\newtheorem{thm}{Theorem}[section]
\newtheorem{lemma}[thm]{Lemma}
\newtheorem{prop}[thm]{Proposition}
\newtheorem{cor}[thm]{Corollary}
\newtheorem{claim}[thm]{Claim}
\theoremstyle{definition}
\newcommand{\cA}{\mathcal{A}}
\newcommand{\cP}{\mathcal{P}}
\newcommand{\cH}{\mathcal{H}}
\newcommand{\cB}{\mathcal{B}}
\newcommand{\cR}{\mathcal{R}}
\newcommand{\cQ}{\mathcal{Q}}
\newcommand{\cF}{\mathcal{F}}
\newcommand{\cS}{\mathcal{S}}
\newcommand{\cX}{\mathcal{X}}
\newcommand{\cU}{\mathcal{U}}
\newcommand{\cV}{\mathcal{V}}
\newcommand{\cI}{\mathcal{I}}
\newcommand{\cW}{\mathcal{W}}
\newcommand{\Z}{\mathbb{Z}}
\renewcommand{\SS}{\mathbb{S}}
\newcommand{\T}{\mathbb{T}}
\newcommand{\E}{\mathbb{E}}
\renewcommand{\Pr}{\mathbb{P}}
\newcommand{\cE}{\mathcal{E}}
\newcommand{\1}{\mathbbm{1}}
\DeclareRobustCommand{\cev}[1]{%
	\mathpalette\do@cev{#1}%
}
\newcommand{\do@cev}[2]{%
	\fix@cev{#1}{+}%
	\reflectbox{$\m@th#1\vec{\reflectbox{$\fix@cev{#1}{-}\m@th#1#2\fix@cev{#1}{+}$}}$}%
	\fix@cev{#1}{-}%
}
\newcommand{\fix@cev}[2]{%
	\ifx#1\displaystyle
	\mkern#23mu
	\else
	\ifx#1\textstyle
	\mkern#23mu
	\else
	\ifx#1\scriptstyle
	\mkern#22mu
	\else
	\mkern#22mu
	\fi
	\fi
	\fi
}
\newcommand{\dpartial}{\vec{\partial}}
\newcommand{\dpartialrev}{\cev{\partial}}
\newcommand{\intB}{\partial_{\bullet}}
\newcommand{\extB}{\partial_{\circ}}
\newcommand{\intextB}{\partial_{\ins \out}}
\newcommand{\ins}{\bullet}
\newcommand{\out}{\circ}
\newcommand{\up}{\,\uparrow\,}
\newcommand{\down}{\,\downarrow\,}
\DeclareMathOperator\zero{\bold{0}}
\DeclareMathOperator\rev{rev}
\newcommand{\Even}{\mathrm{Even}}
\newcommand{\Odd}{\mathrm{Odd}}
\DeclareMathOperator\dist{dist}
\DeclareMathOperator\diam{diam}
\newcommand{\distTV}{\mathrm{d_{TV}}}
\DeclareMathOperator\Int{int}
\newcommand{\phase}{\mathcal{P}}
\newcommand{\phasedom}{\mathcal{P}}
\newcommand{\bad}{\mathsf{none}}
\newcommand{\unbal}{\mathsf{unbal}}
\newcommand{\overlap}{\mathsf{overlap}}
\newcommand{\Ent}{\mathsf{Ent}}
\DeclareMathOperator\supp{supp}
\newcommand{\rest}{\mathsf{rest}}
\newcommand{\unique}{\mathsf{uniq}}
\newcommand{\nondom}{\mathsf{nondom}}
\newcommand{\iso}{\mathsf{iso}}
\newcommand{\even}{\mathsf{even}}
\newcommand{\odd}{\mathsf{odd}}
\newcommand{\bdry}{\mathsf{bdry}}
\newcommand{\inner}{\mathsf{int}}
\newcommand{\breakups}{\cX}
\newcommand*\rel@kern[1]{\kern#1\dimexpr\macc@kerna}
\newcommand*\widebar[1]{%
  \begingroup
  \def\mathaccent##1##2{%
    \rel@kern{0.8}%
    \overline{\rel@kern{-0.8}\macc@nucleus\rel@kern{0.2}}%
    \rel@kern{-0.2}%
  }%
  \macc@depth\@ne
  \let\math@bgroup\@empty \let\math@egroup\macc@set@skewchar
  \mathsurround\z@ \frozen@everymath{\mathgroup\macc@group\relax}%
  \macc@set@skewchar\relax
  \let\mathaccentV\macc@nested@a
  \macc@nested@a\relax111{#1}%
  \endgroup
}
\title{Rigidity of proper colorings of $\Z^{\MakeLowercase{d}}$}
\date{\today}
\author{Ron Peled}
\address{Ron Peled\hfill\break
	Tel Aviv University\\
	School of Mathematical Sciences\\
	Tel Aviv, 69978, Israel.}
\email{peledron@post.tau.ac.il}
\urladdr{http://www.math.tau.ac.il/~peledron}
\author{Yinon Spinka}
\address{Yinon Spinka\hfill\break
	University of British Columbia\\
	Department of Mathematics\\
	Vancouver, BC V6T 1Z2, Canada.}
\email{yinon@math.ubc.ca}
\urladdr{http://www.math.ubc.ca/~yinon/}
\thanks{Research of both authors was supported by the Israel Science Foundation grant 861/15 and the European Research Council starting grant 678520 (LocalOrder). Research of Y.S. was additionally supported by the Adams Fellowship Program of the Israel Academy of
Sciences and Humanities.}
\begin{document}

\begin{abstract}
A proper $q$-coloring of a domain in $\Z^d$ is a function assigning one of $q$ colors to each vertex of the domain such that adjacent vertices are colored differently. Sampling a proper $q$-coloring uniformly at random, does the coloring typically exhibit long-range order? It has been known since the work of Dobrushin that no such ordering can arise when $q$ is large compared with $d$. We prove here that long-range order does arise for each $q$ when $d$ is sufficiently high, and further characterize all periodic maximal-entropy Gibbs states for the model. Ordering is also shown to emerge in low dimensions if the lattice $\Z^d$ is replaced by $\Z^{d_1}\times\T^{d_2}$ with $d_1\ge 2$, $d=d_1+d_2$ sufficiently high and $\T$ a cycle of even length. The results address questions going back to Berker--Kadanoff (1980), Koteck\'y (1985) and Salas--Sokal (1997).
\end{abstract}

\maketitle

\section{Introduction and results}

What does a typical proper coloring with $q$ colors of the integer lattice $\Z^d$ look like? By proper we mean that adjacent vertices must be colored differently. As the lattice $\Z^d$ is bipartite, having an \emph{even} and an \emph{odd} sublattice, it admits proper $q$-colorings for any $q\ge 2$. The $q=2$ case is degenerate with only two possible (proper) colorings -- the chessboard coloring and its translation by one lattice site. For $q\ge 3$ the number of colorings of bounded domains is exponentially large in the volume of the domain, as witnessed by the following important construction: Partition the $q$ colors into two subsets $A,B$ and consider the family of colorings obtained by coloring sites in the even sublattice with colors from $A$ and sites in the odd sublattice with colors from $B$. On a domain $\Lambda$ with an equal number of even and odd sites this gives $(|A|\cdot |B|)^{|\Lambda|/2}$ colorings, and this quantity is maximized when $\{|A|,|B|\}=\{\lfloor \frac{q}{2}\rfloor, \lceil \frac{q}{2}\rceil\}$. Certainly most colorings are not obtained this way, but could it be that most colorings coincide with such a ``pure $(A,B)$-coloring'' at \emph{most vertices}? This is evidently not so in dimension $d=1$ (when $q\ge 3$) and, in fact, is not the case in any dimension provided the number of colors is large compared with the dimension ($q>4d$ suffices; see the discussion after Theorem~\ref{thm:long-range-order}). The main result presented here deals with the opposite regime -- when the dimension is large compared with the number of colors -- where it is shown that coincidence at most vertices with a ``pure $(A,B)$-coloring'' does in fact take place. More precisely, when $\{A,B\}$ partitions the $q$ colors into sets of sizes $\lfloor \frac{q}{2}\rfloor$ and $\lceil \frac{q}{2}\rceil$, then picking a coloring uniformly among colorings of a domain which follow the $(A,B)$-pattern on its boundary, the coloring at any vertex in the domain is very likely to follow the $(A,B)$-pattern as well.

We proceed to state our main result, following required notation. A \emph{pattern} is a pair $(A,B)$ of disjoint subsets of $[q]:=\{1,\ldots, q\}$ (we stress that $(A,B)$ and $(B,A)$ are distinct patterns). It is called \emph{dominant} if $\{|A|,|B|\}=\left\{\lfloor \tfrac{q}{2}\rfloor, \lceil \tfrac{q}{2}\rceil\right\}$.
A \emph{domain} is a non-empty finite $\Lambda \subset \Z^d$ such that both $\Lambda$ and $\Z^d\setminus\Lambda$ are connected. Its \emph{internal vertex-boundary}, the set of vertices in $\Lambda$ adjacent to a vertex outside $\Lambda$, is denoted $\intB \Lambda$.
Given a proper $q$-coloring $f$, we say that
\[ \text{a vertex $v$ is \emph{in the $(A,B)$-pattern} if either $v$ is even and $f(v) \in A$, or $v$ is odd and $f(v) \in B$}.\]
We also say that a set of vertices is in the $(A,B)$-pattern if all its elements are such.

\begin{thm}\label{thm:long-range-order}
There exists $C\ge 1$ such that for any number of colors $q \ge 3$ and any dimension
\begin{equation}\label{eq:dim-assump}
d \ge Cq^{10} \log^2 q,
\end{equation}
%\note{More precise condition is
%\[ d \ge C q^6 (q+\log d)^4 \log^2 d .\]}
the following holds. Let $(A,B)$ be a dominant pattern. Let $\Lambda\subset\Z^d$ be a domain. Let $\Pr_{\Lambda,(A,B)}$ be the uniform measure on proper $q$-colorings $f$ of $\Lambda$ satisfying that $\intB \Lambda$ is in the $(A,B)$-pattern. Then
\begin{equation}\label{eq:main_thm_bound}
 \Pr_{\Lambda,(A,B)}\big(v\text{ is not in the $(A,B)$-pattern}\big) \le e^{-\frac{d}{q^3(q+\log d)}},\qquad v\in\Lambda.
\end{equation}
\end{thm}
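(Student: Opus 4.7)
The proof follows a Peierls-type contour argument capturing the interface between the region conforming to the $(A,B)$-pattern and the region deviating from it. Given a proper coloring $f$ drawn from $\Pr_{\Lambda,(A,B)}$, call a vertex \emph{bad} if it is not in the $(A,B)$-pattern, and note that the conditioning makes every vertex of $\intB\Lambda$ good. Hence any bad vertex $v$ must be shielded from $\intB\Lambda$ by a ``contour''. The first step is to formalize this contour as a \emph{breakup} $\gamma$: a connected subcomplex supported in an appropriate thickening of the bad/good interface, defined carefully so that (i) if $v$ is bad then there exists a breakup $\gamma$ separating $v$ from $\intB\Lambda$ within $\Lambda$; and (ii) the number of breakups of size $s$ containing a fixed vertex is at most $\exp(Cs\log d/d)$, an upper bound exploiting the high dimension via the fact that surfaces in $\Z^d$ can be encoded efficiently relative to their area.

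The heart of the argument is a Peierls weight bound
\[
\Pr_{\Lambda,(A,B)}\bigl(\text{the breakup of } f \text{ contains } \gamma\bigr) \le \exp\!\bigl(-c|\gamma|/[q^3(q+\log d)]\bigr),
\]
established by a \emph{shift} (or \emph{repair}) construction. Given a coloring $f$ whose breakup contains $\gamma$, identify a ``shell'' region of volume proportional to $|\gamma|$ around $\gamma$ and replace the coloring on this shell by one extending the dominant $(A,B)$-pattern across the shell, thereby eliminating $\gamma$. The essential entropy inequality is that the number of proper $(A,B)$-patterned colorings of the shell exceeds the number of proper colorings realizing a competing pattern there by a factor exponential in the shell volume, precisely because the dominant partition uniquely maximizes $|A|\cdot|B|$ among all partitions of $[q]$. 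The information needed to invert the shift can be recorded on $\gamma$ using $O(|\gamma|\log q)$ bits; for $d$ large compared with $q$ the entropy gained on the shell dominates this decoding cost, yielding the stated weight bound.

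A breakup separating a single vertex from $\intB\Lambda$ has $|\gamma|\ge 2d$, the minimal bounding surface of a vertex in $\Z^d$. A union bound combining the weight bound with the counting estimate (ii) gives
\[
\Pr_{\Lambda,(A,B)}(v \text{ is bad}) \le \sum_{s \ge 2d} \exp\!\left(\frac{Cs\log d}{d} - \frac{cs}{q^3(q+\log d)}\right) \le \exp\!\left(-\frac{d}{q^3(q+\log d)}\right),
\]
where the final inequality uses the hypothesis $d\ge Cq^{10}\log^3 q$ to make the Peierls exponent dominate already at $s=2d$.

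The principal obstacle is designing the shift map and the matching definition of breakup. Breakups must be fine enough that the shift gains genuine entropy at every local mismatch with the $(A,B)$-pattern, yet coarse enough that the decoding information fits in $O(|\gamma|\log q)$ bits and that the isoperimetric counting in (ii) remains effective. Maintaining properness throughout the modification --- without the luxury of a global color flip available in the Ising setting --- demands a delicate local analysis of how colors may be rearranged near the bad/good interface, together with combinatorial bookkeeping exploiting the bipartite (even/odd) structure of $\Z^d$. It is the balance among these competing requirements that ultimately produces the $q^{10}\log^3 q$ scaling of the dimension threshold.
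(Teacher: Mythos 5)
Your outline reproduces the general shape (breakups, a repair/shift map, a union bound), but two of its load-bearing claims are not available, and the paper's actual proof exists precisely to get around them. First, the union bound over individual breakups cannot close. Your counting claim (ii), $\exp(Cs\log d/d)$ breakups of size $s$ through a fixed vertex, is only of the right form when $s$ counts boundary plaquettes of \emph{parity-restricted} (odd/even) cutsets; for general connected interfaces measured by vertices the count is $\exp(\Theta(s\log d))$, and even for odd cutsets measured by plaquettes it is $\exp(\Theta(L/d))$. Meanwhile the true per-breakup probability bound delivered by a repair map is of order $\exp(-cL/(qd))$ per plaquette (this is sharp; see the toy computation with ratio $((q-1)/(q+1))^{L/2d}$), so the contour entropy ($\gtrsim c/d$ per plaquette, let alone $c\log d/d$) always dominates the energetic gain $c/(qd)$, for every $d$ --- no dimension assumption rescues the naive union bound. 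The stronger per-contour rate $1/[q^3(q+\log d)]$ you posit per unit size is not something the shift construction can produce for a \emph{specific} breakup. This is exactly why the paper coarse-grains: it builds a family of \emph{approximations} of size $\exp(CqL(\log d/d)^{3/2}+C(M+N)\log^2 d/d)$ (Proposition~\ref{prop:family-of-odd-approx}), proves a probability bound for the event that \emph{some} breakup is compatible with a given approximation (Proposition~\ref{prop:prob-of-odd-approx}), and only then takes a union bound; the parity structure of the regions $Z_P$ (regular $P$-even sets) is what makes the approximation family small enough.

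Second, the entropy accounting in your repair step does not balance. The map must erase the coloring on the interface region $X_*$, whose cardinality is comparable to $L$ (not $L/d$), so recording the erased data ``in $O(|\gamma|\log q)$ bits'' costs order $|X_*|\log q$, while the gain from refilling in the dominant pattern is only of order $L/(qd)+M/q+N/q^2$; making $d$ large widens rather than closes this gap. The paper's resolution is not cheaper bookkeeping but a genuinely different estimate: Shearer's inequality applied to the erased colors (Lemma~\ref{lem:bound-on-pseudo-breakup} / Proposition~\ref{lem:shearer-for-bad-set}) shows their entropy is within a quantifiable deficit of the maximal-entropy filling, the deficit being extracted from the structural notions of restricted edges, non-dominant vertices, unbalanced neighborhoods and unique patterns (plus the extra recorded sets $V_P$ needed to handle $X_\bad$, Lemmas~\ref{lem:lower-bound-on-size-of-restricted-with-V}--\ref{lem:family-of-strong-odd-approx}). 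Without this synthesis of the contour and entropy methods, neither the per-approximation bound nor the final rate $e^{-d/[q^3(q+\log d)]}$ (which in the paper also uses $L\ge d^2$ from Lemma~\ref{lem:boundary-size-of-odd-set}, not merely $|\gamma|\ge 2d$) can be reached.
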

The theorem establishes the existence of long-range order, as the effect of the imposed boundary conditions on the distribution of $f(v)$ does not vanish in the limit as the domain $\Lambda$ increases to the whole of $\Z^d$. Indeed, by symmetry among the colors, the bound~\eqref{eq:main_thm_bound} implies that for some $\epsilon>0$, any domain $\Lambda$ and, for concreteness, any even vertex $v\in\Lambda$,
\begin{equation}\label{eq:color_distribution}
  \Pr_{\Lambda,(A,B)}\big(f(v)=i\big) \ge \tfrac{1}{q}+\epsilon\quad \text{if } i\in A\qquad\text{and}\qquad \Pr_{\Lambda,(A,B)}\big(f(v)=i\big) \le \tfrac{1}{q}-\epsilon\quad \text{if } i\in B.
\end{equation}
The above statements quantify the probability of single-site deviations from the boundary pattern. Extensions to larger spatial deviations are provided in Section~\ref{sec:large-violations} and a consequence for the enumeration of proper $q$-colorings is discussed in Section~\ref{sec:max-entropy-states}.

It is natural to wonder whether other restrictions on the boundary values besides the one used in Theorem~\ref{thm:long-range-order} would lead to other behaviors of the coloring in the bulk of the domain. This idea is captured by the notion of a Gibbs state: a probability measure on proper $q$-colorings of $\Z^d$ for which the conditional distribution on any finite set, given the coloring outside the set, is uniform on the proper colorings extending the boundary values (see Section~\ref{sec:gibbs} for a precise definition).
A fundamental problem in statistical physics is to understand the set of Gibbs states corresponding to a given model. In many models, including proper $q$-colorings, it is evident that there is at least one Gibbs state and the next question arising is to ascertain whether there is more than one. Dobrushin gave a fundamental sufficient condition for the uniqueness of Gibbs states~\cite{Dobrushin1968TheDe}. Applied to proper $q$-colorings, it implies uniqueness whenever $q>4d$ (due to Koteck\'y~\cite[pp.~148-149,457]{georgii2011gibbs} and Salas--Sokal~\cite{salas1997absence}). This bound was improved to $q>\frac{11}{3}d$ by Vigoda~\cite{vigoda2000improved}, with a further improvement to approximately $q>3.53d$ by Goldberg--Martin--Paterson~\cite{goldberg2005strong}, relying on the fact that $\Z^d$ has no triangles.

In the opposite direction, results showing multiplicity of Gibbs states are in general more difficult to obtain. For the $q$-coloring model, this question may be trivial to answer due to the existence of ``frozen Gibbs states'' -- measures supported on a single proper coloring $f$, with the property that $f$ cannot be modified on any finite set while staying proper -- which are known to exist if and only if $q\le d+1$~\cite{alon2019mixing}. To avoid this degenerate situation, one often restricts consideration to Gibbs states of \emph{maximal entropy} -- Gibbs states invariant under translations by a full-rank sublattice of $\Z^d$, termed \emph{periodic} Gibbs states, whose measure-theoretic entropy equals the topological entropy of proper $q$-colorings (see Section~\ref{sec:max-entropy-states}) -- and the challenge is then to determine whether there is more than one such measure. A concrete question, which has received significant attention in the literature (see Section~\ref{sec:background}), is to determine whether multiple Gibbs states of maximal entropy exist for any number of colors~$q$, when the dimension~$d$ is sufficiently high. In fact, the result~\eqref{eq:color_distribution} immediately implies the existence of multiple Gibbs states, one for each dominant pattern $(A,B)$, and it is not overly difficult to establish that these have maximal entropy. This fact, along with additional properties, constitutes our second main result.

\begin{thm}\label{thm:existence_Gibbs_states}
Let $q \ge 3$ and suppose that the dimension $d$ satisfies~\eqref{eq:dim-assump}. For each dominant pattern $(A,B)$ there exists a Gibbs state $\mu_{(A,B)}$ such that, for any sequence of domains $\Lambda_n$ increasing to $\Z^d$, the measures $\Pr_{\Lambda_n,(A,B)}$ converge weakly to $\mu_{(A,B)}$ as $n \to \infty$. In particular, $\mu_{(A,B)}$ is invariant to automorphisms of $\Z^d$ preserving the two sublattices. Moreover, the $(\mu_{(A,B)})$ are distinct, extremal and of maximal entropy.
\end{thm}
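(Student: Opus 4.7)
Fix a dominant pattern $(A,B)$. The sequence $(\Pr_{\Lambda_n,(A,B)})$ is tight by compactness of $[q]^{\Z^d}$, and each accumulation point is a Gibbs state because conditioning the uniform measure on proper colorings of $\Lambda_n$ on the coloring outside any finite $S\subset\Lambda_n$ gives the uniform distribution on proper extensions, which passes to weak limits. To show the limit is unique I would prove that $\Pr_{\Lambda_n,(A,B)}(E)$ is Cauchy for every local event $E$ supported on a finite set $S$. Fix a box $B\supset S$; the Markov property gives
\[ \Pr_{\Lambda_n,(A,B)}(E)=\sum_{\eta}\Pr_{\Lambda_n,(A,B)}(f|_{\intB B}=\eta)\,\Pr_B(E\mid f|_{\intB B}=\eta), \quad \Lambda_n\supset B. \]
By Theorem~\ref{thm:long-range-order} and a union bound, $f|_{\intB B}$ is in the $(A,B)$-pattern with probability at least $1-|\intB B|\,e^{-d/(q^3(q+\log d))}$, uniformly in $n$; on that event the inner probability equals $\Pr_{B,(A,B)}(E)$, a quantity independent of $n$. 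Enlarging $B$ makes the error arbitrarily small, giving Cauchy-ness and identifying the weak limit $\mu_{(A,B)}$.

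\textbf{Symmetry and distinctness.} Invariance of $\mu_{(A,B)}$ under sublattice-preserving automorphisms of $\Z^d$ follows because the constraint defining $\Pr_{\Lambda,(A,B)}$ is stable under such automorphisms; choosing each $\Lambda_n$ to be invariant under any prescribed such automorphism, uniqueness of the weak limit yields the claimed invariance. Distinctness is immediate from~\eqref{eq:color_distribution}: the single-site marginal at an even vertex $v$ differs by at least $2\epsilon$ between $\mu_{(A,B)}$ and $\mu_{(A',B')}$ whenever $(A,B)\neq (A',B')$.

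\textbf{Extremality.} I would verify triviality of the tail $\sigma$-algebra $\mathcal{T}$ of $\mu_{(A,B)}$. For a bounded local function $F$ and an increasing sequence $V_n\nearrow\Z^d$, the DLR property identifies $\E_{\mu_{(A,B)}}(F\mid\cF_{V_n^c})$ with the finite-volume Gibbs expectation at $V_n$ given the random boundary $\eta=f|_{\intB V_n}$, and backward martingale convergence sends this a.s.\ to $\E_{\mu_{(A,B)}}(F\mid\mathcal{T})$. The large-deviation extensions of Theorem~\ref{thm:long-range-order} announced in Section~\ref{sec:large-violations} should provide exponential bounds in $|\intB V_n|$ for the probability that a definite fraction of $\intB V_n$ violates the $(A,B)$-pattern; Borel--Cantelli then yields that $\mu_{(A,B)}$-almost surely $\eta$ is ``mostly'' in the $(A,B)$-pattern for every sufficiently large $n$. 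A localization argument of the same flavor as the proof of Theorem~\ref{thm:long-range-order} shows that such boundaries produce finite-volume measures whose $F$-expectations are close to $\Pr_{V_n,(A,B)}(F)$, and these converge to $\E_{\mu_{(A,B)}}(F)$ by the first paragraph. Hence $\E_{\mu_{(A,B)}}(F\mid\mathcal{T})=\E_{\mu_{(A,B)}}(F)$ a.s., giving extremality.

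\textbf{Maximal entropy.} The Shannon entropy of $\Pr_{\Lambda_n,(A,B)}$ equals $\log Z_{\Lambda_n,(A,B)}$, with $Z_{\Lambda_n,(A,B)}$ the number of proper colorings of $\Lambda_n$ whose inner boundary is in the $(A,B)$-pattern; color-permutation symmetry makes these counts agree across all dominant patterns. If an arbitrary proper coloring of $\Lambda_n$, with \emph{no} imposed boundary condition, mostly follows \emph{some} dominant pattern on an inner boundary layer, then $\sum_{(A,B)\text{ dom.}} Z_{\Lambda_n,(A,B)}$ equals $(1-o(1))$ times the total proper-coloring count, so $|\Lambda_n|^{-1}\log Z_{\Lambda_n,(A,B)}\to h_{\mathrm{top}}$. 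Upper semicontinuity of specific entropy under weak convergence then yields $h(\mu_{(A,B)})\ge h_{\mathrm{top}}$, and the variational principle forces equality. The main obstacle is this pattern-decomposition of \emph{arbitrary} proper colorings of $\Lambda_n$, which is not contained in Theorem~\ref{thm:long-range-order} as stated: one must extend the underlying Peierls-type breakup analysis to arbitrary boundary conditions and classify each coloring according to the dominant pattern its inner boundary predominantly follows.
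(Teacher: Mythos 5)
Your plan for invariance and distinctness is fine, but two of the remaining steps have genuine gaps. First, the convergence argument does not work as written. The per-site bound of Theorem~\ref{thm:long-range-order} is a fixed constant $\varepsilon=e^{-d/(q^3(q+\log d))}$ for fixed $d,q$, so the union-bound error $|\intB B|\,\varepsilon$ \emph{grows} as you enlarge $B$; it cannot be made ``arbitrarily small'' by taking $B$ large, and for small $B$ it is merely a constant, so you only get that the finite-volume measures are within a fixed total-variation distance of each other, not Cauchy-ness. Moreover, even on the event that all of $\intB B$ follows the $(A,B)$-pattern, the conditional law of $f$ inside $B$ under $\Pr_{\Lambda_n,(A,B)}$ is \emph{not} $\Pr_{B,(A,B)}$: distinct pattern-compliant boundary configurations $\eta$ receive $n$-dependent weights (proportional to the number of compatible extensions outside $B$), so identifying the mixture with $\Pr_{B,(A,B)}(E)$ begs the question. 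The paper repairs both points at once: it proves decay-in-distance estimates for \emph{connected clusters} of pattern violations (Proposition~\ref{prop:Z_*-bound}, Proposition~\ref{prop:B_P-bound}, and Corollary~\ref{cor:prob-of-joint-breakup-core} for a pair of independent samples), which are exponentially small in the distance $r$ rather than summed over a growing boundary, and then conditions on the \emph{maximal} pattern-compliant surface separating $U$ from $U^{+r}$ via the boundary semi-lattice argument of Lemma~\ref{lem:marginal-distribution-given-agreement}; this is what makes the domain Markov property applicable and yields Lemma~\ref{lem:convergence}. Your extremality sketch rests on the same missing decay estimates (the paper instead derives extremality from the exponential mixing bound of Lemma~\ref{lem:almost-independence-of-colorings}, built on the same tools), so it inherits the gap, though the strategy is reasonable in spirit.

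Second, the maximal-entropy step is incomplete, as you acknowledge: it requires a pattern decomposition of proper colorings of $\Lambda_n$ with \emph{free} boundary conditions, which is not a routine extension (the paper explicitly flags free boundary conditions as delicate), and in addition ``mostly follows a pattern on a boundary layer'' does not by itself place a coloring among those counted by $Z_{\Lambda_n,(A,B)}$, whose boundary must follow the pattern exactly, so a surgery/repair step would also be needed before the identity $\sum_{(A,B)}Z_{\Lambda_n,(A,B)}=(1-o(1))\,|\Omega^{\mathrm{free}}_{\Lambda_n}|$ can be asserted. The paper takes a different route that avoids free boundary conditions entirely: any periodic measure of maximal entropy is shown to have no infinite $(\Z^d)^{\otimes 2}$-component of $Z_*$ (Lemma~\ref{lem:no-infinite-cluster-of-interface-vertices}), by periodizing and reflecting a finite-volume boundary condition and extending it to a pure $(a,b)$-boundary via Lemma~\ref{lem:pattern-extends-to-ab-boundary-conditions}, so that Proposition~\ref{prop:Z_*-bound} applies; Proposition~\ref{prop:max-entropy-states-are-mixtures} then shows every such measure is a mixture of the $\mu_{(A,B)}$, and maximal entropy of each $\mu_{(A,B)}$ follows from affinity of specific entropy together with the symmetry among dominant patterns. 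If you want to keep your enumeration-based approach, you must supply the free-boundary breakup analysis and the boundary repair step; otherwise the characterization route is the one that closes the argument.
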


Together with Theorem~\ref{thm:long-range-order} we see that the Gibbs state $\mu_{(A,B)}$ has a tendency towards the $(A,B)$-pattern at all vertices. Our techniques yield stronger facts, showing that large spatial deviations from the $(A,B)$-pattern are exponentially suppressed (see Section~\ref{sec:large-violations}). The techniques further yield that $\mu_{(A,B)}$ is strongly mixing with an exponential rate (see Lemma~\ref{lem:almost-independence-of-colorings}).

Theorem~\ref{thm:existence_Gibbs_states} shows that there are at least $\binom{q}{q/2}$ extremal maximal-entropy Gibbs states for even~$q$ and $2\binom{q}{\lfloor q/2 \rfloor}$ such Gibbs states for odd $q$. Our third result shows that these exhaust all possibilities.
\begin{thm}\label{thm:characterization_of_Gibbs_states}
  Let $q \ge 3$ and suppose that the dimension $d$ satisfies~\eqref{eq:dim-assump}. Then any (periodic) maximal-entropy Gibbs state is a mixture of the measures $\{\mu_{(A,B)}\}$.
\end{thm}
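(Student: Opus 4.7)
The plan is to reduce to extremal Gibbs states via the ergodic decomposition, then identify any extremal periodic maximal-entropy Gibbs state with $\mu_{(A,B)}$ for some dominant pattern $(A,B)$. For the reduction, note that the specific entropy is affine on shift-invariant measures and is bounded above by the topological entropy $h_{\text{top}}$. Hence each extremal component of any periodic maximal-entropy Gibbs state is itself periodic and of maximal entropy, so it suffices to prove the theorem for extremal $\mu$.

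For extremal $\mu$, the key step is to upgrade the finite-volume breakup machinery of Theorem~\ref{thm:long-range-order} to a specific-entropy bound. Given a proper coloring $f$, let $\breakups(f)\subset\Z^d$ be its breakup set as constructed in the proof of Theorem~\ref{thm:long-range-order}, so that each connected component of $\Z^d\setminus\breakups(f)$ is in a single pattern. Call a vertex \emph{bad} if it lies in $\breakups(f)$ or in a non-dominant pattern. The entropy cost of breakups should yield an inequality
\[
  h(\mu)\le h_{\text{top}}-c\cdot \mu(0 \text{ is bad})
\]
for some $c=c(d,q)>0$ valid for every periodic Gibbs state $\mu$. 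Since $h(\mu)=h_{\text{top}}$, this forces $\mu(0 \text{ is bad})=0$, so by translation invariance $\mu$-almost surely no vertex is bad. Since distinct dominant patterns must be separated by breakup vertices in the decomposition, $\mu$-a.s.\ all of $\Z^d$ lies in a single dominant pattern; ergodicity then pins this pattern down to a deterministic $(A,B)$.

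It remains to identify $\mu$ with $\mu_{(A,B)}$. Both are extremal Gibbs states, so they are either equal or mutually singular. I would compare finite-dimensional marginals via the DLR condition: for any finite $\Delta\subset\Z^d$ and any large box $\Lambda\supset\Delta$, the $\mu$-conditional law of $f|_\Lambda$ given $f|_{\Lambda^c}$ is uniform on proper extensions of $f|_{\Lambda^c}$ to $\Lambda$; on the $\mu$-typical event that the configuration outside $\Lambda$ is in the $(A,B)$-pattern in a suitable neighborhood of $\Lambda$, this conditional law coincides with the analogous conditional law under $\mu_{(A,B)}$. Taking $\Lambda\to\Z^d$ and invoking the characterization of $\mu_{(A,B)}$ from Theorem~\ref{thm:existence_Gibbs_states} identifies the finite-dimensional marginals of $\mu$ and $\mu_{(A,B)}$, hence $\mu=\mu_{(A,B)}$.

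The main obstacle is the entropy inequality in the second paragraph: translating the finite-volume breakup estimates into a sharp specific-entropy deficit for any periodic Gibbs state carrying a positive density of bad vertices requires a variational argument in the spirit of Pirogov--Sinai theory, relating the exponentially small weights of breakups established in the proof of Theorem~\ref{thm:long-range-order} to a gap from $h_{\text{top}}$. Once this ingredient is in place, the ergodicity step and the final identification via DLR are essentially standard.
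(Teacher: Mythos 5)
Your central claimed inequality, $h(\mu)\le h_{\text{top}}-c\,\mu(0\text{ is bad})$ for every periodic Gibbs state, cannot be true as stated, and the argument built on it collapses. The measure $\mu_{(A,B)}$ itself is a (periodic, maximal-entropy) Gibbs state under which the origin is ``bad'' with \emph{positive} probability: local deviations from the $(A,B)$-pattern (e.g.\ an even vertex taking a color of $B$ while all its neighbours take a single color $b_1\in B$, with generic pattern colors at distance two) occur with positive probability under the finite-volume measures and hence under $\mu_{(A,B)}$, and any such deviation forces nearby vertices into $Z_*$ (or into no dominant pattern). So your inequality would give $h(\mu_{(A,B)})<h_{\text{top}}$, contradicting Theorem~\ref{thm:existence_Gibbs_states}; equivalently, your conclusion that a maximal-entropy measure a.s.\ has \emph{no} bad vertices, so that all of $\Z^d$ lies in a single dominant pattern, is false for the very measures the theorem describes. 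The only statement of this kind that can hold (and the one the paper proves, Lemma~\ref{lem:no-infinite-cluster-of-interface-vertices}) is qualitative and about large-scale structure: under any periodic maximal-entropy measure, $Z_*(f)$ has no infinite $(\Z^d)^{\otimes 2}$-component. Its proof is exactly the ingredient you flag as ``the main obstacle'' and do not supply, and it is not a routine Pirogov--Sinai variational step: one must replicate a near-entropy-maximizing boundary condition by reflections across a large box, use Lemma~\ref{lem:pattern-extends-to-ab-boundary-conditions} to extend the reflected boundary condition to genuine $(a,b)$- (hence dominant-pattern) boundary conditions on a slightly larger box, and only then can Proposition~\ref{prop:Z_*-bound} (which is proved only for pattern boundary conditions) be invoked to show that a positive density of interface paths reaching the replicated boundary costs entropy of volume order, contradicting maximality.

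Beyond this, your final step also needs restructuring: since under a maximal-entropy measure one only gets that every finite box is, at some random finite distance, surrounded by a region whose boundary is in \emph{some} dominant pattern (not that the whole configuration follows one pattern), the identification cannot be ``ergodicity pins the pattern down.'' The paper instead works directly with any such measure (no ergodic decomposition, no extremality assumption): it defines, for each dominant pattern $P$, the event $\cE_P$ that boxes $U_n$ are surrounded by $P$-pattern circuits for infinitely many $n$, shows via the semi-lattice/domain-Markov argument of Lemma~\ref{lem:marginal-distribution-given-agreement} and the convergence in Lemma~\ref{lem:convergence} that $\mu(\cdot\mid\cE_P)=\mu_P$, and concludes $\mu=\sum_P\mu(\cE_P)\,\mu_P$. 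Your DLR comparison in the last paragraph is close in spirit to this, but it is hung on the false ``no bad vertices'' premise rather than on the surrounding-circuit structure that the no-infinite-component lemma actually provides.
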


The main results are not valid in low dimensions due to the uniqueness results discussed above. Nonetheless, they are applicable in any dimension $d\ge 2$ provided the underlying graph is suitably modified. Precisely, the above results remain true as stated when $\Z^d$ is replaced by a graph of the form $\Z^{d_1}\times\T_{2m}^{d_2}$, $m\ge 1$ integer, provided $d_1\ge 2$ and $d=d_1+d_2$ satisfies \eqref{eq:dim-assump}, where $\T_{2m}$ is the cycle graph on $2m$ vertices (the path on $2$ vertices if $m=1$). The graph $\Z^{d_1}\times\T_{2m}^{d_2}$ may be viewed as a subset of $\Z^d$ in which the last $d_2$ coordinates are restricted to take value in $\{0,1,\ldots, 2m-1\}$ and are endowed with periodic boundary conditions. In this sense, it is only the local structure of $\Z^d$ which matters to the results. To keep the discussion focused, we present the proofs of the results only in the $\Z^d$ case and comment on the minor adjustments (beyond obvious notational changes) required for graphs of the above form.

\subsection{General spin systems}\label{sec:general_spin_systems}
The methods introduced in this paper allow a vast generalization: In the companion paper~\cite{peledspinka2018spin}, we extend the ideas from the proper $q$-coloring setting to general discrete spin systems satisfying suitable conditions. The results characterize the set of maximal-pressure Gibbs states of such systems, showing that a typical sample from such a Gibbs state mainly follows an $(A,B)$ pattern for suitable sets $A,B$. We briefly describe here the main results of~\cite{peledspinka2018spin}. An introduction aimed at a physics audience appears in~\cite{peled2017condition}.

The spin systems considered are described by a finite \emph{spin space} $\SS$, a collection $(\lambda_i)_{i \in \SS}$ of positive numbers called the \emph{single-site activities}, and a collection $(\lambda_{i,j})_{i,j \in \SS}$ of non-negative numbers called the \emph{pair interactions}.
The pair interactions are symmetric, i.e., $\lambda_{i,j}=\lambda_{j,i}$ for all $i,j \in \SS$, and at least one is positive. The probability of a \emph{configuration} $f \colon \Lambda \to \SS$ is proportional to
\begin{equation}\label{eq:config-weight}
\prod_{v \in \Lambda} \lambda_{f(v)} \prod_{\{u,v\} \in E(\Lambda)} \lambda_{f(u),f(v)} ,
\end{equation}
where $E(\Lambda)$ is the set of edges of $\Z^d$ whose two endpoints belong to $\Lambda$. Classical models obtained as special cases include the Ising, Potts, hard-core, Widom--Rowlinson, beach and clock models.

The $q$-state antiferromagnetic Potts model at temperature $T$ is obtained when $\SS = [q]$ and $\lambda_{i,j}=\1_{\{i\neq j\}}+e^{-\frac{1}{T}}\1_{\{i=j\}}$. The $(\lambda_i)$ encode external magnetic fields. The proper $q$-coloring model is obtained in the zero-temperature limit, when $\lambda_{i,j}=\1_{\{i\neq j\}}$, taking all $\lambda_i = 1$.

The emergent long-range order will involve spins interacting with the maximal pair interaction weight. In this setting, a \emph{pattern} is thus defined as a pair $(A,B)$ of subsets of $\SS$ such that
\[ \lambda_{a,b}=\max_{i,j \in \SS} \lambda_{i,j}\qquad\text{for all $a \in A$ and $b \in B$}.\]
The single-site activities then play a role in singling out \emph{dominant} patterns, defined as patterns maximizing $(\sum_{a\in A} \lambda_a)(\sum_{b\in B} \lambda_b)$ among all patterns. These definitions extend the ones used above for proper $q$-colorings.

Two patterns $(A,B)$ and $(A',B')$ are called \emph{equivalent} if there is a bijection $\varphi \colon \SS \to \SS$ such that
\begin{equation*}
\{\varphi(A),\varphi(B)\}=\{A',B'\},\qquad \lambda_{\varphi(i)}=\lambda_i,\qquad \lambda_{\varphi(i),\varphi(j)}=\lambda_{i,j}\qquad\text{for all }i,j \in \SS.
\end{equation*}
The results of the companion paper apply to spin systems in which all dominant patterns are equivalent.

As for proper colorings, here too we wish to avoid degenerate situations, and thus restrict attention to (periodic) maximal-pressure Gibbs states (which are the analogues of maximal-entropy Gibbs states in this more general setting).

\begin{thm}[{\cite{peledspinka2018spin}}]\label{thm:equilibrium-states-general}
For each spin system as above (fixing $\SS$, $(\lambda_i)$ and $(\lambda_{i,j})$) in which all dominant patterns are equivalent there exists $d_0$ such that the following holds in any dimension $d \ge d_0$.
\begin{enumerate}[leftmargin=20pt]
  \item For each dominant pattern $(A,B)$ there exists a Gibbs state $\mu_{(A,B)}$ which is extremal, invariant to automorphisms of $\Z^d$ preserving the two sublattices and of maximal pressure.
  \item The Gibbs states $(\mu_{(A,B)})$ are distinct, with samples from $\mu_{(A,B)}$ having a strong tendency to follow the $(A,B)$-pattern in the sense that $\mu_{(A,B)}(f(u) \in A,\,f(v) \in B)\ge 1-\epsilon(d)$, for even $u \in \Z^d$ and odd $v \in \Z^d$, where $\epsilon(d) \to 0$ as $d \to \infty$.
  \item Every (periodic) maximal-pressure Gibbs state is a mixture of the measures $\{\mu_{(A,B)}\}$.
\end{enumerate}
\end{thm}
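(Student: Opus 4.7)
The plan is to generalize the framework developed for proper $q$-colorings in Theorems~\ref{thm:long-range-order}--\ref{thm:characterization_of_Gibbs_states} to the weighted spin-system setting, proceeding via the same three-step scheme: (a) a finite-volume long-range-order estimate analogous to~\eqref{eq:main_thm_bound}, (b) passage to the infinite-volume limit to construct the states $\mu_{(A,B)}$ and verify their properties, and (c) showing that any extremal periodic maximal-pressure Gibbs state coincides with some $\mu_{(A,B)}$.

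For each dominant pattern $(A,B)$, define the finite-volume measure on configurations $f\colon\Lambda\to S$ proportional to~\eqref{eq:config-weight}, conditioned on $\intB\Lambda$ being in the $(A,B)$-pattern. The Peierls-type argument behind Theorem~\ref{thm:long-range-order} introduces a \emph{breakup}---the maximal connected components of vertices where $f$ deviates from the $(A,B)$-pattern---and bounds the probability of any fixed breakup by comparing with a modified configuration in which the breakup is replaced by a pure $(A,B)$-pattern. In the weighted setting, this comparison produces a product of ratios $\lambda_{f(u),f(v)}/\max_{i,j}\lambda_{i,j}$ over broken bonds and $\lambda_{f(v)}/\max_{i\in A\cup B}\lambda_i$ over broken sites. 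The hypothesis that all dominant patterns are equivalent ensures that these ratios are bounded away from $1$ on broken bonds: a breakup cannot transition cost-free between the reference $(A,B)$-pattern and another dominant pattern, since by equivalence the other pattern is related to $(A,B)$ by a spin bijection and would induce a visible discrepancy at the breakup boundary. Combinatorial counting of breakups proceeds as in the coloring case, yielding, for $d \ge d_0$ large enough, a bound $\Pr_{\Lambda,(A,B)}(v\text{ not in the }(A,B)\text{-pattern}) \le \epsilon(d)$ with $\epsilon(d)\to 0$ as $d\to\infty$.

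Part~(1) then follows as in the proof of Theorem~\ref{thm:existence_Gibbs_states}: weak convergence along any sequence $\Lambda_n\uparrow\Z^d$ produces a Gibbs state $\mu_{(A,B)}$ invariant under sublattice-preserving automorphisms of $\Z^d$; extremality follows from the Peierls-driven strong mixing; maximality of pressure is a consequence of the variational principle applied to the free energy of the measures $\Pr_{\Lambda_n,(A,B)}$. Part~(2) is immediate upon passing to the limit in the finite-volume estimate. For Part~(3), one upgrades the Peierls bound so that it holds under any periodic maximal-pressure Gibbs state: a positive density of breakups in such a state would, via the variational principle, contradict maximality of pressure by exhibiting a replacement configuration with strictly larger weight. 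This upgrade forces every sample from a maximal-pressure state to have an almost-surely well-defined ``pattern at infinity''; extremality (obtained by Choquet decomposition on the convex set of periodic Gibbs states, using affineness of pressure) then makes this pattern constant, and a DLR plus strong-mixing comparison identifies the corresponding extremal component with the associated $\mu_{(A,B)}$.

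The main obstacle is the weighted Peierls estimate. Quantitative control of the interplay between the pair interactions $\lambda_{i,j}$ and the activities $\lambda_i$ is required in order to ensure that a breakup of size $n$ has weight at most $C^n \delta^n$ for some $\delta < 1$ that tends to $0$ as $d \to \infty$. The equivalence of dominant patterns is essential here: without it, a breakup could transition between two inequivalent maximally-weighted patterns without any cost, obstructing the Peierls bound and indeed destroying the ordering phenomenon. Once this estimate is in place, the arguments of Theorems~\ref{thm:existence_Gibbs_states} and~\ref{thm:characterization_of_Gibbs_states} transport to the general setting with only notational modifications.
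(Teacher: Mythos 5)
This theorem is not proved in the present paper at all: it is quoted from the companion paper \cite{peledspinka2018spin}, and the text around it only indicates that the proof there extends the machinery developed here (the regions $Z_P$ for all dominant patterns, breakups, the coarse-graining into approximations, and the Shearer-based estimates), so there is no in-paper proof to compare line by line. Judged as a proof plan, your sketch follows the right global architecture (finite-volume pattern estimate, limit and properties of $\mu_{(A,B)}$, characterization of maximal-pressure states), but it has a genuine gap at its central step.

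The gap is the mechanism you propose for the Peierls bound. You suppress a breakup by a product of ratios $\lambda_{f(u),f(v)}/\max_{i,j}\lambda_{i,j}$ over broken bonds, claimed to be bounded away from $1$ thanks to equivalence of dominant patterns. But the general theorem contains the zero-temperature proper $q$-coloring model as a special case, where $\lambda_{i,j}=\1_{\{i\neq j\}}$ and all $\lambda_i=1$: every admissible configuration has exactly the same weight, so no bond ever pays a ratio below $1$, and your estimate yields nothing. The suppression is entropic, not energetic; the toy computation in Section~\ref{sec:proof-overview} shows an interface between two dominant patterns costs only a factor like $(\tfrac{q-1}{q+1})^{|\partial U|/2d}$, coming from a reduced count of admissible local configurations, and (contrary to your claim) transitions between \emph{equivalent} dominant patterns are precisely the nearly cost-free ones -- equivalence is needed so that all dominant patterns carry the same maximal weight, not to make interfaces expensive. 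Moreover, this small entropic cost is exactly why "combinatorial counting of breakups" with a union bound over breakups fails: the paper must bound the probability of a fixed breakup via the repair transformation together with Shearer's inequality (Lemma~\ref{lem:bound-on-pseudo-breakup}, Proposition~\ref{prop:prob-of-given-breakup}), and must replace the union bound over breakups by one over the much smaller family of approximations (Propositions~\ref{prop:family-of-odd-approx} and~\ref{prop:prob-of-odd-approx}); in the weighted setting these become free-energy rather than pure entropy estimates, but none of this is replaced by a per-bond weight comparison. Similarly, in part (3) "a replacement configuration with strictly larger weight" is not how maximality is contradicted: the argument (cf.\ Lemma~\ref{lem:no-infinite-cluster-of-interface-vertices}) is of Gallavotti--Miracle-Sol\'e type, reflecting boundary conditions, extending them to a pure pattern (Lemma~\ref{lem:pattern-extends-to-ab-boundary-conditions}), and invoking the quantitative bound of Proposition~\ref{prop:Z_*-bound} to contradict maximal entropy/pressure; without that quantitative input, the variational-principle step you invoke does not go through.
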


A quantitative estimate for $d_0$ in terms of $(\lambda_i)$ and $(\lambda_{i,j})$ is possible, encapsulating conditions of ``low-temperature'' and ``significant weight difference between dominant and non-dominant patterns'', as described in~\cite{peledspinka2018spin}. These imply, for instance, that the results obtained for the proper $q$-coloring model extend to the low-temperature regime of the antiferromagnetic $q$-state Potts model, with the temperature even allowed to \emph{grow} with $d$ at a power-law rate. Also described in~\cite{peledspinka2018spin} are properties of the Gibbs state $\mu_{(A,B)}$ which is in correspondence with the dominant pattern $(A,B)$, among which are quantitative bounds for $\epsilon(d)$ and convergence of finite-volume measures with $(A,B)$ boundary conditions to $\mu_{(A,B)}$. Applications to other classical models including the hard-core, lattice Widom--Rowlinson, beach and clock models are also discussed.

%The companion paper~\cite{peledspinka2018spin} includes a discussion of many specific models to which the general result applies. These include the Widom--Rowlinson model, extending results of Lebowitz-Gallavotti*** and Lebowitz-Runnels***, the beach model, extending results of H\"aggstr\"om*** and Burton--Steif***, and clock models with residual entropy, extending results of~\cite{peled2010high}.

As for the $q$-coloring model, a version of Theorem~\ref{thm:equilibrium-states-general} remains valid on $\Z^{d_1}\times\T_{2m}^{d_2}$ provided $d_1\ge 2$ and $d=d_1+d_2$ is at least the threshold $d_0$ of the theorem.

\subsection{Discussion and background}\label{sec:background}

Long-range ordering results of the type obtained here are ubiquitous in statistical physics. Starting from the classical result of Peierls~\cite{peierls1936ising} that the Ising model orders at low temperature, such results have been obtained for a wide range of models. In the example of the Ising model, where the state space is $S = \{+, -\}$, the probability distribution biases against different values being placed at adjacent vertices. In the limit of zero temperature, this bias becomes absolute and the only allowed configurations in a domain are the fully~$+$ or fully~$-$ configurations. The result of Peierls may thus be viewed as saying that the zero-temperature ordering persists to the low-temperature regime, an idea which received systematic treatment starting with the work of Pirogov and Sinai~\cite{pirogov1975phase,pirogov1976phase} (see Friedli--Velenik~\cite[Chapter~7]{friedli2017statistical} for a pedagogical introduction). In contrast, the proper $q$-coloring model studied here is already a zero-temperature model (for the antiferromagnetic $q$-state Potts model), with the difficulty in its analysis stemming from the fact that it has \emph{residual entropy} -- configurations are sampled uniformly from a set whose cardinality is exponential in the volume. As such, any long-range order present in the model is \emph{entropically driven} and its rigorous justification requires new tools.

The question of understanding the type of emergent long-range order, or its absence, in the antiferromagnetic $q$-state Potts model, including proper $q$-colorings, has received significant attention. In the physics literature, to our knowledge, the problem was first considered by Berker--Kadanoff~\cite{berker1980ground} who suggested in 1980 that a phase with algebraically decaying correlations may occur at low temperatures (including zero temperature) with fixed $q$ when $d$ is large. This prediction was challenged by numerical simulations and an $\varepsilon$-expansion argument of Banavar--Grest--Jasnow~\cite{banavar1980ordering} who predicted a Broken-Sublattice-Symmetry (BSS) phase at low temperatures for the $3$ and $4$-state models in three dimensions. The BSS phase is exactly of the type proved to occur here, with a global tendency towards a pure $(A,B)$-ordering for a dominant pattern $(A,B)$. Koteck\'y~\cite{kotecky1985long} in 1985 argued for the existence of the BSS phase at low temperature when $q=3$ and $d\ge 3$ by analyzing the model on a decorated lattice. This prediction became known as \emph{Koteck\'y's conjecture}. While our concern here is with the zero-temperature case, we briefly mention that the behavior of the antiferromagnetic Potts model at intermediate temperature regimes is also unclear. The interested reader is directed to the paper of Rahman--Rush--Swendsen~\cite{rahman1998intermediate}, where the $3$-state model in three dimensions is considered, conflicting predictions regarding Permutationally-Symmetric-Sublattice (PSS) and Rotationally-Symmetric (RS) phases are surveyed and the controversy between them is addressed. We are not aware of mathematically rigorous results on such intermediate-temperature regimes. We also mention that irregularities in a lattice (i.e., having different sublattice densities) often promote the formation of order. This may be used, for instance, to find for each $q$ a \emph{planar} lattice on which the proper $q$-coloring model is ordered~\cite{huang2013two}. However, irregularities also modify the nature of the resulting phase, leading to long-range order in which a single spin value appears on most of the lower-density sublattice~\cite{kotecky2014entropy}, or to partially ordered states~\cite{qin2014partial}.

In the mathematically rigorous literature, Koteck\'y's conjecture remained open for 25 years until its high-dimensional case was verified at zero temperature by the first author~\cite{peled2010high} and by Galvin--Kahn--Randall--Sorkin~\cite{galvin2012phase} (following closely related papers by Galvin--Randall~\cite{galvin2007torpid} and Galvin--Kahn~\cite{galvin2004phase}). The high-dimensional case of the conjecture was fully resolved some years later by Feldheim and the second author~\cite{feldheim2015long}. The results of \cite{peled2010high, galvin2012phase} correspond to the $q=3$ case of Theorem~\ref{thm:long-range-order}, and to the existence of $6$ extremal maximal-entropy Gibbs states which results from it (the fact that the measures have maximal entropy is shown in~\cite[Section~5]{galvin2012phase}), while the convergence result in Theorem~\ref{thm:existence_Gibbs_states} and the characterization result given in Theorem~\ref{thm:characterization_of_Gibbs_states} are new also for this case. Periodic boundary conditions were considered in~\cite{galvin2007torpid, feldheim2013rigidity} and in~\cite{peled2010high} for the corresponding height function (also on tori with non-equal side lengths). Following Koteck\'y, it is quite natural to predict that multiple maximal-entropy Gibbs states exist for proper $q$-colorings with any $q\ge3$ provided the dimension is sufficiently large as a function of~$q$. Related questions and conjectures have been made by several authors:
\begin{itemize}[leftmargin=20pt]
  \item Salas--Sokal~\cite{salas1997absence} write in 1997 that any lattice $G$ should admit a value $q_c(G)$ such that the anitferromagnetic $q$-state Potts model on $G$ is disordered at all $q>q_c(G)$ and all temperatures, has a critical point at zero temperature when $q=q_c(G)$, and often (though not always) has a phase transition at non-zero temperature for any $q<q_c(G)$;
  \item Koteck{\'y}--Sokal--Swart~\cite[Section 1.4, (3)]{kotecky2014entropy} ask to prove the existence of an entropy-driven phase transition on $\Z^d$ for suitable pairs of $(q,d)$ and suggest that this holds for $q<q_c(\Z^d)$ for some function $q_c(\Z^d)$, possibly satisfying $q_c(\Z^d)\approx 2d$.
  \item Engbers--Galvin~\cite[Section 6.3]{engbers2012h2} write that it would be of great interest to prove long-range order for weighted graph homomorphisms on $\Z^d$ (including proper $q$-colorings) and deduce information on the Gibbs states of the model.
  \item Galvin--Kahn--Randall--Sorkin~\cite[Conjecture 1.3]{galvin2012phase} conjecture that, for any $q>3$, there are multiple maximal-entropy Gibbs states for proper $q$-colorings of $\Z^d$ when $d$ is sufficiently large.
  \item Feldheim and the authors ask in \cite[Section 8]{feldheim2013rigidity} and \cite[Section 1.3]{feldheim2015long} to show long-range order of the BSS type (with $\lfloor \frac q2 \rfloor$ colors predominant on one sublattice and the remaining $\lceil \frac q2 \rceil$ colors on the other sublattice) for each $q$ when $d$ is sufficiently large.
\end{itemize}
Our work resolves the prediction by exhibiting long-range order for all $q$ when $d$ is sufficiently large, and further allows for a quantitative power-law dependence between $q$ and $d$ (the companion paper \cite{peledspinka2018spin} addresses more general models including weighted graph homomorphisms). Compared with the aforementioned uniqueness of Gibbs states results which hold when $q>C d$, we see that a power-law dependence is best possible though the precise power is yet to be determined.

The previously addressed case of $q=3$ colors has a special additional structure as proper $3$-colorings of $\Z^d$ admit a height function representation. This special structure manifests in a natural cyclic order on the $6$ dominant patterns and is essential to the analysis presented in \cite{peled2010high} and  \cite{galvin2012phase}. Already the extension to low temperatures in \cite{feldheim2015long} is quite significant as the global height representation is lost, but the analysis there still relies on the height function existing locally, away from the rare places where the coloring is not proper. As nothing of this structure remains when the number of colors increases beyond 3, the previously used methods are insufficient for the analysis of proper $q$-colorings with any $q \ge 4$. Specific new challenges arising include the difficulty in identifying ordered regions (which, if any, dominant pattern does a vertex follow?), the many more ways in which the proper coloring can order and transition between the different orders (the large number of dominant patterns and their complex ``adjacency structure''), and the more significant role played by disordered regions (which do not follow any pattern) and sub-optimally ordered regions (which follow a non-dominant pattern). Consequently, finding a useful definition of ordered and disordered regions in a given coloring is already a non-trivial first step in the analysis of the $q\ge 4$ case (this was true also for the low-temperature $q=3$ case but a number of additional difficulties arise for proper $q$-colorings with $q\ge 4$).

A common ingredient in the proofs of long-range order for $q=3$ colors in $\Z^d$, as well as for the hard-core model, is the use of sophisticated \emph{contour methods}. The underlying idea is similar to the argument of Peierls -- identify regions of ``excitations'', i.e., deviations from the ordered state, show that any specific excitation is unlikely and use a union bound to show that the probability that there exists an excitation is small. However, the idea in this form fails for the proper $3$-coloring and hard-core models, as the probability of specific excitations is not sufficiently small to allow the use of the union bound. As a remedy, one is led to a ``coarse-graining'' technique, in which several different excitations are grouped together according to a common ``approximation'', the probability of each approximation is shown to be small, the number of approximations is shown to be small (compared with the number of excitations) and a union bound over approximations is then applied to show that the probability that there exists an excitation is small. The notion of approximation which turns out to be fruitful takes advantage of the following geometric property of the excitation regions in the $3$-coloring and hard-core models -- these regions have all their vertex boundary on one of the two sublattices of~$\Z^d$.
Such regions have been termed ``odd cutsets'' in \cite{peled2010high}. The idea to group such regions according to a common approximation can be traced back to the works of Korshunov and Sapozhenko~\cite{korshunov1981number,Korshunov1983Th,sapozhenko1987onthen,sapozhenko1989number,sapozhenko1991number} in the context of general bipartite graphs, with further developments and applications to statistical physics questions on $\Z^d$ made by Galvin~\cite{Galvin2003hammingcube,galvin2007sampling,galvin2008sampling}, Galvin--Kahn~\cite{galvin2004phase}, Galvin--Kahn--Randall--Sorkin~\cite{galvin2012phase}, Galvin--Randall~\cite{galvin2007torpid}, Galvin--Tetali~\cite{galvin2004slow,galvin2006slow}, Feldheim--Spinka~\cite{feldheim2015long,feldheim2016growth}, Peled~\cite{peled2010high} and Peled--Samotij~\cite{peled2014odd}.
This core idea is also used and further developed in this work.

In a parallel development, \emph{entropy methods} have been identified as a powerful tool to analyze models of graph homomorphisms. Pioneered by Kahn--Lawrentz~\cite{kahn1999generalized} in 1999 and Kahn~\cite{kahn2001entropy, Kahn2001hypercube} in 2001, the ideas were further developed by Galvin--Tetali~\cite{galvin2004weighted} (see also Lubetzky--Zhao~\cite{lubetzky2015replica}), Galvin~\cite{galvin2006bounding}, Madiman--Tetali \cite{madiman2010information} and Engbers--Galvin~\cite{engbers2012h1,engbers2012h2}. The basic method applies to graph homomorphisms from a finite bipartite regular (or bi-regular) graph $G$ to a general finite graph~$H$. Relying on Shearer's inequality \cite{chung1986some}, it implies that most such graph homomorphisms are \emph{locally} ordered at most vertices, in the sense that the neighborhood of all but $\epsilon(\Delta(G))$ fraction of the vertices follow some dominant pattern (as in Section~\ref{sec:general_spin_systems}), where $\Delta(G)$ is the degree of $G$ and $\epsilon(\Delta)$ is a function satisfying $\epsilon(\Delta)\to0$ as $\Delta\to\infty$. This suffices to estimate rather accurately the exponential growth rate of the number of graph homomorphisms, up to an error term which decreases as the degree of $G$ grows (for proper colorings of $\Z^d$ the obtained error decays as $C(q)/d$ as $d\to\infty$. Our results imply improved error bounds, see Section~\ref{sec:max-entropy-states}). Generalizations from graph homomorphisms to discrete spin systems of the type considered in Section~\ref{sec:general_spin_systems} are possible~\cite{galvin2004weighted, galvin2006bounding}. The method does not generally imply \emph{global} ordering in typical graph homomorphisms, as it allows for different regions to be ordered according to different dominant patterns. Nonetheless, it was discovered in \cite{engbers2012h2} that global $(A,B)$-ordering follows on hypercube graphs -- discrete tori with vertex set $\{0,1,\dots, 2m-1\}^d$ which are considered with $m\ge 1$ \emph{fixed} and $d\to\infty$ -- due to the interplay between their isoperimetric properties and the smallness of the function $\epsilon$ above. One may further allow $m$ to grow slowly with $d$ but this approach does not extend to the $\Z^d$ lattice~\cite[Section 6.3]{engbers2012h2}.

The main technical novelty introduced in this paper is a non-trivial synthesis of the contour and entropy methods discussed above. Our approach begins by identifying ordered and disordered regions in a given coloring, where vertices are classified according to the coloring of their local neighborhoods. The abundance of possible local colorings gives rise to a complicated classification where regions ordered according to one dominant pattern may overlap with those of another and where many types of disordered behavior may arise. The contours separating the different regions are then approximated with a similar, albeit more involved, technique to that used in the $q=3$ case. It then remains to prove that any given picture of approximated contours is unlikely, in order to deduce long-range order via a union bound. This is resolved here by use of the entropy method extended in the following two manners: (i) The method is applied to a partial set of colorings, restricted by various pieces of information known from the contour picture, and these restrictions are taken into account by the entropy estimates to produce a sufficiently tight bound. (ii) The method is applied to colorings defined on bounded subsets of $\Z^d$, specifically on the disordered regions and on the interfaces between ordered regions. This is in contrast with previous applications of the method where it was applied to the full set of colorings (or graph homomorphisms), which were themselves defined on a regular graph. New difficulties thus arise in integrating the external information with the entropy estimates and in carefully tracking and cancelling the boundary terms arising from the irregularity of the bounded subsets. A detailed overview of the method is given in Section~\ref{sec:proof-overview}.

We end the discussion with several questions for future research.
\begin{enumerate}[leftmargin=20pt]
  \item Determine for all pairs $(q,d)$ whether there is a unique maximal-entropy Gibbs state. Is the dependence on $q$ monotone in the sense that there is a $q_c(d)$ with multiple maximal-entropy Gibbs states existing if and only if $q<q_c(d)$? Does $\frac{q_c(d)}{d}$ tend to a positive limit as $d\to\infty$? The same may be asked regarding uniqueness among all Gibbs states (not necessarily of maximal entropy). As mentioned in the introduction, frozen Gibbs states exist if and only if $q\le d+1$~\cite{alon2019mixing} while uniqueness (among all Gibbs states) is known when $q>3.53d$~\cite{goldberg2005strong}.

      For comparison, we mention that the $\Delta$-regular tree case was studied by Brightwell--Winkler~\cite{brightwell2002random} who noted that frozen Gibbs states exist whenever $q\le \Delta$, and by Jonasson~\cite{jonasson2002uniqueness} who proved uniqueness whenever $q\ge \Delta+1$ and $\Delta$ is large.
  \item Prove an analogous result to Theorem~\ref{thm:long-range-order} for free and periodic boundary conditions. Our methods should be relevant also for these cases, with the periodic case with even side length possibly being a direct extension (see~\cite[Section~8]{feldheim2013rigidity} for a prediction regarding $3$-colorings of tori with odd side length), and the free case seeming more difficult as issues regarding excitations (deviations from the long-range order) touching the boundary of the domain must be dealt with carefully. Of course, the characterization of Gibbs states given in Theorem~\ref{thm:characterization_of_Gibbs_states} does not depend on the choice of boundary conditions.
  \item As discussed, our results apply also in low dimensions provided that the underlying lattice is enhanced to $\Z^{d_1}\times\T_{2m}^{d_2}$, $m\ge 1$ integer, $d_1\ge 2$ and $d=d_1+d_2$ satisfying~\eqref{eq:dim-assump} (proper $3$-colorings were considered in this setting in~\cite{peled2010high}). Another natural enhancement used in low-dimensional lattices, e.g., in the context of percolation~\cite{slade2006lace}, is the spread-out lattice. In our context, this corresponds to $\Z^d$ with additional edges connecting every two vertices of different parity whose graph distance in $\Z^d$ is at most some fixed threshold $M$. We expect our results to hold also with this enhancement provided $d\ge 2$ and $M$ is sufficiently large as a function of $q$ (raising $d$ should only assist the long-range order).
\end{enumerate}

\subsection{Organization}
The rest of the paper is organized as follows.
In Section~\ref{sec:proof-overview}, we provide an overview of the proof. In Section~\ref{sec:preliminaries}, definitions and preliminary results which will be needed throughout the paper are given. In Section~\ref{sec:high-level-proof}, we give the main steps of the proof of Theorem~\ref{thm:long-range-order}, including the definitions of breakups and approximations and the statements of several propositions which are then used to deduce Theorem~\ref{thm:long-range-order}. In Section~\ref{sec:breakup}, we prove the propositions about breakups (existence of non-trivial breakup, almost-sure absence of infinite breakups, bounds on the probability of breakups). In Section~\ref{sec:shift-trans}, we prove Lemma~\ref{lem:bound-on-pseudo-breakup} which provides a general bound on the probability of an event and which is used in the proofs in Section~\ref{sec:prob-of-given-breakup} and Section~\ref{sec:prob-of-approx}.
In Section~\ref{sec:approx}, we prove Proposition~\ref{prop:family-of-odd-approx} about the exists of a small family of approximations. Finally, in Section~\ref{sec:gibbs}, we prove results about the infinite-volume Gibbs states, namely, Theorem~\ref{thm:existence_Gibbs_states} and Theorem~\ref{thm:characterization_of_Gibbs_states}.

\subsection{Acknowledgments}
We thank Raimundo Brice\~no, Nishant Chandgotia, Ohad Feldheim and Wojciech Samotij for early discussions on proper colorings and other graph homomorphisms. We are grateful to Christian Borgs for valuable advice on the way to present the material of this paper and its companion~\cite{peledspinka2018spin}. We thank Michael Aizenman, Jeff Kahn, Eyal Lubetzky, Dana Randall, Alan Sokal, Prasad Tetali and Peter Winkler for useful discussions and encouragement. The presentation benefited significantly from the insightful comments of two anonymous referees.

\section{Overview of proof}
\label{sec:proof-overview}

In this section we give a high-level view of the proof of Theorem~\ref{thm:long-range-order}. Apart from the definitions in Section~\ref{sec:proof-overview-Z}, this overview will be not be used in the detailed proofs of the later sections.

We recall that $(A,B)$ is a dominant pattern if $A,B\subset[q]$ are disjoint and $\{|A|,|B|\}=\{\lfloor \frac q2 \rfloor, \lceil \frac q2 \rceil\}$. Throughout this section, we fix a domain $\Lambda \subset \Z^d$ and a dominant pattern
\begin{equation}\label{eq:P_0_def}
P_0=(A_0,B_0) \qquad\text{such that}\qquad |A_0| = \lfloor\tfrac{q}{2}\rfloor,~|B_0| =\lceil\tfrac{q}{2}\rceil .
\end{equation}
We think of $P_0$ as the boundary pattern so that we will later consider a coloring chosen from $\Pr_{\Lambda,P_0}$.

We use $\partial U$ to denote the edge-boundary of a set $U \subset \Z^d$, and $N(U)$ to denote its neighborhood (vertices adjacent to some vertex in $U$). We also denote $\intB U := U \cap N(U^c)$, $\extB U := N(U) \setminus U$, $\intextB U := \intB U \cup \extB U$, $U^+ = U^{+1} := U \cup N(U)$ and, inductively, $U^{+j}:=(U^{+(j-1)})^+$ for $j>1$. We say that $U$ is an even (odd) set if $\intB U$ is contained in the even (odd) sublattice of $\Z^d$. An even (odd) set $U$ is called \emph{regular} if both it and its complement contain no isolated vertices. See Section~\ref{sec:preliminaries} for more notation and definitions.

\subsection{A toy scenario}\label{sec:toy scenario}
\begin{figure}
		\includegraphics[scale=0.55]{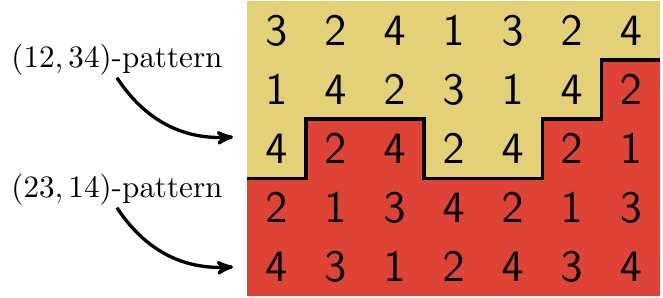}\qquad\qquad
		\includegraphics[scale=0.55]{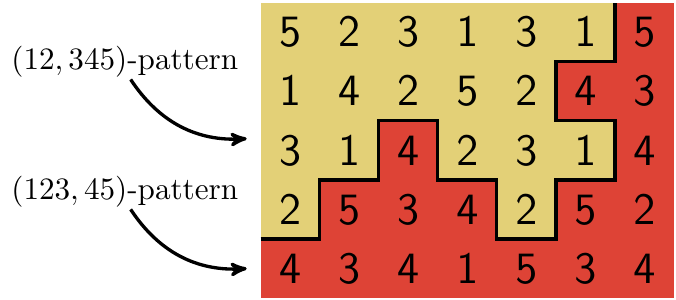}
	\caption{Part of the interface between the regions associated to different dominant patterns in the toy scenario of Section~\ref{sec:toy scenario} (left: $q=4$ colors, right: $q=5$ colors). In practice, our definitions are motivated by the odd $q$ case and always associate sets with fixed boundary parity (even or odd sets) to each dominant pattern, regardless of the parity of $q$ (see Section~\ref{sec:proof-overview-Z}).}
%
%Motivated by the odd $q$ case, we choose to define the
%
%the proof associates to each dominant pattern an even or an odd set In our proof, the identification of ordered and disordered regions (see Section~\ref{sec:proof-overview-Z}) is motivated by the odd $q$ case.}
	\label{fig:interfaces}
\end{figure}
To gain intuition, let us analyze the ``entropic loss'' in the toy scenario in which the $P_0$-pattern is disturbed by a single droplet of a different dominant pattern $P=(A,B)$; see Figure~\ref{fig:interfaces}. More precisely, let $U\subset\Z^d$ be such that $U^+\subset\Lambda$ and let $n(U)$ be the number of proper colorings of $\Lambda$, for which $U^+$ is in the $P$-pattern and $(\Lambda\setminus U)^+$ is in the $P_0$-pattern. A straightforward computation yields that, when $q$ is even,
\[ \frac{n(U)}{n(\emptyset)} \le \left(\frac{q-2}{q}\right)^{|\intextB U|},\]
with equality if and only if $|A_0 \Delta A|=2$. When $q$ is odd, a straightforward (though somewhat more involved) computation yields that
\[ \frac{n(U)}{n(\emptyset)} \le \left(\frac{q-1}{q+1}\right)^{\frac{1}{2d} |\partial U|},\]
with equality if and only if either $U$ is an odd set and $A_0 \subset A$ or $U$ is an even set and $B_0 \subset B$. This example shows a difference in behavior between the even and odd $q$ cases, with the odd case more difficult due to the lower cost of creating interfaces between $P_0$- and $P$-ordered regions.
It is the odd $q$ case that motivates many of our definitions and ideas, including the idea that such interfaces should be even or odd, according to the relative size of $A_0$ and $A$.
Thus some of our definitions are somewhat less natural in the even $q$ case.

\subsection{Identification of ordered and disordered regions}
\label{sec:proof-overview-Z}
Given a proper $q$-coloring $f$ of $\Z^d$, we wish first to identify regions where $f$ follows, in a suitable sense, a dominant pattern. A first idea is that the decision regarding a vertex $v$ will be made based on the values that $f$ takes on the \emph{neighbors} of $v$. Indeed, the color that $v$ takes cannot itself be sufficient as it has only $q$ options whereas there are many more dominant patterns, but the colors of the neighbors turn out to suit the job. A second idea, motivated by the toy scenario described earlier and also by questions of approximation of contours which will be soon described, is that each region will be a (regular) even or odd set. More precisely, the region associated with a dominant pattern $(A,B)$ is an even set if $|A|\le|B|$ and an odd set if $|A|>|B|$ (thus odd sets appear only if $q$ is odd). Let us now describe the regions precisely. Let $\phasedom$ be the set of all dominant patterns. For each $P=(A,B)\in\phasedom$, define the terms
\begin{equation}\label{eq:P-even-odd}
  \text{$P$-even} = \begin{cases}
    \text{even} &\text{if } |A|\le |B|\\
    \text{odd} &\text{if } |A|>|B|
  \end{cases}\qquad\text{and similarly}\qquad \text{$P$-odd} = \begin{cases}
    \text{odd} &\text{if } |A|\le |B|\\
    \text{even} &\text{if } |A|>|B|
  \end{cases}.
\end{equation}
Thus, for instance, if $|A|\le |B|$ then even vertices (having even sum of coordinates) are $P$-even and odd vertices are $P$-odd. The region associated to $P$ is denoted $Z_P(f)$ and defined by
\begin{equation}\label{eq:Z-def}
Z_P = Z_P(f) := \big\{ v \in \Z^d : v\text{ is $P$-odd},~ N(v)\text{ is in the $P$-pattern} \big\}^+.
\end{equation}
Figure~\ref{fig:breakup} depicts these sets in examples. For technical reasons, only $P$-odd vertices whose neighbors are in the $P$-pattern are included in $Z_P$, and then $Z_P$ is taken to be the smallest $P$-even set containing them. Note that a $P$-odd vertex in $Z_P$ is not itself required to be in the $P$-pattern, whereas a $P$-even vertex in $Z_P$ is necessarily in the $P$-pattern, but need not have its neighbors in the $P$-pattern. In addition, there may be $P$-even vertices which are not in $Z_P$ although their neighbors are in the $P$-pattern. These somewhat undesirable consequences of our definition are allowed in order to ensure that $Z_P$ is a regular $P$-even set, which will be important in the proof.

Having defined the regions $(Z_P)_{P\in\phasedom}$, let us examine more closely their interrelations. It is possible for a vertex $v$ to belong to two (or more) of the $Z_P$ and also possible that it lies outside all of the $Z_P$. These possibilities are captured by the following definitions (see Figure~\ref{fig:breakup}):
\[ Z_\overlap := \bigcup_{P \neq Q} (Z_P \cap Z_Q) \qquad\text{and}\qquad Z_\bad := \bigcap_P (Z_P)^c. \]
Regions of these types, along with the boundaries of $Z_P$, are regions where the coloring $f$ does not achieve its maximal entropy per vertex, in a way which is quantified later. It will be our task to prove that such regions are not numerous and this will lead to a proof of Theorem~\ref{thm:long-range-order}. To this end, we define
\begin{equation}\label{eq:Z_*-def}
Z_* = Z_*(f) := \bigcup_P \intextB Z_P \cup Z_\overlap \cup Z_\bad .
\end{equation}
The region $Z_*$ plays a similar role in our analysis as the contours used in arguments of the Peierls or Pirogov-Sinai type.

\begin{figure}
	\centering
	\includegraphics[scale=0.37]{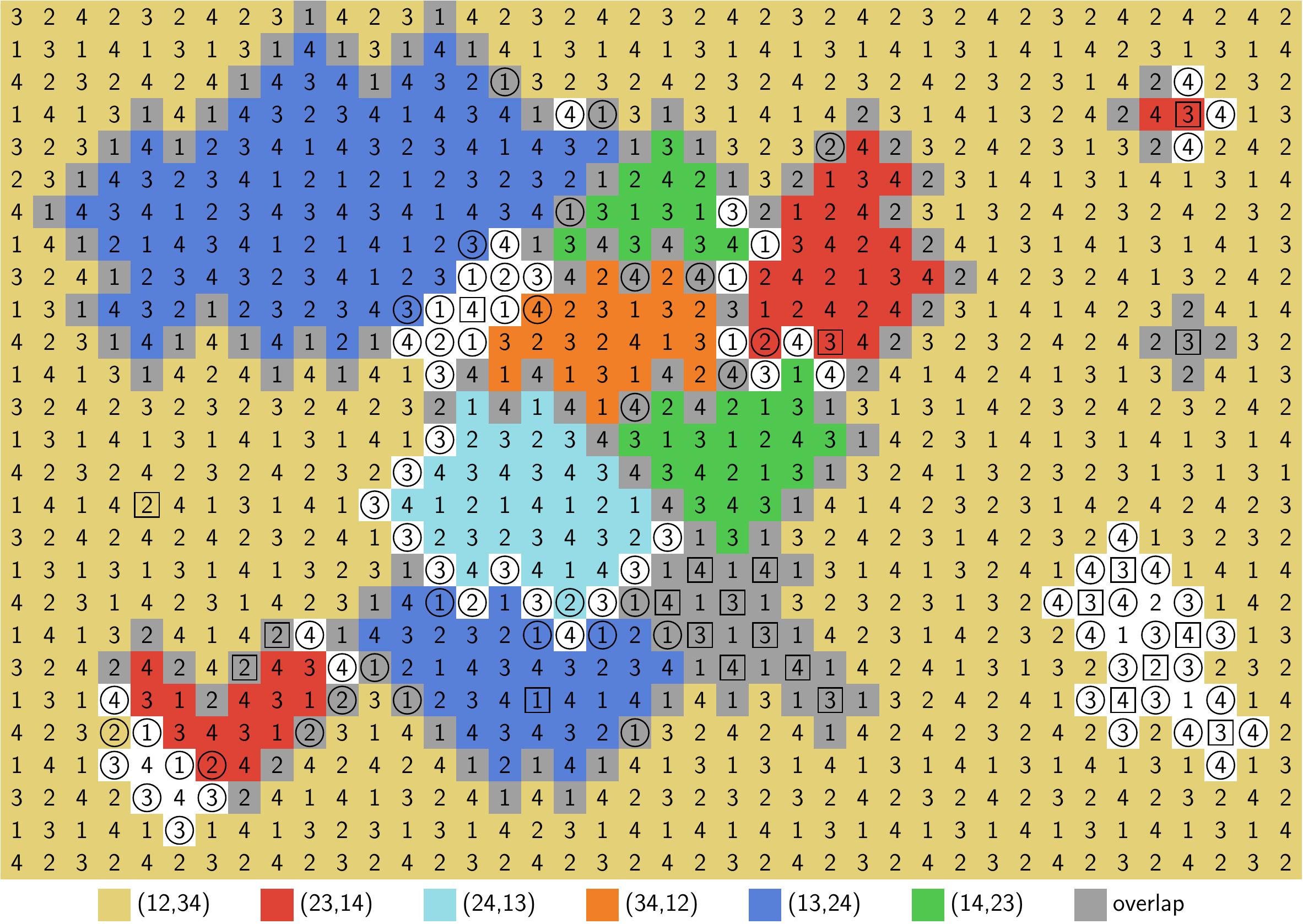}\vspace{4pt}
	
	\includegraphics[scale=0.37]{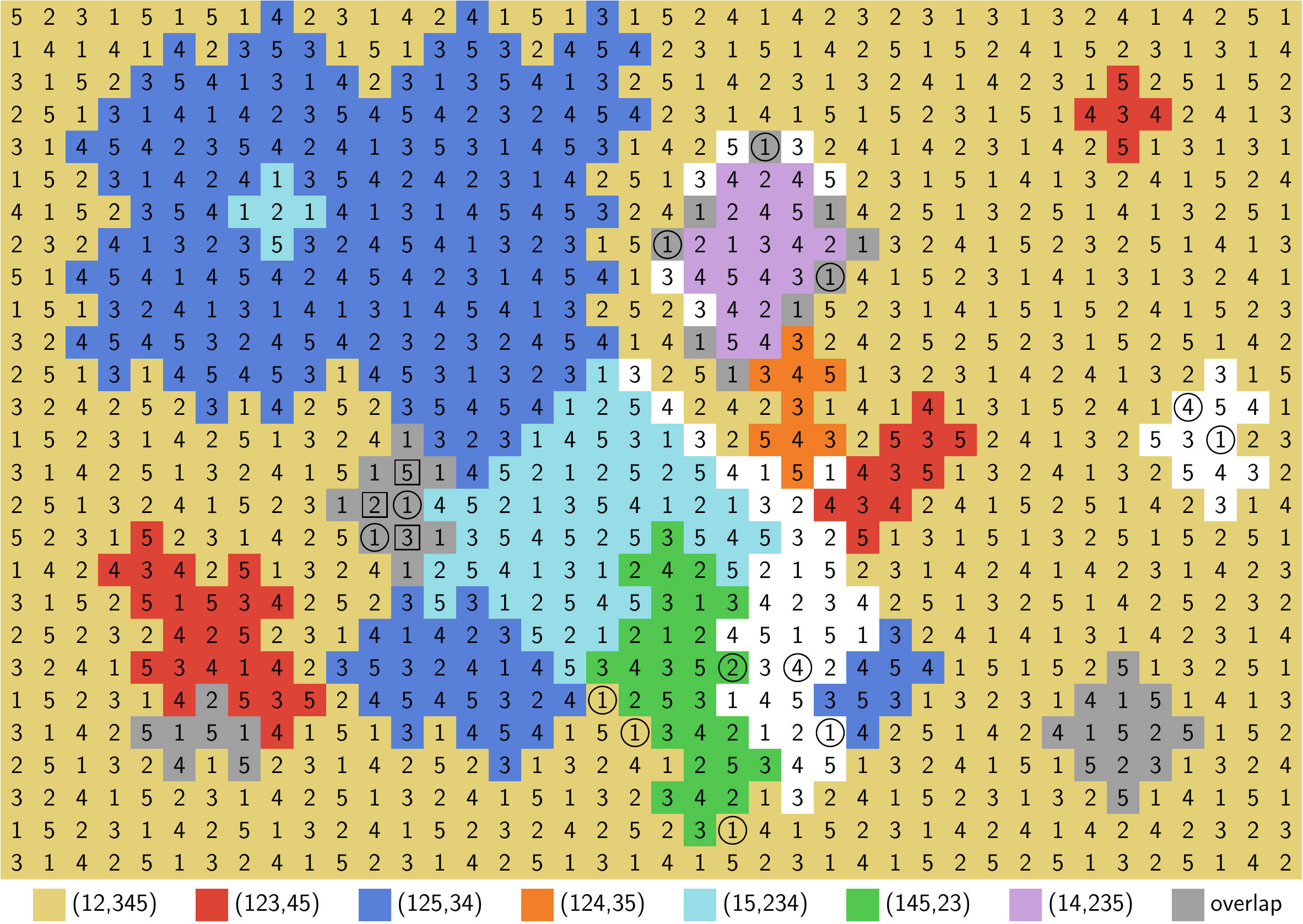}
	\captionsetup{width=0.95\textwidth, font=small}
	\caption{Proper $q$-colorings (top: $q=4$, bottom: $q=5$) and the associated identification of ordered and disordered regions (each $Z_P$ has a different color, with gray indicating $Z_\overlap$ and white indicating $Z_\bad$). Non-dominant vertices (defined in Section~\ref{sec:Shearer_overview}) are depicted: squares indicate vertices whose neighbors are assigned less than $\lfloor \frac q2 \rfloor$ different values, whereas circles indicate vertices whose neighbors are assigned more than $\lceil \frac q2 \rceil$ different values.}
	%\caption{A proper 4-coloring (bottom) and 5-coloring (top) and the associated identification of ordered and disordered regions. The different $Z_P$ are shown in different colors. A white background denotes regions of $Z_\bad$ and a gray background denotes regions of $Z_\overlap$. Non-dominant vertices (defined in Section~\ref{sec:Shearer_overview}) are indicated: squares indicate vertices whose neighbors are all assigned the same value, whereas circles indicate vertices whose neighbors are assigned four different values.}
	\label{fig:breakup}
\end{figure}

\begin{figure}
	\centering
	\includegraphics[scale=0.204]{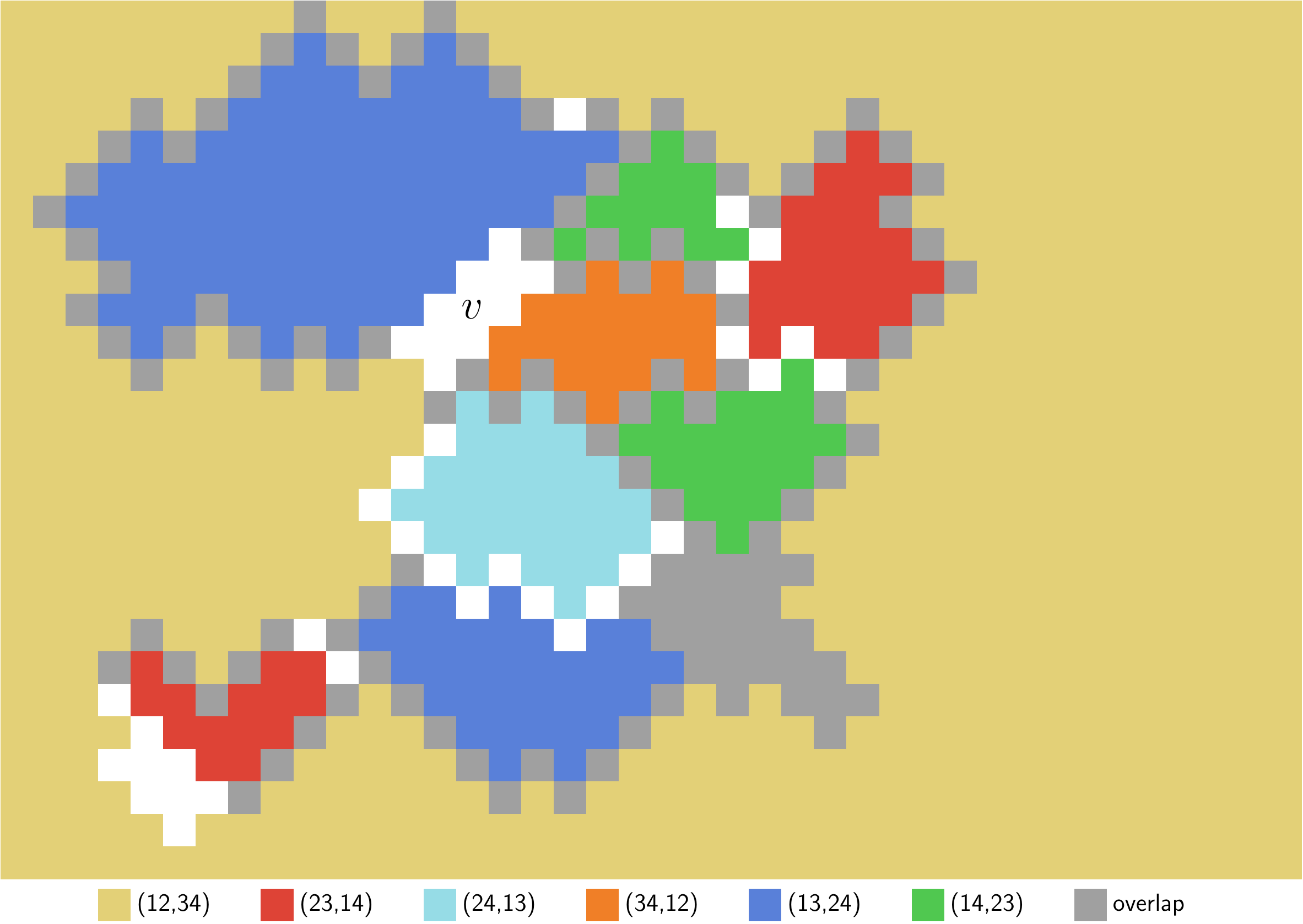}~
	\includegraphics[scale=0.204]{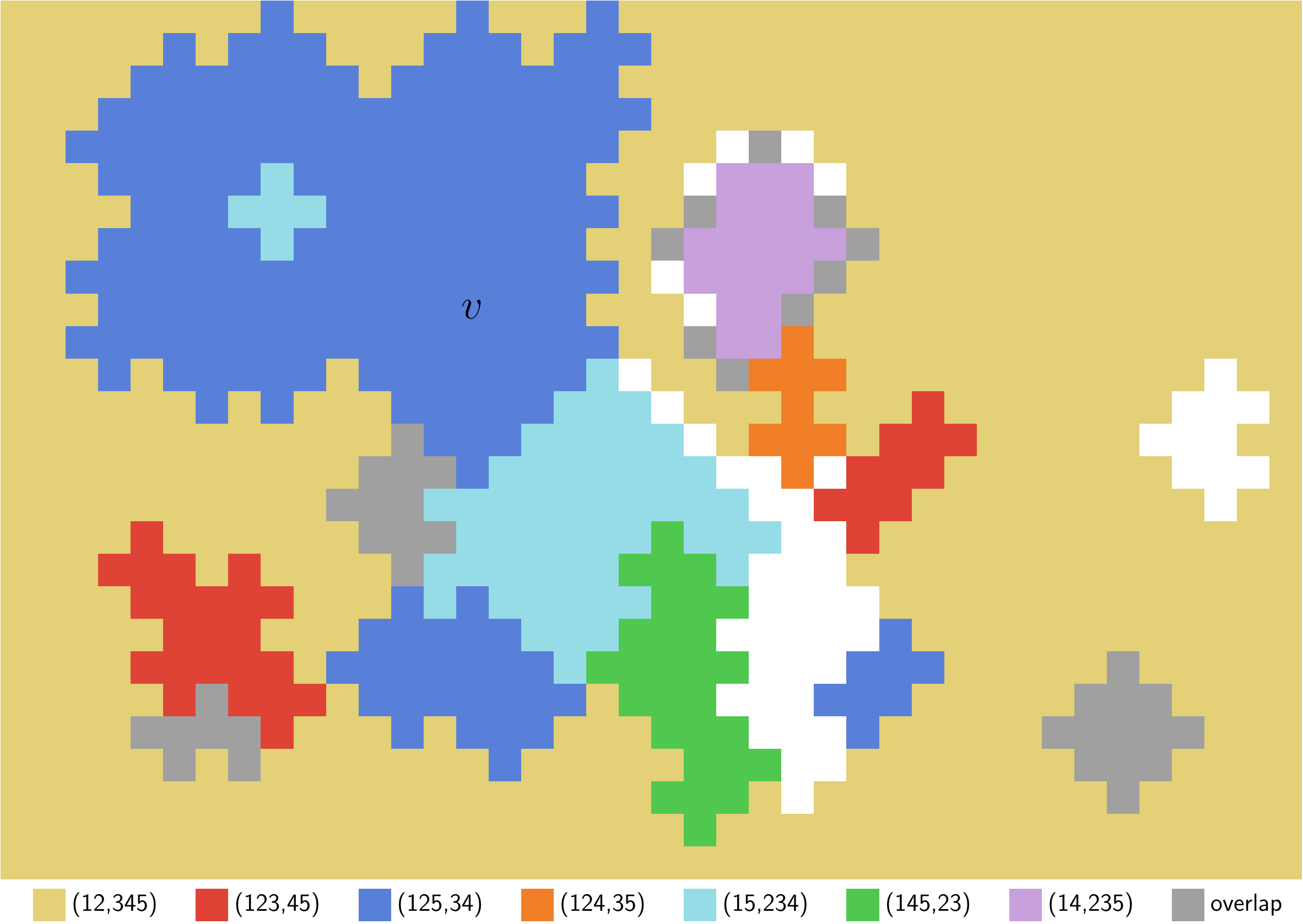}
	\captionsetup{width=0.95\textwidth, font=small}
	\caption{Breakups seen from $v$ of the colorings shown in Figure~\ref{fig:breakup} (left: $q=4$, right: $q=5$). Such breakups are not unique; for instance, the $q=5$ case may further include the small violation of the boundary pattern in the top-right corner of Figure~\ref{fig:breakup} (bottom).
Each $X_P$ has a different color, with gray indicating $X_\overlap$ and white indicating $X_\bad$ (the information of a breakup also includes the classification of the region $X_\overlap$ into various $X_P$, though this is not depicted in the figure).}
	\label{fig:breakup2}
\end{figure}

\subsection{Breakups}
\label{sec:proof-overview-breakup}
With Theorem~\ref{thm:long-range-order} in mind, let $f$ be sampled from $\Pr_{\Lambda,P_0}$ and fix a vertex $v\in\Lambda$. It is convenient to extend $f$ to a coloring of $\Z^d$ by coloring vertices of $\Lambda^c$ independently and uniformly from $A_0$ or $B_0$ according to their parity (so that they are in the $P_0$-pattern). The collection $(Z_P)$ then identifies ordered and disordered regions in $f$. Our goal is to show that $v$ is typically in the $P_0$-pattern.
One checks that $Z_{P} \setminus Z_{\overlap}$ is in the $P$-pattern, and therefore it suffices to show that, with high probability, $Z_{P_0}$ is the unique set among $(Z_P)$ to which $v$ belongs. This, in turn, follows by showing that there is a path from $v$ to infinity avoiding $Z_*$. If no such path exists, there needs to be a \emph{connected} component of $Z_*^+$ which disconnects~$v$ from infinity. Our focus is then on these connected components and this motivates the following notion of a breakup seen from $v$, which encodes partial information from $(Z_P)$ relevant to these components.

A breakup of $f$ is a collection $X = (X_P)_{P\in\phasedom}$ of regular $P$-even subsets of $\Z^d$, from which one defines $X_*$ in the same manner as $Z_*$ is defined from $(Z_P)$, with the following properties: (i) $\Lambda^c \subset X_{P_0}$, and (ii) For each $P\in\phasedom$, every $P$-odd vertex $u \in X_*^{+5}$ satisfies that $u \in X_P$ if and only if $u\in Z_P$. This definition allows $f$ to have multiple breakups. A trivial example of a breakup, for which $X_* = \emptyset$, is obtained when $X_{P_0}=\Z^d$ while $X_P = \emptyset$ for all $P\neq P_0$. A second example of a breakup is $X = (Z_P)$, for which $X_* = Z_*$. More generally, the idea behind the definition is that some subset of the connected components of $Z_*$ is selected (though not every choice is possible) and then $X$ is set up in such a way that $X_*$ is exactly the union of the selected components, and each $X_P$ coincides with $Z_P$ in a suitable neighborhood of $X_*$. A breakup is called non-trivial if $X_*\neq\emptyset$. A breakup is said to be seen from $v$ if every finite connected component of $X_*^{+5}$ disconnects $v$ from infinity. It will be shown that if $v$ is not in the $P_0$-pattern then there exists a non-trivial breakup seen from $v$ (see Section~\ref{sec:breakup-def}). Figure~\ref{fig:breakup2} shows possible breakups seen from $v$.

We remark that the use of the enlarged neighborhood $X_*^{+5}$ yields a wide region around $X_*$ where, for each $P\in\phasedom$, all vertices in $X_P$ are actually in the $P$-pattern. This will be convenient in the proof (though the specific number $5$ is not important and could just as well be taken larger).

\subsection{Approximations}
Suppose again that $f$ is sampled from $\Pr_{\Lambda,P_0}$ and $v\in\Lambda$. Following the previous discussion, in order to deduce Theorem~\ref{thm:long-range-order}, it suffices to bound the probability that $f$ has a non-trivial breakup seen from $v$. Our method of proof is, in essence, an involved variant of the Peierls argument. The standard argument consists of two parts: obtaining a bound on the probability that a given $X$ is a breakup of $f$ (this is discussed in the subsequent section), and concluding via a union bound that $f$ is unlikely to have any non-trivial breakup seen from~$v$.
However, the toy scenario considered in Section~\ref{sec:toy scenario} shows that the ``entropic loss per edge'' on the interfaces between different $X_P$ may be small. Indeed, the bound obtained on the probability that a given $X$ is a breakup of $f$ does not allow to conclude the proof (via the union bound) as the number of possible breakups seen from $v$ is too large in comparison. We thus vary the standard argument as follows. We employ a delicate coarse-graining scheme of the possible breakups according to their approximate structure, i.e., multiple breakups are grouped together according to a common ``approximation''.
The scheme is, on the one hand, coarse enough to ensure that a relatively small number of approximations suffices to cover all possible breakups seen from $v$, while it is, on the other hand, sufficiently fine to allow a useful bound on the probability that $f$ has a non-trivial breakup with a given approximation. We conclude via a union bound over the possible approximations.

The crucial property of breakups which allows their approximation is that each $X_P$ is either regular even or regular odd. Let us briefly discuss the theory of such sets: The number of odd sets $U\subset\Z^d$ which are connected, have connected complement, contain the origin and have $|\partial U|=L$ boundary plaquettes grows as $2^{(\frac{1+\varepsilon_d}{2d})L}$ for $L$ large~\cite{feldheim2016growth}, with $2^{-2d}\le\varepsilon_d\le\frac{C\log^{3/2} d}{\sqrt{d}}$. This contrasts with the same count when the set is not required to be odd, which grows faster, roughly as~$e^{\frac{c\log d}{d} L}$~\cite{lebowitz1998improved,balister2007counting}. The different growth rates are indicative of a deeper structural difference. Typical odd sets of the above type have a macroscopic shape (e.g., an axis-parallel box) from which they deviate on the microscopic scale, while sets of the above type without the parity restriction should scale to integrated super-Brownian excursion \cite{lubensky1979statistics, slade1999lattice}. The distinction between these very different behaviors is akin to the breathing transition undergone by random surfaces~\cite[Section~7.3]{fernandez2013random}. This phenomenon has been exploited in previous works, e.g., \cite{galvin2004phase,peled2010high,feldheim2015long}, to provide a natural coarse-graining scheme for odd sets, grouping them according to their macroscopic shape, and noting that this shape has significantly less entropy in high dimensions than the odd sets themselves (of order at most~$\big(\frac{\log d}{d}\big)^{3/2} L$). We proceed in the same manner here, extending the previous schemes from a single $X_P$ to breakups.

It is natural to approximate breakups by applying the previous coarse-graining techniques separately to each $X_P$. This can indeed be done, but due to the amount of dominant patterns it leads to a version of Theorem~\ref{thm:long-range-order} which requires the dimension $d$ to be larger than an exponential function of $q$, rather than the stated power-law dependence~\eqref{eq:dim-assump}. Instead, we use a more sophisticated scheme which takes into account the interplay between the different $X_P$.

\begin{figure}
	\centering
	\includegraphics[scale=0.204]{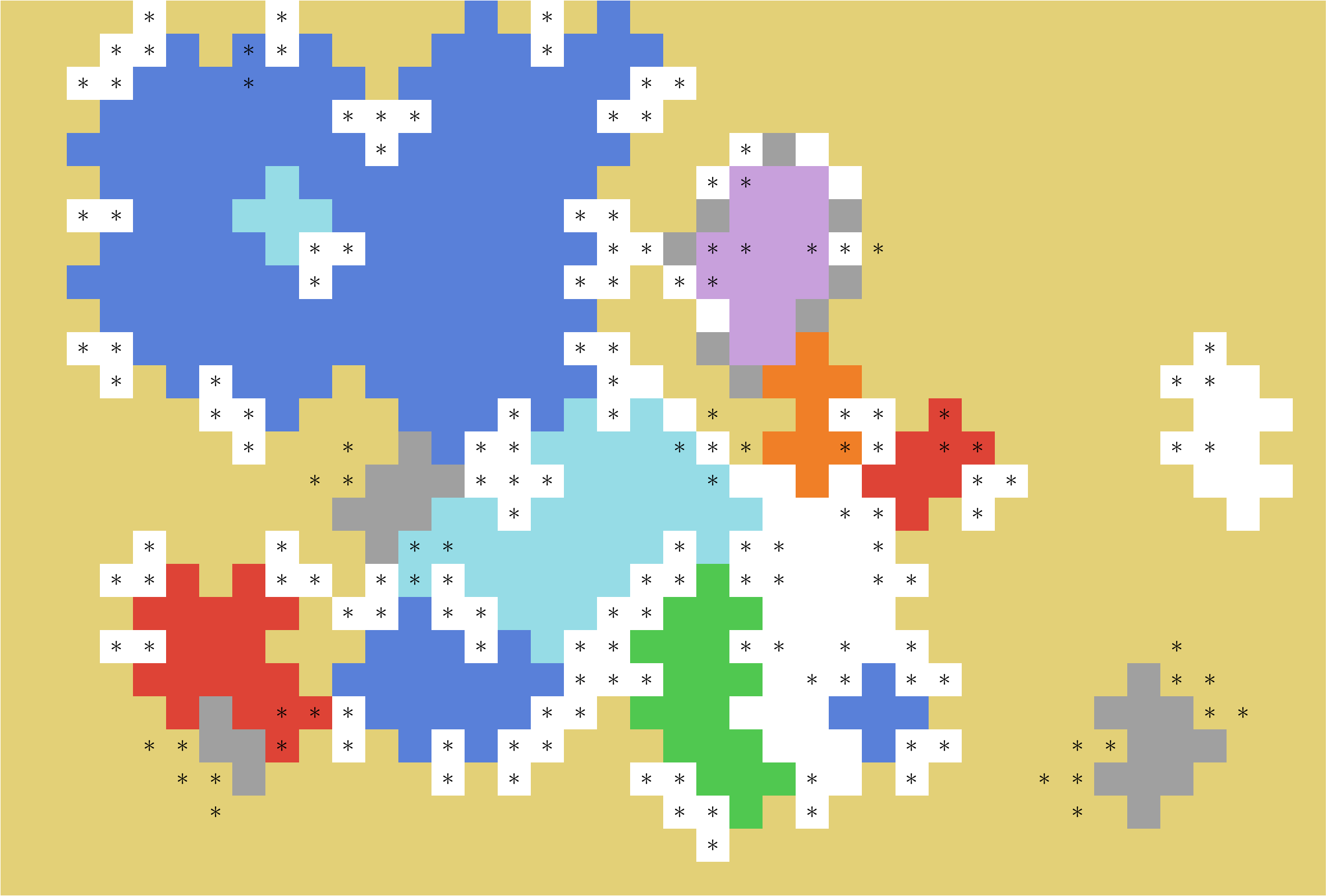}~
	\includegraphics[scale=0.204]{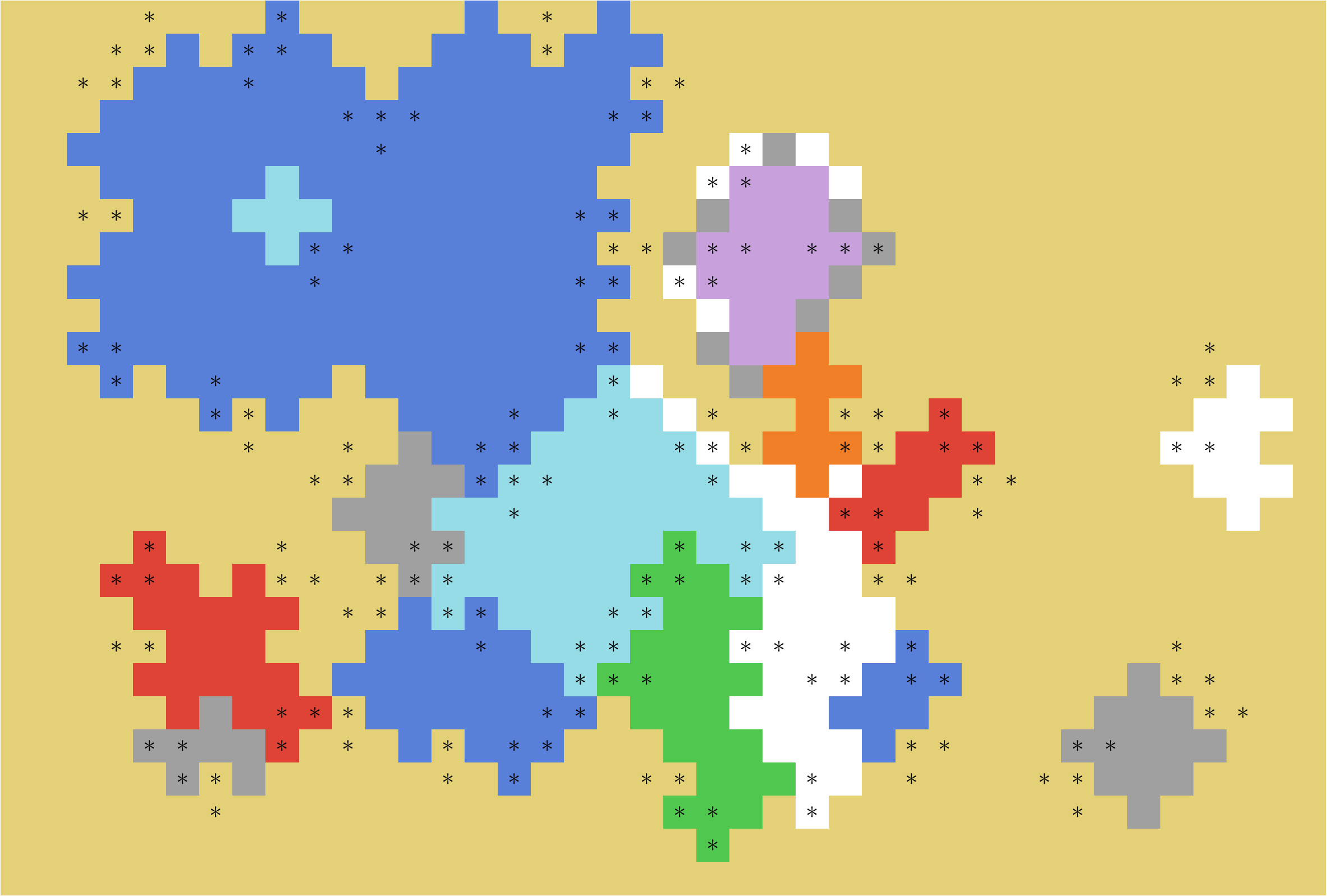}\vspace{-3pt}
    \includegraphics[scale=0.26]{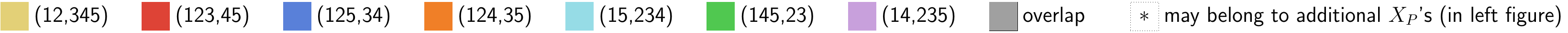}
	\captionsetup{width=0.95\textwidth, font=small}
	\caption{An illustration of an approximation (left) of a breakup (right). An approximation provides partial information on a breakup. On the left, the colors represent regions belonging to a single $A_P$ -- these regions are known to belong to $X_P$ and are not known to belong to any other $X_Q$. A gray background indicates regions belonging to two or more $A_P$ -- these regions are known to belong to $X_\overlap$. A white background indicates regions which do not belong to any $A_P$. A star ($*$) indicates the set $A^{**}$ -- these regions may belong to additional $X_P$'s. In particular, a white background with no star indicates regions which do not belong to $A^{**}$ or to any $A_P$ -- these regions are known to belong to $X_\bad$. The stars are also shown on the right to ease comparison. We note that the approximations used in the proof carry additional information; see Section~\ref{sec:approx-overview}.}
	\label{fig:approx}
\end{figure}

An approximation of a breakup $X=(X_P)$ is a collection $A = ((A_P)_{P \in \phasedom}, A^*,A^{**})$ of subsets of $\Z^d$ which provides partial information on $X$. Its precise definition is given in Section~\ref{sec:approx-overview} but we mention here that it satisfies that $A_P \subset X_P \subset A_P \cup A^{**}$ for all $P$. Thus $A_P$ is a region known to be in $X_P$ while $A^{**}$ is a region on which the classification into the various $(X_P)$ is not fully specified (so that a single $A$ may approximate many breakups). Further information is provided through the subset $A^* \subset A^{**}$ and additional properties ensure that $A^{**}$ is not large and that it is only present near $X_*$. See Figure~\ref{fig:approx} for an illustration.

\subsection{Repair transformation}\label{sec:repair transformation}
We proceed to explain, for a given $X=(X_P)$, how to bound the probability that $X$ is a breakup of $f$, when $f$ is sampled from $\Pr_{\Lambda,P_0}$. In the full proof the arguments need to be adapted to the case that only an approximation of $X$ is given rather than $X$ itself, but this adaptation is not the essence of the argument so our focus in the overview is on the case that $X$ is given.

Let $\Omega$ be the set of proper colorings having $X$ as a breakup.
To establish the desired bound on $\Pr_{\Lambda,P_0}(\Omega)$, we apply the following one-to-many transformation to every coloring $f \in \Omega$: (i) Erase the colors at all vertices of $X_*$. (ii) For each dominant pattern $P=(A,B)$,
apply a permutation of $[q]$ which takes $P$ to $P_0$ to the colors of $f$ on $X_P\setminus X_*$, and also, in the case that $|A|>|B|$, shift the coloring in $X_P\setminus X_*$ by a single lattice site in some fixed direction~(such a shift was first used by Dobrushin for the hard-core model~\cite{dobrushin1968problem}). (iii) Arbitrarily assign colors in the $P_0$-pattern at all remaining vertices (making the transformation multiple valued). See Figure~\ref{fig:repairmap} for an illustration.

Noting that the resulting configuration is always a proper coloring, and that no entropy is lost in step (ii), it remains to show that the entropy gain in step (iii) is much larger than the entropy loss in step (i). The gain in step (iii) is either $\log \lfloor \tfrac{q}{2} \rfloor$ or $\log \lceil \tfrac{q}{2} \rceil$ per vertex according to its parity, making the entropy gain an easily computable quantity. The main challenge is thus to bound the loss in step (i), and the method used for this purpose is described next.

\begin{figure}
	\centering
	\captionsetup{font=small}
	\begin{subfigure}[t]{0.5\textwidth}
		\includegraphics[scale=0.204]{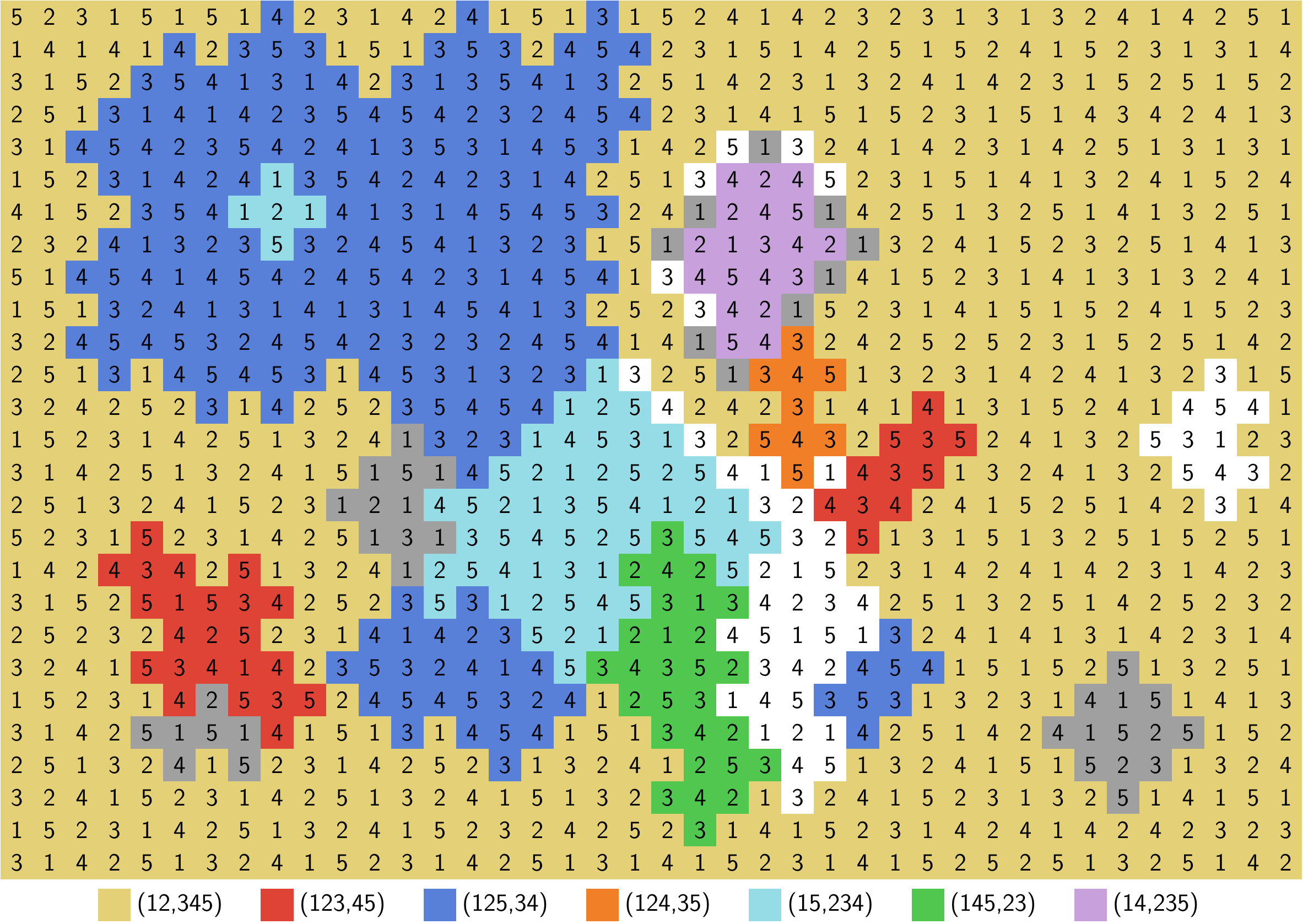}
		\caption{A coloring having a breakup $X$.}
	\end{subfigure}\,\,%
	\begin{subfigure}[t]{0.5\textwidth}
		\includegraphics[scale=0.204]{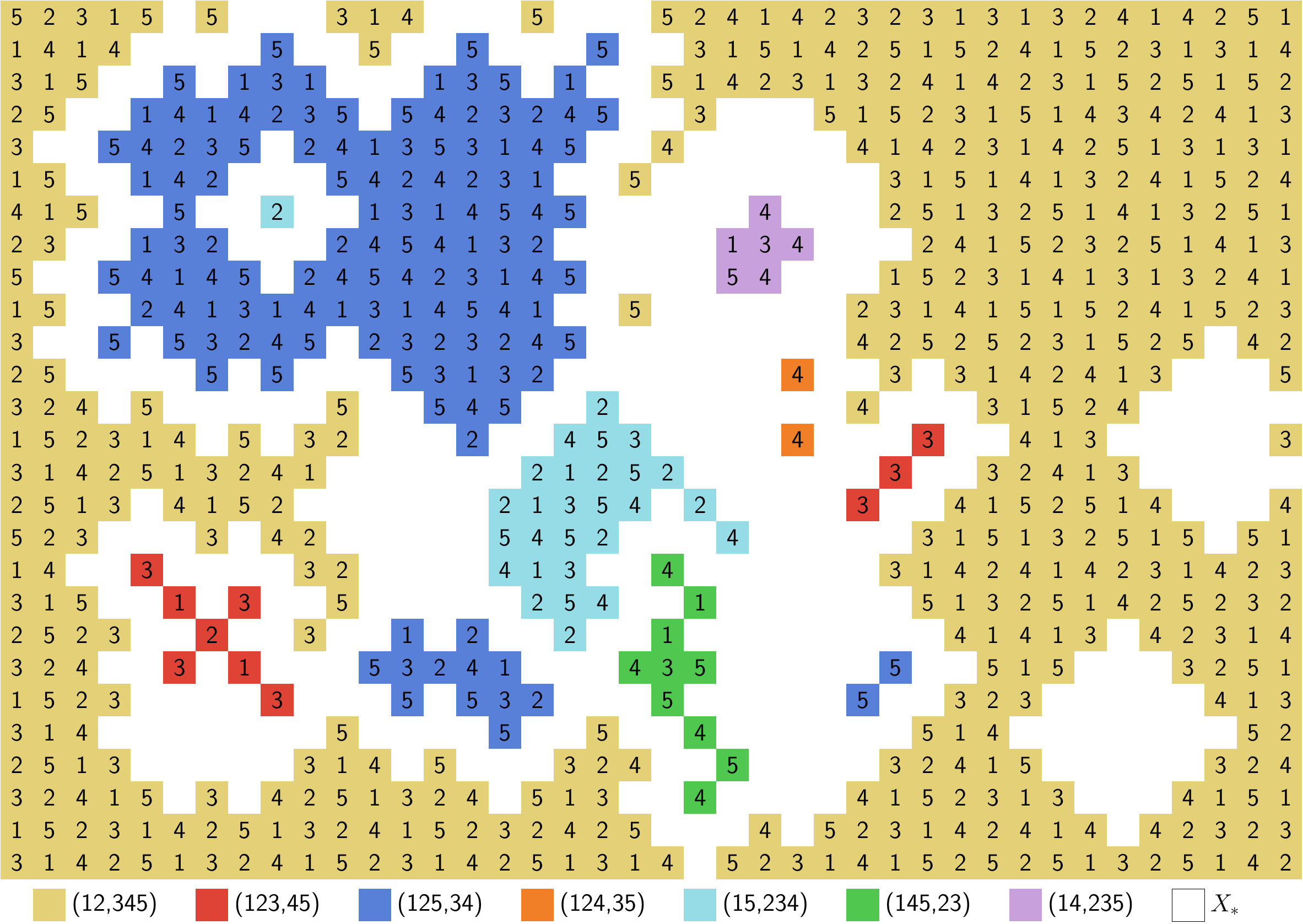}
		\caption{Step (i): colors in $X_*$ are erased.}
	\end{subfigure}
	\vspace{5pt}
	
	\begin{subfigure}[t]{0.5\textwidth}
		\includegraphics[scale=0.204]{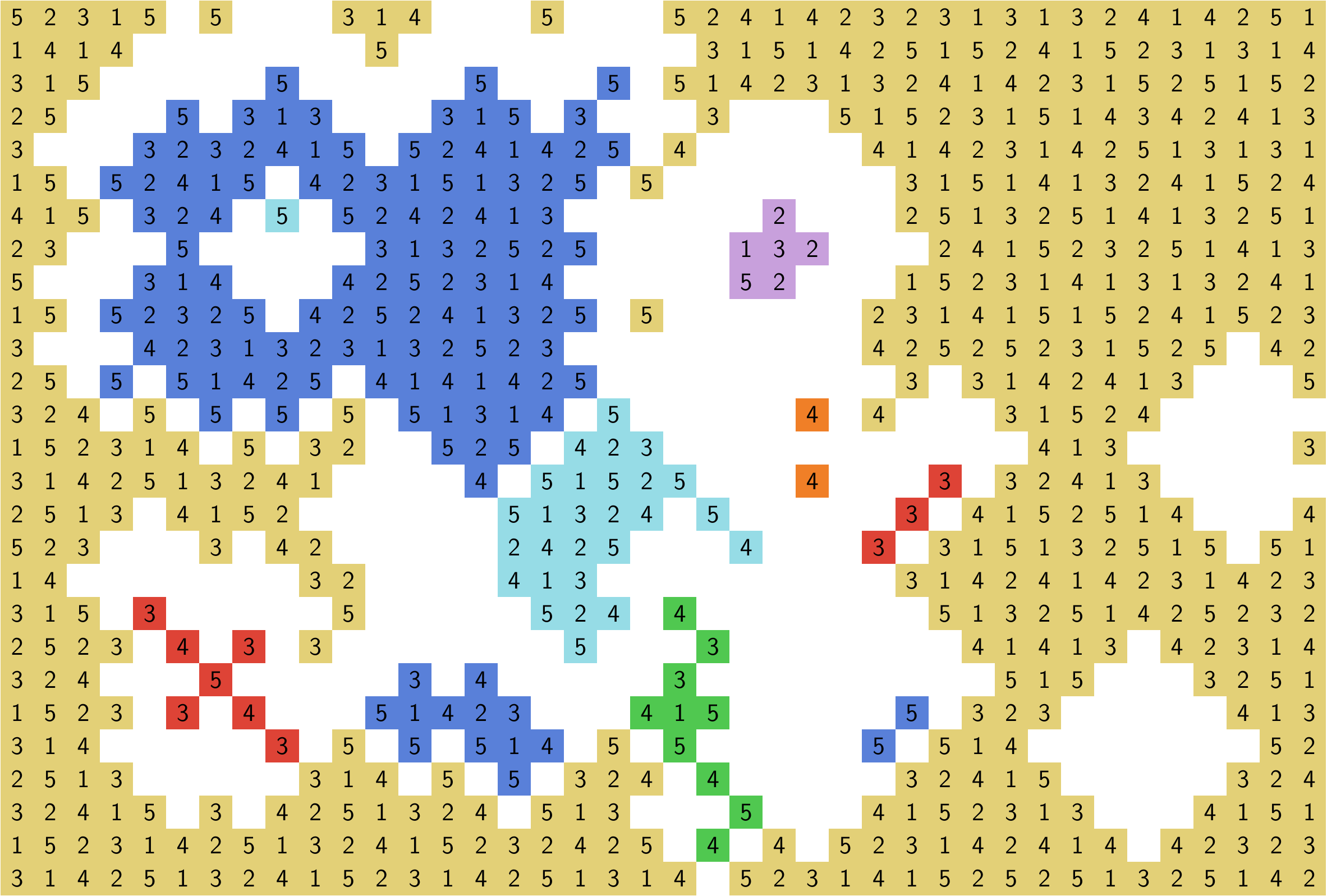}
		\caption{Step (ii): colors in $X_P$ are permuted and shifted.}
	\end{subfigure}\,\,%
	\begin{subfigure}[t]{0.5\textwidth}
		\includegraphics[scale=0.204]{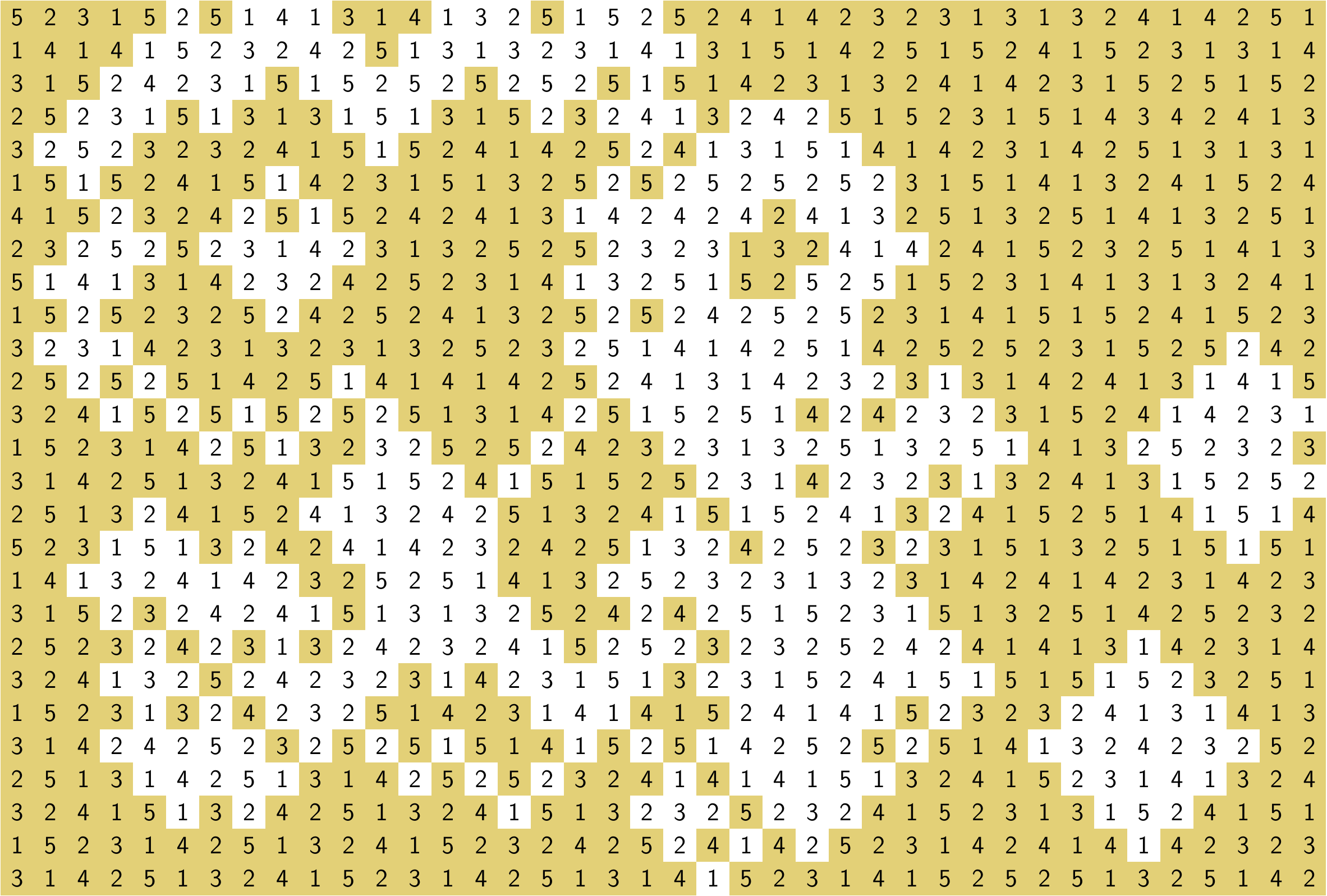}
		\caption{Step (iii): empty sites are colored in the $P_0$-pattern.}
	\end{subfigure}
	\captionsetup{font=normal}
	\caption{The repair transformation applied to the $5$-coloring of Figure~\ref{fig:breakup} with the breakup of Figure~\ref{fig:breakup2}.}
	\label{fig:repairmap}
\end{figure}

\subsection{Upper bounds on entropy loss}\label{sec:upper-bounds-on-entropy-loss}
We make use of the following extension of the subadditivity of entropy (see Section~\ref{sec:entropy} for basic definitions and properties), first used in a similar context by Kahn~\cite{kahn2001entropy}, followed by Galvin--Tetali~\cite{galvin2004weighted}.

\begin{lemma}[Shearer's inequality~\cite{chung1986some}]\label{lem:shearer}
Let $Z_1,\dots,Z_n$ be discrete random variables. Let $\cI$ be a collection of subsets of $\{1,\dots,n\}$ such that $|\{I \in \cI : i \in I\}| \ge k$ for every~$i$.
Then
\[ \Ent(Z_1,\dots,Z_n) \le \frac{1}{k} \sum_{I \in \cI} \Ent((Z_i)_{i\in I}) .\]
\end{lemma}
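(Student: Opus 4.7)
The plan is to give the standard chain-rule proof of Shearer's inequality. The key identity to exploit is the chain rule for entropy, which for discrete random variables $Z_1,\dots,Z_n$ reads
\[
\Ent(Z_1,\dots,Z_n) = \sum_{i=1}^{n} \Ent\bigl(Z_i \,\big|\, Z_1,\dots,Z_{i-1}\bigr),
\]
together with the elementary fact that conditioning on more information can only decrease entropy.

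First I would fix an arbitrary ordering of $\{1,\dots,n\}$ and, for each $I \in \cI$, write $I = \{i_1 < i_2 < \dots < i_m\}$ and apply the chain rule within $I$ to obtain
\[
\Ent\bigl((Z_i)_{i \in I}\bigr) = \sum_{j=1}^{m} \Ent\bigl(Z_{i_j} \,\big|\, Z_{i_1},\dots,Z_{i_{j-1}}\bigr).
\]
Since $\{i_1,\dots,i_{j-1}\} \subset \{1,\dots,i_j-1\}$, the monotonicity of entropy under additional conditioning yields
\[
\Ent\bigl(Z_{i_j} \,\big|\, Z_{i_1},\dots,Z_{i_{j-1}}\bigr) \ge \Ent\bigl(Z_{i_j} \,\big|\, Z_1,\dots,Z_{i_j-1}\bigr),
\]
so summing over $j$ gives $\Ent((Z_i)_{i \in I}) \ge \sum_{i \in I} \Ent(Z_i \mid Z_1,\dots,Z_{i-1})$.

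Next I would sum this inequality over all $I \in \cI$ and interchange the order of summation. Using the hypothesis that each index $i$ lies in at least $k$ members of $\cI$, the right-hand side becomes
\[
\sum_{I \in \cI} \sum_{i \in I} \Ent\bigl(Z_i \,\big|\, Z_1,\dots,Z_{i-1}\bigr) = \sum_{i=1}^{n} \bigl|\{I \in \cI : i \in I\}\bigr| \cdot \Ent\bigl(Z_i \,\big|\, Z_1,\dots,Z_{i-1}\bigr) \ge k \sum_{i=1}^{n} \Ent\bigl(Z_i \,\big|\, Z_1,\dots,Z_{i-1}\bigr),
\]
and the final sum equals $k \cdot \Ent(Z_1,\dots,Z_n)$ by the chain rule applied to the full tuple. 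Dividing by $k$ yields the desired bound.

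There is essentially no obstacle: the argument is a short manipulation using only the chain rule, monotonicity of conditional entropy under conditioning, and a double-counting step enabled by the covering hypothesis. The only mild subtlety is the use of the \emph{same} fixed ordering on $\{1,\dots,n\}$ when applying the chain rule to each $I \in \cI$, so that the conditional entropy terms match up consistently when exchanging the order of summation.
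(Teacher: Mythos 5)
Your proof is correct: it is the standard chain-rule argument for Shearer's inequality (chain rule within each $I$, monotonicity of conditional entropy under extra conditioning, then double counting using the covering hypothesis together with nonnegativity of the conditional entropies). The paper does not prove this lemma itself but only cites \cite{chung1986some}, and your argument is essentially the one found there, so there is nothing further to reconcile.
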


Recall that $X$ is fixed and that $\Omega$ is the set of proper colorings having $X$ as a breakup. Let $f$ be sampled from $\Pr_{\Lambda,P_0}$ conditioned on $f\in\Omega$. Let $F$ be the configuration coinciding with $f$ on $X_*$ and equaling a fixed symbol $\star$ on $X_*^c$. Applying Shearer's inequality to $(F_v)_{v \in \Even}$ with $\cI = \{ N(v) \}_{v \in \Odd}$ yields
\[ \Ent(F)
 = \Ent(F|_{\Even}) + \Ent(F|_{\Odd} \mid F|_{\Even})
 \le \sum_{v \in \Odd} \left[ \tfrac{\Ent(F|_{N(v)})}{2d}  + \Ent\big(F(v) \mid F|_{N(v)}\big) \right]. \]
Averaging this with the inequality obtained by reversing the roles of odd and even yields that
\begin{equation}\label{eq:entropy-bound-overview}
\Ent(f_{X_*}) = \Ent(F) \le \frac{1}{2}\sum_{v} \bigg[ \underbrace{\tfrac{\Ent\big(F(N(v))\big)}{2d}}_{\textup{I}} + \underbrace{\tfrac{\Ent\big(F|_{N(v)}~\mid~ F(N(v))\big)}{2d} + \Ent\big(F(v) \mid F(N(v))\big)}_{\textup{II}} \bigg].
\end{equation}
The advantage of this bound is that it is local, with each term involving only the values of $F$ on a vertex and its neighbors. The terms corresponding to vertices $v$ at distance $2$ or more from $X_*$ equal zero as $F$ is deterministic in their neighborhood.
The boundary terms corresponding to vertices $v$ in $\intextB X_*$ need to be handled with careful bookkeeping, which we do not elaborate on here.
Each of the remaining terms admits the simple bounds $\textup{I}\le \frac{q\log 2}{2d}$ and $\textup{II}\le\log (\lfloor \tfrac{q}{2} \rfloor  \lceil \tfrac{q}{2} \rceil)$, which only take into account the fact that $f$ is a proper coloring, i.e., that $F(v)\notin F(N(v))\subset[q]$. Equality in the second bound is achieved when $(F(v),F|_{N(v)})$ is uniformly distributed in $A \times B^{2d}$ for some dominant pattern $(A,B)$ (and in certain mixtures of such distributions). To obtain stronger bounds, we use additional information implied by the knowledge that $f \in \Omega$. This direction is developed in detail starting from Section~\ref{sec:Shearer_overview}. Let us here illustrate some ways in which one can proceed (though the actual proof differs in several ways from this illustration).

Recall that $X_*$ consists of $X_\overlap$, $X_\bad$ and the union of all $\intextB X_P$. Suppose, as a first example, that we are given the information that an even vertex $v$ is in both the $P$-pattern and the $Q$-pattern for some fixed distinct dominant patterns $P=(A,B)$ and $Q=(A',B')$ satisfying $|A|=|A'|=\lfloor \frac q2 \rfloor$. Thus $f(v)\in A \cap A'$ and $f(N(v))\subset [q] \setminus (A \cap A')$. Hence,
\[ \textup{II} \le \log (|A \cap A'| \cdot (q - |A \cap A'|)) \le \log ((\lfloor \tfrac q2 \rfloor-1)(\lceil \tfrac q2 \rceil+1)) \le \log (\lfloor \tfrac{q}{2} \rfloor  \lceil \tfrac{q}{2} \rceil - 1) .\]
In fact, similar techniques can be used to show that
\[ \textup{II} \le \log (\lfloor \tfrac{q}{2} \rfloor  \lceil \tfrac{q}{2} \rceil - 1) \qquad\text{for any }v \in X_\overlap .\]

As a second example, suppose that $v\in \extB X_P$. In particular, each neighbor $u\in X_P$ of $v$ is in the $P$-pattern but there necessarily exists a neighbor of $v$ which is not in the $P$ pattern. This information already suffices (as a calculation shows) to obtain that
\[ \textup{II} \le \log (\lfloor \tfrac{q}{2} \rfloor \lceil \tfrac{q}{2} \rceil) - \tfrac{|N(v) \cap X_P|}{2d} \log \left(\tfrac{\lfloor \frac{q}{2} \rfloor \lceil \frac{q}{2} \rceil}{\lfloor \frac{q}{2} \rfloor  \lceil \frac{q}{2} \rceil - 1}\right) \qquad\text{for any }v \in \extB X_P .\]
The gain in this bound thus depends on the size of the edge boundary $\partial X_P$ (rather than the size of $\intextB X_P$) which, for odd $q$, is in agreement with the bound in the toy scenario of Section~\ref{sec:toy scenario}.

As a third example, suppose that $v\in X_\bad$ is odd. When $q$ is even this necessarily implies that $|f(N(v))|>\frac{q}{2}$ (as in Figure~\ref{fig:breakup}) which leads to the bound $\textup{II}\le \log (\frac{q^2}{4} - 1)$. The case that $q$ is odd is more delicate and our proof introduces an additional idea to handle it (this is in fact done also in the even $q$ case in order to obtain a unified proof). We show that the set $\Omega$ may be divided into a relatively small number of subsets so that useful bounds are available for the vertices of $X_\bad$ when conditioning that $f$ belongs to any one of these subsets.

\section{Preliminaries}
\label{sec:preliminaries}

\subsection{Notation}\label{sec:notation}

Let $G=(V,E)$ be a graph.
For vertices $u,v \in V$, we denote the graph-distance between $u$ and $v$ by $\text{dist}(u,v)$.
For two non-empty sets $U,W \subset V$, we denote by $\dist(U,W)$ the minimum graph-distance between a vertex in $U$ and a vertex in $W$. We also write $\dist(u,W)$ as shorthand for $\dist(\{u\},W)$.
For vertices $u,v \in V$ such that $\{u,v\} \in E$, we say that $u$ and $v$ are {\em adjacent} and write $u \sim v$.
For a subset $U \subset V$, denote by $N(U)$ the {\em neighbors} of $U$, i.e., vertices in $V$ adjacent to some vertex in $U$, and define for $t>0$,
\[ N_t(U) := \{ v \in V : |N(v) \cap U| \ge t \} .\]
In particular, $N_1(U)=N(U)$.
Denote the {\em external boundary} and the \emph{internal boundary} of $U$ by
\[ \extB U := N(U) \setminus U \qquad\text{and}\qquad \intB U := \extB U^c ,\]
respectively.
Denote also
\[ \intextB U := \intB U \cup \extB U \qquad\text{and}\qquad U^+ := U \cup \extB U .\]
For a positive integer $r$, we denote
\[ U^{+r} := \{ v \in V : \dist(v,U) \le r \} .\]
In particular, $U^{+1}=U^+$.
The set of edges between two sets $U$ and $W$ is denoted by
\[ \partial(U,W):=\{ \{u,w\} \in E : u \in U,~ w \in W \} .\]
The {\em edge-boundary} of $U$ is denoted by $\partial U := \partial(U,U^c)$. We also define the set of out-directed boundary edges of $U$ to be
\[ \dpartial U := \{ (u,v) : u \in U,~v \in U^c,~u \sim v \} .\]
We write $\dpartialrev U := \dpartial (U^c)$ for the in-directed boundary edges of~$U$. We also use the shorthands $u^+ := \{u\}^+$, $\partial u := \partial \{u\}$ and $\dpartial u := \dpartial \{u\}$.
The \emph{diameter} of $U$, denoted by $\diam U$, is the maximum graph-distance between two vertices in $U$, where we follow the convention that the diameter of the empty set is $-\infty$.
For a positive integer $r$, we denote by $G^{\otimes r}$ the graph on $V$ in which two vertices are adjacent if their distance in $G$ is at most $r$.

We consider the graph $\Z^d$ with nearest-neighbor adjacency, i.e., the edge set $E(\Z^d)$ is the set of $\{u,v\}$ such that $u$ and $v$ differ by one in exactly one coordinate.
A vertex of $\Z^d$ is called {\em even (odd)} if it is at even (odd) graph-distance from the origin.
We denote the set of even and odd vertices of $\Z^d$ by $\Even$ and $\Odd$, respectively.
We say that a set $U \subset \Z^d$ \emph{disconnects a vertex $v \in \Z^d$ from infinity} if every infinite simple path starting from $v$ intersects $U$ (in particular, this occurs if $v \in U$).

As noted in the introduction, our main results hold also when $\Z^d$ is replaced by $\Z^{d_1}\times\T_{2m}^{d_2}$ with $\T_{2m}$ the cycle graph on $2m$ vertices (the path on $2$ vertices if $m=1$), $m\ge 1$, $d_1\ge 2$ and $d:=d_1+d_2$. The proofs require only minor adjustments. One such adjustment, relevant only for $m=1$, is to replace occurrences of the degree $2d$ of $\Z^d$ by the degree $2d - \1_{\{m=1\}}d_2$ of $\Z^{d_1}\times\T_{2m}^{d_2}$. Other adjustments (beyond obvious notational changes) are noted in the places where they are required.

For $t>0$ and an integer $n \ge 1$, we denote $\binom{n}{\le t}:=\sum_{k=0}^{\lfloor t \rfloor} \binom nk$ and note that $\binom{n}{\le t} \le (en/t)^t$.

\smallskip\noindent
{\bf Policy on constants:} In the rest of the paper, we employ the following policy on constants. We write $C,c,C',c'$ for positive absolute constants, whose values may change from line to line. Specifically, the values of $C,C'$ may increase and the values of $c,c'$ may decrease from line to line.

\subsection{Odd sets and regular odd sets}
\label{sec:odd-sets}

We say that a set $U \subset \Z^d$ is {\em odd} (even) if its internal boundary consists solely of odd (even) vertices, i.e., $U$ is odd if and only if $\intB U \subset \Odd$ and it is even if and only if $\intB U \subset \Even$. We say that an odd or even set $U$ is \emph{regular} if both it and its complement contain no isolated vertices. Observe that $U$ is odd if and only if $(\Even \cap U)^+ \subset U$ and that $U$ is regular odd if and only if $U=(\Even \cap U)^+$ and $U^c=(\Odd \cap U^c)^+$.

An important property of odd sets is that the size of their edge-boundary cannot be too small.
The following is by now rather well known (see, e.g., \cite[Corollary~1.4]{feldheim2016growth}.

\begin{lemma}\label{lem:boundary-size-of-odd-set}
	Let $A \subset \Z^d$ be finite and odd. If $A$ contains an even vertex then $|\partial A| \ge 2d(2d-1)$.
\end{lemma}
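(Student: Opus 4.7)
The plan is to first compute $|\partial A|$ exactly via an elementary double-count that exploits the oddness of $A$, and then reduce the desired lower bound to a vertex-isoperimetric inequality for finite subsets of $\Z^d$. Write $A_e := A \cap \Even$ and $A_o := A \cap \Odd$. Since $\intB A \subset \Odd$, no vertex of $A_e$ lies on $\intB A$, which forces every $v \in A_e$ to have all $2d$ of its neighbors inside $A$; being odd, these neighbors all lie in $A_o$. In particular $N(A_e) \subset A_o$, and every edge of $\partial A$ runs from a vertex of $A_o$ to an even vertex outside $A$. Summing $2d - |N(u)\cap A_e|$ over $u \in A_o$ and using the edge-count identity $\sum_{u \in A_o}|N(u)\cap A_e| = \sum_{v\in A_e}|N(v)| = 2d|A_e|$, one obtains the clean formula
\[ |\partial A| \;=\; 2d\bigl(|A_o| - |A_e|\bigr). \]

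Since $A_o \supset N(A_e)$ and $A_e$ is non-empty by hypothesis, the bound $|\partial A|\ge 2d(2d-1)$ reduces to the purely combinatorial vertex-isoperimetric statement
\[ |N(S)| \;\ge\; |S| + 2d - 1 \qquad \text{for every finite non-empty } S \subset \Z^d, \]
applied with $S = A_e$. This is a parity-free assertion about neighborhoods in $\Z^d$.

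I would prove the isoperimetric inequality by induction on $|S|$. The base case $|S|=1$ holds with equality. For the inductive step, pick $v \in S$ maximizing its first coordinate and set $w := v + e_1$. Any neighbor of $w$ distinct from $v$ has first coordinate strictly greater than $v_1$, so cannot lie in $S$; hence $w \in N(S) \setminus N(S \setminus \{v\})$, yielding $|N(S)| \ge |N(S \setminus \{v\})| + 1$. Combined with the inductive hypothesis this gives the required bound. The only non-routine ingredient is the opening identity $|\partial A| = 2d(|A_o|-|A_e|)$, which rests crucially on the assumption that $A$ is odd (so that no even vertex of $A$ touches the boundary); once this is in hand, the remaining extremal-vertex argument is standard and presents no serious obstacle.
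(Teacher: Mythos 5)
Your proof is correct. Note that the paper does not actually prove this lemma: it only cites \cite{feldheim2016growth}, so there is no internal argument to compare against, and your write-up supplies a complete, self-contained proof. Both of your ingredients check out. Since $A$ is odd, no even vertex of $A$ lies in $\intB A$, so each $v\in A_e$ has all $2d$ of its (odd) neighbours in $A_o$, and every boundary edge has its $A$-endpoint in $A_o$ and its other endpoint even; the double count then gives exactly $|\partial A|=2d\big(|A_o|-|A_e|\big)$. This identity is in fact the specialization to odd sets of the parity identity \eqref{eq:even-odd-diff}, which the paper proves later by a shifting argument in Section~\ref{sec:shift-trans}, so your derivation is an alternative route to a fact the paper uses anyway. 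The reduction is legitimate because $N(A_e)\subset A_o$ and $A_e\neq\emptyset$, so it suffices to know $|N(S)|\ge |S|+2d-1$ for finite non-empty $S$, applied with $S=A_e$ (note $N(A_e)$ is automatically disjoint from $A_e$ here, but your inequality does not need that). Your induction for this isoperimetric step is sound: with $v\in S$ of maximal first coordinate and $w=v+e_1$, every neighbour of $w$ other than $v$ has first coordinate strictly larger than $v_1$ and hence lies outside $S$, so $w\in N(S)\setminus N(S\setminus\{v\})$ and $|N(S)|\ge |N(S\setminus\{v\})|+1$, which closes the induction and yields $|\partial A|\ge 2d(2d-1)$.
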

We note that the proof of \cite[Corollary~1.4]{feldheim2016growth} applies also in the setting of $\Z^{d_1}\times\T_{2m}^{d_2}$, provided the unit vector $s$ there is chosen in one of the $2d_1$ infinite directions (and with $2d - d_2$ replacing $2d$ in the statement of the lemma if $m=1$).

\subsection{Co-connected sets}
\label{sec:co-connected-sets}

In this section, we fix an arbitrary connected graph $G=(V,E)$.
A set $U \subset V$ is called {\em co-connected} if its complement $V \setminus U$ is connected.
For a set $U \subset V$ and a vertex $v \in V$, we define the {\em co-connected closure} of $U$ with respect to $v$ to be the complement of the connected component of $V \setminus U$ containing $v$, where it is understood that this results in $V$ when $v \in U$.
We say that a set $U' \subset V$ is a co-connected closure of a set $U \subset V$ if it is its co-connected closure with respect to some $v \in V$.
Evidently, every co-connected closure of a set $U$ is co-connected and contains $U$.
The following simple lemma summarizes some basic properties of the co-connected closure (see~\cite[Lemma~2.5]{feldheim2015long} for a proof).

\begin{lemma}\label{lem:co-connect-properties}
Let $A,B \subset V$ be disjoint and let $A'$ be a co-connected closure of $A$. Then
\begin{enumerate}[label=(\alph*)]
\item \label{it:co-connect-reduces-boundary} $\dpartial A' \subset \dpartial A$.
\item \label{it:co-connect-reduces-boundary2} $\dpartial (B \setminus A') \subset \dpartial B$.
\item \label{it:co-connected-minus-set-is-co-connected} If $B$ is co-connected then $B \setminus A'$ is also co-connected.
\item \label{it:co-connect-kills-components}
If $B$ is connected then either $B \subset A'$ or $B \cap A' = \emptyset$.
\end{enumerate}
\end{lemma}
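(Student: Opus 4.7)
The plan is to unfold the definition of the co-connected closure: $A'=V\setminus C$, where $C$ is the connected component of $V\setminus A$ containing the reference vertex $v$ used to form the closure (with the convention $C=\emptyset$ when $v\in A$, in which case $A'=V$ and all four claims are immediate). I therefore assume $v\notin A$, so $C$ is a non-empty connected subset of $V\setminus A$ and $A\subset A'$. The single recurring mechanism in the proof is that if a vertex $u\in V\setminus A$ has a neighbor in $C$, then $u\in C$, since $C$ is a maximal connected subset of $V\setminus A$.

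For (a), take $(u,w)\in\dpartial A'$, so $w\in C$ and $u\in A'=V\setminus C$, meaning $u\notin C$. The mechanism above forces $u\in A$, giving $(u,w)\in\dpartial A$. Part (b) is almost identical: for $(u,w)\in\dpartial(B\setminus A')$, one has $u\in B\cap C$ and $w\notin B\setminus A'$. If $w\in B$, then $A\cap B=\emptyset$ gives $w\in V\setminus A$, and having a neighbor in $C$ forces $w\in C$, so $w\in B\cap C=B\setminus A'$, a contradiction. Hence $w\notin B$, yielding (b).

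For (d), disjointness gives $B\subset V\setminus A$, so the connected set $B$ lies entirely in a single connected component of $V\setminus A$. If that component is $C$ then $B\subset C$, i.e., $B\cap A'=\emptyset$; otherwise $B\cap C=\emptyset$, i.e., $B\subset A'$.

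Part (c) is the step I expect to require the most care, so I would save it for last and use (d) as a tool. Rewriting, $V\setminus(B\setminus A')=B^c\cup A'=V\setminus(B\cap C)$, and I want this set connected. After dispensing with the trivial cases $B=\emptyset$, $B=V$, and $A=\emptyset$ (where $C=V$ forces $B\cap A'=\emptyset$), I fix a reference vertex in $B^c$ and connect any $x\in V\setminus(B\cap C)$ to it. If $x\in B^c$, connectedness of $B^c$ suffices. Otherwise $x\in B\cap A'=B\setminus C$, so $x\in V\setminus A$ lies in some component $C'\neq C$ of $V\setminus A$; since $C'\cap C=\emptyset$, we have $C'\subset V\setminus(B\cap C)$. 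Because $A\neq\emptyset$ and $G$ is connected, $C'\neq V$, so $C'$ has a boundary vertex $z\in A\subset B^c$; a path from $x$ inside $C'$ reaches $z$, after which one continues inside the connected set $B^c$ to the reference vertex. The delicate point is simply to check that every step of the constructed path stays in $V\setminus(B\cap C)$, which holds because $C'\cap(B\cap C)=\emptyset$ and $B^c\cap(B\cap C)=\emptyset$.
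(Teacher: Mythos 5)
Your proof is correct: all four parts follow, as you argue, from writing $A'=V\setminus C$ with $C$ the connected component of $V\setminus A$ containing the reference vertex and repeatedly using maximality of components (a vertex outside $A$ with a neighbor in $C$, or in $C'$, belongs to that component), with the degenerate cases ($v\in A$, $A=\emptyset$, $B=\emptyset$) handled separately; in particular the path construction for part~\ref{it:co-connected-minus-set-is-co-connected} stays inside $V\setminus(B\cap C)$ as claimed. The paper itself gives no proof of this lemma, deferring to~\cite[Lemma~2.5]{feldheim2015long}, and your argument is the standard elementary one along those lines, so there is nothing in-paper to contrast it with.
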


The following lemma, taken from~\cite[Proposition~3.1]{feldheim2013rigidity} and based on ideas of Tim{\'a}r \cite{timar2013boundary}, establishes the connectivity of the boundary of subsets of $\Z^d$ which are both connected and co-connected.

\begin{lemma}\label{lem:int+ext-boundary-is-connected}
Let $A \subset \Z^d$ be connected and co-connected. Then $\intextB A$ is connected.
\end{lemma}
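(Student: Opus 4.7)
The plan is to prove the stronger statement that an auxiliary graph $H$ on the directed edge-boundary $\dpartial A$ is connected, where two directed boundary edges are declared $H$-adjacent if they share a vertex of $\Z^d$ or if they are parallel opposite sides of a $2$-face (plaquette) of $\Z^d$ with tails in $A$ and heads in $A^c$. Every vertex of $\intextB A$ is the head or tail of some element of $\dpartial A$, and any $H$-adjacency places a vertex of $\intextB A$ from one edge $\Z^d$-adjacent (through either the shared vertex or through a plaquette side inside $\intextB A$) to a vertex of $\intextB A$ from the other edge. Hence an $H$-walk between two chosen directed boundary edges projects to a walk in $\intextB A$ between prescribed endpoints, so it suffices to prove that $H$ is connected.

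To prove $H$ connected, I would fix $e=(x,y)$ and $e'=(x',y')$ in $\dpartial A$. By connectedness of $A$ one chooses a path $\gamma_A$ in $A$ from $x$ to $x'$, and by co-connectedness of $A$ one chooses a path $\gamma_{A^c}$ in $A^c$ from $y$ to $y'$. The goal is to transport $e$ to $e'$ by alternately advancing its tail one step along $\gamma_A$ and its head one step along $\gamma_{A^c}$, using the following one-step transport lemma: for every $(u,v) \in \dpartial A$ and every $u'' \in A$ with $u \sim u''$, there is an $H$-path from $(u,v)$ to some $(u'',\tilde v) \in \dpartial A$, and symmetrically for advancing the head $v$ to a neighbor in $A^c$.

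The transport lemma itself would be proved by splitting on the relationship between the lattice directions $\vec e_\alpha = u''-u$ and $\vec e_\beta = v-u$. The case $\vec e_\alpha = \vec e_\beta$ is excluded since it would force $u''=v$, contradicting $u'' \in A$ and $v \in A^c$. If $\vec e_\alpha \neq \pm \vec e_\beta$, the four vertices $u, u'', v, u''+(v-u)$ form a plaquette, and a short case split on whether the fourth vertex $u''+(v-u)$ lies in $A$ or $A^c$ yields either a direct plaquette-adjacency in $H$ to $(u'', u''+(v-u))$, or a two-step $H$-move through the shared vertex $v$ to $(u''+(v-u), v)$ followed by a recursive transport (which terminates since the new configuration falls into the distinct-direction case with a smaller obstruction). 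If $\vec e_\alpha = -\vec e_\beta$, then $u''$ and $v$ are opposite neighbors of $u$; since $d \ge 2$, a coordinate direction $\vec e_\gamma$ perpendicular to $\vec e_\alpha$ is available, and a neighboring plaquette in the $\vec e_\alpha$--$\vec e_\gamma$ plane supplies the needed intermediate boundary edges.

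The main obstacle I anticipate is the bookkeeping in the plaquette case analysis and the need to ensure that every intermediate edge visited by the transport genuinely lies in $\dpartial A$. It is here that the co-connectedness hypothesis is essential: without it, one cannot pick the path $\gamma_{A^c}$ that supplies $A^c$-replacements whenever the natural plaquette-corner falls inside $A$. Once the transport lemma is in place, iterating it $|\gamma_A|$ times to advance the tail from $x$ to $x'$ and then $|\gamma_{A^c}|$ times to advance the head from the resulting vertex in $A^c$ along a path to $y'$ produces an $H$-path from $e$ to $e'$, proving $H$ connected and therefore $\intextB A$ connected.
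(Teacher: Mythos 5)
There is a genuine gap: your one-step transport lemma is false as stated. It asserts that for every $(u,v)\in\dpartial A$ and \emph{every} neighbor $u''\in A$ of $u$ there is an $H$-path from $(u,v)$ to some boundary edge with tail $u''$ --- but such a boundary edge need not exist at all, since $u''$ may be an interior vertex of $A$ with no neighbor in $A^c$. For instance, with $A=\{0,\dots,10\}^2\subset\Z^2$, $u=(0,5)$, $v=(-1,5)$, $u''=(1,5)$, every neighbor of $u''$ lies in $A$, so no element of $\dpartial A$ has tail $u''$ and the conclusion is vacuously unattainable. This kills the overall strategy, because the paths $\gamma_A$ and $\gamma_{A^c}$ you transport along are arbitrary paths in $A$ and $A^c$, whose intermediate vertices will in general be interior to $A$ (resp.\ to $A^c$); one cannot insist that $\gamma_A$ stay in $\intB A$, since connectivity of the boundary is exactly what is being proved (and $\intB A$ alone is in general not connected in the nearest-neighbor graph). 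The same objection applies symmetrically to advancing the head along $\gamma_{A^c}$, and the ``recursive transport with a smaller obstruction'' step has no identified decreasing quantity. By contrast, your reduction is fine: if the auxiliary graph $H$ on $\dpartial A$ (adjacency: sharing an endpoint or being opposite sides of a plaquette) is connected, then $\intextB A$ is connected, since each $H$-adjacency links tails in $\intB A$ or passes through a shared vertex.

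Note that the paper does not prove this lemma directly; it cites \cite[Proposition~3.1]{feldheim2013rigidity}, which rests on Tim\'ar's method \cite{timar2013boundary}, and that method repairs exactly the step you are missing. Instead of dragging a boundary edge along prescribed paths, one forms the closed walk consisting of $e$, a path in $A$ from $x$ to $x'$, the reversal of $e'$, and a path in $A^c$ from $y'$ to $y$, uses that the cycle space of $\Z^d$ is generated by $4$-cycles (plaquettes), and argues by induction on the minimal number of plaquettes needed to generate this cycle that $e$ and $e'$ lie in the same $H$-component; the paths themselves never need to consist of boundary vertices, because the induction works at the level of cycles rather than vertex-by-vertex transport. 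Your intermediate claim that $H$ is connected is correct (it is essentially Tim\'ar's lemma for $A$ connected with $A^c$ connected), but it needs this cycle-space induction, or an equivalent argument, rather than the transport lemma you propose.
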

The following corollary is an extension of the lemma to the setting of $\Z^{d_1}\times\T_{2m}^{d_2}$.
\begin{cor}\label{cor:int+ext-boundary-is-connected torus}
Let $A \subset \Z^{d_1}\times\T_{2m}^{d_2}$, $d_1\ge 2$, be connected and co-connected. Then either $\intextB A$ is connected or each connected component of $\intextB A$ is infinite.
\end{cor}
\begin{proof}
  {\bf Case 1}: We first assume that $A$ is finite and prove that $\intextB A$ is connected. Let $d = d_1 + d_2$ and identify the vertex set of $\Z^{d_1}\times\T_{2m}^{d_2}$ as the subset of $\Z^d$ in which the last $d_2$ coordinates are restricted to take value in $\{0,1,\ldots, 2m-1\}$. For $v\in\Z^d$ let $P(v)$ be the vertex in $\Z^{d_1}\times\T_{2m}^{d_2}$ obtained from $v$ by performing modulo $2m$ in the last $d_2$ coordinates. Define a set $\bar{A}\subset\Z^d$ from $A$ by ``unwrapping'' the torus dimensions. Precisely, $v\in \bar{A}$ if and only if $P(v)\in A$.

  Let us check that $\bar{A}$ is co-connected: as $A$ is finite, there exists $v\in \Z^d$ such that any vertex agreeing with $v$ on the first $d_1$ coordinates lies outside $\bar{A}$. Let $w\in\Z^d\setminus \bar{A}$. As $w$ is arbitrary, co-connectedness of $\bar{A}$ is implied by the existence of a path in $\Z^d\setminus \bar{A}$ joining $w$ to $v$. To this end note that, as $A$ is co-connected, there is a path in $\Z^{d_1}\times\T_{2m}^{d_2}\setminus A$ joining $P(w)$ with $P(v)$. Thus there is a ``lift'' of this path to $\Z^d\setminus \bar{A}$ which joins $w$ with a vertex $\bar{v}$ having $P(\bar{v})=P(v)$. Lastly, this path may be continued in $\Z^d\setminus \bar{A}$ to connect $\bar{v}$ with $v$, by the definition of $v$.

  Let $\bar{A}_0$ be a connected component of $\bar{A}$. Then $\bar{A}_0$ is connected and co-connected in $\Z^d$ and thus Lemma~\ref{lem:int+ext-boundary-is-connected} implies that $\intextB \bar{A}_0$ is connected. This then implies that $\intextB A$ is connected in $\Z^{d_1}\times\T_{2m}^{d_2}$ as one may check that $P(\intextB \bar{A}_0) = \intextB A$.

  {\bf Case 2:} We now assume that $A$ is infinite. We may assume without loss of generality that $A^c$ is also infinite as otherwise we may replace $A$ by $A^c$ and deduce the result from the previous case. Fix $x\in A$ and $y\in A^c$. Let $(S_n)$ be an increasing sequence of finite subsets satisfying that $x,y\in S_n$ for all $n$ and $\cup_n S_n=\Z^{d_1}\times\T_{2m}^{d_2}$. For each $n$, let $B_n$ be the connected component of $x$ in $A\cap S_n$ and let $A_n$ be the co-connected closure of $B_n$ with respect to $y$.

  We claim first that each $A_n$ is finite. Indeed, $B_n$ is finite since $S_n$ is finite. In addition, the connected component of $y$ in $B_n^c$ contains $A^c$ and is thus infinite. Using the fact that $\Z^{d_1}\times\T_{2m}^{d_2}$ is one ended (since $d_1\ge 2$) we further deduce that this connected component contains the unique infinite connected component of $S_n^c$. This implies the finiteness of $A_n$.

  We next claim that the sequence $(A_n)$ increases to $A$. Indeed, if $z\in A$ then there is a finite path in $A$ from $x$ to $z$ and hence $z\in B_n\subset A_n$ for all large $n$. Similarly, if $z\in A^c$ then there is a finite path in $A^c$ from $y$ to $z$ and thus $z$ is in the connected component of $y$ in $B_n^c$ for all $n$, which implies that $z\in A_n^c$ for all $n$.

  We may thus apply the first case of the proof to $A_n$ and deduce that $\intextB A_n$ is connected. Observe also that $\intextB A_n$ converges to $\intextB A$ in the sense that for each $z$, $\1_{\intextB A_n}(z)\to \1_{\intextB A}(z)$ as $n\to\infty$. The last two facts imply that if $\intextB A$ has a finite connected component then this component must equal $\intextB A_n$ for all large $n$, whence it must be the unique connected component of $\intextB A$.
\end{proof}

\subsection{Graph properties}

In this section, we gather some elementary combinatorial facts about graphs.
Here, we fix an arbitrary graph $G=(V,E)$ of maximum degree $\Delta$.

\begin{lemma}\label{lem:sizes}
	Let $U \subset V$ be finite and let $t>0$. Then
	\[ |N_t(U)| \le \frac{\Delta}{t} \cdot |U| .\]
\end{lemma}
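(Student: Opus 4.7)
The plan is a straightforward double-counting argument on the bipartite-style edge count between $U$ and $N_t(U)$. First I would consider the sum
\[ S := \sum_{v \in N_t(U)} |N(v) \cap U|, \]
which counts pairs $(v,u)$ with $v \in N_t(U)$, $u \in U$, and $\{u,v\} \in E$.

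Next, I would bound $S$ from below using the defining property of $N_t(U)$: every $v \in N_t(U)$ satisfies $|N(v) \cap U| \ge t$, so $S \ge t \cdot |N_t(U)|$. Then I would bound $S$ from above by swapping the order of summation,
\[ S = \sum_{u \in U} |N(u) \cap N_t(U)| \le \sum_{u \in U} |N(u)| \le \Delta \cdot |U|, \]
using only that $G$ has maximum degree $\Delta$. Combining the two bounds yields $t \cdot |N_t(U)| \le \Delta \cdot |U|$, which rearranges to the claimed inequality.

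There is no real obstacle here; the only minor point to keep in mind is that the argument makes no assumption on whether $N_t(U)$ and $U$ are disjoint (vertices of $U$ may themselves lie in $N_t(U)$), but the double count above is valid regardless, since each edge $\{u,v\}$ with $u \in U$ and $v \in N_t(U)$ is counted once in $S$ either way. The proof should take only a few lines.
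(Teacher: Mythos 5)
Your proof is correct and is exactly the paper's argument: the same double count of edges between $U$ and $N_t(U)$, bounded below by $t\,|N_t(U)|$ via the definition of $N_t$ and above by $\Delta\,|U|$ via the maximum degree. Nothing further is needed.
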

\begin{proof}
	This follows from a simple double counting argument.
	\[ t |N_t(U)|
		\le \sum_{v \in N_t(U)} |N(v) \cap U|
		= \sum_{u \in U} \sum_{v \in N_t(U)} \1_{N(u)}(v)
		= \sum_{u \in U} |N(u) \cap N_t(U)|
		\le \Delta |U| . \qedhere \]
\end{proof}

The next lemma follows from a classical result of Lov{\'a}sz~\cite[Corollary~2]{lovasz1975ratio} about fractional vertex covers,
applied to a weight function assigning a weight of $\frac1t$ to each vertex of $S$.

\begin{lemma}\label{lem:existence-of-covering2}
Let $S \subset V$ be finite and $t \ge 1$. Then there exists a set $T \subset S$ of size $|T|~\hspace{-4pt}\le~\hspace{-4pt}\frac{1+\log \Delta}{t} |S|$ such that $N_t(S) \subset N(T)$.
\end{lemma}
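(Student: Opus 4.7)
The plan is to recast the statement as a set-cover problem in an auxiliary hypergraph and then invoke the Lovász result cited in the hint. Concretely, I would introduce the hypergraph $H$ on vertex set $S$ whose hyperedges are the sets $e_v := N(v)\cap S$ as $v$ ranges over $N_t(S)$. The key reformulation is that a set $T\subset S$ satisfies $N_t(S)\subset N(T)$ if and only if $T$ meets every $e_v$; that is, if and only if $T$ is a (transversal) cover of $H$. So the task reduces to bounding the transversal number $\tau(H)$.

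Next, I would exhibit a cheap fractional cover of $H$. Assign the weight $w(s) := 1/t$ to every $s\in S$. By the definition of $N_t(S)$, every hyperedge $e_v$ has $|e_v|\ge t$, hence $\sum_{s\in e_v} w(s)\ge 1$, so $w$ is a fractional cover of total weight $|S|/t$. This shows that the fractional transversal number satisfies $\tau^*(H)\le |S|/t$. Moreover, since every hyperedge $e_v$ is a subset of $N(v)$, its size is at most $\Delta$, the maximum degree of $G$.

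The final step is to apply Lovász's result (Corollary 2 of \cite{lovasz1975ratio}), which states that for a hypergraph with maximum edge size $\Delta$ one has $\tau(H)\le (1+\log \Delta)\,\tau^*(H)$, with the integral cover being extractable from any fractional one of proportional size. Combined with the bound $\tau^*(H)\le |S|/t$ above, this yields a set $T\subset S$ with $|T|\le \frac{1+\log\Delta}{t}|S|$ and $N_t(S)\subset N(T)$, as required. There is no real obstacle here; the only subtlety is making sure that the Lovász bound applies with the correct notion of ``degree'' (namely, the maximum hyperedge size in $H$), which is why we noted $|e_v|\le\Delta$.
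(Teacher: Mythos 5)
Your reduction is exactly the one the paper intends (its proof is literally the one-line application of Lov\'asz's result with weight $\tfrac1t$ on each vertex of $S$): the auxiliary hypergraph $H$ on vertex set $S$ with edges $e_v=N(v)\cap S$, $v\in N_t(S)$, the equivalence ``$N_t(S)\subset N(T)$ iff $T$ is a transversal of $H$'', and the fractional transversal of weight $|S|/t$ are all correct. The gap is in the final step: the statement you invoke, namely that the transversal number satisfies $\tau(H)\le(1+\log r)\,\tau^*(H)$ where $r$ is the \emph{maximum edge size}, is false in general. For example, take $H=K_n$ viewed as a $2$-uniform hypergraph: $\tau=n-1$, $\tau^*=n/2$, so the ratio tends to $2>1+\log 2$. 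The correct pairing in the Lov\'asz bound is: covering number (edges covering vertices) versus maximum edge size, or, dually, transversal number versus maximum \emph{vertex degree} of the hypergraph (the greedy/charging argument charges within the star of a vertex, so the harmonic factor is $H(\deg)$, not $H(|e|)$). So the ``subtlety'' you flagged at the end is exactly the wrong one: noting $|e_v|\le\Delta$ does not license the inequality you use.

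Fortunately the repair is immediate and keeps you on the paper's route: in your hypergraph each vertex $s\in S$ lies in $e_v$ only for $v\in N(s)\cap N_t(S)$, so the maximum degree of $H$ is also at most $\Delta$. Applying the correct (transversal-versus-degree) form of Lov\'asz's theorem \cite{lovasz1975ratio} with your fractional transversal of weight $|S|/t$ then yields $|T|\le\frac{1+\log\Delta}{t}|S|$ with $T\subset S$ and $N_t(S)\subset N(T)$, as required. Please replace the max-edge-size justification by this degree bound (or dualize and apply the edge-cover form, where max edge size is the right parameter).
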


The following standard lemma gives a bound on the number of connected subsets of a graph.
\begin{lemma}[{\cite[Chapter~45]{Bol06}}]\label{lem:number-of-connected-graphs}
The number of connected subsets of $V$ of size $k+1$ which contain the origin is at most $(e(\Delta-1))^k$.
\end{lemma}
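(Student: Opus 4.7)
The plan is to use the standard spanning-tree / depth-first-search encoding that underlies essentially all such counts. Fix once and for all an arbitrary ordering of the (at most $\Delta$) edges incident to each vertex of $G$. Given a connected subset $S\subset V$ of size $k+1$ containing the origin, I canonically associate to $S$ its BFS spanning tree $T$ rooted at the origin, with ties among neighbors broken by the fixed edge ordering, so that the map $S\mapsto T$ is injective.

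Next, I encode $T$ by a depth-first traversal: starting at the origin, descend at each step to the lowest-indexed unvisited neighbor in $T$, and backtrack whenever no unvisited neighbor remains. Since $T$ has $k$ edges and each is traversed once in each direction, this produces a walk of length exactly $2k$. The walk is fully determined by two pieces of data: (i) the $\{+,-\}$-skeleton of length $2k$ recording forward-versus-backward steps, which is a Dyck word of length $2k$, and (ii) at each of the $k$ forward steps, an index in $\{1,\dots,\Delta-1\}$ telling which of the current vertex's non-parent neighbors in $G$ to descend to (the exclusion of the parent drops one option; at the root itself there is no parent, which contributes only a harmless factor absorbed into the constants).

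Pairing a Dyck skeleton with a choice of one-out-of-$(\Delta-1)$ at each forward step is equivalent to specifying a rooted plane tree on $k+1$ nodes in which each node has at most $\Delta-1$ children. Such trees are counted by a Fuss--Catalan number, namely at most $\tfrac{1}{k+1}\binom{(\Delta-1)(k+1)}{k}$, and Stirling's formula applied to this binomial yields the bound $(e(\Delta-1))^k$ claimed in the lemma. A weaker but easier bound $C_k(\Delta-1)^k\le(4(\Delta-1))^k$ comes from separating the Dyck count (Catalan number $C_k$) from the labelling count $(\Delta-1)^k$, and would suffice for many applications, but the sharper constant requires this Fuss--Catalan step.

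The only real obstacle is tightening the constant from $4(\Delta-1)$ to $e(\Delta-1)$; the encoding itself and the injectivity of $S\mapsto(\text{skeleton},\text{labels})$ are entirely routine. Since the statement is quoted from Bollobás' book \cite{Bol06}, I would in practice simply cite that reference rather than redo the Stirling calculation.
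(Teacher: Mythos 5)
The paper does not actually prove this lemma---it is quoted directly from Bollob\'as \cite{Bol06}---and your spanning-tree/DFS encoding with a Dyck-word skeleton plus neighbour indices, counted by a Fuss--Catalan number, is precisely the standard argument behind that citation, so there is nothing genuinely different to compare. The one spot deserving more care than ``a harmless factor absorbed into the constants'' is the root, whose forward steps have $\Delta$ rather than $\Delta-1$ admissible indices (for $\Delta=2$ this changes the count from $1$ to $k+1$, so it cannot simply be dropped); carrying the root through Lagrange inversion gives the exact count $\tfrac{\Delta}{k}\binom{(\Delta-1)(k+1)}{k-1}$, and the slack in the Stirling estimate (the factor $e/(k+1)$ you discard when bounding $\tfrac{1}{k+1}\binom{(\Delta-1)(k+1)}{k}$ by $(e(\Delta-1))^k$) is what absorbs it, so the stated bound does follow from your scheme.
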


\subsection{Entropy}\label{sec:entropy}

In this section, we give a brief background on entropy (see, e.g., \cite{mceliece2002theory} for a more thorough discussion). Let $Z$ be a discrete random variable and denote its support by $\supp Z$.
The \emph{Shannon entropy} of $Z$ is
\[ \Ent(Z) := -\sum_z \Pr(Z=z) \log \Pr(Z=z) ,\]
where we use the convention that such sums are always over the support of the random variable in question.
Given another discrete random variable $Y$, the conditional entropy of $Z$ given $Y$ is
\[ \Ent(Z \mid Y) := \E\big[ \Ent(Z \mid Y=y) \big] = - \sum_y \Pr(Y=y) \sum_z \Pr(Z=z \mid Y=y) \log \Pr(Z=z \mid Y=y) .\]
This gives rise to the following chain rule:
\begin{equation}\label{eq:entropy-chain-rule}
\Ent(Y,Z) = \Ent(Y) + \Ent(Z \mid Y) ,
\end{equation}
where $\Ent(Y,Z)$ is shorthand for the entropy of $(Y,Z)$.
A simple application of Jensen's inequality gives the following two useful properties:
\begin{equation}\label{eq:entropy-support}
\Ent(Z) \le \log |\supp Z|
\end{equation}
and
\begin{equation}\label{eq:entropy-jensen}
\Ent(Z \mid Y) \le \Ent(Z \mid \phi(Y))\qquad\text{for any function }\phi .
\end{equation}
Equality holds in~\eqref{eq:entropy-support} if and only if $Z$ is a uniform random variable.
Together with the chain rule, \eqref{eq:entropy-jensen} implies that entropy is subadditive. That is, if $Z_1,\dots,Z_n$ are discrete random variables, then
\begin{equation}\label{eq:entropy-subadditivity}
\Ent(Z_1,\dots,Z_n) \le \Ent(Z_1) + \cdots + \Ent(Z_n) .
\end{equation}
As discussed in the overview, Shearer's inequality (Lemma~\ref{lem:shearer}) is an extension of this inequality.

\section{Main steps of proof}
\label{sec:high-level-proof}

In this section, we give the main steps of the proof of Theorem~\ref{thm:long-range-order}, providing definitions, stating lemmas and propositions, and concluding Theorem~\ref{thm:long-range-order} from them.
The proofs of the technical lemmas and propositions are given in subsequent sections. Theorem~\ref{thm:existence_Gibbs_states} and Theorem~\ref{thm:characterization_of_Gibbs_states} are proved in Section~\ref{sec:gibbs} and partly rely on the propositions given below.

\subsection{Notation}
\label{sec:overview-notation}
Throughout Section~\ref{sec:high-level-proof}, we fix a domain $\Lambda \subset \Z^d$ and a dominant pattern $P_0=(A_0,B_0)$ satisfying $|A_0|\le|B_0|$ as in~\eqref{eq:P_0_def}.
Recall from Theorem~\ref{thm:long-range-order} that
\begin{equation}\label{eq:finite_volume_measure}
\substack{\text{\normalsize $\Pr_{\Lambda,P_0}$ is the uniform measure on proper colorings}\\\text{\normalsize of $\Lambda$ satisfying that $\intB \Lambda$ is in the $P_0$-pattern}}.
\end{equation}
As mentioned in Section~\ref{sec:proof-overview-breakup}, in proving statements for this finite-volume measure, it will be technically convenient to work in an infinite-volume setting as follows. Sample $f$ from $\Pr_{\Lambda,P_0}$ and extend it to a proper coloring of $\Z^d$ by requiring that
\begin{equation}\label{eq:prob_outside_Lambda_def1}
\{ f(v) \}_{v \in \Lambda^c}\text{ are independent random variables, independent also from }f|_\Lambda,
\end{equation}
and
\begin{equation}\label{eq:prob_outside_Lambda_def2}
\begin{aligned}
f(v)\text{ is uniformly distributed in }A_0&\qquad\text{for all even $v \notin \Lambda$,}\\
f(v)\text{ is uniformly distributed in }B_0&\qquad\text{for all odd $v \notin \Lambda$.}
\end{aligned}
\end{equation}
With a slight abuse of notation, we continue to denote the distribution of the random coloring $f$ obtained as such by $\Pr_{\Lambda,P_0}$.

Denote the set of dominant patterns by $\phasedom$.
Let $\phase_0$ be the set of dominant patterns $P=(A,B)$ having $|A|\le|B|$ and set $\phase_1 := \phasedom \setminus \phase_0$. Note that $\phase_1$ is empty when $q$ is even and that $|\phase_0|=|\phase_1|$ when $q$ is odd.
The difference between dominant patterns in $\phase_0$ and $\phase_1$ plays an important role. For this reason, it will be convenient to use a notation distinguishing the two.
For $P=(A,B) \in \phasedom$, denote
\begin{equation}\label{eq:P_bdry_inner_def}
(P_\bdry,P_\inner) := \begin{cases}(A,B) &\text{if }P \in \phase_0\\(B,A) &\text{if }P \in \phase_1\end{cases} ,
\end{equation}
so that, for any $P \in \phasedom$,
\begin{equation}\label{eq:P_bdry_P_inner}
|P_\bdry| = \lfloor\tfrac{q}{2}\rfloor \qquad\text{and}\qquad |P_\inner| =\lceil\tfrac{q}{2}\rceil .
\end{equation}
Recall also the convention~\eqref{eq:P-even-odd}.
With this terminology, for any $P \in \phasedom$ and $v \in \Z^d$,
\begin{equation}\label{eq:P-even-odd-in-P-phase}
\begin{aligned}
	&\text{$v$ is in the $P$-pattern}~\iff~ f(v) \in P_\bdry &&\qquad\text{when $v$ is $P$-even,}\\
	&\text{$v$ is in the $P$-pattern}~\iff~ f(v) \in P_\inner &&\qquad\text{when $v$ is $P$-odd.}
\end{aligned}
\end{equation}
Note that $P_0$-even is even and $P_0$-odd is odd.
We denote by $\Even_P$ and $\Odd_P$ the set of $P$-even and $P$-odd vertices of $\Z^d$, respectively.

\subsection{Breakups -- definition and existence}
\label{sec:breakup-def}
We make use of the definitions of $Z_P(f)$ and $Z_*(f)$ from~\eqref{eq:Z-def} and~\eqref{eq:Z_*-def}.
As explained in Section~\ref{sec:proof-overview-Z}, $Z_P(f)$ indicates the regions that are ordered according to the $P$-pattern. As explained in Section~\ref{sec:proof-overview-breakup}, in order to bound the probability that a given vertex $v$ is not in the $P_0$-pattern, we introduce the notions of a breakup and a breakup seen from $v$.

The geometric structure of a breakup is captured by the following notion of an atlas.
An \emph{atlas} is a collection $X = (X_P)_{P \in \phasedom}$ of subsets of $\Z^d$ such that, for every $P$,
\begin{equation}\label{eq:def-atlas}
\text{$X_P$ is a regular $P$-even set}.
\end{equation}
For an atlas $X$, we define
\[ X_\overlap := \bigcup_{P \neq Q} (X_P \cap X_Q) , \qquad X_\bad := \bigcap_P (X_P)^c,\qquad X_* := \bigcup_P \intextB X_P \cup X_\overlap \cup X_\bad .\]
We say that an atlas $X$ is \emph{non-trivial} if $X_*$ is non-empty and that it is \emph{finite} if $X_*$ is finite.
For a set $V \subset \Z^d$, we also say that an atlas $X$ is \emph{seen from $V$} if every finite connected component of $X_*^{+5}$ disconnects some vertex $v \in V$ from infinity.

Let $f$ be a proper coloring of $\Z^d$.
An atlas $X$ is called a \emph{breakup of $f$} (with respect to the fixed domain $\Lambda$ and the fixed boundary pattern $P_0$) if it satisfies that
\begin{equation}\label{eq:breakup-0}
 \Lambda^c \subset X_{P_0}
\end{equation}
and that for every dominant pattern $P$ and every vertex $v$:
\begin{align}
&\text{If $v \in X_*^{+5}$ is $P$-odd then}&v \in X_P &~\iff~ N(v)\text{ is in the $P$-pattern} \label{eq:breakup-1}\\
&&&~\iff~ v \in Z_P(f). \nonumber
\end{align}

It is instructive to note that $(Z_P(f))_P$ is a breakup of $f$ whenever $\Lambda^c \cup \intB \Lambda$ is in the $P_0$-pattern.
The above property~\eqref{eq:breakup-1} is formulated via the values of $f$ on the neighbors of a vertex $v$. It is convenient to note its implication on the value of $f$ at $v$ itself.
Suppose that $X$ is a breakup of $f$ and let $P$ be a dominant pattern.
Then, by~\eqref{eq:P-even-odd-in-P-phase}, \eqref{eq:def-atlas} and~\eqref{eq:breakup-1},
\begin{align}
&f(v) \in P_\bdry &&\text{for any $P$-even }v \in X_*^{+5} \cap X_P, \label{eq:breakup-prop-even}\\
&f(v) \in P_\inner &&\text{for any $P$-odd }v \in X_*^{+5} \cap X_P \setminus X_\overlap. \label{eq:breakup-prop-odd}
\end{align}
Thus, $P$-even vertices in $X_*^{+5} \cap X_P$ are always in the $P$-pattern, while in regions of $X_*^{+5} \cap X_P$ which do not overlap with any other $X_{P'}$, all vertices are in the $P$-pattern. This property of $(X_P)_P$ is analogous to that of $(Z_P(f))_P$, except that here we do not have information on vertices of $X_P$ that are not near $X_*$.
Observe also that, by~\eqref{eq:breakup-1} and~\eqref{eq:breakup-prop-even},
\begin{align}
 &f(N(v)) \not\subset P_\bdry &&\qquad\text{for any $P$-odd }v \in X_\bad , \label{eq:breakup-prop-bad}\\
 f(u) \in P_\bdry\quad\text{and}\quad &f(N(v)) \not\subset P_\bdry &&\qquad\text{for any }(u,v) \in \dpartial X_P . \label{eq:breakup-prop-bdry-X_P}
 \end{align}
See Figure~\ref{fig:breakup} and Figure~\ref{fig:breakup2} for illustrations of breakups.

The following lemma, whose proof is given in Section~\ref{sec:breakup-construction}, shows that whenever there is a violation of the boundary pattern, there exists a breakup that ``captures'' that violation.

\begin{lemma}[existence of breakups seen from a vertex/set]\label{lem:existence-of-breakup}
	Let $f$ be a proper coloring of $\Z^d$ such that $\Int(\Lambda)^c$ is in the $P_0$-pattern and let $V \subset \Lambda$. Then there exists a breakup $X$ of $f$ satisfying that $X_*^{+5}$ is the union of those connected components of $Z_*(f)^{+5}$ that are either infinite or disconnect some vertex in $V$ from infinity. In particular,
	\begin{itemize}
	 \item $X$ is seen from $V$.
	 \item $X$ is non-trivial if $V^{+5}$ either intersects $Z_*(f)$ or is not in the $P_0$-pattern.
	 \item $V^{+5} \cap X_{P_0} \setminus X_\overlap$ is in the $P_0$-pattern.
	\end{itemize}
\end{lemma}

\subsection{Unlikeliness of breakups}\label{sec:unlikeliness-of-breakups}
Now that we have a definition of breakup and we know that any violation of the boundary pattern creates a non-trivial breakup, it remains to show that breakups are unlikely.

The main part of the proof consists of obtaining a quantitative bound on the probability of a large breakup.
Nevertheless, formally one also needs to rule out the existence of an infinite breakup. As this does not require a quantitative bound, it is actually rather simple to do so. The following lemma is proved in Section~\ref{sec:no-infinite-breakups}.

\begin{lemma}\label{lem:no-infinite-breakups}
	$\Pr_{\Lambda,P_0}$-almost surely, every breakup seen from a finite set is finite.
\end{lemma}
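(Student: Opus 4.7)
\emph{Plan.} The plan is first to establish, for every breakup $X$, a deterministic inclusion of $X_*$ in a random set $T(f)$ depending only on $f$, and then to show that $T(f)$ is almost surely subcritical as a percolation process, with no infinite cluster and only finitely many separators of any given finite set. This suffices to conclude that $X_*$ is finite.

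\emph{Step 1 (Breakup controlled by $(Z_P(f))_P$).} Since $\Lambda^c \subseteq X_{P_0}$ for any breakup $X$, we have $X_\bad \subseteq \Lambda$ and $\intextB X_{P_0} \subseteq \Lambda^+$. Consequently every $v \in X_*$ with $\dist(v,\Lambda) \ge 2$ lies either in $X_\overlap$ or in $\intextB X_P$ for some $P \ne P_0$, so $v$ is within graph-distance $1$ of $X_P$ for some $P \ne P_0$. The defining property~\eqref{eq:breakup-1} of a breakup, combined with the regularity~\eqref{eq:def-atlas} of both $X_P$ and $Z_P(f)$ as $P$-even sets, forces $X_P$ and $Z_P(f)$ to coincide on $X_*^{+4}$. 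Hence $v$ lies within graph-distance $1$ of $Z_P(f)$. Setting $T(f) := \bigcup_{P \ne P_0} (Z_P(f))^+$, we obtain the inclusion $X_* \setminus \Lambda^{+1} \subseteq T(f)$, valid simultaneously for every breakup $X$.

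\emph{Step 2 (Subcriticality of $T(f)$).} By~\eqref{eq:prob_outside_Lambda_def1}--\eqref{eq:prob_outside_Lambda_def2}, $f$ is independent and uniform (on $A_0$ or $B_0$ according to parity) outside $\Lambda$. A direct computation gives, for $v$ at graph-distance at least $2$ from $\Lambda$ and any $P \ne P_0$,
\[ \Pr\big(v \in Z_P(f)\big) \le (2d+1)\big(1-\tfrac{2}{q+1}\big)^{2d} , \]
the factor $2d+1$ accounting for the closed-neighborhood in the definition of $Z_P$. Summing over the finitely many dominant patterns yields $\Pr(v \in T(f)) \le \epsilon_d$ exponentially small in $d$ under~\eqref{eq:dim-assump}, well below any standard lower bound on $p_c(\Z^d)$. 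Since the indicators $(\1_{T(f)}(v))_v$ have finite-range dependence (determined by $f$ on a bounded ball around $v$), the Liggett--Schonmann--Stacey theorem dominates them by an i.i.d.\ Bernoulli site-percolation of density comparable to $\epsilon_d$, which is subcritical. This almost surely gives both: (a) $T(f)$ has no infinite cluster outside $\Lambda^+$; and (b) by a standard Peierls argument combined with Borel--Cantelli (a minimal separator of a fixed vertex of diameter $L$ has at least $L^{d-1}$ vertices, the number of candidate shapes is at most $e^{CL^{d-1}}$, and the probability a given one is contained in $T(f)$ is at most $\epsilon_d^{cL^{d-1}}$, which is summable), only finitely many connected components of $T(f)$ disconnect any given finite vertex set from infinity.

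\emph{Conclusion and obstacle.} For any breakup $X$ seen from a finite set $V$, each connected component of $X_*^{+5}$ is contained in $\Lambda^{+6} \cup T(f)^{+5}$. An infinite such component would force an infinite cluster of $T(f)$ (since $\Lambda$ is finite), excluded by (a); each finite component of $X_*^{+5}$ surrounds $V$, but only finitely many surrounding clusters of $T(f)^{+5}$ exist by (b). Therefore $X_*$ is almost surely finite. The main technical point is the subcriticality estimate in Step~2, but it is immediate under the very generous dimension assumption~\eqref{eq:dim-assump}, which makes $\epsilon_d$ exponentially small while $p_c(\Z^d)$ is only polynomially small in $d$.
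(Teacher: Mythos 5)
Your Step 1 is essentially sound: using \eqref{eq:breakup-1} together with the fact that $X_P$ and $Z_P(f)$ are regular $P$-even sets (hence determined by their $P$-odd vertices) does give $X_P\cap X_*^{+4}=Z_P(f)\cap X_*^{+4}$, and hence an inclusion of $X_*$ minus a bounded neighborhood of $\Lambda$ into $T(f)=\bigcup_{P\neq P_0}Z_P(f)^+$; this matches the reduction the paper makes via the notion of a ``double pattern''. Two small repairs are needed there: you must stay at distance at least $3$ from $\Lambda$ (for $v$ at distance $2$ the relevant neighborhoods $N(u)$, $u\in v^+$, may meet $\Lambda$, where $f$ is neither uniform nor independent, so your displayed bound on $\Pr(v\in Z_P(f))$ fails as stated), and an infinite component of $T(f)^{+5}$ only yields an infinite cluster of $T(f)$ in a bounded power of $\Z^d$, not in $\Z^d$ itself.

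The genuine gap is in Step 2. The Liggett--Schonmann--Stacey theorem does not dominate your finite-range-dependent field by i.i.d.\ Bernoulli ``of density comparable to $\epsilon_d$'': for a $k$-dependent field with marginals at most $p$ the domination parameter is of order $p^{1/(D+1)}$, where $D$ is the degree of the dependency graph (here polynomial in $d$, of order $d^6$), and this loss is unavoidable in general (block-constant fields show the domination density must be at least $p^{1/|B|}$ for blocks $B$ of the dependence range). With $p=\epsilon_d\approx e^{-cd/q}$ this gives a density of the form $e^{-c/(qd^5)}$, i.e.\ close to $1$, vastly above $p_c(\Z^d)\asymp 1/d$, so no subcriticality follows. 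The backup Peierls computation has the same problem plus two more: the events $\{v\in T(f)\}$ over the vertices of a separator are not independent, so $\epsilon_d^{cL^{d-1}}$ is unjustified (passing to a $7$-separated subset costs a factor $d^{-O(1)}$ in the exponent, after which the shape entropy $e^{Cn\log d}$ wins), and the isoperimetric claim that a separator of diameter $L$ has at least $L^{d-1}$ vertices is false (boundaries of thin tubes have size linear in $L$). The paper's proof avoids all of this by a first-moment argument keyed to the diameter rather than the volume: from an infinite component, or from many nested surrounding components, of $Z_*^{+5}$ one extracts a long path in $(\Z^d\setminus\Lambda^+)^{\otimes 50}$ whose vertices have pairwise disjoint closed neighborhoods and each of which is in a double pattern; sequential conditioning (legitimate because $f$ is i.i.d.\ outside $\Lambda$ and the neighborhoods are disjoint) bounds the probability of a fixed such path of length $n$ by $2^{qn}\left(\tfrac{q-1}{q+1}\right)^{2dn}$, and the union over the at most $d^{Cn}$ such paths is then killed by \eqref{eq:dim-assump}, since the per-step gain $e^{-cd/q}$ beats the per-step entropy $e^{C\log d+q\log 2}$. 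Your argument would need to be replaced by essentially this skeleton estimate; the generic domination route cannot be made to work in this regime.
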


We now discuss the quantitative bound on finite breakups.
To this end, denote by $\breakups$ the collection of atlases which have a positive probability of being a breakup and, for integers $L,M,N \ge 0$, denote
\[ \breakups_{L,M,N} := \left\{ X \in \breakups ~:~ \Big|\bigcup_P \partial X_P\Big|=L,~|X_\overlap|=M,~|X_\bad|=N \right\} .\]

\begin{prop}\label{prop:prob-of-breakup-associated-to-V}
	For any finite $V \subset \Z^d$ and any integers $L,M,N \ge 0$, we have
	\[ \Pr_{\Lambda,P_0}(\text{there exists a breakup in }\breakups_{L,M,N}\text{ seen from }V) \le 2^{|V|} \cdot \exp\left(- \tfrac{c}{q^3(q+\log d)} \big( \tfrac{L}{d}+\tfrac{M}{q}+\tfrac{N}{q^2} \big) \right) .\]
\end{prop}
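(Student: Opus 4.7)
The strategy I would follow is the coarse-graining/entropy scheme sketched in Section~\ref{sec:proof-overview}. In outline: group all breakups into a small family of approximations, bound the probability that a given approximation is realized by any breakup via a repair transformation together with Shearer's inequality, and conclude by a union bound.

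First I would invoke (a suitable form of) Proposition~\ref{prop:family-of-odd-approx} to produce a family $\cA_{L,M,N}$ of approximations such that every $X\in\breakups_{L,M,N}$ seen from $V$ admits some $A\in\cA_{L,M,N}$ as approximation, and such that
\[ |\cA_{L,M,N}| \le 2^{|V|}\cdot\exp\Bigl(C\tfrac{\log^{3/2} d}{\sqrt{d}}\bigl(\tfrac{L}{d}+\tfrac{M}{q}+\tfrac{N}{q^2}\bigr)\Bigr). \]
The factor $2^{|V|}$ absorbs the combinatorics of specifying which vertices of $V$ are separated from infinity by each connected component of $X_*^{+5}$; the main factor encodes the classical rigidity of regular even/odd sets, sharpened to handle breakups (which combine several $X_P$ simultaneously). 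A union bound then reduces matters to showing that $\Omega_A := \{\exists X\in\breakups_{L,M,N}\text{ approximating }A\}$ satisfies $\Pr_{\Lambda,P_0}(\Omega_A) \le \exp\bigl(-c'(L/d+M/q+N/q^2)/(q^2(q+\log d))\bigr)$, with rate slightly stronger than in the statement so as to absorb the approximation cost using~\eqref{eq:dim-assump}.

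Next, fix $A$ and apply the one-to-many operation of Section~\ref{sec:repair transformation} to each $f\in\Omega_A$: delete $f$ on the unknown region (essentially $X_*^{+5}$); on each connected component of $X_P\setminus X_*$ apply a color bijection $\varphi\colon[q]\to[q]$ sending $P$ to $P_0$, combined, when $P\in\phase_1$, with a unit lattice shift in a fixed direction to restore parity compatibility (the Dobrushin-type shift, which exploits regularity of the $X_P$); finally, refill the deleted region with any $P_0$-pattern proper against the now-fixed neighbors. The refill step contributes a gain of $|P_{0,\bdry}|^\alpha|P_{0,\inner}|^\beta$, where $\alpha,\beta$ count even and odd vertices of the unknown region, and after accounting for the controlled bijection multiplicities, bounding $\Pr_{\Lambda,P_0}(\Omega_A)$ reduces to an upper bound on the Shannon entropy $H$ of $f$ restricted to the unknown region under $\Pr_{\Lambda,P_0}(\cdot\mid\Omega_A)$.

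The hard step is showing $H$ falls short of $\alpha\log|P_{0,\bdry}|+\beta\log|P_{0,\inner}|$ by the desired amount. Applying Shearer's inequality in the symmetric form~\eqref{eq:entropy-bound-overview} decomposes $H$ into local contributions at vertices within distance one of the unknown region. The na\"\i ve per-vertex bound $\log(\lfloor q/2\rfloor\lceil q/2\rceil)+O(q\log 2/d)$ matches the refill gain exactly, so the whole argument hinges on extracting per-vertex and per-edge \emph{savings} from the structural data carried by the breakup. For $v\in X_\overlap$, constraints \eqref{eq:breakup-prop-even}--\eqref{eq:breakup-prop-odd} restrict $(f(v),f|_{N(v)})$ to a product set of size at most $\lfloor q/2\rfloor\lceil q/2\rceil-1$ (treating the $(P$-even,$Q$-even$)$ and $(P$-even,$Q$-odd$)$ sub-cases separately as in Section~\ref{sec:proof-overview}), yielding the $M/q$ contribution; for $v\in X_\bad$, \eqref{eq:breakup-prop-bad} combined with a probability bound on the possible ``unbalanced'' local configurations yields the smaller $N/q^2$ term; and for edges $(u,v)\in\dpartial X_P$, \eqref{eq:breakup-prop-bdry-X_P} forces $f(u)\in P_\bdry$ and $f(N(v))\not\subset P_\bdry$, which after expanding $\Ent(F|_{N(v)}\mid F(N(v)))$ as a sum over neighbors of $v$ yields a per-edge saving producing the $L/d$ term. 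I expect the main obstacle to be the delicate bookkeeping at $\intextB X_*$: averaging the Shearer identities over both sublattices, tracking and cancelling boundary contributions, and avoiding double-counting of savings between overlap vertices and adjacent boundary edges, all while preserving the uniform $q$-dependent saving required to absorb the approximation cost under~\eqref{eq:dim-assump}.
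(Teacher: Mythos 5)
Your top-level scheme is exactly the one the paper follows: produce the family of approximations of Proposition~\ref{prop:family-of-odd-approx}, bound the probability that a fixed approximation is realized by some breakup, and conclude by a union bound, absorbing the enumeration cost via~\eqref{eq:dim-assump}. The genuine gap is in the middle step, where you treat the event $\Omega_A$ as if the breakup $X$ were known. The repair map and, more importantly, the entropy savings you invoke (overlap vertices, bad vertices, edges of $\dpartial X_P$) are attached to locations that the approximation $A$ does \emph{not} determine: by~\ref{it:approx-A_P} the sets $X_\overlap$, $X_\bad$ and the boundaries $\intextB X_P$ may be hidden entirely inside the unknown region $A^{**}$, whose size is allowed to be of order $L\log d/\sqrt{d}$ by~\ref{it:approx-unknown-size} --- vastly larger than the total saving $\tfrac{c}{q^3(q+\log d)}\tfrac{L}{d}$ you must extract. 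So running Shearer with only the constraints that are deterministic given $A$ does not produce the required deficit, and running it with constraints read off from $X$ requires knowing $X$, which is exactly what you cannot afford to enumerate. The paper resolves this with a dichotomy in the proof of Proposition~\ref{prop:prob-of-odd-approx}: either many unknown vertices are restricted or non-dominant for \emph{every} compatible breakup, in which case Lemma~\ref{lem:bound-on-pseudo-breakup} applies with regions built from $A$ alone; or else Lemma~\ref{lem:breakup-recovery} shows the breakup is reconstructible from $A$ outside a small, cheaply enumerable set of vertices, and one falls back on the per-breakup bound of Proposition~\ref{prop:prob-of-given-breakup} plus a union bound over the few compatible breakups. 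Your proposal has no analogue of this reconstruction step, and the per-breakup bound does not even appear as a separate ingredient, so the argument stalls at this point.

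A second, related gap is your treatment of $X_\bad$. The deterministic consequence~\eqref{eq:breakup-prop-bad} only excludes $P$-odd bad vertices whose neighborhood is in the $P$-pattern; a $P$-even bad vertex may have $|f(N(v))|\in\{\lfloor\tfrac q2\rfloor,\lceil\tfrac q2\rceil\}$, a balanced neighborhood, and no incident restricted edges, so no per-vertex saving is available from the breakup (or approximation) data alone. The paper obtains the $N/q^2$ term only after recording auxiliary sparse sets $(V_P)$ approximating the sets $U_P(f)$ of such vertices, proving that with this extra conditioning enough unbalanced/non-dominant/restricted structure appears (Lemma~\ref{lem:lower-bound-on-size-of-restricted-with-V}), and showing the enumeration cost of the $(V_P)$, of order $\exp(CNq(q+\log d)(\log d)/d)$, is negligible (Lemma~\ref{lem:family-of-strong-odd-approx}). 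Your phrase ``a probability bound on the possible unbalanced local configurations'' points in the right direction but does not supply this mechanism, and without it the claimed $N$-dependence of the exponent is not justified.
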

This is the main technical proposition of this paper. An overview of the tools to prove the proposition is given in the rest of Section~\ref{sec:high-level-proof}, with the detailed proofs appearing in Section~\ref{sec:breakup}, Section~\ref{sec:shift-trans} and Section~\ref{sec:approx}.

It is now a simple matter to deduce Theorem~\ref{thm:long-range-order}.

\begin{proof}[Proof of Theorem~\ref{thm:long-range-order}]
	Suppose that $v$ is not in the $P_0$-pattern.
	Lemma~\ref{lem:existence-of-breakup} implies the existence of a non-trivial breakup $X$ seen from $v$.
	By Lemma~\ref{lem:no-infinite-breakups}, we may assume that $X$ is finite so that $X \in \breakups_{L,M,N}$ for some $L,M,N \ge 0$. Since $X$ is also non-trivial, some set in $\{ X_P,X_P^c \}_P$ is both non-empty and not $\Z^d$. Recalling~\eqref{eq:def-atlas} and applying Lemma~\ref{lem:boundary-size-of-odd-set} (or its analogue for even sets) to any such set shows that $L\ge d^2$.
	Therefore, by Proposition~\ref{prop:prob-of-breakup-associated-to-V},
	\[ \Pr_{\Lambda,P_0}\big(v\text{ is not in the $P_0$-pattern}\big) \le 2 \sum_{\substack{L \ge d^2,\,M,N \ge 0}} \exp\left(- \tfrac{c}{q^3(q+\log d)} \big( \tfrac{L}{d}+\tfrac{M}{q}+\tfrac{N}{q^2} \big) \right) .\]
	Using~\eqref{eq:dim-assump}, the desired inequality follows (perhaps with a larger constant $C$ in~\eqref{eq:dim-assump}).
\end{proof}

\subsection{Unlikeliness of specific breakups}

In light of the bound in Proposition~\ref{prop:prob-of-breakup-associated-to-V}, it is natural to first prove that a specific atlas is unlikely to be a breakup. Precisely, we would like to show the following.

\begin{prop}\label{prop:prob-of-given-breakup}
	For any $X \in \breakups_{L,M,N}$, we have
	\[ \Pr_{\Lambda,P_0}(X\text{ is a breakup}) \le \exp\left(- \tfrac{c}{q} \big( \tfrac{L}{d}+\tfrac{M}{q}+\tfrac{N}{q^2} \big)\right) .\]
\end{prop}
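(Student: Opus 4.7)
The plan is to bound $\Pr_{\Lambda,P_0}(\Omega)$ for $\Omega:=\{f:X\text{ is a breakup}\}$ via a one-to-many ``repair'' map from $\Omega$ into the support of $\Pr_{\Lambda,P_0}$, and to estimate the ensuing entropy gain/loss. Following the recipe of Section~\ref{sec:repair transformation}, on $f\in\Omega$ I would (i) forget the colors on $X_*$; (ii) on each connected component $D$ of $X_P\setminus X_*$, apply a color permutation $\varphi_P$ sending $P$ to $P_0$, additionally shifting the restriction $f|_D$ by $(1,0,\dots,0)$ when $P\in\phase_1$ (so that $P$-even vertices are carried to even vertices, matching $P_0$); (iii) resample the values on $X_*$ uniformly among proper extensions respecting the $P_0$-boundary condition. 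Step (ii) is measure preserving; step (iii) contributes an entropy gain of essentially $|\Even\cap X_*|\log|A_0|+|\Odd\cap X_*|\log|B_0|$ (up to boundary corrections at $\intB X_*$ that will need bookkeeping). Writing $H_{\text{gain}}$ for this gain and $H_{\text{loss}}$ for $\Ent(f|_{X_*}\mid f|_{X_*^c},\Omega)$ lost in step (i), the repair map yields
\[
\Pr_{\Lambda,P_0}(\Omega)\le\exp\bigl(H_{\text{loss}}-H_{\text{gain}}\bigr);
\]
the goal is to show $H_{\text{gain}}-H_{\text{loss}}\ge \tfrac{c}{q}\bigl(\tfrac{L}{d}+\tfrac{M}{q}+\tfrac{N}{q^2}\bigr)$. (I would package this comparison into the general tool mentioned in Section~\ref{sec:shift-trans} as Lemma~\ref{lem:bound-on-pseudo-breakup}.)

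To control $H_{\text{loss}}$, I extend $f|_{X_*}$ to a random field $F$ on $\Z^d$ (taking a dummy symbol off $X_*$), and apply Shearer's inequality with the covering $\{N(v):v\in\Odd\}$ (each vertex covered $k=2d$ times), then symmetrically with $\{N(v):v\in\Even\}$, averaging the two bounds exactly as in~\eqref{eq:entropy-bound-overview}:
\[
H_{\text{loss}}\le\tfrac12\sum_{v}\Bigl[\tfrac{1}{2d}\Ent\bigl(F(N(v))\bigr)+\tfrac{1}{2d}\Ent\bigl(F|_{N(v)}\mid F(N(v))\bigr)+\Ent\bigl(F(v)\mid F(N(v))\bigr)\Bigr].
\]
Only $v\in X_*^{+1}$ contribute, and the crude bounds $\tfrac{q\log 2}{2d}$ and $\log(\lfloor q/2\rfloor\lceil q/2\rceil)$ exactly match the per-vertex gain, so every bit of savings must come from local information encoded in the breakup.

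The savings are harvested vertex by vertex, using the four notions flagged at the end of Section~\ref{sec:proof-overview}. \emph{Overlap vertices} $v\in X_\overlap$: by~\eqref{eq:breakup-prop-even} applied in two distinct patterns $P,Q$, the pair $(F(v),F|_{N(v)})$ is constrained so that the $\Ent(F(v)\mid F(N(v)))$-type contribution is bounded by $\log(\lfloor q/2\rfloor\lceil q/2\rceil-1)$, saving $\gtrsim 1/q^2$ per vertex of $X_\overlap$, hence $\gtrsim M/q^2$ in total. \emph{Bad vertices} $v\in X_\bad$: by~\eqref{eq:breakup-prop-bad}, $F(N(v))$ is never contained in any $P_\bdry$, ruling out the extremal uniform-on-$A\times B^{2d}$ distributions; a case analysis on whether the neighborhood is \emph{unbalanced} ($F(N(v))$ far from size $\lceil q/2\rceil$) or balanced but \emph{non-dominant} produces a savings of order $1/q^3$ per bad vertex, for a total of $\gtrsim N/q^3$. \emph{Restricted edges} $(u,v)\in\dpartial X_P$: \eqref{eq:breakup-prop-bdry-X_P} forces $f(u)\in P_\bdry$ and $f(N(v))\not\subset P_\bdry$; when term (II) is expanded as a sum over neighbors, the contribution at $u$ absorbs a constant entropy deficit, which — divided by $2d$ in Shearer — gives savings of order $1/(qd)$ per edge, for a total of $\gtrsim L/(qd)$. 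Combining the three yields the announced bound.

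The main obstacle is the last bullet: extracting a per-edge entropy deficit from $\partial X_P$ rather than a per-vertex one from $\intextB X_P$, and reconciling the boundary corrections that arise when Shearer's inequality is applied to $f$ restricted to the irregular set $X_*$ (vertices of $\intextB X_*$ need their conditional entropies carefully accounted for, and the gain $H_{\text{gain}}$ has parallel boundary contributions from step (iii) that must be lined up to cancel them). Equally delicate is the treatment of bad vertices: the conditional entropies in term (II) are averages over realizations of $F(N(v))$, and good control requires translating the qualitative statement ``$F(N(v))$ is not contained in any $P_\bdry$'' into quantitative probability bounds on the various local configurations. I would organize this by defining explicitly the sets of ``unbalanced'', ``restricted'', ``unique-pattern'' and ``non-dominant'' vertices/edges, proving separate local entropy lemmas for each, and then summing with care so that each vertex/edge of $X_*$ is charged exactly once.
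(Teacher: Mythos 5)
Your overall architecture (repair map, Shearer with the covering $\{N(v)\}$, and harvesting local savings from overlap vertices, bad vertices and boundary edges, packaged through Lemma~\ref{lem:bound-on-pseudo-breakup}) is the same as the paper's, and the accounting you propose for $X_\overlap$ and for $\dpartial X_P$ matches what is actually done (Lemma~\ref{lem:lower-bound-on-size-of-restricted}). The gap is in the bad-vertex term. You claim a savings of order $1/q^3$ per vertex of $X_\bad$ directly from the deterministic information~\eqref{eq:breakup-prop-bad}, via a dichotomy ``unbalanced versus balanced-but-non-dominant''. This dichotomy is not exhaustive: \eqref{eq:breakup-prop-bad} constrains only $P$-\emph{odd} vertices of $X_\bad$, and there is no analogue for $P$-even vertices. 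Concretely, a vertex $v\in X_\bad$ may satisfy $f(N(v))=P_\inner$ exactly for some dominant pattern $P$ for which $v$ is $P$-even (when $q$ is even, $|f(N(v))|=q/2$ at an even vertex; when $q$ is odd, $|f(N(v))|=\lceil q/2\rceil$). Such a vertex is dominant, can have a perfectly balanced neighborhood, and need not be incident to any restricted edge, so the local term $\textup{I}+\textup{II}$ at $v$ shows no deficit at all, and $k(\Omega)$ in Lemma~\ref{lem:bound-on-pseudo-breakup} can fail to be of order $N/q^2$ (it is only guaranteed to dominate $L/d$ and $M/q$). Since $X_\bad$ could consist almost entirely of such vertices, your argument as written cannot produce the $N/q^2$ (or even $N/q^3$) term in the exponent.

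The paper's proof fixes exactly this point by revealing extra information rather than relying on the breakup alone: it defines $U_P(f)$, the set of $P$-even bad vertices with $f(N(u))=P_\inner$, records a sparse witness collection $V=(V_P)_P$ of size $O(Nq\log d/d)$ chosen via the fractional-cover lemma (Lemma~\ref{lem:existence-of-covering2}), and restricts to subevents $\Omega(V)\subset\Omega$ on which~\eqref{eq:iso-approx} holds. On each $\Omega(V)$ one can show (Lemma~\ref{lem:lower-bound-on-size-of-restricted-with-V}) that every bad vertex is either unbalanced, or within $N_{d/3q}$ of non-dominant vertices, or incident to many restricted edges, giving $k(\Omega(V))\ge \tfrac{L}{3d}+\tfrac{M}{6q}+\tfrac{N}{18q^2}$; the enumeration cost of the family of possible $V$ is $\exp(CNq(q+\log d)\log d/d)$ (Lemma~\ref{lem:family-of-strong-odd-approx}), which is negligible under~\eqref{eq:dim-assump}, and one concludes by a union bound over $V$. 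Some device of this kind (conditioning on and enumerating a small amount of extra data about the configuration on $X_\bad$, or otherwise quantitatively controlling the probability that many bad vertices sit in exact $P_\inner$ neighborhoods) is indispensable; without it the claimed $N$-dependence does not follow. A secondary bookkeeping point: to absorb the error term $\tfrac{q}{d}|S\setminus S^\Omega_\unique|$ in Lemma~\ref{lem:bound-on-pseudo-breakup} you also need that every vertex of $X_*\setminus X_\bad$ has a unique pattern (Lemma~\ref{lem:vertices-with-unique-pattern}), which your sketch does not address explicitly, though it is a much smaller omission.
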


\subsection{Approximations}\label{sec:approx-overview}

It is temping to conclude that breakups seen from $V$ are unlikely (as stated in Proposition~\ref{prop:prob-of-breakup-associated-to-V}) by summing the bound of Proposition~\ref{prop:prob-of-given-breakup} over all atlases in $\breakups_{L,M,N}$ that are seen from $V$. Unfortunately, this approach fails as the size of the latter collection exceeds the reciprocal of the bound of Proposition~\ref{prop:prob-of-given-breakup}.
To overcome this obstacle we employ a delicate coarse-graining scheme of the possible breakups according to their rough features. For this we crucially rely on the geometric restriction~\eqref{eq:def-atlas}.

Let $A = ((A_P)_{P \in \phasedom}, A^*,A^{**})$ be a collection of subsets of $\Z^d$ such that each $A_P$ is $P$-even and $A^* \subset A^{**}$. For notational convenience, we write $Q \simeq P$ if $Q,P \in \phase_i$ for some $i \in \{0,1\}$.
We say that $A$ is an \emph{approximation of an atlas $X$} if the following conditions hold for all $P \in \phasedom$:
\begin{enumerate}[label=\text{(A\arabic*)},ref=\text{(A\arabic*)}]
  \item \label{it:approx-A_P} $A_P \subset X_P \subset A_P \cup (\Odd_P \cap A^*) \cup (\Even_P \cap A^{**})$.
 \item \label{it:approx-A*_P} $\Odd_P \cap A^* \subset N_d(\bigcup_{Q \simeq P} A_Q)$.
 \item \label{it:approx-unknown-size} $|A^{**}| \le \tfrac{C\log d}{\sqrt{d}}\cdot \big|\bigcup_Q \partial X_Q\big|$.
 \item \label{it:approx-unknown-location} $A^{**} \subset \bigcup_Q (\intextB X_Q)^{+3}$.
\end{enumerate}

Since $A^*\subset A^{**}$, property \ref{it:approx-A_P} implies that $A_P\subset X_P\subset A_P\cup A^{**}$ for all $P$ (See Figure~\ref{fig:approx} for an illustration of these containments). In words, the sets $A_P$ indicate vertices which are guaranteed to be in $X_P$ while the set $A^{**}$ indicates vertices whose classification into the various $X_P$ is not fully specified by the approximation. The distinguished subset $A^*\subset A^{**}$ conveys additional information through \ref{it:approx-A_P} and \ref{it:approx-A*_P}: A $P$-odd vertex is either guaranteed to belong to $X_P$ (if it belongs to $A_P$), is guaranteed not to belong to $X_P$ (if it does not belong to $A_P \cup A^*$), or at least half of its neighbors belong to $\bigcup_{Q \simeq P} A_Q$. The other two properties further restrict the ``missing information'', with~\ref{it:approx-unknown-size} ensuring that $A^{**}$ is not too large and~\ref{it:approx-unknown-location} ensuring that $A^{**}$ is only present near the boundaries of the~$X_P$'s.

The following proposition shows that one may find a small family which contains an approximation of every atlas seen from a given set.

\begin{prop}\label{prop:family-of-odd-approx}
	For any integers $L,M,N \ge 0$ and any finite set $V \subset \Z^d$, there exists a family $\cA$ of approximations of size
	\[ |\cA| \le 2^{|V|} \cdot \exp\left(CL \tfrac{(q+\log d)\log d}{d^{3/2}} + C(M+N) \tfrac{ \log^2 d}{d} \right) \]
	such that any $X \in \breakups_{L,M,N}$ seen from $V$ is approximated by some element in $\cA$.
\end{prop}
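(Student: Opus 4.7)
The plan is to extend the Sapozhenko--Galvin--Peled coarse-graining scheme for approximating a single regular odd (or even) set to the multi-pattern setting of a breakup, while keeping the $q$-dependence linear in the exponent rather than exponential, as would result from a naive per-pattern union bound. The central idea is to use a single covering set shared across all patterns, with each covering vertex carrying a label indicating its associated pattern; the cost of the label is $\log|\phasedom|\le Cq$ per vertex, rather than a per-pattern product.

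Fix $X \in \breakups_{L,M,N}$ seen from $V$. For each $P\in\phasedom$, the set $X_P$ is a regular $P$-even set with $L_P:=|\partial X_P|$, and Lemma~\ref{lem:boundary-size-of-odd-set} bounds $|\intB X_P|$ in terms of $L_P$. Following the approach of \cite{peled2010high, galvin2012phase}, apply Lemma~\ref{lem:existence-of-covering2} with a suitable threshold $t = t(d)$ to extract a covering $T_P \subset \intB X_P$ such that $N(T_P)$ captures the vertices of $\intB X_P$ having many neighbors in $X_P$, with the sparse portion collected into an uncertain region. Define $A_P$ to be the set of vertices which, given the labeled covering $\{(T_P, P)\}_{P\in\phasedom}$ and the breakup properties~\eqref{eq:breakup-1}--\eqref{eq:breakup-prop-bdry-X_P}, are unambiguously determined to lie in $X_P$; set
\[ A^{**} := \Big[\big(\bigcup_P \intextB X_P\big) \setminus \big(\bigcup_P N(T_P)\big)\Big] \cup (X_\overlap)^{+3} \cup (X_\bad)^{+3}, \]
and $A^* := \{v \in A^{**} : v \in N_d(\bigcup_{Q \simeq P(v)} A_Q)\}$, where $P(v)$ denotes the parity class of $v$. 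Properties~\ref{it:approx-A_P}--\ref{it:approx-unknown-location} hold by construction, with $|A^{**}| \le CL\log d/\sqrt{d} + C(M+N)$ obtained by combining the covering-miss estimate from Lemma~\ref{lem:sizes} with direct bounds on the thickened overlap and bad regions.

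The family $\cA$ is then bounded by encoding: (a) a subset of $V$ identifying the relevant connected components of $X_*^{+5}$, contributing $2^{|V|}$; (b) the labeled covering $\bigsqcup_P T_P$, contributing $\exp(CqL(\log d/d)^{3/2})$ via a bound on the total covering size together with $\log|\phasedom| \le Cq$ for the labels; and (c) the specification of $A^{**}$ beyond the covering's predictions, contributing $\exp(C(M+N)\log^2 d/d)$ via Lemma~\ref{lem:number-of-connected-graphs} applied to the connected pieces of $X_\overlap^+$ and $X_\bad^+$.

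The principal obstacle is to keep the $q$-dependence linear in the exponent: because $|\phasedom|$ may be as large as $O(2^q)$, a separate approximation per pattern would give $\exp(|\phasedom|\cdot CL(\log d/d)^{3/2})$, which is exponential in $q$ and useless for~\eqref{eq:dim-assump}. The shared labeled-covering resolution described above pays the $q$-cost only per covering vertex, with the covering size already carrying the factor $(\log d/d)^{3/2}$. The delicate technical point is that the edges of $\partial X_P$ may, \emph{a priori}, be shared across many patterns, so one must exploit the structural rigidity of the breakup---in particular that vertices outside $X_\overlap \cup X_\bad$ belong to a unique $X_P$---to argue that the ``pattern multiplicity'' of the boundary is bounded away from overlap/bad regions, with those regions' contributions absorbed into the $M$- and $N$-dependent terms.
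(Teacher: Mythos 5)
Your overall architecture (a sparsified covering/separating set for the boundaries, anchored to $V$ at cost $2^{|V|}$, then upgraded to approximations) matches the paper's, but two steps as you describe them do not work. First, your construction does not produce approximations in the sense required: by putting $(X_\overlap)^{+3}\cup(X_\bad)^{+3}$ into $A^{**}$ you violate both \ref{it:approx-unknown-size} and \ref{it:approx-unknown-location}. There is no bound of $M+N$ by $CL\log d/\sqrt{d}$ (isoperimetry only controls $|A|$ by $|\partial A|^{d/(d-1)}$, so overlap/bad regions can be far larger than the boundary budget), and a vertex deep inside $X_P\cap X_Q$ or deep inside $X_\bad$ need not be within distance $3$ of any $\intextB X_Q$; indeed your own estimate $|A^{**}|\le CL\log d/\sqrt{d}+C(M+N)$ is already not \ref{it:approx-unknown-size}, and these two properties are used essentially in Proposition~\ref{prop:prob-of-odd-approx} (e.g.\ in~\eqref{eq:A**-U-S-sizes} and in $S\subset X_*^{+4}$). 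Relatedly, your cost accounting for step (c) is off by a factor of order $d/\log d$: enumerating the connected pieces of $X_\overlap^+$ and $X_\bad^+$ via Lemma~\ref{lem:number-of-connected-graphs} costs $\exp(C(M+N)\log d)$, which can never be beaten by the probability gain $\exp(-c(M/q+N/q^2)/(q^3(q+\log d)))$. The paper never records $X_\overlap$ or $X_\bad$ at all -- they are recovered, up to $A^{**}$, from the $A_P$'s as complements/intersections; $M$ and $N$ enter the enumeration only through a $d$-fold sparsified anchor set $U''\subset X_\bad\cup X_\overlap$ of size $C(M+N)\log d/d$ produced by Lemma~\ref{lem:existence-of-covering2}, whose sole purpose is to make the candidate sets connected and tied to $V$ so that Lemma~\ref{lem:number-of-disconnecting-sets} applies; that is where $C(M+N)\log^2 d/d$ comes from.

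Second, the mechanism you propose for keeping the $q$-dependence linear -- one pattern label per covering vertex -- does not encode the information that actually threatens the blow-up. The dangerous objects are the components $T$ of the complement of the separating set lying deep inside overlap regions: for these one must record the whole membership set $\{P : T\subset X_P\}$, and such a $T$ is not adjacent to any covering vertex, so labels on covering vertices say nothing about it; a priori this set ranges over subsets of $\phasedom$, i.e.\ $2^{|\phasedom|}=2^{2^{\Theta(q)}}$ possibilities. The paper's resolution is the structural fact~\eqref{eq:possible-patterns-a-vertex-can-be-in}: for a breakup, $\{P : v\in X_P\}$ is always of the form $\{P\in\phase_i : I\subset P_\bdry\}$ for some set of colors $I\subset[q]$, hence at most $2^q$ possibilities (the ``rule of rank $q$''), combined with the isoperimetric fact that each large component of $W^c$ has boundary at least $d^2$, so there are at most $2|W|/d$ of them; recording one rule element per large component then costs only $\exp(Cq|W|/d)$ (Lemma~\ref{lem:family-of-small-approx-from-separating-set}). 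Your remark that vertices outside $X_\overlap\cup X_\bad$ lie in a unique $X_P$ does not help precisely where the problem lives, namely inside $X_\overlap$, whose size $M$ you can afford only at about $q\log d/d$ bits per vertex. Finally, note that \ref{it:approx-A_P} demands that every unknown $P$-odd vertex of $X_P$ lie in $A^*$, i.e.\ have at least $d$ known neighbours in $\bigcup_{Q\simeq P}A_Q$; with your definition of $A^*$ this is a claim requiring proof (the paper arranges it through the maximal-$W$ construction in Lemma~\ref{lem:family-of-approx-from-small-approx}), not something that holds by construction.
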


Of course, working with approximations, finding a suitable modification of Proposition~\ref{prop:prob-of-given-breakup} becomes a more complicated task.
The following proposition provides a similar bound on the probability of having a breakup which is approximated by a given approximation (its proof actually uses Proposition~\ref{prop:prob-of-given-breakup} as an ingredient).

\begin{prop}\label{prop:prob-of-odd-approx}
	For any approximation $A$ and any integers $L,M,N \ge 0$, we have
	\[ \Pr_{\Lambda,P_0}(A\text{ approximates some breakup in }\breakups_{L,M,N}) \le \exp\left(- \tfrac{c}{q^3(q+\log d)} \big( \tfrac{L}{d}+\tfrac{M}{q}+\tfrac{N}{q^2} \big) \right) .\]
\end{prop}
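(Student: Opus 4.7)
The plan is to mirror the proof of Proposition~\ref{prop:prob-of-given-breakup} with the approximation $A$ playing the role that a fixed breakup $X$ played there. The key conceptual point is that properties~\ref{it:approx-A_P}--\ref{it:approx-unknown-location} encode enough of the structure of any breakup approximated by $A$ to drive the same repair transformation and Shearer-based entropy estimates that yielded the fixed-breakup bound, without ever having to know $X$. Rather than pay $2^{\Theta(q|A^{**}|)}$ in a union bound over compatible breakups (which would be fatal, cf.\ Proposition~\ref{prop:family-of-odd-approx}), both Proposition~\ref{prop:prob-of-given-breakup} and Proposition~\ref{prop:prob-of-odd-approx} should be deduced as special cases of the single underlying estimate of Lemma~\ref{lem:bound-on-pseudo-breakup}, after recasting the event ``$A$ approximates some $X \in \breakups_{L,M,N}$'' as a pseudo-breakup event.

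\textbf{Repair map and entropy accounting.} Concretely, I would first define a damage set $D$ depending only on $A$, e.g.\ $D := \bigcup_P \intB A_P \cup A^{**} \cup (\bigcup_P A_P)^c$. By~\ref{it:approx-A_P}, every breakup $X$ approximated by $A$ satisfies $X_* \subset D$, and every connected component of $A_P \setminus D$ lies in $X_P \setminus X_*$, so $f$ follows the $P$-pattern there. For each $f$ in the event $\Omega_A$, I perform the one-to-many repair from Section~\ref{sec:repair transformation}: on each component of $A_P \setminus D$ apply a colour permutation sending $P_\bdry \to A_0$ and $P_\inner \to B_0$, and, for $P \in \phase_1$, a single-lattice-site shift; then erase $f$ on $D$ and refill independently with a uniform proper $P_0$-patterned coloring. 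The refill contributes an entropy gain of at least $|D| \log \lfloor q/2 \rfloor$ modulo explicit boundary corrections. The entropy \emph{loss} is bounded by applying the Shearer decomposition~\eqref{eq:entropy-bound-overview} to $F = f|_D$. The crucial input is~\ref{it:approx-A*_P}: each $v \in \Odd_P \cap A^*$ has at least $d$ neighbors in $\bigcup_{Q \simeq P} A_Q$, where $f$ follows a pattern $Q$ by~\ref{it:approx-A_P}; this pins $f(v)$ into $\bigcap_{Q \simeq P} Q_\inner$ and pushes the local contribution $\Ent(F(v) \mid F(N(v))) + \Ent(F(N(v)))/(2d)$ strictly below $\log(\lfloor q/2 \rfloor \lceil q/2 \rceil)$. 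Quantifying the gap via the four structural notions of Section~\ref{sec:Shearer_overview} (unbalanced neighborhoods, non-dominant vertices, restricted edges, unique pattern) produces per-vertex savings that sum to the claimed exponent $(c/(q^3(q+\log d)))(L/d + M/q + N/q^2)$, with the extra loss factor $1/(q^2(q+\log d))$ relative to Proposition~\ref{prop:prob-of-given-breakup} absorbing both the enlarged damage $|A^{**}| = O(L \log d / \sqrt{d})$ from~\ref{it:approx-unknown-size} and the cost of encoding the pattern decisions within $A^{**}$.

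\textbf{Main obstacle.} The most delicate point is the bookkeeping near the artificial interface $\intextB A_P$, which in general does not coincide with $\partial X_P$. To recover the correct $L/d$ dependence, the entropy loss must be attributed to \emph{edges} of $\partial X_P$ rather than to vertices of $\intextB A_P$ (otherwise one both double-counts across the different patterns meeting at a single vertex and loses a factor of~$d$); at the same time one must ensure uniform control over the hidden choices inside $A^{**}$, using property~\ref{it:approx-unknown-location} to localize any remaining ambiguity near the true~$X_*$. This balancing -- translating local entropy estimates on an irregular bounded region into a global bound in terms of the edge boundary, while accounting for information provided by~\ref{it:approx-A*_P} on only part of $A^{**}$ -- is what makes Lemma~\ref{lem:bound-on-pseudo-breakup} nontrivial, and is where the shift transformation of Section~\ref{sec:shift-trans} enters. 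Once that lemma is in place, Proposition~\ref{prop:prob-of-odd-approx} follows by checking that the data $((A_P)_P, A^*, A^{**})$ together with the sizes $L, M, N$ satisfy its hypotheses.
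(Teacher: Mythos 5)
Your setup (building $S$ and the $S_P$ out of $A$, checking the hypotheses of Lemma~\ref{lem:bound-on-pseudo-breakup}, and noting that the known vertices have a unique pattern, cf.\ Lemma~\ref{lem:vertices-with-unique-pattern-approx}) matches the first half of the paper's argument, but your central quantitative claim fails, and with it the whole single-pass entropy argument. You assert that \ref{it:approx-A*_P} ``pins $f(v)$ into $\bigcap_{Q \simeq P} Q_\inner$'' for $v \in \Odd_P \cap A^*$ and hence forces a strict local entropy deficit at every unknown vertex. It does not: the $d$ known neighbors promised by \ref{it:approx-A*_P} may all lie in a single $A_Q$, and if $v$ in fact lies in the interior of $X_Q$ then $f(N(v)) \subset Q_\bdry$ and $f(v) \in Q_\inner$ with balanced counts --- a full-entropy configuration with no deficit whatsoever. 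More fundamentally, the deficits that Proposition~\ref{prop:prob-of-given-breakup} extracts via Lemma~\ref{lem:lower-bound-on-size-of-restricted} are deficits relative to the event $\Omega_X$ that a \emph{specific} breakup occurs (e.g.\ every edge of $\dpartialrev X_P$ is restricted because one knows $f(u) \in P_\bdry$ there); relative to the larger event $\Omega_A$ that \emph{some} compatible breakup occurs, an interface hidden inside $A^{**}$ can sit at different locations with different patterns for different $g \in \Omega_A$, so those edges need not be restricted at all, and $k(\Omega_A)$ can be far too small for Lemma~\ref{lem:bound-on-pseudo-breakup} to give the claimed bound. Your fallback --- ``absorbing the cost of encoding the pattern decisions within $A^{**}$'' into the weakened exponent --- cannot work either: that cost is of order $q|A^{**}| \approx qL\log d/\sqrt{d}$ by \ref{it:approx-unknown-size}, which under \eqref{eq:dim-assump} dwarfs the available gain of order $L/(q^3(q+\log d)d)$; avoiding exactly this payment was the point of introducing approximations.

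What is missing is the paper's dichotomy and breakup-recovery step. The paper splits $\Omega_A$ according to whether the number of half-restricted/non-dominant vertices exceeds a small threshold proportional to $\tfrac{L}{d}+\tfrac{M}{q}+\tfrac{N}{q^2}$. On the large-deficit part, Lemma~\ref{lem:bound-on-pseudo-breakup} applies directly. On the complementary part, Lemma~\ref{lem:breakup-recovery} shows that for any $P$-odd $v \in A^*$ which is neither half-restricted nor non-dominant, membership $v \in X_P$ is \emph{determined} by the approximation (via $v \in N_{d/2}(A_P)$); hence every compatible breakup is reconstructible from $A$ plus the identity of the few exceptional vertices and their index sets $I_X(v)$ (at most $2^q$ choices each, by \eqref{eq:possible-patterns-a-vertex-can-be-in}). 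This makes the family $\cB$ of compatible breakups small enough that a union bound over $\cB$ using Proposition~\ref{prop:prob-of-given-breakup} closes the argument. Without this reconstruction mechanism --- i.e.\ without turning ``no local deficit'' into ``no local ambiguity'' --- your proposal has no way to handle unknown vertices that sit in the bulk of some $X_Q$, and the proof does not go through.
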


We are now ready to complete the proof of Proposition~\ref{prop:prob-of-breakup-associated-to-V}.

\begin{proof}[Proof of Proposition~\ref{prop:prob-of-breakup-associated-to-V}]
Let $\cA$ be a family of approximations as guaranteed by Proposition~\ref{prop:family-of-odd-approx}. Let $\Omega$ be the event that there exists a breakup in $\breakups_{L,M,N}$ seen from $V$ and let $\Omega(A)$ be the event that there exists a breakup in $\breakups_{L,M,N}$ seen from $V$ and approximated by $A$. Then, by Proposition~\ref{prop:family-of-odd-approx} and Proposition~\ref{prop:prob-of-odd-approx},
\[ \Pr(\Omega) \le \sum_{A \in \cA} \Pr(\Omega(A)) \le 2^{|V|} \cdot \exp\left(CL \tfrac{(q+\log d)\log d}{d^{3/2}} + C(M+N) \tfrac{ \log^2 d}{d} - \tfrac{c}{q^3(q+\log d)} \big( \tfrac{L}{d}+\tfrac{M}{q}+\tfrac{N}{q^2} \big) \right) .\]
The proposition now follows using that $q \le c d^{1/10} / \log^{1/5} d$ by~\eqref{eq:dim-assump}.
\end{proof}

The proofs of Proposition~\ref{prop:prob-of-given-breakup}, Proposition~\ref{prop:family-of-odd-approx} and Proposition~\ref{prop:prob-of-odd-approx} constitute the main technical parts of the paper.
Proposition~\ref{prop:family-of-odd-approx}, showing the existence of approximations, is proved in Section~\ref{sec:approx}.
The proofs of Proposition~\ref{prop:prob-of-given-breakup} and Proposition~\ref{prop:prob-of-odd-approx} make use of a repair transformation and entropy  methods, along the lines discussed in Section~\ref{sec:proof-overview}.
Key points of the analysis are introduced in the next section, while the detailed proofs are given in Section~\ref{sec:prob-of-given-breakup}, Section~\ref{sec:prob-of-approx} and Section~\ref{sec:shift-trans}.

\subsection{Bounding the probability of breakups and approximations}\label{sec:Shearer_overview}

In proving Proposition~\ref{prop:prob-of-given-breakup}, we roughly follow the plan discussed in Section~\ref{sec:repair transformation} and Section~\ref{sec:upper-bounds-on-entropy-loss}. The same approach is also used for the proof of Proposition~\ref{prop:prob-of-odd-approx}, but is more involved as less information is provided (only an approximation of $X$ is given). In following this approach, we are led to estimate entropic terms similar to the terms \textup{I} and \textup{II} appearing in~\eqref{eq:entropy-bound-overview}. The type of additional information we shall use in order to improve the naive bounds on such entropic terms is based on four notions --- \emph{non-dominant vertices}, vertices having \emph{unbalanced neighborhoods}, \emph{restricted edges} and vertices having a \emph{unique pattern} --- all of which we now define. These notions are somewhat abstract (and not directly related to a specific breakup) in order to allow sufficient flexibility for the proof of both propositions.

Let $f \colon \Z^d \to [q]$ be a proper coloring and let $\Omega$ be a collection of proper colorings of $\Z^d$.
The four notions implicitly depend on $f$ and~$\Omega$. Let $v \in \Z^d$ be a vertex and let $u$ be adjacent to $v$. Recall that $(v,u) \in \dpartial v$ is the directed edge from $v$ to $u$. We say that

\smallskip
\begin{itemize}[leftmargin=15pt]
	\item
	$v$ is \emph{non-dominant} (in $f$) if
	\begin{equation}
	|f(N(v))| \notin \big\{\lfloor \tfrac{q}{2} \rfloor, \lceil \tfrac{q}{2} \rceil \big\} \label{eq:restricted_not_dom}.
	\end{equation}
Thus, a vertex is non-dominant if the set of colors which appear on its neighbors does not determine a dominant pattern. See Figure~\ref{fig:breakup} for an illustration of this notion.

\smallskip
	\item
	$(v,u)$ is \emph{restricted} (in $(f,\Omega)$) if
\begin{equation}\label{eq:restricted_def}
\big\{g(u) : g \in \Omega,~ g(N(v)) = f(N(v)) \big\} \cup \big\{g(v) : g \in \Omega,~ g(N(v)) = f(N(v)) \big\} \neq [q] .
\end{equation}
Observe that $(v,u)$ is restricted if and only if
	\begin{align}
	\text{either}\qquad&\big\{g(u) : g \in \Omega,~ g(N(v)) = f(N(v)) \big\} ~\neq~ f(N(v)),\label{eq:restricted_A_def}\\
	\text{or}\qquad&\big\{g(v) : g \in \Omega,~ g(N(v)) = f(N(v)) \big\} ~\neq~ f(N(v))^c .\label{eq:restricted_B_def}
	\end{align}
Thus, roughly speaking, $(v,u)$ is restricted if upon inspection of the set of values which appear on the neighbors of $v$, one is guaranteed that either $u$ or $v$ cannot take all possible values which they should typically take, i.e., either $u$ cannot take some value in $f(N(v))$, or $v$ cannot take some value in $f(N(v))^c$.
Note that~\eqref{eq:restricted_B_def} actually implies that all edges in $\dpartial v$ are restricted as it does not involve $u$.

\smallskip
	\item
	$v$ has an \emph{unbalanced neighborhood} (in $f$) if
	\[ |\{ u\in N(v) : f(u) = i \}| \le \tfrac{d}{q} \qquad\text{for some }i \in f(N(v)) .\]
This condition states that at least one of the colors which appear on the neighbors of $v$ is significantly underrepresented in the sense that it appears substantially less than $\frac{2d}{|f(N(v))|}$ times.

\smallskip
	\item
    $v$ has a \emph{unique pattern} (in $\Omega$) if there exists $A \subset [q]$ such that the following holds for every $g \in \Omega$: if $g(N(v)) \neq A$ then either $v$ is non-dominant in $g$ or all edges in $\dpartial v$ are restricted in~$(g,\Omega)$.

%	$v$ has a \emph{unique pattern} (in $\Omega$) if there exists $A \subset [q]$ such that, for every $g \in \Omega$, either $g(N(v)) = A$ or $v$ is non-dominant in $g$ or all edges in $\dpartial v$ are restricted in~$(g,\Omega)$.

%In other words, there exists $A\subset [q]$, which may be thought of a side of some dominant pattern, so that if $f(N(v))\neq A$ then there is necessarily a significant loss of entropy at $v$.

\noindent Thus, $A$ is the unique choice for $f(N(v))$ which does not lead to a significant loss of entropy.

%\noindent We may alternatively term this notion as a unique high-entropy pattern or unique unrestricted pattern, the reason being that there is at most one choice for $g(N(v))$ which does not lead to a reduction of entropy at $v$ by making $v$ non-dominant or causing all edges in $\dpartial v$ to be restricted.
\end{itemize}

\smallskip

With Proposition~\ref{prop:prob-of-given-breakup} in mind, suppose that $\Omega$ is the set of all proper colorings having a given atlas $X$ as a breakup. To illustrate the above notions one may check, for instance, that (i) every edge in $\dpartialrev X_P$ is necessarily restricted, (ii) every edge incident to $X_\overlap$ is either restricted (in one of its two orientations) or else it must be incident to a non-dominant vertex, (iii) for even~$q$, every odd vertex in $X_\overlap\cup X_\bad$ must be non-dominant and (iv) every vertex in every $X_P$ has a unique pattern (including those in $X_\overlap$). Thus the above notions feature significantly on $\intextB X_P$, $X_\overlap$ and also on $X_\bad$ when $q$ is even, and are helpful in controlling the probability that $X$ is a breakup. In contrast, when $q$ is odd, difficulties arise in controlling the prevalence of the above notions in $X_\bad$ (this is due to the possibility of having vertices $v$ in $X_\bad$ satisfying that $|f(N(v))|=\lceil\frac{q}{2}\rceil$). To overcome this, we shall, in the course of proving Proposition~\ref{prop:prob-of-given-breakup}, divide $\Omega$ into a relatively small number of subevents on which we are ensured that the above notions also feature sufficiently in $X_\bad$ (the notion of unbalanced neighborhood has an important role in this part).

The following lemma, which is proved in Section~\ref{sec:shift-trans}, provides a general upper bound on the probability of certain events in terms of the above notions. Given a proper coloring $f$ of $\Z^d$, a collection $\Omega$ of proper colorings of $\Z^d$ and a subset $S \subset \Z^d$, let $S^f_\nondom$ be the set of vertices in $S$ which are non-dominant in $f$, let $S^f_\unbal$ be the set of vertices in $S$ which have unbalanced neighborhoods in $f$, let $S^{\Omega,f}_\rest$ be the set of directed edges $(v,u)$ with $v \in S$ which are restricted in $(f,\Omega)$, and let $S^\Omega_\unique$ be the set of vertices in $S$ which have a unique pattern in $\Omega$.

\begin{lemma}\label{lem:bound-on-pseudo-breakup}
	Let $S \subset \Z^d$ be finite and let $\{ S_P \}_{P \in \phasedom}$ be a partition of $S^c$ such that $\extB S_P \subset S$ for all~$P$.
	Suppose that $S \cup S_{P_0}$ contains $(\Lambda^c)^+$.
	Let $\Omega$ be an event on which $(\intB S_P)^+$ is in the $P$-pattern for every $P$ and denote
	\[ k(\Omega) := \min_{f \in \Omega} \left( \tfrac{1}{d}\big|S^{\Omega,f}_\rest\big| + \tfrac{1}{q} \big|S^f_\nondom\big| + \big|S^f_\unbal\big| \right) .\]
	Then
	\[ \Pr_{\Lambda,P_0}(\Omega) \le \exp\left[-\tfrac{k(\Omega)}{128q} + \tfrac{q}{d}\big|S \setminus S^\Omega_\unique\big| +e^{-d/65q^2}|S| \right] .\]
\end{lemma}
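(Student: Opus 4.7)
The plan is to relate $\Pr_{\Lambda,P_0}(\Omega)$ to a ratio of configuration counts and bound this ratio via a shift-permutation transformation combined with a Shearer-type entropy estimate on $S$, sharpened using the four notions. For each $f\in\Omega$ I would construct a one-to-many transformation: for each $P\in\phasedom$, apply a fixed permutation $\sigma_P$ of $[q]$ carrying $P$ to $P_0$ on $f|_{S_P}$, together with the Dobrushin one-site shift in the $(1,0,\ldots,0)$ direction when $P\in\phase_1$; then erase $f$ on $S$, enlarged by a thin seam absorbing the shift, and refill with any proper $P_0$-pattern coloring consistent with the transformed exterior. The hypotheses that $(\intB S_P)^+$ is in the $P$-pattern under $\Omega$ and that $S\cup S_{P_0}\supset(\Lambda^c)^+$ ensure that the transformed exterior is globally in the $P_0$-pattern, so every refill is a valid element of the $P_0$-pattern boundary class, and the preimage multiplicity is controlled by the number of permutation-and-shift choices. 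This reduces the bound on $\Pr_{\Lambda,P_0}(\Omega)$ to a comparison between $\Ent(f|_S)$ and the entropy of a $P_0$-fill on $S$ (which is $\log|P_{0,\bdry}|$ on even sites and $\log|P_{0,\inner}|$ on odd sites).

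The central estimate bounds $\Ent(f|_S)$ via Shearer's inequality applied to the covers $\{N(v)\}_{v\in\Even}$ and $\{N(v)\}_{v\in\Odd}$ as in~\eqref{eq:entropy-bound-overview}, yielding a local decomposition into terms I and II over vertices in $S^{+1}$. The naive bounds $\text{I}\le q\log 2/(2d)$ and $\text{II}\le\log(\lfloor q/2\rfloor\lceil q/2\rceil)$ exactly match the $P_0$-fill entropy term by term, so any net saving must come from local improvements, which are supplied by the four notions at precisely the locations where they appear in the decomposition. A non-dominant $v$ forces $|f(N(v))|\cdot|f(N(v))^c|\le\lfloor q/2\rfloor\lceil q/2\rceil-1$, saving order $1/q$ in the second part of~II; a restricted edge $(v,u)$ strictly reduces the admissible color set at one of its endpoints, saving order $1/(qd)$ after the Shearer weight $1/(2d)$ on the first part of~II; and an unbalanced $v$ gives a concentration of $F|_{N(v)}$ conditional on $F(N(v))$ (some color appears at most $d/q$ times out of $2d$), saving order~$1$ via an improved bound on the same term. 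Summing over $v\in S$, the total saving is of order $k(\Omega)/q$, which exponentiates to the leading factor $\exp[-k(\Omega)/(128q)]$.

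The unique-pattern notion acts not per-vertex but by compressing the enumeration over the set-valued $F(N(v))\subset[q]$ to a single representative: for $v\in S^\Omega_\unique$, every configuration in $\Omega$ either matches the unique pattern $A$ or has $v$ non-dominant or has all of $\dpartial v$ restricted, so the contribution of term~I is absorbed into the non-dominant/restricted tallies already counted in $k(\Omega)$. A vertex in $S\setminus S^\Omega_\unique$ escapes this compression and pays an additive cost of order $q/d$, produced by bounding $\Ent(F(N(v)))\le\log\binom{q}{\lfloor q/2\rfloor}\le q\log 2$ and dividing by the Shearer factor $2d$; this yields the middle term of the stated bound. The residual $e^{-d/65q^2}|S|$ is a Bernoulli-type estimate on the number of sites along the Dobrushin shift seam where the shift could create an improper edge, using that a uniformly random pair of colors in a dominant pattern agrees with probability at most $O(1/q)$, and concentrated for $d\gg q^2$. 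The main obstacle is the aggregation step: the four notions interact nontrivially on vertices near $\intB S$ and at the shift seam, and the unique-pattern reduction must be carried out via a careful two-step enumeration (first over the pattern $f(N(v))$, then over the actual coloring values) that does not double-count the savings already extracted from the non-dominant, restricted and unbalanced notions, so that the constants assemble cleanly into the prefactor $1/(128q)$.
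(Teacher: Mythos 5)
Your overall route is the paper's route: a deterministic repair map (permute each $S_P$ to $P_0$, Dobrushin-shift the $P\in\phase_1$ regions, refill the erased region with $P_0$-pattern colorings) combined with Shearer's inequality in which only a small ``type'' of $f|_{N(v)}$ is exposed and the four notions supply the per-vertex savings, with the unique-pattern notion controlling the entropy of the exposed type. However, there is a genuine gap in your accounting of the error term $e^{-d/65q^2}|S|$. You attribute it to a ``Bernoulli-type estimate on the shift seam where the shift could create an improper edge.'' No such estimate exists or is needed: the hypothesis $\extB S_P\subset S$ forces $\dist(S_P\setminus S^+,S_Q\setminus S^+)\ge 3$ for $P\neq Q$, so after the one-site shift the permuted pieces are still separated by the erased region, their boundaries are in the $P_0$-pattern, and \emph{every} refill produces a proper coloring deterministically; moreover the map must be injective in $(f,h)$, not merely ``proper with high probability,'' or the counting collapses. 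The term $e^{-d/65q^2}|S|$ actually arises elsewhere, in the step your sketch glosses over: even at a vertex $v\in S^\Omega_\unique$ the exposed type $X_v=(f(N(v)),\1_\unbal)$ has positive entropy, roughly $p\log(1/p)$ where $p$ is the probability that the neighborhood deviates from the unique pattern, and this is \emph{not} dominated by the savings $\tfrac{d}{32q}p$ when $p$ is very small; one needs $x\log(a/x)\le a/e$ to convert the leftover into the additive $e^{-d/65q^2}$ per vertex. Saying the contribution is ``absorbed into the tallies already counted in $k(\Omega)$'' skips exactly the place where this third term is generated.

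Two further points where your plan, as stated, would not close. First, the claim that the naive bounds $\textup{I}\le \frac{q\log 2}{2d}$, $\textup{II}\le\log(\lfloor\frac q2\rfloor\lceil\frac q2\rceil)$ ``exactly match the $P_0$-fill entropy term by term'' is false near $\intB S$ and wherever $S$ abuts a region $S_P$ with $P\in\phase_1$: the fill count is $\lfloor\frac q2\rfloor^{|S_*^\even|}\lceil\frac q2\rceil^{|S_*^\odd|}$ while the Shearer bound produces $(\lfloor\frac q2\rfloor\lceil\frac q2\rceil)^{|S^+|/2}$ times parity-imbalance factors, and the two only match after (i) tracking boundary factors of the form $(\lfloor\frac q2\rfloor/\lceil\frac q2\rceil)^{\frac1{4d}(|\partial^\even\cdot|-|\partial^\odd\cdot|)}$, (ii) using the identity $|U^\even|-|U^\odd|=\frac1{2d}(|\partial^\even U|-|\partial^\odd U|)$, and (iii) averaging the bound with its even/odd-reversed version; this cancellation is the core of the bookkeeping and is not supplied by ``a thin seam.'' Second, your per-notion savings are misquoted: an unbalanced neighborhood saves order $1/q$ (Chernoff gives $|\Psi_v|\le |J|^{2d}e^{-d/4q}$, hence $\frac1{2d}\cdot\frac d{4q}$), not order $1$, and a non-dominant vertex saves order $1/q^2$, not $1/q$. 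These happen to still suffice for the weights in $k(\Omega)$ and the prefactor $\frac1{128q}$, but as written your constants would not assemble, and the correct assembly is precisely the content of Proposition~\ref{lem:shearer-for-bad-set} and Lemma~\ref{lem:Z-bounds} that your sketch defers to ``the aggregation step.''
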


We conclude with a short outline as to how Lemma~\ref{lem:bound-on-pseudo-breakup} is used to prove Proposition~\ref{prop:prob-of-given-breakup}.
To this end, we take $S$ to be $X_*$ and $S_P$ to be $X_P \setminus X_*$, and, as a first attempt, we take $\Omega$ to be the event that $X$ is a breakup.
Concluding Proposition~\ref{prop:prob-of-given-breakup} from Lemma~\ref{lem:bound-on-pseudo-breakup} is still not straightforward, as the latter, when applied directly to $\Omega$, gives an insufficient bound on its probability.
The difficulty here is that, while $k(\Omega)$ is large in comparison to $L$ and $M$, it is not necessarily large in comparison to~$N$.
Indeed, the observations above will allow us to deduce (see Lemma~\ref{lem:lower-bound-on-size-of-restricted} and Lemma~\ref{lem:vertices-with-unique-pattern}) that
\[ \tfrac{1}{d}\big|S^{\Omega,f}_\rest\big| + \tfrac{1}{q}\big|S^f_\nondom\big| \ge \tfrac{L}{2d} + \tfrac{M}{4q} \qquad\text{and}\qquad |S \setminus S^\Omega_\unique| \le N ,\]
for every $f \in \Omega$.
Lemma~\ref{lem:bound-on-pseudo-breakup} thus gives that $\Pr_{\Lambda,P_0}(\Omega)\le \exp(-\frac {c}{dq} L - \frac {c}{q^2} M + \frac {2q}{d} N)$ (using also~\eqref{eq:dim-assump}), which is not small as a function of $N$.
Instead, to obtain a good bound, we shall apply Lemma~\ref{lem:bound-on-pseudo-breakup} to subevents $\Omega' \subset \Omega$ on which we have additional information about the coloring on the set $X_\bad$.
For suitably chosen subevents (see Lemma~\ref{lem:lower-bound-on-size-of-restricted-with-V}), the number of restricted edges in $X_\bad$ increases enough to ensure that
\[ k(\Omega') \ge \tfrac{L}{3d} + \tfrac{M}{6q} + \tfrac{N}{18q^2} .\]
As the entropy of this additional information is negligible with our assumptions (see Lemma~\ref{lem:family-of-strong-odd-approx}), this will allow us to conclude Proposition~\ref{prop:prob-of-given-breakup} by taking a union bound over the subevents~$\Omega'$.
This is carried out in detail in Section~\ref{sec:prob-of-given-breakup}.
The proof of Proposition~\ref{prop:prob-of-odd-approx} is given in Section~\ref{sec:prob-of-approx}.

\section{Breakups}
\label{sec:breakup}

In this section, we prove Lemma~\ref{lem:existence-of-breakup} about the existence of a non-trivial breakup, we prove Lemma~\ref{lem:no-infinite-breakups} about the absence of infinite breakups, we prove Proposition~\ref{prop:prob-of-given-breakup} about the probability of a given breakup, and we prove Proposition~\ref{prop:prob-of-odd-approx} about the probability of an approximation.

\subsection{Constructing a breakup seen from a vertex/set}\label{sec:breakup-construction}

	Here we prove Lemma~\ref{lem:existence-of-breakup}.
	As we have mentioned, the collection $Z=(Z_P(f))_P$ defined in~\eqref{eq:Z-def} is always a breakup as long as $\Int(\Lambda)^c$ is in the $P_0$-pattern. The main difficulty is therefore to construct a breakup that is seen from a given set. For this, we require the following lemma which allows to ``close holes''. The proof is accompanied by Figure~\ref{fig:closing-holes}.

\begin{lemma}\label{lem:closing-holes}
	Let $V,W \subset \Z^d$ and let $B$ be the union of connected components of $W$ that are either infinite or disconnect some vertex in $V$ from infinity. Let $A$ be a connected component of $B^c$. Then $\extB A$ is contained in a connected component of $(W^c)^+$.
\end{lemma}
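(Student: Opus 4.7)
The plan is to prove two things in sequence: first that $\extB A \subset (W^c)^+$, and then that all of $\extB A$ lies in a single connected component of $(W^c)^+$.

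The first part is a short case analysis. For $v \in \extB A$ with neighbor $u \in A$: the case $v \in W^c$ would place both $v$ and $u$ in $B^c$ with $v \sim u \in A$, forcing $v \in A$ --- a contradiction; hence $v \in W$. Similarly, $u \in W$ would force $u$ and $v$ to share a $W$-component, which (being met by $A \subset B^c$) would be a safe component, so $v$ would again end up in $A$. Thus $u \in W^c$ and $v \in W \cap N(W^c) \subset (W^c)^+$, and each $v$ comes equipped with a distinguished witness $u_v \in A \cap W^c$ satisfying $u_v \sim v$.

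For the second part, the obstacle is that $A \cap W^c$ need not be connected in $(W^c)^+$: a thick safe $W$-component $S' \subset A$ may enclose ``holes'' (finite components $H$ of $\Z^d \setminus S'$) whose $W^c$-pieces become isolated from the outside of $S'$ inside $(W^c)^+$. The key observation that bypasses this is that \emph{no bad component $B_j$ sits inside such a hole $H$}: such a $B_j$ would be finite and would have to disconnect some $v \in V$ from infinity, yet safety of $S'$ places $v$ in $I_{S'}$ (the infinite component of $\Z^d \setminus S'$), and since $B_j \subset H$ is disjoint from $I_{S'}$, the connected set $I_{S'}$ itself gives a path from $v$ to infinity in $\Z^d \setminus B_j$. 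This forces every witness $u_v$ to lie outside every hole of every safe $W$-component --- otherwise $v \in \extB A \subset B$ would lie either in that hole or on $S'$, both impossible. Writing $A^\dagger := A \setminus \bigcup H$ for the union over all such holes, a short rerouting (any excursion of an $A$-path into an $H \subset A$ can be shortcut through the connected $S' \subset A$ enclosing $H$) shows that $A^\dagger$ is itself connected.

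The last step is to show that $A^\dagger \cap W^c$ lies in a single connected component of $(W^c)^+$; combining with $v \sim u_v$ then finishes the proof. For $x, y \in A^\dagger \cap W^c$, take a path in $A^\dagger$ between them. Each maximal excursion into a safe $W$-component $S'$ enters and exits at vertices $s_1, s_2 \in S' \cap N(I_{S'})$, because the $W^c$-neighbors along the path lie in $A^\dagger$ and hence in $I_{S'}$. Now $I_{S'}$ is connected and its complement $S' \cup \bigcup_j H_j$ is also connected (each $H_j$ attaches to $S'$), so Lemma~\ref{lem:int+ext-boundary-is-connected} yields that $\intextB I_{S'}$ is connected. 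This set is the disjoint union of the ``outer shell'' $S' \cap N(I_{S'})$ with $\extB S' \cap I_{S'} \subset W^c$, and both pieces sit inside $(W^c)^+$; therefore the interior sub-path from $s_1$ to $s_2$ can be replaced by a path inside $\intextB I_{S'} \subset (W^c)^+$. Performing this replacement at every excursion yields a $(W^c)^+$-path from $x$ to $y$. The hole-avoidance step is the only genuinely nontrivial ingredient; once it is in hand, Lemma~\ref{lem:int+ext-boundary-is-connected} applied to $I_{S'}$ is what drives the final assembly.
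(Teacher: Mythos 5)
Your overall route is sound but genuinely different from the paper's. The paper argues by contradiction on a pair $a,a'\in\extB A$: it takes the component $S$ of $a$ in $(W^c)^+$, passes to its co-connected closure $\bar S$ with respect to $a'$, applies Lemma~\ref{lem:int+ext-boundary-is-connected} to place the connected set $\intextB\bar S$ inside a single $W$-component $D$, and then shows $D$ would have to lie in $B$ while being disjoint from the $W$-components of $a$ and $a'$ --- a contradiction in a few lines. You instead argue constructively: $\extB A\subset W\cap N(W^c)$ with witnesses $u_v\in A\cap W^c$ (correct), no component of $B$ can sit in a hole of a safe $W$-component (correct --- your phrase ``safety of $S'$ places $v$ in $I_{S'}$'' should also allow $v\in S'$, but then $S'\cup I_{S'}$ is a connected infinite set avoiding $B_j$, so the contradiction persists), and then a rerouting of paths through $\intextB I_{S'}$, where Lemma~\ref{lem:int+ext-boundary-is-connected} applied to $I_{S'}$ plays the role that the paper has it play for $\bar S$. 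What your approach buys is an explicit picture of how $\extB A$ is connected inside $(W^c)^+$; what it costs is length and bookkeeping about holes, which the paper's closure-of-one-component trick avoids entirely.

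The one step whose justification does not stand as written is the connectivity of $A^\dagger$. Your parenthetical argument shortcuts an excursion into a hole $H$ of $S'$ through $S'$ itself, but $S'$ need not lie in $A^\dagger$: a safe component can sit inside a hole of another safe component (nested ``annuli'' inside $A$ are perfectly possible), in which case the shortcut lands in a removed region and the argument as stated does not terminate; with infinitely many nested safe components a ``maximal hole containing a given point'' need not even exist. The statement you need is nevertheless true and repairable within your framework: given a path $p\subset A$ between two points of $A^\dagger$, every hole met by $p$ forces $p$ to pass through the enclosing safe component (since the endpoints avoid all holes), holes containing a common point are nested and have disjoint enclosing components, so a finite path meets only finitely many holes; among these one reroutes around the maximal ones, and a short argument shows the enclosing component of such a maximal hole is itself not contained in any hole, hence lies in $A^\dagger$. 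You should either add this maximal-hole argument or note that the same issue is what the paper sidesteps by working with a single co-connected closure instead of the family of all safe components.
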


\begin{proof}
	Let $a,a' \in \extB A$. It suffices to show that $a$ and $a'$ are connected by a path in $(W^c)^+$.
	Assume towards a contradiction that this is not the case.
	
	Let $S$ be the connected component of $a$ in $(W^c)^+$ and note that $\intextB S \subset W$. Let $\bar{S}$ be the co-connected closure of $S$ with respect to $a'$. Since $a' \notin S$ by assumption, we have $a' \notin \bar{S}$. Since Lemma~\ref{lem:int+ext-boundary-is-connected} implies that $\intextB \bar{S}$ is connected and since $\intextB \bar{S} \subset \intextB S \subset W$, we see that $\intextB \bar{S}$ is contained in a connected component $D$ of $W$.
	
	Since $\extB A \subset \intB B \subset \intB W$, the connected components $D_a$ and $D_{a'}$ of $a$ and $a'$ in $W$ are contained in $B$.
	Since any path between $a$ and $a'$ must intersect $\intextB \bar{S} \setminus \{a,a'\}$ and since there is a path in $B^c \cup \{a,a'\}$ between $a$ and $a'$, it follows that $\intextB \bar{S} \not\subset B$. In particular, $D \not\subset B$ so that $D \neq D_a,D_{a'}$. Hence, $D$ is disjoint from both $D_a$ and $D_{a'}$.
	
	We now show that $D \subset B$, which leads to a contradiction, and thus concludes the proof.
	If $D$ is infinite then this follows from the definition of $B$.
	Otherwise, $\intextB \bar{S} \subset D$ is finite, so that either $\bar{S}$ or $\bar{S}^c$ is finite (since $\Z^d$ with $d\ge 2$ is one ended). Thus, $\intextB \bar{S}$ disconnects either $a$ or $a'$ from infinity. Therefore, $D$ disconnects either $D_a$ or $D_{a'}$ from infinity. In particular, $D$ disconnects some vertex in $V$ from infinity, so that $D \subset B$ by the definition of $B$.
\end{proof}

The proof of Lemma~\ref{lem:closing-holes} requires a slight modification to apply in the setting of $\Z^{d_1}\times\T_{2m}^{d_2}$, $d_1\ge 2$. Lemma~\ref{lem:int+ext-boundary-is-connected} needs to be replaced by Corollary~\ref{cor:int+ext-boundary-is-connected torus} and thus the case that all connected components of $\intextB \bar{S}$ are infinite needs to be addressed. In fact, this case cannot occur. The arguments in the proof still imply that every connected component of $\intextB \bar{S}$ is contained in a connected component of $W$, and also that $\intextB \bar{S} \not\subset B$. The definition of $B$ thus implies that $\intextB \bar{S}$ has a finite connected component (whence $\intextB \bar{S}$ is connected, by Corollary~\ref{cor:int+ext-boundary-is-connected torus}).

\begin{figure}
	\includegraphics[scale=0.55]{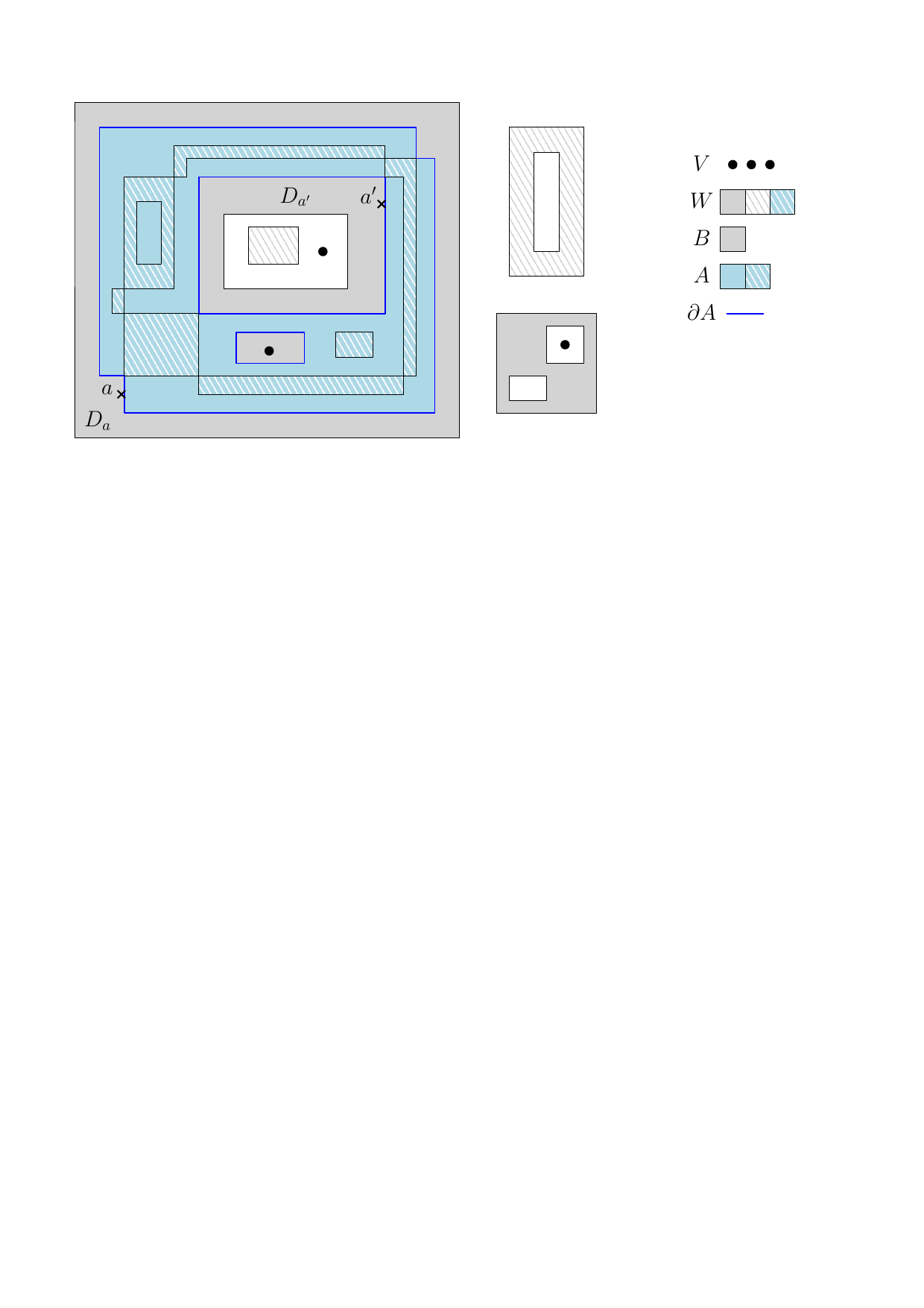}
	\caption{An illustration of the objects in Lemma~\ref{lem:closing-holes} and its proof.}
	\label{fig:closing-holes}
\end{figure}

The next lemma shows that an atlas can be ``localized'' into an atlas which is seen from $V$.

\begin{lemma}\label{lem:existence-of-adapted-atlas}
	Let $\Lambda$ be a domain, let $V \subset \Lambda$, let $P_0$ be a dominant pattern and let $Y$ be an atlas such that $\Lambda^c \subset Y_{P_0}$. Then there exists an atlas $X$ which is seen from $V$ and satisfies that
\begin{equation}\label{eq:adapted}
X_*^{+5} \cap X_P = X_*^{+5} \cap Y_P \qquad\text{for every dominant pattern }P.
\end{equation}
Moreover, $\Lambda^c \subset X_{P_0}$ and $X_*^{+5}$ is the union of connected components of $Y_*^{+5}$ that are either infinite or disconnect some vertex in $V$ from infinity.
\end{lemma}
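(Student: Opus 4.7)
I plan to construct $X$ by retaining $Y$ on the ``relevant'' portion of $Y_*^{+5}$ and filling the rest homogeneously with the pattern that naturally surrounds it. Set $W := Y_*^{+5}$ and let $B$ be the union of connected components of $W$ that are infinite or disconnect some vertex of $V$ from infinity. For each connected component $A$ of $B^c$, Lemma~\ref{lem:closing-holes} provides a single connected component $T_A$ of $(W^c)^+$ containing $\extB A$. Since $(W^c)^+ = W^c \cup \intB W$ and every vertex of $\intB Y_*^{+5}$ lies at graph-distance exactly $5$ from $Y_*$, we have $(W^c)^+ \subset Y_*^c$. Connected subsets of $Y_*^c$ must be contained in a single $Y_P$ (transitions between distinct patterns pass through $Y_* \supset \bigcup_P \intextB Y_P \cup Y_\overlap \cup Y_\bad$), so each $T_A$ determines a unique dominant pattern $P_A$ with $T_A \subset Y_{P_A}$, with the convention $P_A := P_0$ in the degenerate case $A = \Z^d$. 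I then define
\[ X_P := (Y_P \cap B) \,\cup\, \bigcup_{A:\, P_A = P} A, \qquad P \in \phasedom. \]

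\textbf{Verifications.} That each $X_P$ is a regular $P$-even set rests on the key observation that if $v \in Y_P \cap B$ has a neighbor $u$ in a $B^c$-component $A'$, then $v \in \extB A' \subset T_{A'} \subset Y_*^c$, and $v \in Y_P$ forces $P_{A'} = P$ and hence $u \in X_P$; consequently $\intB X_P \subset \intB Y_P$ is $P$-even, and regularity reduces to that of $Y_P$. A parallel case analysis using $T_{A'} \subset Y_*^c$ throughout yields $X_* = Y_* \cap B$, and applying the elementary identity $(Y_* \cap C)^{+5} = C$ valid for every connected component $C$ of $Y_*^{+5}$ upgrades this to $X_*^{+5} = B$. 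The equality $X_*^{+5} \cap X_P = X_*^{+5} \cap Y_P$ is then immediate from $X_P \cap B = Y_P \cap B$, and ``$X$ is seen from $V$'' follows since the connected components of $X_*^{+5} = B$ are, by the definition of $B$, either infinite or disconnect a vertex of $V$ from infinity.

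\textbf{Main obstacle.} The delicate assertion is $\Lambda^c \subset X_{P_0}$. For $v \in \Lambda^c \cap B$ it follows immediately from $\Lambda^c \subset Y_{P_0}$. The remaining case is $v \in \Lambda^c$ lying in a $B^c$-component $A$, where one must show $P_A = P_0$; for this it suffices to exhibit a vertex $w \in T_A \cap \Lambda^c$, since then $w \in Y_{P_0}$ forces $P_A = P_0$ because $T_A$ lies in a unique $Y_P$. I plan to produce such $w$ by first showing $A \cap W^c \neq \emptyset$ (if $v \in W$, one walks to a boundary vertex of the non-relevant $W$-component containing $v$ and passes to a $W^c$-neighbor, which must lie in $A$ as $A$ is a whole $B^c$-component), and then, using the connectedness of $\Lambda^c$ together with the observation that any infinite path in $\Lambda^c$ eventually enters $W^c$, joining an $A \cap W^c$-vertex of $\Lambda^c$ to $\extB A$ by a path in $(W^c)^+$ via a closing-off argument in the spirit of Lemma~\ref{lem:closing-holes}. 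Carefully navigating this interplay between the holes of $W$, the different $(W^c)^+$-components, and $\Lambda^c$ is the main technical difficulty of the proof.
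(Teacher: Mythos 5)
Your construction is exactly the paper's: the same $B$, the same components $A$ of $B^c$ with their patterns $P_A$ read off from $\extB A$ via Lemma~\ref{lem:closing-holes}, the same definition $X_P=(Y_P\cap B)\cup\bigcup_{A:P_A=P}A$, and your verifications of the atlas property, of $X_*=Y_*\cap B$, of $X_*^{+5}=B$ (via $(Y_*\cap C)^{+5}=C$) and of ``seen from $V$'' are correct in outline. The problem is the one point you yourself label the main obstacle, $\Lambda^c\subset X_{P_0}$: you do not prove it, and the route you sketch does not work as stated. First, the ``observation'' that any infinite path in $\Lambda^c$ eventually enters $W^c$ is false in general: such a path may stay forever inside an \emph{infinite} component of $W=Y_*^{+5}$ (e.g.\ when some $Y_P$, $P\neq P_0$, overlaps $Y_{P_0}$ on an infinite region). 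It only becomes true after one knows the path stays inside $A\subset B^c$, which in turn uses $\extB A\subset\Lambda$, i.e.\ already the hypothesis $P_A\neq P_0$ one is trying to contradict. Second, and more seriously, the crux of your plan --- joining an $A\cap W^c$ vertex of $\Lambda^c$ to $\extB A$ by a path in $(W^c)^+$ --- is false for an arbitrary such vertex: non-relevant (finite, not separating $V$) components of $W$ lying inside $A$ can enclose regions of $A$, so $(W^c)^+\cap(A\cup\extB A)$ may split into several components of $(W^c)^+$, only one of which contains $\extB A$. The proof of Lemma~\ref{lem:closing-holes} does not adapt directly either: its final step uses that \emph{both} endpoints lie in $\extB A\subset\intB B$, so that their $W$-components are relevant; with one endpoint an interior $W^c$-vertex that step collapses.

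For comparison, the case in which $\Lambda^c\not\subset A$ is immediate (a vertex of $\extB A\cap\Lambda^c$ lies in $Y_{P_0}\cap Y_*^c$, forcing $P_A=P_0$), and this is essentially how the paper disposes of the issue in one line. The genuinely delicate residual case is $\Lambda^c\subset A$ with $\extB A\subset\Lambda$, which your sketch targets but does not resolve. A correct completion needs a different argument, for instance: if $P_A\neq P_0$, the $Y_*^c$-component $G$ of $\extB A$ lies in $Y_{P_A}$, hence avoids $\Lambda^c$ and is finite; filling its holes gives a finite $M\supset\extB A$ with $\extB M\subset Y_*$, so $\extB M$ disconnects $u_0\in\extB A$ from infinity; a co-connected-closure argument using Lemma~\ref{lem:int+ext-boundary-is-connected} and the fact that distinct components of $Y_*^{+5}$ have cores far apart shows a \emph{single} component $E$ of $W$ disconnects $u_0$ from infinity; then, exactly as at the end of the proof of Lemma~\ref{lem:closing-holes}, $E$ is relevant (it equals, or encloses, the relevant component containing $u_0$), so $E\cap A=\emptyset$, forcing the connected set $A$ into a finite component of $E^c$; finally $A$ finite with $\extB A\subset\Lambda$ contradicts $A\cap\Lambda^c\neq\emptyset$ by connectedness of $\Lambda^c$. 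None of this (or any substitute) appears in your proposal, so as it stands the proof of the ``Moreover, $\Lambda^c\subset X_{P_0}$'' part of the lemma is missing.
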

\begin{proof}
	Let $B$ be the union of connected components of $Y_*^{+5}$ that are infinite or disconnect some vertex in $V$ from infinity. Let $\cA$ be the set of connected components of $B^c$. We claim that
	\[ \text{for every $A \in \cA$, there exists a unique $P_A \in \phasedom$ such that $A^{+5} \setminus A \subset Y_{P_A} \setminus Y_*$} .\]
	Indeed, it follows from the definition of $Y_*$ that for every $a \in A^{+5} \setminus A \subset Y_*^c$, there exists a unique dominant pattern $P_a$ such that $a \in Y_{P_a}$. Since Lemma~\ref{lem:closing-holes} applied with $W:=Y_*^{+5}$ yields that $\extB A$ is contained in a connected component of $(W^c)^+ \subset (Y_*^{+4})^c$, we see that $P_a=P_{a'}$ for all $a,a' \in \extB A$.
	The claim follows. Note also that, since $\Lambda^c \subset Y_{P_0}$, we have $P_A=P_0$ for all $A \in \cA$ such that $A \not\subset \Lambda$.
	
	We now define $X=(X_P)_P$ by
	\[ X_P := (Y_P \cap B) \cup \bigcup \{A \in \cA : P_A=P \} ,\qquad P \in \phasedom .\]
	Let us show that $X$ satisfies the conclusion of the lemma.
	Note first that $X_P \cap B = Y_P \cap B$ and $X_* \subset B$, so that $X_* = Y_* \cap B$ and $X_*^{+5} = B$. It easily follows that $X$ is an atlas satisfying~\eqref{eq:adapted}.
	Let us check that $X$ is seen from $V$. Indeed, every finite connected component of $X_*^{+5}=B$ is by definition a connected component of $Y_*^{+5}$ that disconnects some vertex in $V$ from infinity.
	Finally, $\Lambda^c \subset X_{P_0}$, since $\Lambda^c \subset Y_{P_0}$ and $P_A=P_0$ for all $A \in \cA$ such that $A \not\subset \Lambda$.
\end{proof}

\begin{proof}[Proof of Lemma~\ref{lem:existence-of-breakup}]
	Recall the definition of $Z_P(f)$ from~\eqref{eq:Z-def}.
	It is straightforward to check that $Y=Z=(Z_P(f))_P$ is an atlas and, using the assumption that $\Int(\Lambda)^c$ is in the $P_0$-pattern, that $\Lambda^c \subset Z_{P_0}$. Thus, the first part of the lemma follows from Lemma~\ref{lem:existence-of-adapted-atlas} (since \eqref{eq:adapted} implies~\eqref{eq:breakup-1}). The three items stated in the second part now follow from the definitions.
\end{proof}

\subsection{No infinite breakups}\label{sec:no-infinite-breakups}
Here we prove Lemma~\ref{lem:no-infinite-breakups}.
As mentioned above, our main argument (namely, Proposition~\ref{prop:prob-of-given-breakup} and Proposition~\ref{prop:prob-of-odd-approx}) is concerned only with finite breakups. However, it is easy to rule out the existence of an infinite breakup in a random coloring. In doing so, there are two possibilities to have in mind: either there exists an infinite component of $Z_*^{+5}$ or infinitely many finite components surrounding a vertex.

\begin{proof}[Proof of Lemma~\ref{lem:no-infinite-breakups}]
	By~\eqref{eq:prob_outside_Lambda_def1} and~\eqref{eq:prob_outside_Lambda_def2}, for any $u \notin \Lambda^+$ and $P \neq P_0$ for which $u$ is $P$-even,
	\[ \Pr\big(u\text{ is in the $P$-pattern} \mid (f(v))_{v \neq u}\big) \le \tfrac{\lceil \tfrac{q}{2} \rceil-1}{\lceil \tfrac{q}{2} \rceil} \le \tfrac{q-1}{q+1} .\]
	Say that $u$ is in a \emph{double pattern} if $u$ is $P$-odd and $N(u)$ is in the $P$-pattern for some $P \neq P_0$. Then
	\[ \Pr\big(u\text{ is in a double pattern} \mid (f(v))_{v \notin N(u)}\big) \le 2^q \left(\tfrac{q-1}{q+1}\right)^{2d} .\]
	Note that if a vertex $u \in \Z^d \setminus \Lambda^+$ belongs to $Z_*$, then some vertex in $u^+$ is in a double pattern.
	
	We wish to show that, almost surely, every breakup seen from $V$ is finite.
	For $v \in \Z^d$, let $E_v$ be the event that $v$ is in an infinite connected component of $Z_*^{+5}$. Let $E'_v$ be the event that $v$ is disconnected from infinity by infinitely many connected components of $Z_*^{+5}$. It suffices to show that $\Pr(E_v)=\Pr(E'_v)=0$ for any $v \in \Z^d$. Let us show that $\Pr(E'_v)=0$; the proof that $\Pr(E_v)=0$ is very similar.
	On the event $E'_v$, for any $m$, there exists a set $B \subset \Z^d \setminus \Lambda^+$ of size at least $m$ such that $B^{+5}$ is connected and disconnects $v$ from infinity and such that for every vertex $u \in B$ there exists a vertex in $u^+$ which is in a double pattern. In particular, for any $m$, there exists a path $\gamma$ in $(\Z^d \setminus \Lambda^+)^{\otimes 50}$ of length $n \ge m$ such that $\{ \gamma_i^+ \}_{i=0}^n$ are pairwise disjoint, $\dist(v,\gamma_0) \le Cn$ and all vertices $\{ \gamma_i \}_{i=0}^n$ are in a double pattern. Since $\Pr(\gamma) \le 2^{qn} (\tfrac{q-1}{q+1})^{2dn}$ for any such fixed $\gamma$, and since the number of simple paths $\gamma$ in $(\Z^d)^{\otimes 50}$ of length $n$ with $\dist(v,\gamma_0) \le Cn$ is at most $d^{Cn}$, the lemma follows using~\eqref{eq:dim-assump}.
\end{proof}

\subsection{The probability of a given breakup}
\label{sec:prob-of-given-breakup}

In this section, we prove Proposition~\ref{prop:prob-of-given-breakup}.
Fix $X \in \breakups_{L,M,N}$ and let $\Omega$ be the set of proper colorings $f$ having $X$ as a breakup.
In order to bound the probability of $\Omega$, we aim to apply Lemma~\ref{lem:bound-on-pseudo-breakup} with
\[ S:=X_* \qquad\text{and}\qquad S_P := X_P \setminus X_* .\]
The definition of $X_*$ implies that $\{ S_P^+ \}_P$ are pairwise disjoint so that, in particular, $\{ S_P \}_P$ is a partition of $S^c$.
By~\eqref{eq:breakup-0}, $S \cup S_{P_0}$ contains $(\Lambda^c)^+$.
By~\eqref{eq:breakup-prop-even}, \eqref{eq:breakup-prop-odd} and~\eqref{eq:def-atlas}, $S_P^+ \cap S^{+2}$ is in the $P$-pattern on the event $\Omega$.
Thus, the assumptions of Lemma~\ref{lem:bound-on-pseudo-breakup} are satisfied.

The following lemma guarantees that there are many restricted edges in $(f,\Omega)$.
Recall the definitions of $S^f_\unbal$, $S^f_\nondom$, $S^{\Omega,f}_\rest$ and $S^\Omega_\unique$ from Section~\ref{sec:Shearer_overview}.

\begin{lemma}\label{lem:lower-bound-on-size-of-restricted}
	For any $f \in \Omega$, we have
	\[ \big|S^{\Omega,f}_\rest\big| \ge L \qquad\text{and}\qquad \tfrac{1}{d}\big|S^{\Omega,f}_\rest\big|+2\big|S^f_\nondom\big| \ge M .\]
\end{lemma}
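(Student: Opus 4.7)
The plan is to prove the two inequalities separately, exploiting that $\Omega$ is the event ``$X$ is a breakup'', so that the deterministic consequences~\eqref{eq:breakup-prop-even}--\eqref{eq:breakup-prop-bdry-X_P} apply to every $g \in \Omega$ and not just to $f$.

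For the first inequality, I would show that every undirected edge in $\bigcup_P \partial X_P$ supplies a distinct directed restricted edge with tail in $X_* = S$. Given $\{u,v\} \in \partial X_P$ with $u \in X_P$, regularity of $X_P$ forces $u \in \intB X_P$ to be $P$-even and $v \in \extB X_P$ to be $P$-odd, with both lying in $X_*$. Applying~\eqref{eq:breakup-prop-bdry-X_P} to every $g \in \Omega$ gives $g(u) \in P_\bdry$ and $g(N(v)) \not\subset P_\bdry$, so for any $c \in f(N(v)) \setminus P_\bdry$ and any $g \in \Omega$ with $g(N(v)) = f(N(v))$, both $c \neq g(u)$ (since $g(u) \in P_\bdry$) and $c \neq g(v)$ (since $g(v) \notin g(N(v)) = f(N(v))$). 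Thus $c$ is absent from the union in~\eqref{eq:restricted_def}, so $(v,u)$ is restricted. The assignment $\{u,v\} \mapsto (v,u)$ is injective on $\bigcup_P \partial X_P$, so $|S^{\Omega,f}_\rest| \ge L$.

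For the second inequality, I would prove the stronger dichotomy that each $v \in X_\overlap$ is either non-dominant in $f$ or has \emph{all} of $\dpartial v$ restricted in $(f,\Omega)$; summation then yields $\tfrac{1}{d}|S^{\Omega,f}_\rest| + 2|S^f_\nondom| \ge 2|X_\overlap| = 2M$. The key device is~\eqref{eq:restricted_B_def}, which when satisfied at $v$ restricts every edge in $\dpartial v$ simultaneously. Fix $v \in X_P \cap X_Q$ with $P \neq Q$, and split by the parities of $v$ in $P$ and $Q$. If $v$ is $P$-odd and $Q$-odd (which forces $P \simeq Q$), then~\eqref{eq:breakup-1} gives $f(N(v)) \subset P_\bdry \cap Q_\bdry$; but distinct dominant patterns in a single class $\phase_i$ have distinct $P_\bdry$ (since $P$ is recovered from $P_\bdry$ via $P_\inner = [q] \setminus P_\bdry$), so $|f(N(v))| \le \lfloor \tfrac{q}{2}\rfloor - 1$ and $v$ is automatically non-dominant. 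Otherwise $v$ is $P$-even for at least one of the two patterns, and~\eqref{eq:breakup-prop-even} applied to every $g \in \Omega$ confines $\{g(v)\}$ either to $P_\bdry$ (size $\lfloor q/2 \rfloor$) or, in the $P$-even/$Q$-even subcase, to $P_\bdry \cap Q_\bdry$ (size $\le \lfloor q/2 \rfloor - 1$). A direct cardinality comparison with $|f(N(v))^c| \ge \lfloor \tfrac{q}{2}\rfloor$ (from $v$ being dominant) -- strict by virtue of $|P_\inner| > |P_\bdry|$ when $q$ is odd in the mixed subcase, and by the strict bound on $|P_\bdry \cap Q_\bdry|$ in the intersection subcase -- produces a color $c \in f(N(v))^c$ outside $\{g(v)\}$, giving~\eqref{eq:restricted_B_def}.

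The hard part is the odd-odd subcase, which is the only case where~\eqref{eq:restricted_B_def} can outright fail and is rescued only by the non-dominance conclusion. The small but essential observation that closes it is the uniqueness of $P_\bdry$ as a descriptor of a dominant pattern within each class $\phase_i$. The remaining subcases reduce to routine cardinality comparisons, made uniform across even and odd $q$ by the convention $|P_\bdry| = \lfloor q/2 \rfloor$.
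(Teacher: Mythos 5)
Your argument is correct and rests on the same ingredients as the paper's (the boundary-edge part is essentially identical, via~\eqref{eq:breakup-prop-bdry-X_P}), but the overlap part is organized differently. The paper argues edge-by-edge: for $u \in X_\overlap$ and each neighbor $v$, either $(v,u)$ is restricted or one of $u,v$ is non-dominant, and then converts non-dominant \emph{neighbors} into edge counts via the set $S^*_\nondom$; in the odd-odd situation it passes to whichever endpoint is $P$-odd, which may be the neighbor rather than the overlap vertex. You instead prove a per-vertex dichotomy at the overlap vertex itself -- non-dominant, or all of $\dpartial v$ restricted via~\eqref{eq:restricted_B_def} -- which removes the neighbor bookkeeping, absorbs the paper's case $v \notin X_P \cap X_Q$ entirely, and yields the slightly stronger bound $2M$. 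The dichotomy is true and your odd-odd and even-even subcases are complete (the injectivity of $P \mapsto P_\bdry$ within a class is exactly the right observation).

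The one place where your write-up under-delivers is the strictness claim in the mixed subcase. Dominance of $v$ only gives $|f(N(v))^c| \ge \lfloor \tfrac{q}{2}\rfloor$, and since $\{g(v) : g \in \Omega,\ g(N(v))=f(N(v))\} \subset P_\bdry$ also has size up to $\lfloor \tfrac{q}{2}\rfloor$, a pure cardinality comparison cannot rule out $f(N(v)) = P_\inner$ and $\{g(v):\ldots\} = P_\bdry = f(N(v))^c$, in which case~\eqref{eq:restricted_B_def} would fail; citing the numerical fact $|P_\inner| > |P_\bdry|$ does not by itself close this. What does close it is the same mechanism you used in the odd-odd case: since $v$ is $Q$-odd and $v \in X_Q \cap X_*^{+5}$, \eqref{eq:breakup-1} gives $f(N(v)) \subset Q_\bdry$, hence $f(N(v))^c \supset Q_\inner$ has size at least $\lceil \tfrac{q}{2}\rceil > \lfloor \tfrac{q}{2}\rfloor \ge |\{g(v):\ldots\}|$, so~\eqref{eq:restricted_B_def} holds (in fact unconditionally, without invoking dominance). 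With that sentence inserted, your proof is complete.
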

\begin{proof}
	Fix $f \in \Omega$ and write $S_\rest$ for $S^{\Omega,f}_\rest$ and $S_\nondom$ for $S^{\Omega,f}_\nondom$.
	
	To show that $|S_\rest| \ge L$, it suffices to show that
\begin{equation}\label{eq:partial_X_P_restricted}
\dpartialrev X_P \subset S_\rest \qquad\text{for any }P .
\end{equation}
To this end, let $(v,u) \in \dpartialrev X_P$. Then $g(u) \in P_\bdry$ and $g(N(v)) \not\subset P_\bdry$ for any $g \in \Omega$ by~\eqref{eq:breakup-prop-bdry-X_P}, from which it follows that $(v,u)$ is restricted by~\eqref{eq:restricted_A_def}.

	We now show that $\tfrac{1}{d}|S_\rest|+2|S_\nondom| \ge M$.
	Letting $S^*_\nondom$ denote the set of edges having an endpoint in $S_\nondom$ and noting that $2|S_\nondom| \ge \tfrac{1}{d}|S^*_\nondom|$, we see that it suffices to show that
	\[ (v,u) \in S_\rest \quad\text{or}\quad u \in S_\nondom \quad\text{or}\quad v \in S_\nondom \qquad\text{for any }u \in X_\overlap\text{ and }v \sim u .\]
	Let $u \in X_\overlap$ and let $P \neq Q$ be such that $u \in X_P \cap X_Q$. If $v \notin X_P \cap X_Q$ then $(v,u)$ is restricted by~\eqref{eq:partial_X_P_restricted}. Otherwise, $v \in X_P \cap X_Q$.
Recall the definitions of $\phase_0$ and $\phase_1$ from Section~\ref{sec:overview-notation}	and that $P \simeq Q$ means that $P,Q \in \phase_0$ or $P,Q \in \phase_1$. If $P \simeq Q$, letting $w \in \{u,v\}$ be $P$-odd, we have $w^+ \subset X_P \cap X_Q$ by~\eqref{eq:def-atlas}. Thus, $g(N(w)) \subset P_\bdry \cap Q_\bdry$ for any $g \in \Omega$ by~\eqref{eq:breakup-prop-even}, and it follows that $w \in S_\nondom$.
	Otherwise, $P \not\simeq Q$ and we may assume without loss of generality that $v$ is $P$-even and $u$ is $Q$-even, in which case $g(v) \in P_\bdry$ and $g(u) \in Q_\bdry$ for any $g \in \Omega$ by~\eqref{eq:breakup-prop-even}, so that it follows from~\eqref{eq:restricted_def} that $(v,u)$ is restricted (note that $P \not\simeq Q$ can only occur when $q$ is odd, since $\phasedom_1$ is empty when $q$ is even).
\end{proof}

As explained in Section~\ref{sec:Shearer_overview}, applying Lemma~\ref{lem:bound-on-pseudo-breakup} directly for $\Omega$ does not produce the bound stated in Proposition~\ref{prop:prob-of-given-breakup}. This bound will instead follow by applying Lemma~\ref{lem:bound-on-pseudo-breakup} to subevents of $\Omega$ on which we have additional information about the coloring on the set $X_\bad$ and then summing the resulting bounds. To explain the reason for this and to motivate the definitions below, we note that, although~\eqref{eq:breakup-prop-bad} prohibits the possibility that the neighborhood $N(v)$ of a $P$-odd vertex $v \in X_\bad$ is in the $P$-pattern, this is possible for a $P$-even vertex. That is, when $q$ is even, it cannot happen that $|f(N(v))|=\tfrac{q}{2}$ for an odd vertex, but it may happen that $|f(N(v))|=\tfrac{q}{2}$ for an even vertex, and when $q$ is odd, it cannot happen that $|f(N(v))|=\lfloor \tfrac{q}{2} \rfloor$, but it may happen that $|f(N(v))|=\lceil \tfrac{q}{2} \rceil$. A vertex for which the latter occurs is problematic as it does not immediately reduce the entropy of the configuration (since it may also have a balanced neighborhood and no or few restricted edges incident to it). For even $q$, this issue is not important as $X_\bad$ is an odd set, so that at least half of its vertices are odd. For odd $q$, however, it may happen that many (perhaps even all or almost all) of the vertices in $X_\bad$ are of this type (see Figure~\ref{fig:breakup}). By recording the location of a small subset of these vertices and the dominant patterns in their neighborhoods, we may ensure that most vertices in $X_\bad$ become restricted in some manner (unbalanced neighborhood, non-dominant vertex, or many incident restricted edges). We now describe the structure of this additional information.

For $f \in \Omega$ and a dominant pattern $P$, define
\begin{equation}\label{eq:U_P_def}
U_P(f) := \big\{ u \in X_\bad : \text{$u$ is $P$-even, }f(N(u)) = P_\inner \big\} .
\end{equation}
Note that the sets $\{ U_P(f) \}_P$ are pairwise disjoint.
Note also that $u \in U_P(f)$ implies that $u^+$ is in the $P$-pattern and that $N(u)$ is not in the $Q$-pattern for any $Q \neq P$. In particular,
\begin{equation}\label{eq:U_P_inpattern}
f(U_P(f)) \subset P_\bdry .
\end{equation}

The collection $(U_P(f))_P$ contains the relevant information on $f$ beyond that which is given by the breakup $X$. However, it contains more information than is necessary and this comes at a large enumeration cost. Instead, we wish to specify only a certain approximation of this information.
Given a collection $V=(V_P)_P$ of subsets of $\Z^d$, let $\Omega(V)$ denote the set of $f \in \Omega$ satisfying that, for every dominant pattern $P$,
\begin{align}
V_P &\subset U_P(f) &\text{and}&& N_{d/3q}\Bigg(\bigcup_{Q \neq P} U_Q(f)\Bigg) &\subset N\Bigg(\bigcup_{Q \neq P} V_Q\Bigg). & \label{eq:iso-approx}
\end{align}
Thus, $V$ is a kind of approximation of $(U_P(f))_P$.
With this definition at hand, there are now two goals. The first is to show that the additional information given by $V$ is enough to improve the bound given in Lemma~\ref{lem:lower-bound-on-size-of-restricted}.
The second is to show that the cost of enumerating $V$ is not too large.

\begin{lemma}\label{lem:lower-bound-on-size-of-restricted-with-V}
	For any $V$ and any $f \in \Omega(V)$, we have
	\[ \tfrac{1}{d} \big|S^{\Omega(V),f}_\rest\big| + \tfrac{1}{q} |S^f_\nondom| + \big|S^f_\unbal\big| \ge \tfrac{L}{3d} + \tfrac{M}{6q} + \tfrac{N}{18q^2} .\]
\end{lemma}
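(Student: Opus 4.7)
The plan is to prove the inequality by combining Lemma~\ref{lem:lower-bound-on-size-of-restricted} (which handles the $L$ and $M$ contributions, and remains valid for $\Omega(V)$ since $\Omega(V) \subset \Omega$ yields $|S^{\Omega(V),f}_\rest| \ge |S^{\Omega,f}_\rest|$) with a separate dichotomy argument providing the $N$ contribution from $X_\bad$. Partition $X_\bad$ into $X_\bad^{\nondom}$ (non-dominant vertices in~$f$), $X_\bad^{\unbal}$ (dominant vertices with unbalanced neighborhood), and the remainder $X_\bad^{\rest}$. The first main claim is $X_\bad^{\rest} \subset \bigsqcup_P U_P(f)$: for $u \in X_\bad^{\rest}$, dominance forces $|f(N(u))| \in \{\lfloor q/2 \rfloor, \lceil q/2 \rceil\}$, and~\eqref{eq:breakup-prop-bad} applied to every pattern $P$ for which $u$ is $P$-odd gives $f(N(u)) \not\subset P_\bdry$. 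Since $P_\bdry$ ranges over all size-$\lfloor q/2 \rfloor$ subsets of $[q]$ in this way (for both parities of $u$ and both parities of $q$, after brief case-checking), we obtain $|f(N(u))| = \lceil q/2 \rceil$ with $f(N(u))=P_\inner$ for a unique pattern~$P$, placing $u \in U_P(f)$.

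The second main claim is the dichotomy: for every $u \in X_\bad^{\rest}$ and every $v \in N(u)$, either the directed edge $(v,u)$ is restricted in $(f,\Omega)$, or $v$ is non-dominant in $f$. Indeed, conditioning on $g(N(v)) = f(N(v))$ fixes $g(u) = f(u)$, so
\[
  \{g(u) : g \in \Omega,\, g(N(v))=f(N(v))\} \cup \{g(v) : g \in \Omega,\, g(N(v))=f(N(v))\} \subset \{f(u)\} \cup ([q] \setminus f(N(v))),
\]
which is a proper subset of $[q]$ unless $f(N(v))=\{f(u)\}$; in that exceptional case $|f(N(v))|=1$, forcing $v$ to be non-dominant since $q \ge 3$. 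Combining this with the observation that $N(X_\bad) \subset X_*$ (any neighbor of a vertex in $X_\bad$ lies in $\intB X_P$ for some $P$, in $X_\overlap$, or in $X_\bad$), and then summing the dichotomy over $u \in X_\bad^{\rest}$ (each restricted edge counted for its unique head $u$, each non-dominant vertex $v$ having at most $2d$ neighbors in $X_\bad^{\rest}$), yields
\[
  \tfrac{1}{d}|S^{\Omega(V),f}_\rest| + 2 |S^f_\nondom| \ge 2 |X_\bad^{\rest}|.
\]

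To finish, take a weighted sum of the displayed inequality, the two estimates of Lemma~\ref{lem:lower-bound-on-size-of-restricted}, and the trivial bounds $|S^f_\unbal| \ge |X_\bad^{\unbal}|$ and $|S^f_\nondom| \ge |X_\bad^{\nondom}|$ with coefficients $1/(6q)$, $1/3$, $1/(6q)$, $1$, $1/(3q)$, respectively. The total coefficients of $\tfrac{1}{d}|S^{\Omega(V),f}_\rest|$, $|S^f_\nondom|$, and $|S^f_\unbal|$ on the left are $1/3 + 1/(3q) \le 1$, $1/q$, and $1$, so the weighted sum is bounded by $T$; meanwhile, the right-hand side evaluates to $L/(3d)+M/(6q)+|X_\bad^{\rest}|/(3q)+|X_\bad^{\unbal}|+|X_\bad^{\nondom}|/(3q) \ge L/(3d)+M/(6q)+N/(18q^2)$, as desired. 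The main technical hurdle is establishing the classification $X_\bad^{\rest} \subset \bigsqcup_P U_P(f)$ together with the dichotomy, both of which rely on a careful use of~\eqref{eq:breakup-prop-bad} and on unpacking the definition of a restricted edge. Notably, the approximation structure $V$ plays no direct role in the proof --- the same bound holds with $\Omega$ in place of $\Omega(V)$ --- with $V$ entering only in the union-bound argument of Proposition~\ref{prop:prob-of-given-breakup} that invokes this lemma.
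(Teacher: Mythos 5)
There is a genuine gap, and it sits exactly at the step your whole argument leans on: the ``dichotomy''. Your justification reads the condition $g(N(v))=f(N(v))$ in~\eqref{eq:restricted_def} as equality of functions on $N(v)$, so that ``conditioning fixes $g(u)=f(u)$''. But in this paper $f(N(v))$ denotes the \emph{image set} $\{f(w):w\in N(v)\}$ (this is forced by the definitions of non-dominant and unbalanced vertices, by the remark following~\eqref{eq:restricted_B_def}, and by the way restricted edges are matched with semi-restricted indices of the type $(\psi([2d]),\psi_\unbal)$ in Section~\ref{sec:shift-trans}); under the function reading every edge would be trivially restricted and the notion would be vacuous. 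With the correct reading, conditioning only forces $g(u)\in f(N(v))$ and $g(v)\in f(N(v))^c$, and your claim that for every $u\in X_\bad^{\rest}$ and every $v\sim u$ either $(v,u)$ is restricted in $(f,\Omega)$ or $v$ is non-dominant is unjustified and in fact false in general. For instance, take adjacent bad vertices $u\in U_P(f)$ and $v\in U_Q(f)$ (or $v\in X_Q$ with $f(N(v))=Q_\inner$): then $v$ is dominant, and the breakup event $\Omega$ imposes no constraint on the single value $g(u)$ (only on image sets of neighborhoods), so under the conditioning $g(N(v))=Q_\inner$ the value $g(u)$ can range over all of $Q_\inner$ and $g(v)$ over all of $Q_\bdry$; neither~\eqref{eq:restricted_A_def} nor~\eqref{eq:restricted_B_def} is triggered.

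This is not a repairable detail but the heart of the matter: your closing remark that ``$V$ plays no direct role and the same bound holds with $\Omega$ in place of $\Omega(V)$'' contradicts the paper's explicit motivation (end of Section~\ref{sec:Shearer_overview} and the discussion before~\eqref{eq:U_P_def}): vertices of $\bigcup_P U_P(f)$ — precisely your set $X_\bad^{\rest}$ — are the ones that need \emph{not} produce restricted edges, non-dominant vertices or unbalanced neighborhoods under $\Omega$ alone, which is why the subevents $\Omega(V)$ and the enumeration Lemma~\ref{lem:family-of-strong-odd-approx} exist at all (otherwise Proposition~\ref{prop:prob-of-given-breakup} would follow directly from Lemma~\ref{lem:bound-on-pseudo-breakup}, which the paper says fails). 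The paper's proof instead uses~\eqref{eq:iso-approx}: a vertex $u$ that is $d/3q$-close to $U\setminus U_P$ for every $P$ lies in $N(V_P)\cap N(V_Q)$ for two distinct patterns, so every $g\in\Omega(V)$ has $g(u)\in P_\inner\cap Q_\inner$, and this extra information (absent from $\Omega$) is what makes the edges from neighboring $U_T$-vertices into $u$ restricted; the remaining bad vertices are then shown to be non-dominant, to have many restricted incident edges via $\dpartialrev X_P$, or to have unbalanced neighborhoods because almost all their neighbors carry colors from a single $P_\bdry$ while $f(N(u))\not\subset P_\bdry$ by~\eqref{eq:breakup-prop-bad}. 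Your steps that do not involve the dichotomy (the classification $X_\bad^{\rest}\subset\bigcup_P U_P(f)$, the observation $N(X_\bad)\subset X_*$, and the final averaging arithmetic) are fine, but without a correct source of restricted edges at the $U_P(f)$-vertices the stated bound on the $N$-term is not established.
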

\begin{proof}
	We fix $V$ and $f \in \Omega(V)$ and suppress them in the notation of $S^f_\unbal$, $S^f_\nondom$, $S^{\Omega(V),f}_\rest$, $U_P(f)$.
	It suffices to show that
	\[ \tfrac{1}{d} |S_\rest| + \tfrac{1}{q} |S_\nondom| + |S_\unbal| \ge \tfrac{N}{6q^2} ,\]
	as the lemma then follows by averaging this bound with the ones given by Lemma~\ref{lem:lower-bound-on-size-of-restricted}. In fact, we will show the slightly stronger inequality
	\[ N \le \tfrac{6q}d |S_\rest| + 6q|S_\nondom| + |S_\unbal| .\]	
	Let $S^*_\rest$ denote the set of vertices which are incident to at least $d/3q$ edges in $S_\rest$, i.e.,
	\[ S^*_\rest := \Big\{ v : \big|(\dpartial v \cup \dpartialrev v) \cap S_\rest\big| \ge \tfrac{d}{3q} \Big\} .\]
	Note that $|S_\rest| \ge \frac{d}{6q}|S^*_\rest|$ and $N_{d/3q}(S_\nondom) \le 6q |S_\nondom|$ by Lemma~\ref{lem:sizes}. It thus suffices to show that
	\begin{equation}\label{eq:X_bad_restricted}
	X_\bad \subset S^*_\rest \cup N_{d/3q}(S_\nondom) \cup S_\unbal .
	\end{equation}

	Let us first show that
	\begin{equation}\label{eq:X_bad_restricted1}
	X_\bad \setminus U \subset S_\nondom , \qquad\text{where }U := \bigcup_P U_P .
	\end{equation}
	To this end, let $u \in X_\bad \setminus U$ and note that, by the definition of a non-dominant vertex, we must show that $|f(N(u))| \notin \{\lfloor \tfrac{q}{2} \rfloor, \lceil \tfrac{q}{2} \rceil\}$.
	Let us consider separately the cases of even and odd $q$. Assume first that $q$ is even. Note that $|f(N(u))| \neq \tfrac{q}{2}$ by~\eqref{eq:breakup-prop-bad} if $u$ is odd and that $|f(N(u))| \neq \tfrac{q}{2}$ by~\eqref{eq:U_P_def} if $u$ is even. Assume now that $q$ is odd. Note that $|f(N(u))| \neq \lfloor\tfrac{q}{2}\rfloor$ by~\eqref{eq:breakup-prop-bad} and that $|f(N(u))| \neq \lceil\tfrac{q}{2}\rceil$ by~\eqref{eq:U_P_def}. This establishes~\eqref{eq:X_bad_restricted1}.
		
	Next, we show that
	\begin{equation}\label{eq:X_bad_restricted2}
	\bigcap_P N_{d/3q}(U \setminus U_P) \subset S^*_\rest .
	\end{equation}
	To see this, let $u \in \bigcap_P N_{d/3q}(U \setminus U_P)$ and note that, by~\eqref{eq:iso-approx}, $u \in N(V_P)$ for some $P$. Since $u \in N_{d/3q}(U \setminus U_P)$, another application of~\eqref{eq:iso-approx} yields that $u \in N(V_Q)$ for some $Q \neq P$.
	Since $V_P \subset U_P$ and $V_Q \subset U_Q$ by~\eqref{eq:iso-approx}, it follows from~\eqref{eq:U_P_def} that $g(u) \in P_\inner \cap Q_\inner$ for any $g \in \Omega(V)$.
	Since $u \in N_{d/3q}(U)$, in order to show that $u \in S^*_\rest$, it suffices to show that if $v \in N(u) \cap U_T$ for some $T$, then $(v,u)$ is restricted. Indeed, this follows since $f(N(v)) = T_\inner$ by~\eqref{eq:U_P_def}, which implies that $(v,u)$ is restricted by~\eqref{eq:restricted_A_def}. This establishes~\eqref{eq:X_bad_restricted2}.

	Finally, towards showing~\eqref{eq:X_bad_restricted}, let $u \in X_\bad$ and assume that
	\[ u \notin S^*_\rest \cup N_{d/3q}(S_\nondom) .\]
	We show that $u \in S_\unbal$. By~\eqref{eq:X_bad_restricted1}, we have $u \notin N_{d/3q}(X_\bad \setminus U)$ so that $u \in N_{2d-d/3q}(\bigcup_P X_P \cup U)$.
	Since $X_\bad \cap N_{d/3q}(\bigcup_P X_P) \subset S^*_\rest$ by~\eqref{eq:partial_X_P_restricted}, it follows that $u \in N_{2d-2d/3q}(U)$.
	Hence, by~\eqref{eq:X_bad_restricted2}, we have that $u \in N_{2d-d/q}(U_P)$ for some~$P$. In particular, $|N(u) \cap f^{-1}(P_\bdry)| \ge 2d-d/q$ by~\eqref{eq:U_P_inpattern}. Since $f(N(u)) \not\subset P_\bdry$ by~\eqref{eq:breakup-prop-bad} (note that $u$ is $P$-odd as it is adjacent to $U_P$), it follows that $u \in S_\unbal$.
\end{proof}

\begin{lemma}\label{lem:family-of-strong-odd-approx}
	There exists a family $\cV$ satisfying that
	\[ |\cV| \le \exp\left(\tfrac{CNq(q+\log d)\log d}{d}\right) \qquad\text{and}\qquad \Omega \subset \bigcup_{V \in \cV} \Omega(V) .\]
\end{lemma}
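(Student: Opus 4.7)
My plan is to construct, for each $f \in \Omega$, a collection $V(f) = (V_P(f))_{P \in \phasedom}$ satisfying~\eqref{eq:iso-approx} (so that $f \in \Omega(V(f))$) via the Lov\'asz covering lemma (Lemma~\ref{lem:existence-of-covering2}), and then to bound the resulting family $\cV := \{V(f) : f \in \Omega\}$ by a direct counting argument. For the construction, for each $P \in \phasedom$ I would apply Lemma~\ref{lem:existence-of-covering2} in $\Z^d$ (maximum degree $2d$) to the set $\bigcup_{Q \neq P} U_Q(f)$ with threshold $t = d/3q$; this yields $T_P \subset \bigcup_{Q \neq P} U_Q(f)$ of size $|T_P| \le \frac{Cq\log d}{d}\bigl|\bigcup_{Q \neq P} U_Q(f)\bigr|$ satisfying $N_{d/3q}(\bigcup_{Q \neq P} U_Q(f)) \subset N(T_P)$. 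Setting $V_Q(f) := U_Q(f) \cap \bigcup_{P \neq Q} T_P$ and using that the sets $\{U_Q(f)\}_Q$ are pairwise disjoint with $T_P \subset \bigcup_{Q \neq P} U_Q(f)$, one checks directly that $V_Q(f) \subset U_Q(f)$ and $T_P \subset \bigcup_{Q \neq P} V_Q(f)$ for every $P$, which gives $N_{d/3q}(\bigcup_{Q \neq P} U_Q(f)) \subset N(T_P) \subset N(\bigcup_{Q \neq P} V_Q(f))$ as required by~\eqref{eq:iso-approx}.

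For the counting, each $V \in \cV$ is a tuple of pairwise disjoint subsets of the fixed set $X_\bad$ of size $N$, so writing $k$ for a uniform bound on $\sum_P|V_P|$, specifying $V$ amounts to choosing the union $\bigcup_P V_P \subset X_\bad$ together with, for each of its at most $k$ vertices, the unique pattern to which it belongs. This gives
\[
|\cV| \;\le\; \sum_{i=0}^{k} \binom{N}{i} |\phasedom|^i \;\le\; (k+1)\bigl(eN|\phasedom|/k\bigr)^{k},
\]
and taking logarithms while using $\log|\phasedom| \le q$ yields $\log|\cV| \le Ck(q+\log d)$ for $k \ge 1$. Thus the desired estimate reduces to showing $k \le \frac{CqN\log d}{d}$.

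The hard part will be establishing this tight bound on $k$. Since $\bigcup_P V_P = \bigcup_P T_P$ (by an elementary computation from the definitions), a direct summation gives only $k \le \sum_P|T_P| \le \frac{Cq|\phasedom|N\log d}{d}$, which is too weak due to the factor $|\phasedom| \le 2^q$. To overcome this I would exploit the substantial overlap among the $T_P$: they all lie in the single set $\bigcup_Q U_Q(f) \subset X_\bad$ (which already gives $k \le N$), and the parity restriction on the $U_P$, namely that each $u \in X_\bad$ lies in $U_P(f)$ for patterns $P$ in only one of the two phases $\phase_0,\phase_1$ (determined by the parity of $u$), permits finer control of $|\bigcup_P T_P|$ by running the covering construction separately on the two sublattices. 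Combined with the dimension assumption $d \ge Cq^{10}\log^3 q$ from~\eqref{eq:dim-assump} which keeps residual factors under control, this would yield the claimed bound on $|\cV|$.
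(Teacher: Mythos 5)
Your construction of $V(f)$ from the per-pattern covering sets $T_P$ is fine as far as it goes, and your counting step is essentially the paper's. But the crux of the lemma is precisely the bound $k=\sum_P|V_P|\le \frac{CqN\log d}{d}$, and there your argument has a genuine gap. Applying Lemma~\ref{lem:existence-of-covering2} once per dominant pattern produces $|\phasedom|\approx 2^q$ sets $T_P$, each of size up to $\frac{Cq N\log d}{d}$, and nothing forces these sets to overlap: the covering lemma is an existence statement, and independent invocations for different targets $\bigcup_{Q\neq P}U_Q(f)$ may return essentially disjoint covers. The two fallbacks you mention do not close this. The trivial bound $|\bigcup_P T_P|\le N$ is far too weak, since the counting step then gives $\log|\cV|\le CN(q+\log d)$ rather than the required $\frac{CNq(q+\log d)\log d}{d}$ (note $\frac{q\log d}{d}\ll 1$ under~\eqref{eq:dim-assump}). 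And the parity restriction only partitions $\phasedom$ into the two classes $\phase_0,\phase_1$, so running the construction separately on the two sublattices saves at most a factor $2$, not the factor $2^q$ you need to remove. The dimension assumption cannot rescue this either, because the $2^q$ loss sits in front of $N$, not inside a term that $d$ controls.

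The missing idea is to invoke the covering lemma \emph{once}, on a single auxiliary bipartite graph, so that one set $W\subset U:=\bigcup_P U_P(f)$ of size $O(qN\log d/d)$ works simultaneously for all $2^q$ patterns. The paper does this by attaching to each $v\in\Z^d$ a \emph{minimal} set of patterns $I_v$ with $|N(v)\cap U_{I_v}|\ge\tfrac13|N(v)\cap U|$, and building a graph on $(\Z^d\times\{0,1\})\cup U$ in which $(v,i)$ is joined to $u\in U$ iff $v\sim u$ and $\1(u\in U_{I_v})=i$; this graph has maximum degree $2d$, so one application of Lemma~\ref{lem:existence-of-covering2} with $t=d/9q$ yields a single $W$ with $|W|\le \frac{CqN\log d}{d}$ such that $v\in N_{d/9q}(U_I)$ implies $v\in N(W\cap U_I)$ for $I\in\{I_v,\phasedom\setminus I_v\}$. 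Setting $V_P:=W\cap U_P$, the verification of~\eqref{eq:iso-approx} splits on whether $P\in I_v$: if $P\notin I_v$ then $U_{I_v}\subset U\setminus U_P$ and $v\in N_{d/3q}(U)\subset N_{d/9q}(U_{I_v})$, while if $P\in I_v$ the minimality of $I_v$ forces $|N(v)\cap U_{\phasedom\setminus I_v}|\ge d/9q$ and $U_{\phasedom\setminus I_v}\subset U\setminus U_P$. Only two subsets of $\phasedom$ per vertex $v$ ever need to be covered, which is why a single set $W$ of the right size suffices; this is the device your proposal lacks.
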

\begin{proof}
	Let $\cV$ be the collection of all $(V_P)_P$ such that $\{V_P\}_P$ are disjoint subsets of $X_\bad$ having $\sum_P |V_P| \le 3rN$, where $r := 3q(1+\log 2d)/d$. Let us check that $\cV$ satisfies the requirements of the lemma. Since $|X_\bad|=N$, we have
	\[ |\cV| \le \binom{N}{\le 3rN} \cdot (2^q)^{3rN} \le \left(\frac{e2^q}{3r}\right)^{3rN} \le e^{CNq(q+\log d)(\log d)/d} .\]
	
	Fix $f \in \Omega$. We must find a collection $(V_P)_P \in \cV$ for which~\eqref{eq:iso-approx} holds. We write $U_P$ for $U_P(f)$, and we denote $U_I := \bigcup_{P \in I} U_P$ for $I \subset \phasedom$ and $U := U_{\phasedom}$.
	Define a bipartite graph $G$ with vertex set $(\Z^d \times \{0,1\}) \cup U$ as follows. For each $v \in \Z^d$, let $I_v$ be a minimal set of dominant patterns for which $|N(v) \cap U_{I_v}| \ge \tfrac13 |N(v) \cap U|$, and place an edge between $(v,i) \in \Z^d \times \{0,1\}$ and $u \in U$ if and only if $v \sim u$ and $\1(u \in U_{I_v})=i$.
	Note that $G$ has maximum degree at most $2d$.
	
	By Lemma~\ref{lem:existence-of-covering2} applied to $G$ with $t=d/9q$, we obtain a set $W \subset U$ of size $|W| \le 3rN$ such that
	\[ v \in N_{d/9q}(U_I) \implies v \in N(W \cap U_I) \qquad\text{for any }v \in \Z^d\text{ and }I \in \{I_v, \phasedom \setminus I_v \}.\]
	Set $V_P := W \cap U_P$ for all $P$ and note that $W = \bigcup_P V_P$. Towards showing~\eqref{eq:iso-approx}, let $P \in \phasedom$ and $v \in N_{d/3q}(U \setminus U_P)$.	
	Suppose first that $P \notin I_v$.
	Then
	\[ v \in N_{d/3q}(U) \subset N_{d/9q}(U_{I_v}) \subset N(W \cap U_{I_v}) \subset N(W \setminus U_P) = N(W \setminus V_P) .\]
    Suppose next that $P \in I_v$. By the minimality of $I_v$, either $I_v=\{P\}$ or $|N(v) \cap U_{I_v}| < \tfrac23 |N(v) \cap U|$. In either case, we have $|N(v) \cap U_{\phasedom \setminus I_v}| \ge d/9q$ so that
    \[ v \in N_{d/9q}(U_{\phasedom \setminus I_v}) \subset N(W \cap U_{\phasedom \setminus I_v}) \subset N(W \setminus U_P) = N(W \setminus V_P) . \qedhere \]
\end{proof}

\begin{lemma}\label{lem:vertices-with-unique-pattern}
	$S \setminus X_\bad \subset S^\Omega_\unique$.
\end{lemma}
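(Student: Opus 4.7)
The plan is to exhibit, for each $v \in S \setminus X_\bad$, a set $A \subset [q]$ witnessing that $v$ has a unique pattern in $\Omega$. Since $v \in X_* \setminus X_\bad$, the definition of $X_\bad$ provides a dominant pattern $P$ with $v \in X_P$, and I split according to whether $v$ is $P$-odd or $P$-even. In both cases the argument is local and uses only the breakup properties~\eqref{eq:breakup-1} and~\eqref{eq:breakup-prop-even} (the latter being a consequence of the former via~\eqref{eq:P-even-odd-in-P-phase}), together with the fact that $P_\bdry$ and $P_\inner$ form a partition of $[q]$ for any dominant $P$.

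If $v$ is $P$-odd, I take $A := P_\bdry$. Any $g \in \Omega$ is a proper coloring for which $X$ is a breakup, so by~\eqref{eq:breakup-1} applied to $g$ (valid since $v \in X_* \subset X_*^{+5}$, $v$ is $P$-odd, and $v \in X_P$), the set $N(v)$ is in the $P$-pattern under $g$; combined with $N(v) \subset \Even_P$, this gives $g(N(v)) \subset P_\bdry$ via~\eqref{eq:P-even-odd-in-P-phase}. Either $g(N(v)) = P_\bdry$, giving the first alternative of the definition, or $g(N(v)) \subsetneq P_\bdry$, in which case $|g(N(v))| < \lfloor q/2 \rfloor$ and hence $v$ is non-dominant in $g$.

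If $v$ is $P$-even, I take $A := P_\inner$. The key rigidity is that~\eqref{eq:breakup-prop-even} forces $h(v) \in P_\bdry$ for every $h \in \Omega$. Given $g \in \Omega$, I split on the relationship between $g(N(v))$ and $P_\inner$: the case $g(N(v)) = P_\inner$ gives the first alternative; the case $g(N(v)) \supsetneq P_\inner$ forces $|g(N(v))| \ge \lceil q/2 \rceil + 1$, which makes $v$ non-dominant; and if $g(N(v)) \not\supset P_\inner$, pick some $i \in P_\inner \setminus g(N(v))$. Then $i \in g(N(v))^c$, while every $h \in \Omega$ with $h(N(v)) = g(N(v))$ satisfies $h(v) \in P_\bdry$ and hence $h(v) \ne i$ (using $P_\bdry \cap P_\inner = \emptyset$). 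Thus the set in~\eqref{eq:restricted_B_def} (with $f$ replaced by $g$) is a proper subset of $g(N(v))^c$, so~\eqref{eq:restricted_B_def} witnesses that every edge $(v,u) \in \dpartial v$ is restricted in $(g,\Omega)$, yielding the third alternative.

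I do not anticipate any substantive obstacle; the argument is a short local case analysis. The only subtlety is that $A$ must be chosen uniformly over $g \in \Omega$, which is why in the $P$-even case I take $A := P_\inner$ and extract the restriction from the uniform bound $h(v) \in P_\bdry$ coming from~\eqref{eq:breakup-prop-even}, rather than from any $g$-dependent information.
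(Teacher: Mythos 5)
Your proof is correct and takes essentially the same route as the paper: choose $A=P_\bdry$ or $A=P_\inner$ according to the parity of $v$ relative to $P$, use \eqref{eq:breakup-1}/\eqref{eq:breakup-prop-even} to pin down $g(N(v))\subset P_\bdry$ resp.\ $g(v)\in P_\bdry$, and conclude via non-dominance or \eqref{eq:restricted_B_def}. The only difference is cosmetic (in the $P$-even case you split on whether $P_\inner\subset g(N(v))$ rather than on the cardinality of $g(N(v))$), which yields the same conclusion.
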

\begin{proof}
	Let $v \in S \setminus X_\bad$ and note that there exists $P$ such that $v \in X_P$.
	Assume first that $v$ is $P$-even. Then, by~\eqref{eq:breakup-prop-even}, $g(v) \in P_\bdry$ for all $g \in \Omega$, so that if $g(N(v)) \neq P_\inner$ then either $|g(N(v))| \notin \{\lfloor \tfrac{q}{2} \rfloor, \lceil \tfrac{q}{2} \rceil\}$ or all edges in $\dpartial v$ are restricted in $g$ by~\eqref{eq:restricted_B_def}. Hence, $v$ has a unique pattern.
	Assume next that $v$ is $P$-odd. Then $v^+ \subset X_P$ by~\eqref{eq:def-atlas} so that, by~\eqref{eq:breakup-prop-even}, $g(N(v)) \subset P_\bdry$ for all $g \in \Omega$. Thus, either $g(N(v)) = P_\bdry$ or $|g(N(v))|<\lfloor \tfrac{q}{2} \rfloor$. In particular, $v$ has a unique pattern.
\end{proof}

\begin{proof}[Proof of Proposition~\ref{prop:prob-of-given-breakup}]
Note that $|S| \le 2L+M+N$.
Thus, Lemma~\ref{lem:bound-on-pseudo-breakup}, Lemma~\ref{lem:lower-bound-on-size-of-restricted-with-V} and Lemma~\ref{lem:vertices-with-unique-pattern} imply that, for any $V$,
	\[ \Pr(\Omega(V)) \le \exp\Big(-\tfrac{1}{32q} \left(\tfrac{L}{3d} + \tfrac{M}{6q} + \tfrac{N}{18q^2}\right)+ \tfrac{qN}{d}+e^{-d/65q^2} (2L+M+N)\Big) .\]
Therefore, by Lemma~\ref{lem:family-of-strong-odd-approx} and~\eqref{eq:dim-assump},
\begin{align*}
\Pr(\Omega) &\le \exp\left(\tfrac{CNq(q+\log d)\log d}{d} + e^{-cd/q^2} (2L+M+N) - \tfrac{c}{q} \big(\tfrac{L}{d}+\tfrac{M}{q}+\tfrac{N}{q^2}\big) \right) \le e^{- \frac{c}{q} \big( \frac{L}{d}+\frac{M}{q}+\frac{N}{q^2} \big)} . \qedhere
\end{align*}
\end{proof}

\subsection{The probability of an approximated breakup}
\label{sec:prob-of-approx}

In this section, we prove Proposition~\ref{prop:prob-of-odd-approx}.
Fix integers $L,M,N \ge 0$ and an approximation $A$.
Denote
\[ A_\bad := \bigcap_P (A_P \cup A^{**})^c ,\qquad A_\overlap := \bigcup_{P \neq Q} (A_P \cap A_Q), \qquad U := A^{**} \cup A_\bad \cup A_\overlap. \]
Further define
\[  S_P := \Int(A_P \setminus U) \qquad\text{and}\qquad S := \bigcap_P (S_P)^c .\]
Note that $U^+ \subset S$, that $\{S_P\}_P$ is a partition of $S^c$ and that $\{ S_P^+ \}_P$ are pairwise disjoint.
Let $X$ be an atlas which is approximated by $A$. Note that, by~\ref{it:approx-A_P},
\[ A_\bad \subset X_\bad \subset A_\bad \cup A^{**}, \quad A_\overlap \subset X_\overlap \subset A_\overlap \cup A^{**}, \quad U = A^{**} \cup A_\bad \cup A_\overlap .\]

\begin{claim}\label{claim:S}
\[  S = X_* \cup (A^{**})^+ .\]
\end{claim}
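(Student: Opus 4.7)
The plan is to prove the two containments separately, using the defining properties of an approximation, particularly \ref{it:approx-A_P}, together with the fact that each $S_P = \Int(A_P \setminus U)$ only captures vertices whose entire closed neighborhood sits inside $A_P \setminus U$.

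For the containment $X_* \cup (A^{**})^+ \subset S$, I would first handle $(A^{**})^+$: any $v \in (A^{**})^+$ has some $u \in v^+ \cap A^{**} \subset U$, so for every $P$, $v^+ \not\subset A_P \setminus U$, hence $v \notin S_P$. Then I would argue $X_* \subset S$ by cases on which piece of $X_*$ contains $v$. If $v \in X_\bad$, then $v \notin X_P \supset A_P$ for any $P$, so trivially $v \notin S_P$. If $v \in X_\overlap \cap A_P$ for some $P$, combine $v \in X_Q$ ($Q \neq P$) with \ref{it:approx-A_P} applied to $Q$: either $v \in A_Q$, giving $v \in A_\overlap \subset U$, or $v \in A^* \cup A^{**} \subset A^{**} \subset U$; either way $v \notin A_P \setminus U$, so $v \notin S_P$. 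For $v \in \intextB X_P$, the case $v \in \extB X_P$ gives $v \notin X_P \supset A_P$ immediately; the case $v \in \intB X_P$ gives a neighbor $u \notin X_P$, hence $u \notin A_P$, showing $v^+ \not\subset A_P$ and $v \notin S_P$. In each subcase one checks all other $Q$ similarly, often using the fact that if $v$ lies in two distinct $A_R$'s then it lies in $A_\overlap \subset U$.

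For the reverse containment $S \subset X_* \cup (A^{**})^+$, I would take $v \in S$ with $v \notin (A^{**})^+$ and show $v \in X_*$ by contradiction. Assuming $v \notin X_*$, the definition of $X_*$ gives a unique $P$ with $v \in X_P$, and moreover $v^+ \subset X_P$ with no neighbor of $v$ in any other $X_Q$; thus $v^+ \subset X_P \setminus X_\overlap$. Now for any $u \in v^+$, the hypothesis $u \notin A^{**}$ combined with \ref{it:approx-A_P} and $A^* \subset A^{**}$ forces $u \in A_P$; and the inclusions $A_\bad \subset X_\bad$, $A_\overlap \subset X_\overlap$ together with $u \notin A^{**}$ give $u \notin U$. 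Therefore $v^+ \subset A_P \setminus U$, meaning $v \in S_P$, which contradicts $v \in S$.

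The arguments are mostly bookkeeping against the definitions, so no step presents a serious obstacle; the one point needing care is the case analysis for $v \in X_\overlap$ and $v \in \intextB X_P$, where one must repeatedly invoke \ref{it:approx-A_P} in both directions (containment of $A_P$ in $X_P$ on the one hand, and of $X_P$ in $A_P \cup A^*\cup A^{**}$ on the other) and remember that the sets $\{X_P\}_P$ are not disjoint, so overlaps have to be absorbed into $A_\overlap$ or $A^{**}$. Once the case analysis is written out cleanly, both containments follow directly.
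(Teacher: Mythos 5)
Your proposal is correct and proceeds essentially as the paper does: both directions are pure definition-chasing with~\ref{it:approx-A_P}, $A^* \subset A^{**}$, and the inclusions $A_\bad \subset X_\bad \subset A_\bad \cup A^{**}$, $A_\overlap \subset X_\overlap \subset A_\overlap \cup A^{**}$; in particular your contradiction argument for $S \subset X_* \cup (A^{**})^+$ is just the contrapositive form of the paper's case split, and your case analysis for $\intextB X_P \subset S$ is the unpacked version of the paper's observation that $v \in S_Q$ forces $v \in \Int(X_Q \setminus X_\overlap)$. The deferred checks for the other patterns $Q$ do go through exactly as you indicate (a vertex of $v^+$ lying in $X_P$ and in $A_Q$ lands in $X_\overlap \subset U$), so there is no gap.
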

\begin{proof}
Let us first show that $S \subset X_* \cup (A^{**})^+$.
Let $v \in S$ and note that $v \notin \Int(A_P \setminus U)$ for all $P$. Thus, for any $P$, there exists $u \in v^+$ such that $u \notin A_P$ or $u \in U$. If the latter occurs for some $P$, then $u \in U \subset A^{**} \cup X_*$ and we are done. Otherwise, for every $P$, there exists $u \in v^+$ such that $u \notin A_P$. That is, $u \in \bigcap_P \Int(A_P)^c$. Suppose that $u \notin X_*$ so that $u \in \Int(X_P)$ for some $P$. By~\ref{it:approx-A_P}, $u \in \Int(A_P \cup A^{**})$. Since $u \notin \Int(A_P)$, it must be that $u \in (A^{**})^+$.

Let us now show that $X_* \cup (A^{**})^+ \subset S$.
Since $A^{**} \subset U$ and $U^+ \subset S$, we see that $(A^{**})^+ \subset S$. Similarly, $X_\bad \cup X_\overlap \subset U \subset S$. It remains to show that $\bigcup_P \intextB X_P \subset S$.
Let $v \in \intextB X_P$ for some $P$ and suppose towards a contradiction that $v \in S_Q$ for some $Q$. Then~\ref{it:approx-A_P} implies that $v \in \Int(X_Q \setminus X_\overlap)$, which contradicts the fact that $v \in \intextB X_P$.
\end{proof}

Thus, using~\ref{it:approx-unknown-location}, we see that $S \subset X_*^{+4}$.
Recall that $A_P \subset X_P$ by~\ref{it:approx-A_P} and note that $\intextB S_P \subset \intextB S \cap X_P \setminus X_\overlap$. Thus, \eqref{eq:breakup-prop-even} and~\eqref{eq:breakup-prop-odd} imply that, for any coloring~$f$ having $X$ as a breakup,
\begin{equation}\label{eq:prob-of-approx-1}
\Even_P \cap A_P \cap S^+\text{ and }\intextB S_P\text{ are in the $P$-pattern}.
\end{equation}
Finally, by~\eqref{eq:breakup-0}, \ref{it:approx-A_P} and the fact that $X_* \subset S$, we have that $S \cup S_{P_0}$ contains $(\Lambda^c)^+$.
We have thus established that the assumptions of Lemma~\ref{lem:bound-on-pseudo-breakup} are satisfied for the sets $(S,(S_P)_P)$ and the event $\Omega$ that $A$ approximates some breakup in $\breakups_{L,M,N}$.

\begin{lemma}\label{lem:vertices-with-unique-pattern-approx}
Every vertex in $S \setminus U$ has a unique pattern. That is, $S \setminus U \subset S^\Omega_\unique$.
\end{lemma}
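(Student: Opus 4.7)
The plan is to adapt Lemma~\ref{lem:vertices-with-unique-pattern}, which proves the analogous statement inside a single breakup, to the approximation setting. The key observation is that the approximation $A$ already labels each vertex of $S\setminus U$ with a unique dominant pattern, and every coloring in $\Omega$ realizes this labeling through the breakup it sees.

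First I would observe that for any $v\in S\setminus U$ there is a unique dominant pattern $P=P(v)$ with $v\in A_P$: since $v\notin A_\bad\cup A^{**}$, some $A_P$ must contain $v$, and since $v\notin A_\overlap$ this $P$ is unique. Next, for any $g\in\Omega$ pick a breakup $X^g\in\breakups_{L,M,N}$ such that $g$ has $X^g$ as a breakup and $A$ approximates $X^g$. By~\ref{it:approx-A_P} we have $v\in A_P\subset X^g_P$, and by Claim~\ref{claim:S} combined with~\ref{it:approx-unknown-location} we have $S\subset (X^g_*)^{+4}\subset (X^g_*)^{+5}$. Hence both~\eqref{eq:breakup-prop-even} and the regularity property~\eqref{eq:def-atlas} are available at $v$, uniformly over $g\in\Omega$.

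The case analysis then proceeds exactly as in Lemma~\ref{lem:vertices-with-unique-pattern}. If $v$ is $P$-odd, then $X^g_P$ being regular $P$-even forces $v^+\subset X^g_P$, so~\eqref{eq:breakup-prop-even} applied to each neighbor of $v$ gives $g(N(v))\subset P_\bdry$; since $|P_\bdry|=\lfloor q/2\rfloor$, either $g(N(v))=P_\bdry$ or $v$ is non-dominant in $g$, so the distinguished set may be taken to be $P_\bdry$. If instead $v$ is $P$-even, take the distinguished set to be $P_\inner$: then $g(v)\in P_\bdry$ by~\eqref{eq:breakup-prop-even} for every $g\in\Omega$, hence
\[
\{g'(v):g'\in\Omega,\, g'(N(v))=g(N(v))\}\subset P_\bdry\cap g(N(v))^c.
\]
Assume $g(N(v))\neq P_\inner$ and that $v$ is dominant in $g$, i.e.\ $|g(N(v))|\in\{\lfloor q/2\rfloor,\lceil q/2\rceil\}$. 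A size comparison using $|P_\inner|=\lceil q/2\rceil\ge|g(N(v))|$ shows that $P_\inner\not\subset g(N(v))$, so the above inclusion is strict, meaning~\eqref{eq:restricted_B_def} holds at $g$ and every edge in $\dpartial v$ is restricted.

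I do not anticipate a real obstacle: the notions of $U$ and of approximation were tailored precisely so that membership in $A_P$ transfers to membership in $X^g_P$ for every $g\in\Omega$, after which the argument is literally the one in Lemma~\ref{lem:vertices-with-unique-pattern}. The only mildly delicate step is the combinatorial check in the $P$-even case that $P_\inner\not\subset g(N(v))$ whenever $g(N(v))\neq P_\inner$ and $|g(N(v))|$ is dominant, but this is immediate by comparing cardinalities.
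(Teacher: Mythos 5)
Your proof is correct and follows essentially the same route as the paper: the paper's argument runs through the previously established fact that $\Even_P \cap A_P \cap S^+$ is in the $P$-pattern for every coloring admitting a breakup approximated by $A$ (i.e.~\eqref{eq:prob-of-approx-1}), which is exactly what you re-derive by choosing a breakup $X^g$ for each $g\in\Omega$ and invoking Claim~\ref{claim:S}, \ref{it:approx-unknown-location}, \eqref{eq:def-atlas} and \eqref{eq:breakup-prop-even}. The subsequent case analysis ($P$-odd giving $g(N(v))\subset P_\bdry$, $P$-even giving restriction via \eqref{eq:restricted_B_def} and the cardinality comparison with $P_\inner$) is the same as in the paper's proofs of Lemmas~\ref{lem:vertices-with-unique-pattern} and~\ref{lem:vertices-with-unique-pattern-approx}, just with the implicit steps spelled out.
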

\begin{proof}
	The proof is essentially the same as that of Lemma~\ref{lem:vertices-with-unique-pattern}.
Let $v \in S \setminus U$ and note that $v \notin A^{**} \cup A_\bad$ so that $v \in A_P$ for some $P$.
Assume first that $v$ is $P$-even. Then, by~\eqref{eq:prob-of-approx-1}, we have $g(v) \in P_\bdry$ for all $g \in \Omega$. Thus, by~\eqref{eq:restricted_B_def}, if $g(N(v)) \neq P_\inner$ and $|g(N(v))| \in \{\lfloor \tfrac{q}{2} \rfloor, \lceil \tfrac{q}{2} \rceil\}$, then all edges in $\dpartial v$ are restricted in $g$. Hence, $v$ has a unique pattern.
Assume next that $v$ is $P$-odd. Then, since $A_P$ is $P$-even, $v^+ \subset A_P$ so that $g(N(v)) \subset P_\bdry$ for all $g \in \Omega$ by~\eqref{eq:prob-of-approx-1}. Thus, either $g(N(v)) = P_\bdry$ or $|g(N(v))|<\lfloor\tfrac{q}{2}\rfloor$. In particular, $v$ has a unique pattern.	
\end{proof}

The proof of Proposition~\ref{prop:prob-of-odd-approx} is based on the idea that one of two situations can occur: either there are enough restricted edges so that one may directly apply Lemma~\ref{lem:bound-on-pseudo-breakup} to obtain the desired bound, or there are not many possible breakups so that one may apply Proposition~\ref{prop:prob-of-given-breakup} together with a union bound. At the heart of this approach lies the following lemma which informally states that an unknown vertex (of a certain type) either incurs an entropic loss (in the sense that it is non-dominant or adjacent to many restricted edges) or there is a unique way to determine to which~$X_P$'s it belongs. We now make this precise.

Denote
\[ S^{\Omega,f,1/2}_\rest := \Big\{ v : \big|\dpartial v \cap S^{\Omega,f}_\rest\big| \ge \tfrac{d}{2} \Big\} .\]
For an atlas $X$, let $\Omega_X$ denote the event that $X$ is a breakup.
With a slight abuse of notation, denote
\[ S^{\Omega,X,1/2}_\rest := \bigcap_{f \in \Omega_X} S^{\Omega,f,1/2}_\rest \qquad\text{and}\qquad S^X_\nondom := \bigcap_{f \in \Omega_X} S^f_\nondom .\]

\begin{lemma}\label{lem:breakup-recovery}
	Let $X$ be an atlas which is approximated by $A$, let $P$ be a dominant pattern and let $v \in A^*$ be a $P$-odd vertex. Then
	\[ \text{either}\qquad v \in S^{\Omega,X,1/2}_\rest \cup S^X_\nondom \qquad\text{or}\qquad  v \in X_P \iff v \in N_{d/2}(A_P) .\]
\end{lemma}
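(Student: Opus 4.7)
The plan is to prove the contrapositive of the dichotomy: assume that exactly one of $v \in X_P$ and $v \in N_{d/2}(A_P)$ holds; we show $v \in S^{\Omega,X,1/2}_\rest \cup S^X_\nondom$. The starting observation is that $v \in A^* \subset A^{**} \subset X_*^{+3} \subset X_*^{+5}$ by~\ref{it:approx-unknown-location}, which enables~\eqref{eq:breakup-1} for the $P$-odd vertex $v$: $v \in X_P$ iff $f(N(v)) \subset P_\bdry$ for every $f \in \Omega_X$. Moreover, whenever $v \in X_P$, the regularity of the $P$-even set $X_P$ combined with $v \in \Odd_P \cap X_P$ gives $N(v) \subset X_P$, so~\eqref{eq:breakup-prop-even} forces $g(u) \in P_\bdry$ for every $u \in N(v)$ and every $g \in \Omega_X$.

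In the ``false-negative'' case ($v \notin X_P$, $v \in N_{d/2}(A_P)$): for any $f \in \Omega_X$, $f(N(v)) \not\subset P_\bdry$. For each of the $\ge d/2$ neighbors $u \in A_P \cap N(v)$, \ref{it:approx-A_P} gives $u \in X_P$, so by~\eqref{eq:breakup-prop-even} every $g \in \Omega_X$ with $g(N(v))=f(N(v))$ has $g(u) \in P_\bdry \cap f(N(v)) \subsetneq f(N(v))$, rendering $(v,u)$ restricted via~\eqref{eq:restricted_A_def}. Since the bound is uniform in $f$, we conclude $v \in S^{\Omega,X,1/2}_\rest$.

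In the ``false-positive'' case ($v \in X_P$, $v \notin N_{d/2}(A_P)$): by~\ref{it:approx-A*_P}, $|N(v) \cap \bigcup_{Q \simeq P} A_Q| \ge d$, so at least $d/2$ neighbors $u$ of $v$ belong to some $A_Q$ with $Q \simeq P$ and $Q \neq P$; since $N(v) \subset X_P$ and $u \in A_Q \subset X_Q$, such $u$ is both $P$-even and $Q$-even, giving $g(u) \in P_\bdry \cap Q_\bdry$ for every $g \in \Omega_X$. The fact that a dominant pattern in $\phase_0$ (resp.\ $\phase_1$) is determined by its $P_\bdry$-part together with $P \neq Q$ yields $|P_\bdry \cap Q_\bdry| \le \lfloor q/2\rfloor - 1$. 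Fix $f \in \Omega_X$: if $v$ is not non-dominant in $f$, then $|f(N(v))| \ge \lfloor q/2\rfloor$ combined with $f(N(v)) \subset P_\bdry$ forces $f(N(v)) = P_\bdry$, so $\{g(u):g(N(v))=f(N(v))\} \subset P_\bdry \cap Q_\bdry \subsetneq P_\bdry = f(N(v))$, making $(v,u)$ restricted by~\eqref{eq:restricted_A_def}.

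The remaining subcase -- $v$ non-dominant in $f$ -- is the principal subtlety and is resolved by a case split on $X_\overlap$. If $v \notin X_\overlap$, \eqref{eq:breakup-prop-odd} yields $g(v) \in P_\inner$ for every $g \in \Omega_X$; since $f(N(v)) \subsetneq P_\bdry$ gives $P_\inner \subsetneq f(N(v))^c$, every edge in $\dpartial v$ is restricted by~\eqref{eq:restricted_B_def}. If instead $v \in X_P \cap X_{Q'}$ for some $Q' \neq P$: when $Q' \simeq P$, $v$ is $Q'$-odd and regularity of $X_{Q'}$ yields $N(v) \subset X_{Q'}$, so $g(N(v)) \subset P_\bdry \cap Q'_\bdry$ for every $g \in \Omega_X$, forcing $|g(N(v))| \le \lfloor q/2\rfloor - 1$ and thus $v \in S^X_\nondom$; when $Q' \not\simeq P$ (possible only when $q$ is odd), $v$ is $Q'$-even, so~\eqref{eq:breakup-prop-even} gives $g(v) \in Q'_\bdry$, and since $|Q'_\bdry| = \lfloor q/2\rfloor < \lceil q/2\rceil < |f(N(v))^c|$, \eqref{eq:restricted_B_def} again restricts every edge of $\dpartial v$. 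In all branches at least $d/2$ edges in $\dpartial v$ are restricted uniformly in $f$ (or else $v$ is non-dominant in every $g \in \Omega_X$), so $v \in S^{\Omega,X,1/2}_\rest \cup S^X_\nondom$.
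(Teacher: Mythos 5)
Your skeleton (arguing the contrapositive, counting the $\ge d$ neighbours in $\bigcup_{Q\simeq P}A_Q$ provided by~\ref{it:approx-A*_P}, and the bound $|P_\bdry\cap Q_\bdry|\le\lfloor q/2\rfloor-1$ for distinct $P\simeq Q$) parallels the paper's argument via the index set $I=\{Q\simeq P: v\in X_Q\}$, and the conclusions you reach in each branch are in fact true. The genuine gap is in how you certify membership in $S^{\Omega,X,1/2}_\rest$: by definition this set is built from edges restricted in $(f,\Omega)$, where $\Omega$ is the event that $A$ approximates \emph{some} breakup in $\breakups_{L,M,N}$, and in \eqref{eq:restricted_A_def}--\eqref{eq:restricted_B_def} the coloring $g$ ranges over all of $\Omega$, not over $\Omega_X$. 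Since $\Omega_X\subset\Omega$, constraints established only for $g\in\Omega_X$ prove restrictedness in $(f,\Omega_X)$, i.e.\ membership in the \emph{larger} set $S^{\Omega_X,f}_\rest\supset S^{\Omega,f}_\rest$ --- the wrong direction. Where your constraint has the form $g(u)\in Q_\bdry$ for $u\in N(v)\cap A_Q$ (the false-negative case, and the ``not non-dominant'' subcase of the false-positive case), the argument is repairable: $A_Q\subset X'_Q$ for \emph{every} breakup $X'$ approximated by $A$, so applying \eqref{eq:breakup-prop-even} to the breakup of each $g\in\Omega$ gives $g(u)\in Q_\bdry$ uniformly over $\Omega$ (this is exactly how the paper uses \eqref{eq:prob-of-approx-1}), and your deductions then go through with only the justification changed. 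But your two \eqref{eq:restricted_B_def}-branches --- ``$v$ non-dominant in $f$ and $v\notin X_\overlap$'', where you use $g(v)\in P_\inner$ via \eqref{eq:breakup-prop-odd}, and ``$v\in X_{Q'}$ with $Q'\not\simeq P$'', where you use $g(v)\in Q'_\bdry$ --- cannot be repaired this way: $v\in A^*\subset A^{**}$ lies in the unknown region, so the approximation does not determine whether $v$ belongs to $X^g_P\setminus X^g_\overlap$, or to $X^g_{Q'}$, for a coloring $g\in\Omega\setminus\Omega_X$, and these constraints on $g(v)$ simply do not hold over all of $\Omega$.

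The repair is to discard the non-dominance case split in the false-positive case and argue as the paper does. There you already have more than $d/2$ neighbours $u\in N(v)\cap A_Q$ with $Q\simeq P$, $Q\ne P$; if $v\in X_Q$ for some such $Q$ your uniform argument gives $v\in S^X_\nondom$, and otherwise $(u,v)\in\dpartial X_Q$, so \eqref{eq:breakup-prop-bdry-X_P} gives $f(N(v))\not\subset Q_\bdry$ for every $f\in\Omega_X$ (information about $f$ is legitimate on that side of \eqref{eq:restricted_A_def}), while $g(u)\in Q_\bdry$ for every $g\in\Omega$; hence all these edges are restricted in $(f,\Omega)$ for every $f\in\Omega_X$, giving $v\in S^{\Omega,X,1/2}_\rest$ with no appeal to \eqref{eq:restricted_B_def} at all.
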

\begin{proof}
	Fix a dominant pattern $P$ and a $P$-odd vertex $v \in A^*$. Recall that $v^+ \subset (A^{**})^+ \subset S \subset X_*^{+4}$.
	Denote $I := \{ Q \simeq P : v \in X_Q \}$.
	We first show that
	\[ |I|>1 \implies v \in S^X_\nondom .\]
	Indeed, if $Q,T \in I$ are distinct, then $f(N(v)) \subset Q_\bdry \cap T_\bdry$ for any $f \in \Omega_X$ by~\eqref{eq:def-atlas} and~\eqref{eq:breakup-prop-even}, and it follows that $v$ is a non-dominant vertex in $f$.
	
	Next, we show that
	\[ \text{for every $Q \simeq P$ and $u \in N(v) \cap A_Q$,}\qquad Q \notin I \implies (v,u) \in S^{\Omega,f}_\rest\text{ for all }f \in \Omega_X .\]
	To this end, let $Q \simeq P$, $u \in N(v) \cap A_Q$ and $f \in \Omega_X$, and note that $g(u) \in Q_\bdry$ for all $g \in \Omega$ by~\eqref{eq:prob-of-approx-1}.
	If $Q \notin I$ then $f(N(v)) \not\subset Q_\bdry$ by~\eqref{eq:breakup-prop-bdry-X_P} so that $(v,u)$ is restricted by~\eqref{eq:restricted_A_def}.
	
	Suppose now that $v \notin S^{\Omega,X,1/2}_\rest \cup S^X_\nondom$. Note that $v \in N_d(\bigcup_{Q \simeq P} A_Q)$ by~\ref{it:approx-A*_P}. It therefore follows from what we have just shown that $I=\{Q\}=\{T \simeq P : v \in N_{d/2}(A_T)\}$ for some $Q \simeq P$. In particular, $v \in X_P$ if and only if $P=Q$ if and only if $v \in N_{d/2}(A_P)$.
\end{proof}

\begin{proof}[Proof of Proposition~\ref{prop:prob-of-odd-approx}]
Let $a>0$ be a small constant to be chosen later.
Consider the event
\[ \Omega' := \Omega \cap \Big\{ \big|S^{\Omega,f,1/2}_\rest\big| + \tfrac{1}{q} \big|S^f_\nondom\big| \ge \tfrac{a}{q^2(q+\log d)} \big(\tfrac{L}{d} + \tfrac{M}{q} + \tfrac{N}{q^2}\big) \Big\} .\]
We bound separately the probabilities of $\Omega'$ and $\Omega \setminus \Omega'$.
Let us begin with $\Omega'$.
Note that
\[ \big|S^{\Omega',f}_\rest\big| \ge \big|S^{\Omega,f}_\rest\big| \ge \tfrac14 d\big|S^{\Omega,f,1/2}_\rest\big| \qquad\text{for any }f \in \Omega .\]
By~\ref{it:approx-unknown-size} and Claim~\ref{claim:S},
\begin{equation}\label{eq:A**-U-S-sizes}
|A^{**}| \le CLd^{-1/2}\log d, \qquad |U| \le M+N+|A^{**}|, \qquad |S| \le 2L+M+N+(2d+1)|A^{**}| .
\end{equation}
Using~\eqref{eq:dim-assump} and Lemma~\ref{lem:vertices-with-unique-pattern-approx}, we may apply Lemma~\ref{lem:bound-on-pseudo-breakup} to obtain
\[ \Pr(\Omega') \le \exp\Big(-\tfrac{ca}{q^3(q+\log d)}\big(\tfrac{L}{d}+\tfrac{M}{q}+\tfrac{N}{q^2}\big) + \tfrac{q}{d}|U| + e^{-cd/q^2}|S| \Big) \le e^{- \frac{ca}{q^3(q+\log d)} ( \frac{L}{d}+\frac{M}{q}+\frac{N}{q^2})} .\]

We now bound the probability of $\Omega \setminus \Omega'$.
To do this, as explained above, we recover the breakup and then apply Proposition~\ref{prop:prob-of-given-breakup}.
Formally, let $\cB$ be the collection of atlases $X \in \breakups_{L,M,N}$ which are approximated by $A$ and have
\begin{equation}\label{eq:prob-of-approx-size-of-restricted}
 \big|S^{\Omega,X,1/2}_\rest\big| + \tfrac{1}{q} \big|S^X_\nondom\big| < \tfrac{a}{q^2(q+\log d)}\big(\tfrac{L}{d}+\tfrac{M}{q}+\tfrac{N}{q^2}\big).
\end{equation}
Note that $\Omega \setminus \Omega' \subset \bigcup_{X \in \cB} \Omega_X$.
We shall show that
\begin{equation}\label{eq:prob-of-approx-size-of-B}
|\cB| \le e^{\frac{Ca}{q} (\frac{L}{d}+\frac{M}{q}+\frac{N}{q^2})}.
\end{equation}
Using Proposition~\ref{prop:prob-of-given-breakup}, when $a$ is chosen to be sufficiently small, this will then yield that
\[ \Pr(\Omega \setminus \Omega') \le \sum_{X\in\cB} \Pr(\Omega_X) \le e^{-\frac{c}{q}(\frac{L}{d}+\frac{M}{q}+\frac{N}{q^2})} . \]

Toward establishing~\eqref{eq:prob-of-approx-size-of-B}, we show that the mapping
\begin{equation}\label{eq:prob-of-approx-mapping}
X ~\longmapsto~ \left(S^{\Omega,X,1/2}_\rest \cup S^X_\nondom,~ (I_X(v))_{v \in S^{\Omega,X,1/2}_\rest \cup S^X_\nondom}\right)
\end{equation}
is injective on $\cB$, where
\[ I_X(v) := \big\{ P\in \phasedom : v \in \Odd_P \cap X_P \big\} .\]
By~\eqref{eq:def-atlas} and~\ref{it:approx-A_P}, we have
\[ X_P = (\Odd_P \cap X_P)^+ = (\Odd_P \cap (A_P \cup (X_P \cap A^*)))^+ \qquad\text{for all }X \in \cB\text{ and all }P .\]
Thus, to determine $X_P$, we only need to know the set $\Odd_P \cap X_P \cap A^*$. In other words, we only need to know for each vertex $v \in \Odd_P \cap A^*$, whether it belongs to $X_P$ or not. If $v \in S^{\Omega,X,1/2}_\rest \cup S^X_\nondom$ then this is given by $I_X(v)$, and otherwise, Lemma~\ref{lem:breakup-recovery} implies that this is determined by the approximation. Thus, the mapping is injective.

Let $\cR$ be the image of the mapping in~\eqref{eq:prob-of-approx-mapping} as $X$ ranges over $\cB$. As this mapping is injective, we have $|\cB| = |\cR|$.
The bound~\eqref{eq:prob-of-approx-size-of-B} will then easily follow once we show that
\begin{equation}\label{eq:possible-patterns-a-vertex-can-be-in}
|\{I_X(v) : X \in \breakups,\, v \in \Even\}| \le 2^q \qquad\text{and}\qquad |\{I_X(v) : X \in \breakups,\, v \in \Odd\}| \le 2^q.
\end{equation}
Indeed, \eqref{eq:A**-U-S-sizes}, \eqref{eq:prob-of-approx-size-of-restricted} and~\eqref{eq:possible-patterns-a-vertex-can-be-in} imply that
\[ |\cB| = |\cR| \le \binom{|S|}{\le \tfrac{a}{q(q+\log d)} (\tfrac{L}{d}+\tfrac{M}{q}+\tfrac{N}{q^2})} \cdot (2^q)^{\frac{a}{q(q+\log d)} (\frac{L}{d}+\frac{M}{q}+\frac{N}{q^2})} \le e^{\frac{Ca}{q} (\frac{L}{d}+\frac{M}{q}+\frac{N}{q^2})} .\]

To show~\eqref{eq:possible-patterns-a-vertex-can-be-in}, it suffices to show that, for any $X \in \breakups$ and $v \in \Z^d$, there exists $I \subset [q]$ such that
\[ I_X(v) = \{ P \in \phase_i : I \subset P_\bdry\} ,\qquad\text{where } i := \1_{\{v\text{ is even}\}} .\]
To see this, let $f \colon \Z^d \to [q]$ be such that $X$ is a breakup of $f$, and set $I := f(N(v))$. By~\eqref{eq:breakup-1}, for $P \in \phase_i$, we have $v \in X_P$ if and only if $I \subset P_\bdry$. For $P \in \phasedom \setminus \phase_i$, we clearly have $P \notin I_X(v)$, since $v$ is $P$-even.
\end{proof}

\section{Repair transformation and Shearer's inequality}
\label{sec:shift-trans}

In this section, we prove the following generalization of Lemma~\ref{lem:bound-on-pseudo-breakup}. Recall from Section~\ref{sec:overview-notation} that $\Lambda \subset \Z^d$ a fixed domain outside which the coloring is forced to be in the $P_0$-pattern.

\begin{lemma}\label{lem:bound-on-pseudo-breakup-expectation}
	Let $S \subset \Z^d$ be finite and let $\{ S_P \}_{P \in \phasedom}$ be a partition of $S^c$ such that $\intB S_P \subset \extB S$ for all $P$.
	Suppose that $S \cup S_{P_0}$ contains $(\Lambda^c)^+$.
	Let $E$ be an event which is determined by the values of $f$ on $S^+$. Let $\Omega$ be the event that $E$ occurs and $(\intB S_P)^+$ is in the $P$-pattern for every~$P$.
	Then
	\[ \Pr_{\Lambda,P_0}(\Omega) \le \exp\left[-\tfrac{1}{128q} \E\left(\big|S^f_\unbal\big| +\tfrac{1}{q}\big|S^f_\nondom\big| + \tfrac{1}{d}\big|S^{\Omega,f}_\rest\big| \right) + \tfrac{q}{d}\big|S \setminus S^\Omega_\unique\big| +e^{-d/65q^2}|S| \right] ,\]
	where the expectation is taken with respect to a random function $f$ chosen from $\Pr_{\Lambda,P_0}(\cdot \mid \Omega)$.
\end{lemma}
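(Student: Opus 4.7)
The plan is to combine a one-to-many repair transformation with Shearer's inequality, as outlined in Section~\ref{sec:Shearer_overview}. Given $f \in \Omega$, construct proper colorings $\Phi(f)$ as follows. On each connected component $D$ of $S_P$, apply a fixed permutation $\sigma_P \in \mathrm{Sym}([q])$ taking $(P_\bdry, P_\inner)$ to $(P_{0,\bdry}, P_{0,\inner})$, and when $P \in \phase_1$, additionally shift the colors of $D$ by one lattice site in the $e_1$-direction (Dobrushin's trick; this preserves properness because $P \in \phase_1$ swaps even/odd sublattices). After erasing $f$ on $S$ and performing these operations on $S^c$, the resulting partial configuration has the $P_0$-pattern on $\extB S \supset \intB S_P$ for every $P$; we then refill $S$ with every proper extension to obtain $\Phi(f)$. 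Each coloring in the image satisfies the $P_0$ boundary condition on $\intB\Lambda$ (using $S \cup S_{P_0} \supset (\Lambda^c)^+$), and each target arises from at most $|\supp(f|_S)|$ preimages, since the choice of $\sigma_P$ and shift per component is determined by the $P_0$-pattern values appearing in the image. Comparing cardinalities and rearranging yields
\[ \log \mu(\Omega) \;\le\; \Ent(f|_S) - \E\bigl[\log N_S(\tilde f)\bigr] + O(1), \]
where $N_S(\tilde f)$ is the number of proper fillings of $S$ compatible with the repaired configuration outside $S$, and $f \sim \mu(\cdot \mid \Omega)$.

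By the $P_0$-pattern on $\extB S$ and properness, each vertex $v \in S$ admits at least $|A_0| = \lfloor q/2\rfloor$ colors if $v$ is even and $|B_0| = \lceil q/2\rceil$ colors if $v$ is odd (additional constraints from neighbors inside $S$ rarely decrease this count, by a concentration estimate on the $2d$ neighborhood colors using $d \gg q^2$, contributing at most the $e^{-d/65q^2}|S|$ error term). For the loss, apply Shearer's inequality (Lemma~\ref{lem:shearer}) to $F := f|_S$ (extended by $\star$ outside $S$), with $\cI = \{N(v)\}_{v \in \Odd}$ for the even coordinates and symmetrically, to obtain, as in~\eqref{eq:entropy-bound-overview},
\[ \Ent(f|_S) \;\le\; \tfrac12 \sum_{v} J_v, \qquad J_v := \tfrac{\Ent(F(N(v)))}{2d} + \tfrac{\Ent(F|_{N(v)} \mid F(N(v)))}{2d} + \Ent(F(v) \mid F(N(v))), \]
where terms with $v$ at distance $\ge 2$ from $S$ vanish. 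The trivial bounds $\Ent(F(N(v))) \le q\log 2$ and the second-plus-third term $\le \log(\lfloor q/2\rfloor\lceil q/2\rceil)$ (by properness) combine to match the gain up to a negligible $O(q/d)$ slack, so all required savings must come from leveraging $\Omega$-information.

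The four notions from Section~\ref{sec:Shearer_overview} contribute as follows. For $v \in S^\Omega_\unique$, the unique-pattern hypothesis forces $F(N(v))$ to concentrate on a single set unless $v$ is non-dominant or all edges in $\dpartial v$ are restricted, suppressing the $\Ent(F(N(v)))/2d$ term and shifting its mass into terms we already account for; for $v \notin S^\Omega_\unique$ we incur an extra $q\log 2/(2d)$, producing the $\tfrac{q}{d}|S \setminus S^\Omega_\unique|$ term. For $v \in S^f_\nondom$, the forbidden value of $|F(N(v))|$ shrinks the joint support of $(F(v), F|_{N(v)})$ by a factor bounded away from $1$ by $c/q^2$, saving $c/q^2$ in $J_v$. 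For $v \in S^f_\unbal$, some color $i \in F(N(v))$ appears at most $d/q$ times among the $2d$ neighbors, which forces the conditional distribution of $F|_{N(v)}$ to be far from uniform on $F(N(v))^{2d}$ and saves $\Omega(1)$ in $\Ent(F|_{N(v)} \mid F(N(v)))/2d$. For each restricted edge $(v,u) \in S^{\Omega,f}_\rest$, decomposing $\Ent(F|_{N(v)} \mid F(N(v))) \le \sum_{w \in N(v)} \Ent(F(w) \mid F(N(v)))$ shows that the restriction~\eqref{eq:restricted_def} reduces the $(v,u)$-summand by $\Omega(1/q)$, contributing $\Omega(1/(qd))$ savings in $J_v$. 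Summing these gains over $v$ (and over edges for the restricted case), taking expectations under $\mu(\cdot \mid \Omega)$, and combining with the gain yields the advertised bound. The main technical obstacle is the bookkeeping at $\intB S$ and $\extB S$, where $F$ transitions between random and deterministic values and the Shearer decomposition must be adjusted so that boundary summands are either absorbed into deterministic parts of the gain or bounded by the $e^{-d/65q^2}|S|$ slack; a secondary challenge is verifying the per-edge decomposition of the conditional-entropy term rigorously, which requires applying subadditivity~\eqref{eq:entropy-subadditivity} inside an already-conditioned entropy and then carefully cancelling double-counted contributions.
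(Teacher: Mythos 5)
Your overall route (repair transformation plus Shearer with the four notions of unbalanced/non-dominant/restricted/unique) is the same as the paper's, but there is a genuine gap at precisely the step you defer to ``bookkeeping'': the parity and boundary terms are \emph{not} negligible and cannot be absorbed into the $e^{-d/65q^2}|S|$ slack, nor do the trivial Shearer bounds match the gain ``up to $O(q/d)$''. Your gain is $\log\lfloor\tfrac q2\rfloor$ or $\log\lceil\tfrac q2\rceil$ per refilled vertex according to its parity, while the crude Shearer loss is $\tfrac12\log(\lfloor\tfrac q2\rfloor\lceil\tfrac q2\rceil)$ per vertex regardless of parity; when $|S^\even|\neq|S^\odd|$ the mismatch is $\tfrac12\big(|S^\even|-|S^\odd|\big)\log\tfrac{\lceil q/2\rceil}{\lfloor q/2\rfloor}$, which by~\eqref{eq:even-odd-diff} equals $\tfrac{1}{4d}\big(|\partial^\even S|-|\partial^\odd S|\big)\log\tfrac{\lceil q/2\rceil}{\lfloor q/2\rfloor}$. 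For odd $q$ this is of order $|\partial S|/(qd)$ and of either sign, i.e.\ exactly the order of the savings $\tfrac{1}{128q}\cdot\tfrac1d\big|S^{\Omega,f}_\rest\big|$ you are trying to extract (in the intended application $S=X_*$ and both are of order $L/(qd)$). These terms must therefore cancel \emph{exactly}, not be estimated. The paper engineers this cancellation: the refill region is $S_*=(S_0\cup S_1^{\down})^c$, including the slab vacated by the Dobrushin shift of the $\phase_1$ components, so that the gain $|\cH|$ carries the explicit factor $\big(\lfloor\tfrac q2\rfloor/\lceil\tfrac q2\rceil\big)^{\frac{1}{4d}(|\partial^\even S_*|-|\partial^\odd S_*|)}$ via~\eqref{eq:even-odd-diff}; Proposition~\ref{lem:shearer-for-bad-set} is proved with a matching boundary factor, obtained by bounding the boundary summands $\sum_{u\in\intB S'}|\partial u\cap\partial S'|\log|S_u|$ edge-by-edge using the pattern constraints that $\Omega$ forces on $\intB S'$; and the identity $|\partial^\even S_*|=|\partial^\even(S^+,S_0)|+|\partial^\odd(S^+,S_1)|$ (a consequence of the shift) is what makes the two factors cancel. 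Without reproducing this, for odd $q$ your exponent can come out with the wrong sign.

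Two secondary issues. First, the counting step is not right as stated: ``each target arises from at most $|\supp(f|_S)|$ preimages'' gives nothing usable, and your inequality $\log\mu(\Omega)\le\Ent(f|_S)-\E[\log N_S]+O(1)$ needs the (true but nontrivial) observation that, conditionally on $\Omega$, the restriction $f|_{S^+}$ is \emph{uniform} on its support, so that entropy equals log-cardinality; for a non-uniform marginal the comparison goes the wrong way. The paper avoids both problems by grouping colorings according to $f|_{(S^+)^c}$, refilling $S_*$ with all $P_0$-pattern colorings (an explicit count independent of $f$, automatically proper), and proving injectivity of $(f|_{(S^+)^c},h)\mapsto\phi_{f,h}$ so that the images are disjoint, yielding $\Pr(\Omega)\le|\Omega_1|/\min_f|T(f)|$. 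Second, no ``concentration estimate'' is needed for the gain; the $e^{-d/65q^2}|S|$ term arises elsewhere, in controlling $\Ent(X_v)$ for unique-pattern vertices, not from interior collisions in the refill.
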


Let us show how this lemma yields Lemma~\ref{lem:bound-on-pseudo-breakup}.
\begin{proof}[Proof of Lemma~\ref{lem:bound-on-pseudo-breakup}]
Let $E$ be the event that $f|_{S^+}=\phi|_{S^+}$ for some $\phi \in \Omega$ and let $\Omega'$ be the event that $E$ occurs and $(\intB S_P)^+$ is in the $P$-pattern for all $P$.
	Note that $E$ is determined by $f|_{S^+}$, $\Omega \subset \Omega'$, $k(\Omega)=k(\Omega')$ and $S^\Omega_\unique = S^{\Omega'}_\unique$. Thus, Lemma~\ref{lem:bound-on-pseudo-breakup} follows from Lemma~\ref{lem:bound-on-pseudo-breakup-expectation}.
\end{proof}

The proof of Lemma~\ref{lem:bound-on-pseudo-breakup-expectation} is based on a general upper bound on the total number of colorings in an event, given in Proposition~\ref{lem:shearer-for-bad-set} below.
For a set $U \subset \Z^d$, we denote $U^\even := \Even \cap U$ and $U^\odd := \Odd \cap U$.
For two sets $U,V \subset \Z^d$, we denote
\[ \partial^\even(U,V) := \partial(U^\even,V^\odd) \qquad\text{and}\qquad \partial^\odd(U,V) := \partial(U^\odd,V^\even) ,\]
so that $\partial(U,V)=\partial^\even(U,V) \cup \partial^\odd(U,V)$. We also write $\partial^\even U := \partial^\even(U,U^c)$ and $\partial^\odd U := \partial^\odd(U,U^c)$, and for a dominant pattern $P$, we use the notation $\partial^{P\text{-}\even}$ and $\partial^{P\text{-}\odd}$ with the meanings inferred from the notions of $P$-even and $P$-odd.
Recall the notions of non-dominant vertex, restricted edge, unbalanced neighborhood and unique pattern defined in Section~\ref{sec:Shearer_overview}. Note that, although those notions were defined for proper colorings $f$ of $\Z^d$, they are well defined for any $v \in S$ when $f$ is a proper coloring of $S^+$.

\begin{prop}\label{lem:shearer-for-bad-set}
	Let $S \subset \Z^d$ be finite and let $\{ S_P \}_{P \in \phasedom}$ be a partition of $S^c$. Let $\cF$ be a set of proper colorings of $S^+$ satisfying that $S^+ \cap (\intB S_P)^+$ is in the $P$-pattern for every $P$.
	Sample $f \in \cF$ uniformly at random.
	Then
	\[ \begin{aligned} |\cF| \le (\lfloor\tfrac{q}{2}\rfloor \lceil\tfrac{q}{2}\rceil)^{\frac12 |S^+|} &\cdot \exp\left[-\tfrac{1}{128q} \E\Big(\big|S^f_\unbal\big| +\tfrac{1}{q}\big|S^f_\nondom\big|+ \tfrac{1}{d}\big|S^{\cF,f}_\rest\big| \Big)+ \tfrac{q}{d}\big|S \setminus S^{\cF}_\unique\big| +e^{-d/65q^2}|S|\right] \\ &\cdot \prod_P \Big(\lfloor \tfrac{q}{2} \rfloor/\lceil \tfrac{q}{2} \rceil\Big)^{\frac{1}{4d}(|\partial^{P\text{-}\even} (S^+,S_P \setminus S^+)|-|\partial^{P\text{-}\odd} (S^+,S_P \setminus S^+)|)} . \end{aligned}\]
\end{prop}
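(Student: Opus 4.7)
The plan is to bound $\log|\cF| = \Ent(f)$ by a two-fold application of Shearer's inequality (Lemma~\ref{lem:shearer}) and then refine each local term using the four mechanisms of Section~\ref{sec:Shearer_overview}. For $f$ uniform on $\cF$, Shearer's inequality applied to $(f(v))_{v \in \Even \cap S^+}$ with the cover family $\{N(v) \cap S^+\}_{v \in \Odd}$ (each even vertex of $S^+$ covered $2d$ times, with boundary effects absorbed separately), combined with the chain rule and the conditional independence of $\{f(v)\}_{v \in \Odd \cap S^+}$ given $f|_\Even$ (which holds because properness and the pattern condition act as single-site constraints given $f|_\Even$), gives
\[\Ent(f) \le \sum_{v \in \Odd} \Big[\tfrac{\Ent(f|_{N(v) \cap S^+})}{2d} + \Ent\!\big(f(v) \mid f|_{N(v)}\big)\Big],\]
with $\Ent(f(v)\mid\cdot)=0$ for $v \notin S^+$. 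Averaging with the symmetric bound (Shearer's applied on $\Odd$ with cover from $\Even$), and decomposing $\Ent(f|_{N(v)}) = \Ent(f(N(v))) + \Ent(f|_{N(v)} \mid f(N(v)))$, produces $\log|\cF| \le \tfrac12 \sum_v T_v$ with
\[T_v := \tfrac{\Ent(f(N(v)))}{2d} + \tfrac{\Ent(f|_{N(v)} \mid f(N(v)))}{2d} + \Ent\!\big(f(v) \mid f(N(v))\big).\]

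Next I would bound $T_v$ pointwise. The trivial bound $T_v \le \log(\lfloor q/2 \rfloor \lceil q/2 \rceil) + q\log 2/(2d)$, following from $|f(N(v))| \subseteq [q]$ and the concavity inequality $\log(a(q-a)) \le \log(\lfloor q/2 \rfloor \lceil q/2 \rceil)$ under properness, summed over $v \in S^+$ and adjusted for pattern-constrained neighborhoods of vertices in $\intB S_P$ (where $f$ is forced to be uniform over $P_\bdry$ or $P_\inner$), produces the leading factor $(\lfloor q/2 \rfloor \lceil q/2 \rceil)^{|S^+|/2}$. The $P$-even/$P$-odd imbalance along the edges $\partial(S^+, S_P \setminus S^+)$, where the two Shearer applications weight the two parities asymmetrically when part of the neighborhood is frozen on the pattern side, accounts for the correction $\prod_P (\lfloor q/2 \rfloor/\lceil q/2 \rceil)^{(|\partial^{P\text{-}\even}|-|\partial^{P\text{-}\odd}|)/(4d)}$. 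The quantitative gains corresponding to the four notions are then: (i) non-dominance of $v$ forces $|f(N(v))|$ strictly off $\lfloor q/2 \rfloor$, losing $\Omega(1/q^2)$ by strict concavity, explaining the coefficient $\tfrac1q|S^f_\nondom|$; (ii) an unbalanced neighborhood pushes $\Ent(f|_{N(v)} \mid f(N(v)))/(2d)$ below $\log|f(N(v))|$ by $\Omega(1/q)$, explaining $|S^f_\unbal|$; (iii) each restricted edge $(v,u) \in \dpartial v$ forbids one joint $(f(v),f(u))$-value and contributes $\Omega(1/(qd))$ after aggregation, giving the $\tfrac1d|S^{\cF,f}_\rest|$ gain; (iv) if $v$ has a unique pattern in $\cF$, the $q\log 2/(2d)$ cost from $\Ent(f(N(v)))/(2d)$ is absorbed because the one exceptional pattern is already handled by (i)--(iii), whereas a non-unique vertex pays this cost in full, yielding the $\tfrac{q}{d}|S \setminus S^\cF_\unique|$ penalty.

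Summing these pointwise bounds with the boundary corrections produces the claimed inequality; the residual $e^{-d/65q^2}|S|$ term absorbs low-order contributions from the outer shell of $S^+$ (where the Shearer cover drops below $2d$) and from rare configurations where the trivial bound is suboptimal but none of (i)--(iv) is active. The main obstacle will be the quantitative calibration of (iii) and (iv) and their interplay. For (iii), the loss from removing a single joint value can be arbitrarily small if that value had low conditional probability, so the $\Omega(1/(qd))$ bound must be combined with (ii): when the distribution of $f|_{N(v)}$ is concentrated, (ii) already supplies the required loss, and when it is spread out, each restricted edge indeed costs the stated amount. For (iv), the absorption requires decomposing $T_v$ into an ``expected pattern'' part plus an ``exceptional'' part, and verifying via a case split that under the unique-pattern hypothesis the exceptional event falls into one of (i), (ii), or (iii). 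Obtaining the universal constant $1/(128q)$, rather than a configuration-dependent one, requires that the four gains combine without catastrophic cancellation, which is ultimately enforced by the convexity structure of $\log(a(q-a))$ together with the disjoint nature of the four notions.
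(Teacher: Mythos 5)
Your skeleton is the same as the paper's: Shearer's inequality applied in both parities and averaged, local terms per vertex, and gains extracted from the four notions, with the unique-pattern notion used to control the entropy of the revealed neighborhood information. However, three concrete points in your plan diverge from what actually makes the proof work. First, the step where you justify $\Ent(f^\odd\mid f^\even)\le\sum_v\Ent(f(v)\mid f|_{N(v)})$ by ``conditional independence'' is not valid: $\cF$ is an arbitrary pattern-respecting subset of colorings, so the odd values need not be conditionally independent given $f|_{\Even}$; the inequality is true, but only via subadditivity and monotonicity of conditional entropy, which is what the paper uses. Second, your accounting of the boundary is off. You propose to absorb the ``outer shell of $S^+$ where the Shearer cover drops below $2d$'' into the $e^{-d/65q^2}|S|$ term; these contributions are of order $\tfrac{1}{2d}|\partial S|\log q$, which can be comparable to $|S|\log q$ and cannot be hidden in an exponentially small error. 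The paper instead completes the cover with the half-neighborhoods $\{N(v)\cap S\}_{v\in\extB S}$ (after passing to the even expansion $S'$ and the enlarged family $\cF'$), and the resulting terms $\sum_{u\in\intB S'}|\partial u\cap\partial S'|\log|S_u|$ are matched \emph{exactly} against the product correction $\prod_P(\lfloor q/2\rfloor/\lceil q/2\rceil)^{\cdots}$; the $e^{-d/65q^2}|S|$ term has a different origin, namely the residual in the entropy bound for the revealed ``type'' at unique-pattern vertices (a near-deterministic random variable estimate of the form $x\log(a/x)\le a/e$), not boundary or ``rare configuration'' effects.

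Third, your worry about restricted edges (``the loss can be arbitrarily small if the forbidden value had low conditional probability'') and the proposed remedy via interplay with unbalancedness point to a missing idea rather than a real obstacle. The paper reveals, per vertex, only the \emph{type} of $f|_{N(v)}$ (the color set $f(N(v))$ together with the unbalanced indicator) and then bounds the remaining conditional entropies by \emph{support counts} conditioned on that type (Lemma~\ref{lem:Z-bounds}): since restrictedness of $(v,u)$ is determined by $f(N(v))$ and $\cF$ alone, each restricted edge removes a color from the coordinate-wise support of $f|_{N(v)}$ or of $f(v)$, giving a clean multiplicative factor $(1-1/|J|)$ per semi-restricted index, hence the $\Omega(1/(qd))$ gain with no probabilistic calibration or case split on concentration needed. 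Without this ``condition on the type, then count supports'' step, your calibration of (iii) and the absorption in (iv) remain genuinely unproven as stated, and these are precisely the delicate points of the argument.
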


Before proving the proposition, let us show it implies Lemma~\ref{lem:bound-on-pseudo-breakup-expectation} and thus Lemma~\ref{lem:bound-on-pseudo-breakup}.

\begin{proof}[Proof of Lemma~\ref{lem:bound-on-pseudo-breakup-expectation}]
The reader may find it useful to consult Figure~\ref{fig:repairmap} where a repair transformation similar to the one used below is illustrated (with $S=X_*$ and $(S_P) = (X_P\setminus X_*)$ for a breakup~$X$).

	Note that $\Omega$ is determined by the values of $f$ on $S^{++}$. Let $\bar{\Lambda}$ be a finite subset of $\Z^d$ that contains $\Lambda \cup S^{++}$.
	Let $\bar{\Omega}$ be the support of the marginal of $\Pr_{\Lambda,P_0}$ on $[q]^{\bar{\Lambda}}$.
	We henceforth view $\Omega$ as a subset of $\bar{\Omega}$.
	Denote
	\[ \Omega_0 := \Big\{ f|_{\bar\Lambda \setminus S^+} : f \in \Omega \Big\} \subset [q]^{\bar{\Lambda} \setminus S^+} \quad\text{and}\quad \Omega_1 := \Big\{ f|_{S^+} : f \in \Omega \Big\} \subset [q]^{S^+} .\]
	Let $T \colon \Omega_0 \to 2^{\bar{\Omega}}$ be a map which satisfies $T(f) \cap T(f') = \emptyset$ for distinct $f,f' \in \Omega_0$.
	Recalling~\eqref{eq:finite_volume_measure}, \eqref{eq:prob_outside_Lambda_def1} and~\eqref{eq:prob_outside_Lambda_def2}, we have that the marginal of $\Pr_{\Lambda,P_0}$ on $\bar\Omega$ is uniform. Hence,
	\[ \Pr_{\Lambda,P_0}(\Omega)
	= \frac{|\Omega|}{|\bar{\Omega}|}
	\le \frac{|\Omega_0| \cdot |\Omega_1|}{\sum_{f \in \Omega_0} |T(f)|} \le \frac{|\Omega_1|}{\min_{f \in \Omega_0} |T(f)|} .\]
	
	Before defining $T$, let us bound $|\Omega_1|$.
	To this end, we aim to apply Proposition~\ref{lem:shearer-for-bad-set} with $\cF=\Omega_1$ (and $S$ and $(S_P)$ as here).
	Observe that, since $(\intB S_P)^+$ is in the $P$-pattern on $\Omega$ and since $E$ is determined by $f|_{S^+}$, the collection $\cF$ satisfies the assumption of the proposition and, moreover, $\Pr_{\Lambda,P_0}(f|_{S^+} \in \cdot \mid \Omega)$ is the uniform distribution on $\cF$.
	For $i \in \{0,1\}$, denote $S_i := \bigcup_{P \in \cP_i} S_P \setminus S^+$, where $\phase_0$ and $\phase_1$ were defined in Section~\ref{sec:overview-notation}.
	Then, by Proposition~\ref{lem:shearer-for-bad-set},
	\begin{align*}
	|\Omega_1| &\le (\lfloor\tfrac{q}{2}\rfloor \lceil\tfrac{q}{2}\rceil)^{\frac12|S^+|} \cdot e^{-\frac{1}{128q} \Big(\big|S^f_\unbal\big|+\tfrac{1}{q}\big|S^f_\nondom\big| + \tfrac{1}{d}\big|S^{\cF,f}_\rest\big| \Big)+\tfrac{q}{d}\big|S \setminus S^{\cF}_\unique\big| +e^{-d/65q^2}|S|} \\&\quad \cdot \left(\lfloor \tfrac{q}{2} \rfloor/\lceil \tfrac{q}{2} \rceil\right)^{\frac{1}{4d}(|\partial^\even (S^+,S_0)|-|\partial^\odd (S^+,S_0)|-|\partial^\even (S^+,S_1)|+|\partial^\odd (S^+,S_1)|)} .
	\end{align*}
	Thus, the lemma will follow if we find a map $T$ such that $T(f) \cap T(f') = \emptyset$ for distinct $f,f' \in \Omega_0$ and which also satisfies
	\begin{equation}\label{eq:lower-bound-for-psuedo-breakup}
	\min_{f \in \Omega_0} |T(f)| \ge (\lfloor\tfrac{q}{2}\rfloor \lceil\tfrac{q}{2}\rceil)^{\frac12|S^+|} \cdot \left(\lfloor \tfrac{q}{2} \rfloor/\lceil \tfrac{q}{2} \rceil\right)^{\frac{1}{4d}(|\partial^\even(S^+,S_0)|-|\partial^\even(S^+,S_1)| - |\partial^\odd (S^+,S_0)|+|\partial^\odd (S^+,S_1)|)} .
	\end{equation}
	
	We now turn to the definition of $T$. Fix a unit vector $e \in \Z^d$.
	For $u \in \Z^d$, we denote $u^{\up} := u+e$ and $u^{\down} := u-e$. For a set $U \subset \Z^d$, we also write $U^{\up} := \{ u^{\up} : u \in U\}$ and $U^{\down} := \{ u^{\down} : u \in U\}$.
	For each $P \in \phasedom$, let $\psi_P$ be a permutation of $[q]$ taking $P$ to $P_0=(A_0,B_0)$ if $P \in \phase_0$ or to $(B_0,A_0)$ otherwise (for $P=P_0$, we take $\psi_{P_0}$ to be the identity). Let $\cH$ be the set of all functions $h \colon S_* \to [q]$ which are in the $P_0$-pattern, where
	\[ S_* := (S_0 \cup S_1^{\down})^c .\]
	For $f \in \Omega_0$ and $h \in \cH$, define $\phi_{f,h} \colon \bar\Lambda \to [q]$ by
	\[ \phi_{f,h}(v) := \begin{cases}
	\psi_P(f(v)) &\text{if }v \in S_P \setminus S^+\text{ for }P \in \cP_0 \\
	\psi_P(f(v^{\up})) &\text{if }v \in (S_P \setminus S^+)^{\down}\text{ for }P \in \cP_1 \\
	h(v) &\text{if }v \in S_*
	\end{cases} .\]
	Note that $\phi_{f,h}$ is well defined, since the assumption that $\intB S_P \subset \extB S$ for all $P$ implies that
	\begin{equation}\label{eq:S_P-distance-3}
	\dist(S_P \setminus S^+,S_Q \setminus S^+) \ge 3 \qquad\text{for distinct }P\text{ and }Q,
	\end{equation}
	so that, in particular, $\{ S_0, S_1^{\down}, S_* \}$ is a partition of $\Z^d$.
	
	Let us check that $\phi:=\phi_{f,h}$ is a proper coloring.
	In light of~\eqref{eq:S_P-distance-3}, it suffices to show that $\intB S_0$, $\intB S_1^{\down}$ and $S_*$ are in the $P_0$-pattern in $\phi$. It is immediate from the definition that $S_*$ is in the $P_0$-pattern in $\phi$.
	If $w \in \intB S_0$ then $w \in \intB (S_P \setminus S^+) \subset (\intB S_P)^+$ for some $P \in \phase_0$. By the assumption of the lemma, $w$ is in the $P$-pattern in $f$, and thus, by the definition of $\psi_P$, $w$ is in the $P_0$-pattern in $\psi_P \circ f$ and hence also in $\phi$.
	Similarly, if $w \in \intB S_1^{\down}$ then $w^{\up} \in \intB (S_P \setminus S^+) \subset (\intB S_P)^+$ for some $P \in \phase_1$, so that $w^{\up}$ is in the $P$-pattern in $f$, and thus, $w^{\up}$ is in the $(B_0,A_0)$-pattern in $\psi_P \circ f$ so that $w$ is in the $P_0$-pattern in $\phi$.
		
	Let us check that $\phi \in \bar{\Omega}$. By~\eqref{eq:finite_volume_measure}, \eqref{eq:prob_outside_Lambda_def1} and~\eqref{eq:prob_outside_Lambda_def2}, we must check that $\bar\Lambda \setminus \Int(\Lambda) = \bar\Lambda \cap (\Lambda^c)^+$ is in the $P_0$-pattern in $\phi$.
	Let $v \in \bar\Lambda \setminus \Int(\Lambda)$ and recall that, by assumption, $(\Lambda^c)^+ \subset S \cup S_{P_0}$.
	Since $S \subset S_*$ and $S_*$ is in the $P_0$-pattern in $\phi$, we may assume that $v \in (S \cup S_{P_0}) \setminus S_* \subset S_{P_0} \setminus S^+$, in which case, $\phi(v)=f(v)$ and it is clear that $v$ is in the $P_0$-pattern in $\phi$.
		
	Finally, define
	\[ T(f) := \{ \phi_{f,h} : h \in \cH \} .\]
	To see that the desired property that $T(f) \cap T(f') = \emptyset$ for distinct $f,f' \in \Omega_0$ holds, we now show that the mapping $(f,h) \mapsto \phi_{f,h}$ is injective on $\Omega_0 \times \cH$. To this end, we show how to recover $(f,h)$ from a given $g$ in the image of this mapping. Indeed, it is straightforward to check that
	\[ f(v) =
	\begin{cases}
	\psi_P^{-1}(g(v)) &\text{if }v \in S_P \setminus S^+\text{ for }P \in \cP_0 \\
	\psi_P^{-1}(g(v^{\down})) &\text{if }v \in S_P \setminus S^+\text{ for }P \in \cP_1
	\end{cases}
	\qquad\text{and}\qquad
	h(v) = g(v)\text{ for }v \in S_* .\]
	
	It remains to check that~\eqref{eq:lower-bound-for-psuedo-breakup} holds.
	By injectivity, we have
	\[ |T(f)| = |\cH| \qquad\text{for all }f \in \Omega_0 .\]
	Since the definition of $\cH$ immediately implies that
	\[ |\cH| = \lfloor \tfrac{q}{2} \rfloor^{|S^\even_*|} \cdot \lceil \tfrac{q}{2} \rceil^{|S^\odd_*|} ,\]
	concluding~\eqref{eq:lower-bound-for-psuedo-breakup} is essentially just a computation. To see this, using the fact (which we prove below) that, for any finite set $U \subset \Z^d$,
	\begin{equation}\label{eq:even-odd-diff}
	|U^\even| - |U^\odd| = \tfrac{1}{2d} (|\partial^\even U| - |\partial^\odd U|) ,
	\end{equation}
	and writing $|S_*^\even| = \tfrac12 (|S_*| + |S_*^\even|-|S_*^\odd|)$, and similarly for $|S_*^\odd|$, we have
	\[ |\cH| = (\lfloor\tfrac{q}{2}\rfloor \lceil\tfrac{q}{2}\rceil)^{\frac12|S_*|} \cdot \left(\lfloor \tfrac{q}{2} \rfloor/\lceil \tfrac{q}{2} \rceil\right)^{\frac{1}{4d}(|\partial^\even S_*|-|\partial^\odd S_*|)} .\]
Noting that $|S_*|=|S^+|$, it thus suffices to show that
\begin{align*}
|\partial^\even S_*| &= |\partial^\even(S^+,S_0)|+|\partial^\odd (S^+,S_1)|,\\
|\partial^\odd S_*| &= |\partial^\odd (S^+,S_0)|+|\partial^\even(S^+,S_1)|.
\end{align*}
Since $\partial S_* = \partial(S^+,S_0) \cup \partial((S^+)^{\down},S_1^{\down})$, this easily follows.	

It remains to prove~\eqref{eq:even-odd-diff}. To see this, first observe that $u \mapsto u^{\down}$ is a bijection between $U^\even \cap U^{\up}$ and $U^\odd \cap U^{\down}$, so that
\[ |U^\even| - |U^\odd| = |(U \setminus U^{\up})^\even|-|(U \setminus U^{\down})^\odd| .\]
As this equality holds for any direction $\up$, summing it up over the $2d$ possible choices yields~\eqref{eq:even-odd-diff}.
\end{proof}

The proof of the Proposition~\ref{lem:shearer-for-bad-set} relies on two lemmas.
The first lemma, whose proof is based on Shearer's inequality, provides a bound on the size of a collection of colorings $f$, which is conveniently factorized into ``local terms'' involving the values of $f$ on a vertex and its neighbors.

\begin{lemma}\label{lem:shearer-extended}
	Let $S \subset \Z^d$ be finite and even and let $\{ A_u \}_{u \in \intB S}$ be a collection of subsets of $[q]$. Let $\cF \subset [q]^S$ be a set of proper colorings such that $f(u) \in A_u$ for every $f \in \cF$ and $u \in \intB S$. Let $f$ be an element of $\cF$ chosen uniformly at random. For each odd vertex $v \in S$, let $X_v$ be a random variable which is measurable with respect to $f|_{N(v)}$.
	Then
	\[ \log |\cF| \le \sum_{v \in S^\odd} \Big[\tfrac{1}{2d} \Ent(X_v)+ \tfrac{1}{2d} \E\log |\Psi_v| + \E\log |I_v| \Big] + \tfrac{1}{2d} \sum_{u \in \intB S} |\partial u \cap \partial S| \log |A_u| ,\]
	where $\Psi_v$ and $I_v$ are the supports of $f|_{N(v)}$ and $f(v)$ given $X_v$, respectively.
\end{lemma}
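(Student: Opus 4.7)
The plan is to apply the chain rule~\eqref{eq:entropy-chain-rule} to write $\log|\cF| = \Ent(f) = \Ent(f|_{S^\even}) + \Ent(f|_{S^\odd} \mid f|_{S^\even})$ and bound each piece via Shearer's inequality and standard entropy manipulations. The guiding observation is that because $S$ is even, every odd vertex $v \in S$ lies in the interior of $S$, so $N(v) \subset S^\even$. Thus the neighborhoods $\{N(v) : v \in S^\odd\}$ live entirely inside the even sublattice of $S$, making it natural to use them to cover $S^\even$.

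For the even part, I would apply Shearer's inequality (Lemma~\ref{lem:shearer}) to $(f(u))_{u \in S^\even}$ with the multiset collection $\cI$ consisting of the neighborhoods $\{N(v) : v \in S^\odd\}$ together with $|\partial u \cap \partial S|$ copies of each singleton $\{u\}$, $u \in \intB S$. Each $u \in S^\even \setminus \intB S$ is covered exactly $2d$ times by the neighborhoods, since its entire neighborhood lies in $S$ and is odd; each $u \in \intB S$ is covered $|N(u) \cap S| = 2d - |\partial u \cap \partial S|$ times by neighborhoods and $|\partial u \cap \partial S|$ times by added singletons, again totaling $2d$. Shearer's inequality with $k = 2d$ then yields
\[ \Ent(f|_{S^\even}) \le \tfrac{1}{2d} \sum_{v \in S^\odd} \Ent(f|_{N(v)}) + \tfrac{1}{2d} \sum_{u \in \intB S} |\partial u \cap \partial S| \cdot \Ent(f(u)). \]
Each singleton term is controlled via $\Ent(f(u)) \le \log |S_u|$, using $\supp f(u) \subset S_u$ and~\eqref{eq:entropy-support}. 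For each $v \in S^\odd$, applying the chain rule to $f|_{N(v)}$ together with the function $X_v$ of it, and again invoking~\eqref{eq:entropy-support} for the conditional term, gives $\Ent(f|_{N(v)}) = \Ent(X_v) + \Ent(f|_{N(v)} \mid X_v) \le \Ent(X_v) + \E \log |\Psi_v|$.

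For the conditional entropy of the odd part, I would use subadditivity followed by the monotonicity property~\eqref{eq:entropy-jensen}:
\[ \Ent(f|_{S^\odd} \mid f|_{S^\even}) \le \sum_{v \in S^\odd} \Ent(f(v) \mid f|_{S^\even}) \le \sum_{v \in S^\odd} \Ent(f(v) \mid f|_{N(v)}) \le \sum_{v \in S^\odd} \Ent(f(v) \mid X_v), \]
where the middle inequality uses $N(v) \subset S^\even$ so that $f|_{S^\even}$ refines $f|_{N(v)}$, and the last inequality uses that $X_v$ is measurable with respect to $f|_{N(v)}$. Bounding each $\Ent(f(v) \mid X_v) \le \E \log |I_v|$ via~\eqref{eq:entropy-support} applied to the conditional distribution and summing the two contributions produces the stated inequality. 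The proof is essentially routine; the only subtle point is identifying the correct Shearer's collection --- the padding with boundary singletons is what produces the $\tfrac{1}{2d}\sum_{u \in \intB S} |\partial u \cap \partial S| \log |S_u|$ correction term, and it is essential to obtain uniform coverage by $2d$ so as to match the desired prefactor on all other terms.
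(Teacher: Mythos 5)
Your proof is correct and follows essentially the same route as the paper: chain rule over even/odd sublattices, Shearer's inequality with $k=2d$ on the even part, and the chain rule through $X_v$ with the support bound \eqref{eq:entropy-support} for the per-vertex terms. The only cosmetic difference is that you pad the Shearer cover with $|\partial u\cap\partial S|$ copies of the singletons $\{u\}$, $u\in\intB S$, whereas the paper uses the sets $\{N(v)\cap S\}_{v\in\extB S}$ and then splits them by subadditivity — these yield the identical boundary term.
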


\begin{proof}	
	Since $f$ is uniformly chosen from $\cF$, we have $\Ent(f)=\log |\cF|$. Hence, our goal is to bound $\Ent(f)$.
	We make use of~\eqref{eq:entropy-chain-rule}-\eqref{eq:entropy-subadditivity} throughout the proof.
	We begin by writing
	\[ \Ent(f) = \Ent(f^\even) + \Ent(f^\odd \mid f^\even) .\]
	By the sub-additivity of entropy, we have
	\[ \Ent(f^\odd \mid f^{\even}) \le \sum_{v \in S^\odd} \Ent\big(f(v) \mid f|_{N(v)}\big) .\]
	We use Shearer's inequality to bound $\Ent(f^\even)$.
	Namely, Lemma~\ref{lem:shearer} applied with the random variables $(Z_i) = (f(v))_{v \in S^\even}$, the collection $\cI=\{N(v)\}_{v \in S^\odd} \cup \{N(v) \cap S\}_{v \in \extB S}$ and $k=2d$, yields
	\[ \Ent(f^\even) \le \tfrac{1}{2d} \sum_{v \in S^\odd} \Ent\big(f|_{N(v)}\big) + \tfrac{1}{2d} \sum_{v \in \extB S} \Ent\big(f|_{N(v) \cap S}\big) .\]
	Note that, by the assumption on $\cF$,
	\[ \sum_{v \in \extB S} \Ent\big(f|_{N(v) \cap S}\big) \le \sum_{v \in \extB S} \sum_{u \in N(v) \cap S} \Ent(f(u)) = \sum_{u \in \intB S} |\partial u \cap \partial S| \cdot \Ent(f(u)) .\]
	Thus, the lemma will follow once we show that
	\[ \tfrac{1}{2d} \cdot \Ent\big(f|_{N(v)}\big) + \Ent\big(f(v) \mid f|_{N(v)}\big) \le \tfrac{1}{2d} \Ent(X_v)+ \tfrac{1}{2d} \E\log |\Psi_v| + \E \log |I_v| .\]
	Indeed,
	\[ \Ent\big(f|_{N(v)}\big) \le \Ent(X_v) + \Ent\big(f|_{N(v)} \mid X_v\big) \le \Ent(X_v) + \E\log|\Psi_v| ,\]
	and
	\[ \Ent\big(f(v) \mid f|_{N(v)}\big) \le \Ent\big(f(v) \mid X_v\big) \le \E\log|I_v| . \qedhere \]
\end{proof}

Besides factorizing the bound on $|\cF|$ over the odd vertices in $S$, Lemma~\ref{lem:shearer-extended} allows exposing some information about $f|_{N(v)}$ which can then be used to bound $|\Psi_v| \cdot |I_v|^{2d}$. One could theoretically expose $f|_{N(v)}$ completely (i.e., by taking $X_v$ to equal $f|_{N(v)}$ above), but this would increase the entropy of $X_v$, making it harder to bound $\Ent(X_v)$ effectively. One would therefore like to expose as little information as possible, which still suffices to obtain good bounds on $|\Psi_v| \cdot |I_v|^{2d}$.

Recalling the notions of non-dominant vertex, restricted edge and unbalanced neighborhood introduced in Section~\ref{sec:Shearer_overview}, we aim to expose just enough information to allow determining the occurrence of these. We now proceed to define this information, which we call the \emph{type} of $f|_{N(v)}$.
Given a function $\psi \colon [2d] \to [q]$, which is later identified with $f|_{N(v)}$, let $\psi_\unbal$ be the indicator of whether there exists $i \in \psi([2d])$ such that $|\psi^{-1}(i)| \le d/q$.
The \emph{type} of $\psi$ is then defined to be $(\psi([2d]),\psi_\unbal)$.

In the proof of Proposition~\ref{lem:shearer-for-bad-set}, we will use Lemma~\ref{lem:shearer-extended} with the random variable $X_v$ taken to be the type of $f|_{N(v)}$.
To make use of the inequality given in Lemma~\ref{lem:shearer-extended}, we will need to accompany it with suitable bounds on $|\Psi| \cdot |I|^{2d}$, where $\Psi$ is a collection of functions of type $(J,z)$ and $I \subset [q]$ is disjoint from $J$. The next lemma provides such bounds. For $\Psi$ consisting of functions of type $(J,z)$, we say that $j \in [2d]$ is a \emph{semi-restricted index} in $\Psi$ if $\{ \psi(j) : \psi \in \Psi\} \neq J$. We note that restricted edges (in the sense of the definition in Section~\ref{sec:Shearer_overview}) correspond to either semi-restricted indices or to the condition that $I \cup J \neq [q]$.

\begin{lemma}\label{lem:Z-bounds}
	Let $\Psi$ be a collection of functions of type $(J,z)$ and let $I \subset J^c$. Suppose that there are $k$ semi-restricted indices in $\Psi$. Then
	\[ |\Psi| \cdot |I|^{2d} \le (\lfloor\tfrac{q}{2}\rfloor \lceil\tfrac{q}{2}\rceil)^{2d} \cdot
	\begin{cases}
	e^{-k/q} &\text{always} \\
	e^{-4d/q^2} &\text{if }|J| \notin \{\lfloor\tfrac{q}{2}\rfloor,\lceil\tfrac{q}{2}\rceil\}\\
	e^{-d/4q} &\text{if }I \cup J \neq [q] \text{ or }z=1
	\end{cases}
	.\]
\end{lemma}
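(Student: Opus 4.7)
In every case the starting point is the unconditional estimate
\[ |\Psi| \cdot |I|^{2d} \le (|I| \cdot |J|)^{2d} \le (\lfloor \tfrac{q}{2} \rfloor \lceil \tfrac{q}{2} \rceil)^{2d}. \]
The first inequality uses the trivial bound $|\Psi| \le |J|^{2d}$, since every $\psi \in \Psi$ is a function $[2d] \to J$. The second uses disjointness $I \cap J = \emptyset$ in $[q]$, which forces $|I| + |J| \le q$, together with the arithmetic identity $\max\{xy : x,y \in \Z_{\ge 0},\ x+y \le q\} = \lfloor \tfrac{q}{2} \rfloor \lceil \tfrac{q}{2} \rceil$. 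Each of the three bounds of the lemma is then obtained by tightening one of these two factors and applying $1-x \le e^{-x}$.

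For the first bound, each of the $k$ semi-restricted indices $j$ satisfies $|\{\psi(j) : \psi \in \Psi\}| \le |J|-1$, whence
\[ |\Psi| \le (|J|-1)^{k}|J|^{2d-k} \le |J|^{2d}(1-1/|J|)^{k} \le |J|^{2d} e^{-k/|J|}, \]
and multiplying by $|I|^{2d}$ and using $|J| \le q$ yields the desired $e^{-k/q}$ factor. For the second bound, a direct case analysis (performed separately for even and odd $q$) shows that whenever $|J| \notin \{\lfloor \tfrac{q}{2} \rfloor, \lceil \tfrac{q}{2} \rceil\}$ one has $|I| \cdot |J| \le |J|(q-|J|) \le \lfloor \tfrac{q}{2} \rfloor \lceil \tfrac{q}{2} \rceil - 1$. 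The excess factor is thus at most $(1 - 1/(\lfloor \tfrac{q}{2} \rfloor \lceil \tfrac{q}{2} \rceil))^{2d} \le e^{-8d/q^{2}} \le e^{-4d/q^{2}}$, using $\lfloor \tfrac{q}{2} \rfloor \lceil \tfrac{q}{2} \rceil \le q^{2}/4$.

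For the third bound we handle the two disjuncts separately. If $I \cup J \neq [q]$, then $|I|+|J| \le q-1$, so $|I| \cdot |J| \le \lfloor \tfrac{q-1}{2} \rfloor \lceil \tfrac{q-1}{2} \rceil$; a short computation shows that the ratio with $\lfloor \tfrac{q}{2} \rfloor \lceil \tfrac{q}{2} \rceil$ is at most $1-c/q$ for an absolute constant $c \ge 1$ (both parities of $q$), which raises to a factor of at most $e^{-2cd/q} \le e^{-d/(4q)}$. If instead $z=1$, then for every $\psi \in \Psi$ there exists some $i \in J$ with $|\psi^{-1}(i)| \le d/q$; fixing $i$, the number of maps $\psi \colon [2d] \to J$ with $|\psi^{-1}(i)| \le d/q$ equals $|J|^{2d} \cdot \Pr(X \le d/q)$ for $X \sim \mathrm{Bin}(2d, 1/|J|)$. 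Since $\E X = 2d/|J| \ge 2d/q$, the multiplicative Chernoff bound (with $\delta = 1 - |J|/(2q) \ge 1/2$) yields $\Pr(X \le d/q) \le e^{-d/(4|J|)}$. Summing over the at most $|J|$ choices of $i$ and recombining with $|I|^{2d}$ produces the target estimate.

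The only delicate point is this last step: the union bound over $i \in J$ contributes a polynomial factor $|J| \le q$ which must be absorbed into the exponential in order to match the stated form $e^{-d/(4q)}$. This can be done either by a slight sharpening of the Chernoff constant (exploiting $|J| \le q - 1$, since the case $|J| = q$ forces $I = \emptyset$ and makes the bound trivial) or by a minor adjustment of the constant in the exponent. The remaining computations are elementary arithmetic and pose no difficulty.
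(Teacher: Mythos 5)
Your proof is correct and follows essentially the same route as the paper's: the product bound over semi-restricted indices for the $e^{-k/q}$ factor, the arithmetic estimate $|I|\cdot|J|\le\lfloor\tfrac{q}{2}\rfloor\lceil\tfrac{q}{2}\rceil-1$ when $|J|\notin\{\lfloor\tfrac{q}{2}\rfloor,\lceil\tfrac{q}{2}\rceil\}$, the constraint $|I|+|J|\le q-1$ when $I\cup J\neq[q]$, and a binomial Chernoff bound with a union over $i\in J$ when $z=1$. The delicate point you flag at the end is resolved in the paper exactly as you suggest: the polynomial factor from the union bound is absorbed using $|J|\le q-1$ together with the standing dimension assumption~\eqref{eq:dim-assump}.
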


Let us explain the terms in the above bound.
Observe that $|\Psi| \cdot |I|^{2d}$ is the number of proper colorings $\varphi$ of $K_{2d,2d}$ whose restriction to the left side of $K_{2d,2d}$ belongs to $\Psi$ and whose restriction to the right side belongs to $I^{[2d]}$.
The first term, $(\lfloor\tfrac{q}{2}\rfloor \lceil\tfrac{q}{2}\rceil)^{2d}$, comes from considering those $\varphi$ whose left and right sides takes values in $A$ and $B$, respectively, for some dominant pattern $(A,B)$.
In the second term, the first case reflects the reduction in the number of choices for $\varphi$ on the left side caused by the existence of semi-restricted indices.
The second case corresponds to a non-dominant vertex.
Finally, the third case corresponds to either an unbalanced neighborhood or a partial restriction on the values of $\varphi$ on the right side.

\begin{proof}
	For the first inequality in the lemma, we note that, by definition, $\{ \psi(j) : \psi \in \Psi \} \subsetneq J$ for any $j \in [2d]$ which is semi-restricted in $\Psi$. Thus,
	\[ |\Psi| \le \prod_{j \in [2d]} |\{ \psi(j) : \psi \in \Psi \}| \le |J|^{2d} \cdot (1-\tfrac{1}{|J|})^k \le |J|^{2d} \cdot e^{-k/q} ,\]
	so that
	\[ |\Psi| \cdot |I|^{2d} \le (|J| \cdot |J^c|)^{2d} \cdot e^{-k/q} \le (\lfloor\tfrac{q}{2}\rfloor \lceil\tfrac{q}{2}\rceil)^{2d} \cdot e^{-k/q} .\]
	For the second inequality in the lemma, observe that if $|J| \notin \{\lfloor\tfrac{q}{2}\rfloor,\lceil\tfrac{q}{2}\rceil\}$, then
	\[ |\Psi|^{1/2d} \cdot |I| \le |J| \cdot |J^c| \le (\lfloor \tfrac{q}{2} \rfloor - 1) \cdot (\lceil \tfrac{q}{2} \rceil + 1) \le \lfloor\tfrac{q}{2}\rfloor \lceil\tfrac{q}{2}\rceil \cdot e^{-4/q^2} .\]
	For the third inequality in the lemma, suppose first that $I \cup J \neq [q]$ and note that
	\[ |\Psi|^{1/2d} \cdot |I| \le |J| \cdot |I| \le \lfloor \tfrac{q}{2} \rfloor \cdot (\lceil \tfrac{q}{2} \rceil - 1) \le \lfloor\tfrac{q}{2}\rfloor \lceil\tfrac{q}{2}\rceil \cdot e^{-1/q} .\]
	Suppose now that $z=1$ and note that, by a Chernoff bound and~\eqref{eq:dim-assump},
	\[ |\Psi| \cdot |J|^{-2d} \le |J| \cdot \Pr\Big(\text{Bin}\big(2d,\tfrac{1}{|J|}\big) \le \tfrac{d}{q}\Big) \le q e^{-\tfrac18\tfrac{2d}{q-1}} \le e^{-d/4q} , \]
	so that
	\[ |\Psi| \cdot |I|^{2d} \le |\Psi| \cdot |J^c|^{2d} \le (\lfloor\tfrac{q}{2}\rfloor \lceil\tfrac{q}{2}\rceil)^{2d} \cdot |\Psi| \cdot |J|^{-2d} \le (\lfloor\tfrac{q}{2}\rfloor \lceil\tfrac{q}{2}\rceil)^{2d-d/4q} . \qedhere \]
\end{proof}

Let us now give the proof of the main proposition.

\begin{proof}[Proof of Proposition~\ref{lem:shearer-for-bad-set}]
	We prove something slightly stronger than the inequality stated in the lemma. Namely, we show that
	\begin{equation}\label{eq:shearer-ub}
	\begin{aligned}
	|\cF| \le (\lfloor\tfrac{q}{2}\rfloor \lceil\tfrac{q}{2}\rceil)^{|S^\odd|} & \cdot e^{-\frac{1}{64q} \E\big(\big|S^{f,\odd}_\unbal\big| + \frac{1}{q} \big|S^{f,\odd}_\nondom\big| +\frac{1}{d}\big|S^{\cF,f,\odd}_\rest\big|\big) + \frac{q}{d}|\Int(S) \setminus S^\cF_\unique| + e^{-d/65q^2}|\Int(S)|} \\ &\cdot \prod_P (\lambda_P)^{\frac{1}{2d}(|\partial^\even (S,S_P)|-|\partial^\odd (S,S_P)|)} ,
	\end{aligned}
	\end{equation}
	where $S^{f,\odd}_\unbal := (S^f_\unbal)^\odd$, $S^{f,\odd}_\nondom = (S^f_\nondom)^\odd$ and $S^{\cF,f,\odd}_\rest$ is the set of restricted edges $(v,u)$ with $v \in (\Int(S))^\odd$, and
	\[ \lambda_P := \begin{cases}
	 \lfloor \tfrac{q}{2} \rfloor &\text{if }P \in \phase_0\\
	 \lceil\tfrac{q}{2} \rceil &\text{if }P \in \phase_1
	\end{cases} .\]
	Indeed, the lemma then follows by taking the geometric average of the above bound and its symmetric version in which the roles of odd and even are exchanged.
	
	In proving~\eqref{eq:shearer-ub}, instead of working directly with $S$, it is convenient to work with its even expansion, defined as
	\[ S' := S \cup (\extB S)^\even = S^+ \setminus (\extB S)^\odd .\]
	Note that $S \subset S' \subset S^+$ and $S^\odd = (S')^\odd$. Let $\cF'$ be the set of functions $f' \in [q]^{S'}$ such that $f'|_S \in \cF$ and for which $S_P$ is in the $P$-pattern for every $P$.
	Observe that if one samples an element $f' \in \cF'$ uniformly at random, then $f'|_S$ has the same distribution as $f$, and the random variables $\{f'(u)\}_{u \in S' \setminus S}$ are independent and uniformly distributed on $A$, where $P=(A,B)$ is the unique dominant pattern such that $u \in S_P$. It follows that
	\[ |\cF'| = |\cF| \cdot \prod_P (\lambda_P)^{|S' \cap S_P|} .\]
	Thus, noting that $|S' \cap S_P|=|\partial^\odd(S,S_P)|+|\partial^\odd(S^c,S_P)|$, we see that~\eqref{eq:shearer-ub} is equivalent to
	\begin{equation}\label{eq:shearer-ub2}
	\begin{aligned}
	|\cF'| \le (\lfloor\tfrac{q}{2}\rfloor \lceil\tfrac{q}{2}\rceil)^{|S^\odd|} &\cdot e^{-\frac{1}{64q} \E\big(\big|S^{f,\odd}_\unbal\big|+ \frac{1}{q} \big|S^{f,\odd}_\nondom\big| + \frac{1}{d}\big|S^{\cF,f,\odd}_\rest\big| \big) +\frac{q}{d}|\Int(S) \setminus S^\cF_\unique| +e^{-d/65q^2}|\Int(S)|} \\ &\cdot \prod_P (\lambda_P)^{\frac{1}{2d}(|\partial^\even (S,S_P)| + |\partial^\odd (S^c,S_P)|)} .
	\end{aligned}
	\end{equation}
	We also note at this point that $S^{\cF,f,\odd}_\rest = S^{\cF',f,\odd}_\rest$ and $S^{\cF}_\unique = S^{\cF'}_\unique$.

	We now aim to apply Lemma~\ref{lem:shearer-extended} with $S'$ and $\cF'$. For $u \in \intB S'$, define
	\[ A_u := \begin{cases} A &\text{if }u \in \extB S \cap S_{(A,B)}\\ \bigcap \{ A : (A,B) \in \phasedom : u \in N(S_{(A,B)}) \} &\text{if }u \in \intB S \end{cases} .\]
	Note that, by the assumption on $\cF$ and by the definition of $\cF'$, we have $\phi(u) \in A_u$ for all $\phi \in \cF'$ and all $u \in \intB S'$.
	For an odd vertex $v \in S$, define
	\[ X_v := \begin{cases} \text{the type of }f'|_{N(v)} &\text{if }v \in \Int(S)\\0&\text{if }v \in \intB S \end{cases}.\]
	Then, by Lemma~\ref{lem:shearer-extended},
	\[ \log |\cF'| \le \sum_{v \in S^\odd} \Big[\tfrac{1}{2d} \Ent(X_v)+ \tfrac{1}{2d} \E\log |\Psi_v| + \E\log |I_v| \Big] + \tfrac{1}{2d} \sum_{u \in \intB S'} |\partial u \cap \partial S'| \log |A_u| ,\]
	where $\Psi_v$ and $I_v$ are the supports of $f'|_{N(v)}$ and $f'(v)$ given $X_v$, respectively. We stress that the expectations above are with respect to $f'$, but we also remind that $f'|_S$ equals $f$ in distribution so that these expectations are the same when taken with respect to~$f$.

	We first show that
	\[ \sum_{u \in \intB S'} |\partial u \cap \partial S'| \log |A_u| \le \sum_{P \in \phase_0} (|\partial^\even (S,S_P)| + |\partial^\odd (S^c,S_P)|) \cdot \log \lambda_P .\]
	Since $\{ \partial u \cap \partial S'\}_{u \in \intB S'}$ and $\{ \partial^\even (S,S_P), \partial^\odd (S^c,S_P) \}_P$ are two partitions of $\partial S'$, it suffices to show an inequality for each edge separately, namely, that $|A_u| \le \lambda_P$ for any $u \in \intB S'$ and $P=(A,B)$ such that $\partial u \cap (\partial^\even (S,S_P) \cup \partial^\odd (S^c,S_P)) \neq \emptyset$.
	To this end, suppose that $\{u,w\} \in \partial^\even (S,S_P) \cup \partial^\odd (S^c,S_P)$ for some $w$ and note that $u$ is even. If $u \notin S$ then $u \in S_P$ so that $A_u = A$ and $|A_u| = \lambda_P$. If $u \in S$ then $w \in S_P$ so that $A_u \subset A$ and $|A_u| \le \lambda_P$.
	
	Thus, to obtain~\eqref{eq:shearer-ub2}, it suffices to show that, for any $v \in S^\odd$,
	\begin{equation}\label{eq:shearer-ub-3}
	\frac{\Ent(X_v)+ \E\log (|\Psi_v| \cdot |I_v|^{2d})}{2d} \le \log(\lfloor\tfrac{q}{2}\rfloor \lceil\tfrac{q}{2}\rceil) + \begin{cases} -\frac{1}{64q} p_v + \frac{q}{d}\1_{v \notin S^\cF_\unique} + e^{-d/65q^2} &\text{if }v \in \Int(S)\\0&\text{if }v \in \intB S\end{cases} ,
	\end{equation}
	where
	\[ p_v := \Pr\big(v \in S^f_\unbal\big) + \tfrac{1}{q} \cdot \Pr\big(v \in S^f_\nondom\big) + \tfrac{1}{d} \cdot \E\big|\vec\partial v \cap S^{\cF,f}_\rest\big| .\]
	Suppose first that $v \in \intB S$. By the assumption on $\cF$ and by definition of $\cF'$, we have that $\Psi_v \subset A^{N(v)}$ and $I_v \subset B$, where $P=(A,B)$ is a dominant pattern such that $v \in S_P^+$. Thus,
	\[ \frac{\Ent(X_v)+ \E\log (|\Psi_v| \cdot |I_v|^{2d})}{2d} = \tfrac{1}{2d} \log (|\Psi_v| \cdot |I_v|^{2d}) \le \log(|A|\cdot |B|) = \log(\lfloor\tfrac{q}{2}\rfloor \lceil\tfrac{q}{2}\rceil) .\]
	
	Suppose now that $v \in \Int(S)$.
	The desired inequality in~\eqref{eq:shearer-ub-3} will follow if we show that
	\begin{equation}\label{eq:shearer-ub-Z}
	\tfrac{1}{2d} \E\log (|\Psi_v| \cdot |I_v|^{2d}) \le \log (\lfloor\tfrac{q}{2}\rfloor \lceil\tfrac{q}{2}\rceil) -\tfrac{1}{32q} \cdot p_v
	\end{equation}
	and
	\begin{equation}\label{eq:shearer-ub-entropy}
	\tfrac{1}{2d} \Ent(X_v) \le \tfrac{1}{64q} \cdot p_v + \tfrac{q}{d}\1_{v \notin S^\cF_\unique} +e^{-d/65q^2} .
	\end{equation}

	We begin by showing~\eqref{eq:shearer-ub-Z}.
	Consider the random set
	\[ \cR_v := \big\{ u \sim v : u\text{ is a semi-restricted index in }\Psi_v \} .\]
	Denote $X_v=(J_v,Z_v)$. By Lemma~\ref{lem:Z-bounds},
	\[ \tfrac{1}{2d} \E\log (|\Psi_v| \cdot |I_v|^{2d}) \le \log (\lfloor\tfrac{q}{2}\rfloor \lceil\tfrac{q}{2}\rceil) -
	\begin{cases}
	\tfrac{1}{2qd} |\cR_v| &\text{always}\\
	\tfrac{2}{q^2} &\text{if }|J_v| \notin \{\lfloor\tfrac{q}{2}\rfloor,\lceil\tfrac{q}{2}\rceil\}\\
	\tfrac{1}{8q} &\text{if }I_v \cup J_v \neq [q] \text{ or }Z_v=1
	\end{cases}
	\]
	Then, recalling the definition of $p_v$, \eqref{eq:shearer-ub-Z} will follow if we show that
\[ \1_{\{v \in S^f_\unbal\}} + \tfrac{1}{q} \1_{\{v \in S^f_\nondom\}} + \tfrac{1}{d} \big|\vec\partial v \cap S^{\cF,f}_\rest\big| \le \begin{cases}
	\tfrac{16}{d} |\cR_v| &\text{if }|J_v| \in \{\lfloor\tfrac{q}{2}\rfloor,\lceil\tfrac{q}{2}\rceil\} \text{ and }Z_v=0\\
	\tfrac{32}{q} + \tfrac{8}{d}|\cR_v| &\text{if }|J_v| \notin \{\lfloor\tfrac{q}{2}\rfloor,\lceil\tfrac{q}{2}\rceil\} \text{ and }Z_v=0\\
	4 &\text{if }I_v \cup J_v \neq [q] \text{ or }Z_v=1
	\end{cases} .\]
	Indeed, if $I_v \cup J_v \neq [q]$ or $Z_v=1$ then the inequality is clear. Otherwise, $I_v \cup J_v = [q]$ and $Z_v=0$ so that $\vec\partial v \cap S^{\cF,f}_\rest = \{v\} \times \cR_v$ and $v \notin S^f_\unbal$. Since $|J_v| \in \{\lfloor\tfrac{q}{2}\rfloor,\lceil\tfrac{q}{2}\rceil\}$ if and only if $v \notin S^f_\nondom$, the inequality follows.
	
	It remains to show~\eqref{eq:shearer-ub-entropy}.
	By~\eqref{eq:entropy-support}, we always have the trivial bound
	\[ \Ent(X_v) \le \log |\supp(X_v)| \le \log 2^{q+1} \le 2q.\]
	Thus, it suffices to show that, for any $v \in S^\cF_\unique$,
	\[ \Ent(X_v) \le \tfrac{d}{32q} \cdot p_v + 2de^{-d/65q^2} .\]
	Fix $v \in S^\cF_\unique$ and denote $p:=p_v$. When $p \ge 1/2q$, the above bound follows from the trivial bound on $\Ent(X_v)$ using~\eqref{eq:dim-assump}. Thus, we may assume that $p<1/2q$.
	By the definition of unique pattern, there exists some $J$ for which $X_v \neq (J,0)$ implies that $v \in S^f_\nondom$ or $\vec\partial v \subset S^{\cF,f}_\rest$.
	In particular, $\Pr(X_v \neq (J,0)) \le pq < 1/2$.
	Hence, using the chain rule for entropy~\eqref{eq:entropy-chain-rule}, we obtain
	\begin{align*}
	\Ent(X_v) &= \Ent(\1_{\{X_v=(J,0)\}}) + \Ent(X_v \mid \1_{\{X_v=(J,0)\}}) \\&\le pq\log \tfrac{2^{q+1}}{pq} + (1-pq)\log\tfrac{1}{1-pq} \le 2pq\log \tfrac{2^{q+1}}{pq} .
	\end{align*}
	Thus,
	\[ \Ent(X_v) - \tfrac{d}{32q} \cdot p \le 2pq \log \left( \tfrac{2^{q+1}}{pq} \cdot e^{-d/64q^2} \right) \le \tfrac {2^{q+2}}e \cdot e^{-d/64q^2} \le e^{-d/65q^2} ,\]
	where we used the fact that $x \log(a/x) \le a/e$ for $0<x<1$ in the second inequality (with $x=pq$) and we used~\eqref{eq:dim-assump} in the last inequality.
\end{proof}

\section{Approximations}
\label{sec:approx}

In this section, we prove Proposition~\ref{prop:family-of-odd-approx}.
That is, we show that there exists a small family of approximations which contains an approximation of every atlas in $\breakups_{L,M,N}$ that is seen from a given set.
The construction of the family of approximations is done in two steps, as we now explain.

Say that a set $W$ \emph{separates} an atlas $X$ if every edge in $\bigcup_P \partial X_P$ has an endpoint in~$W$, and that it \emph{tightly separates} $X$ if also $W \subset \bigcup_P (\intextB X_P)^{+2}$.
The first step is to construct a small family of small sets which contains a separating set of every atlas in $\breakups_{L,M,N}$ that is seen from a given set.

\begin{lemma}\label{lem:family-of-separating-sets}
	For any integers $d \ge 2$ and $L,M,N \ge 0$ and any finite set $V \subset \Z^d$, there exists a family $\cW$ of subsets of $\Z^d$, each of size at most $CL (\log d)/\sqrt{d}$,
	such that
	\[ |\cW| \le 2^{|V|} \cdot \exp\Big(\tfrac{CL \log^2 d}{d^{3/2}} + \tfrac{C(M+N) \log^2 d}{d} \Big) \]
	and any atlas $X \in \breakups_{L,M,N}$ seen from $V$ is tightly separated by some set in $\cW$.
\end{lemma}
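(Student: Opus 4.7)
The plan is a simultaneous Sapozhenko-type coarse-graining of the regular $P$-even sets $(X_P)_P$ comprising the atlas. I first prepay the $2^{|V|}$ factor by enumerating, for each $v\in V$, whether $v$ is disconnected from infinity by $X_*^{+5}$. Since $X$ is seen from $V$, this localizes each finite component of $X_*^{+5}$ near some recorded vertex of $V$, so the construction below can be organized component by component.

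Next, for each $P$, I produce a separating subset $W_P$ of $\intextB X_P$ in two steps. Because $X_P$ is regular $P$-even, $\intB X_P\subset\Even_P$ and $\extB X_P\subset\Odd_P$, so every boundary edge of $X_P$ joins $\intB X_P$ to $\extB X_P$. Applying Lemma~\ref{lem:existence-of-covering2} with $t:=c\sqrt{d}$ to the bipartite graph on $\intB X_P\cup\extB X_P$, I obtain $T_P\subset\intB X_P$ with $|T_P|\le C(\log d)/\sqrt d\cdot|\intB X_P|\le CL_P(\log d)/\sqrt d$, where $L_P:=|\partial X_P|$, such that $T_P$ covers every $u\in\extB X_P$ with $|N(u)\cap\intB X_P|\ge t$. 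The remaining ``low-degree'' vertices of $\extB X_P$ I adjoin explicitly; standard isoperimetry for regular $P$-even sets (each such vertex has $\Omega(d)$ neighbors in $X_P^c$ too) forces their count to be $O(L_P/d)$, so adjoining them does not violate the size bound. Setting $W_P:=T_P\cup\{\text{low-degree vertices}\}$ and $W:=\bigcup_P W_P$ then gives a vertex cover of $\bigcup_P\partial X_P$ with $|W|\le CL(\log d)/\sqrt d$, contained in $\bigcup_P(\intextB X_P)^{+2}$ by construction. To reach tight separation while also supporting the later approximation step, I extend $W$ by a bounded-size neighborhood record for the vertices of $X_\overlap\cup X_\bad$, each of which lies in $\bigcup_P(\intextB X_P)^{+2}$ by the definition of $X_*$.

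To enumerate $\cW$ I invoke the Sapozhenko-type coarse-graining bound for regular $P$-even sets established in~\cite{peled2010high,feldheim2016growth}: the number of approximations $W_P$ which can arise from a regular $P$-even set with $|\partial X_P|=L_P$ is at most $\exp(CL_P(\log d)^{3/2}/d^{3/2})$. Summing $\sum_PL_P=L$ gives the main entropy factor $\exp(CL(\log d)^{3/2}/d^{3/2})$. The $M+N$ vertices of $X_\overlap\cup X_\bad$ each lie within distance $2$ of an already-specified vertex of some $W_P$, so each can be located in one of $d^{O(1)}$ positions once the $W_P$ are fixed, contributing the remaining $\exp(C(M+N)(\log d)/d)$ factor.

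The principal obstacle is carrying out the Sapozhenko coarse-graining itself. The standard route is a two-scale iterative reveal of $T_P$: at the coarse scale $\sqrt d$ one exposes a sparse skeleton, which one then refines within $O(1)$-neighborhoods at the fine scale. The resulting $(\log d)^{3/2}/d^{3/2}$ rate per unit boundary reflects the ``breathing'' geometry of typical regular odd/even sets and relies crucially on $X_P$ being $P$-even (not merely a general subset with small boundary). A secondary complication is that boundaries of different $X_P$ may lie close to each other; this is handled by performing the approximation of each $W_P$ relative to its own $\intextB X_P$, so that the per-phase entropies simply sum. All other steps (the $2^{|V|}$ localization, the explicit listing of low-degree external-boundary vertices, and the enumeration of $X_\overlap\cup X_\bad$ positions) are routine once the coarse-graining is in hand.
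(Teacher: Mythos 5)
Your construction of the separating set does not work as stated, for two independent reasons. First, $W=\bigcup_P\big(T_P\cup\{\text{low-degree external vertices}\}\big)$ is not a vertex cover of $\bigcup_P\partial X_P$: for a boundary edge $\{w,u\}$ with $u\in\extB X_P$ of high degree, Lemma~\ref{lem:existence-of-covering2} only gives $u\in N(T_P)$, i.e.\ \emph{some} neighbor of $u$ lies in $T_P$; neither $u$ nor $w$ need lie in $W$, and replacing $T_P$ by $N(T_P)$ costs a factor $2d$ and destroys the $CL(\log d)/\sqrt d$ size cap. Second, the claim that the low-degree vertices of $\extB X_P$ number $O(L_P/d)$ is false: having $\Omega(d)$ neighbors in $X_P^c$ produces no edges of $\partial X_P$. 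For example, if $X_P=x^+$ for a $P$-odd vertex $x$ (a regular $P$-even set with $L_P=2d(2d-1)$), then all $\Theta(d^2)=\Theta(L_P)$ vertices of $\extB X_P$ have at most two neighbors in $X_P$, so adjoining them gives $|W|=\Theta(L)$. The paper avoids both problems via Lemma~\ref{lem:four-cycle-property} and Corollary~\ref{cor:revealed-separate}: every boundary edge has a \emph{revealed} endpoint (seeing at least $d$ boundary edges), there are only $O(L/d)$ revealed vertices, and the genuinely hard step (Lemma~\ref{lem:existence-of-U}) is the clustering argument showing the revealed vertices are contained in $N(U)$ for a core $U$ of size $O(Ld^{-3/2}\log d)$; the family $\cW$ then consists of sets of the form $N(U')$.

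The treatment of $X_\overlap\cup X_\bad$ and the enumeration also have gaps. These sets need not lie in $\bigcup_P(\intextB X_P)^{+2}$ — deep disordered or doubly-ordered regions sit far from every $\intextB X_P$ — so the ``bounded-size neighborhood record'' is unavailable, would violate tightness, and would break the size cap (which has no $M,N$ dependence); moreover, even granting proximity, recording $M+N$ vertices with $d^{O(1)}$ choices each gives $\exp(C(M+N)\log d)$, not $\exp(C(M+N)(\log d)/d)$. The paper instead covers $X_\bad\cup X_\overlap$ by a set of size $C(M+N)(\log d)/d$ (Lemma~\ref{lem:existence-of-covering2}) whose only role is to anchor the components of the core so that the ``seen from $V$'' property can be exploited. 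Finally, invoking a Sapozhenko-type count $\exp(CL_P\log^{3/2}d/d^{3/2})$ of the possible $W_P$ from \cite{peled2010high,feldheim2016growth} as a black box assumes essentially the hardest content of this lemma: those counts are for single cutsets anchored at a prescribed location, whereas here one must enumerate unanchored, multi-component, multi-phase families with no $q$-dependence permitted in the bound. Your ``prepaid $2^{|V|}$'' does not localize the components by itself; the paper needs Lemma~\ref{lem:number-of-disconnecting-sets} (a surrounding set of size $n$ must meet an explicit segment of length $n(2d+1)^{10}$, and is connected in $(\Z^d)^{\otimes 21}$) together with the $4^r$ budget-splitting over components, and it sidesteps phase labels entirely by enumerating a single unlabeled core $U$ and taking $\cW=\{N(U'):U'\subset U\}$.
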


Most of the arguments in this section are applied separately to the two collections $(X_P)_{P \in \phase_0}$ and $(X_P)_{P \in \phase_1}$, which consist of even and odd sets, respectively. The desired approximation defined in Section~\ref{sec:approx-overview} is then constructed by combining the two independent pieces. For simplicity of writing, we fix the parity of the sets we work with here to be odd, even sets being completely analogous.

The definition of an atlas does not require any relation between $X_P$ for different $P$. In particular, the set of $P$ for which a given vertex belongs to $X_P$ could be any subset of the dominant patterns. Since there are doubly-exponentially in $q$ many such subsets, this would not lead to the correct dependency on $q$. In light of this, we require an additional property of atlases, satisfied by any breakup, namely, \eqref{eq:possible-patterns-a-vertex-can-be-in}.
In order to keep this section as independent as possible, we introduce some abstract definitions.

Let $S=(S_i)_i$ be a collection of regular odd sets (we do not explicitly specify the index set as it has no significance in what follows). A \emph{rule} is a family $\cQ$ of subsets of indices. We say that a rule $\cQ$ has \emph{rank at most $q$} if $|\cQ| \le 2^q$. We say that $S$ is an odd \emph{$\cQ$-collection} if it obeys the rule $\cQ$ in the following sense:
\[ \{ i : v \in S_i \} \in \cQ \qquad\text{for any even vertex }v .\]
An \emph{approximation} of $S$ is a collection $A=((A_i)_i,A_*)$ such that $A_i \subset S_i \subset A_i \cup A_*$ and $A_i$ is odd for all $i$ and such that $\Even \cap A_* \subset N_d(\bigcup_i A_i)$.
We say that $A$ is \emph{controlled by} a set $W$ if $|A_*| \le C|W|$ and $A_* \subset W^+$, and that $W$ \emph{separates} $S$ if every edge in $\bigcup_i \partial S_i$ has an endpoint in $W$.

\begin{lemma}\label{lem:family-of-approx-from-separating-set}
For any integers $d \ge 2$ and $q \ge 1$, any rule $\cQ$ of rank at most $q$ and any finite set $W \subset \Z^d$, there exists a family $\cA$ of approximations, each of which is controlled by~$W$, such that
	\[ |\cA| \le \exp\Big(\tfrac{C|W|(q+\log d)}{d} \Big) \]
	and any odd $\cQ$-collection which is separated by $W$ is approximated by some element in $\cA$.
\end{lemma}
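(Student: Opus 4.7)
We follow the Sapozhenko-type coarse-graining scheme employed in earlier works on odd cutset approximations (Peled, Galvin--Kahn--Randall--Sorkin, Feldheim--Spinka), adapted here to the multi-set setting governed by the rule $\cQ$. The family $\cA$ is produced by enumerating over a small combinatorial \emph{fingerprint} that encodes each odd $\cQ$-collection separated by $W$ up to the ambiguous region $A_*$.

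\textbf{Structural observations.} Because $W$ separates each $S_i$, every edge in $\partial S_i$ has an endpoint in $W$, so $\intextB S_i \subset W^+$ for every~$i$. Consequently, for every odd $u \notin W^+$, no edge in $\dpartial u$ lies in any $\partial S_i$, and the \emph{type function} $c \colon \Even \to \cQ$ given by $c(v) := \{i : v \in S_i\}$ propagates: if even $v, v'$ share an odd neighbor $u \notin W^+$ then $c(v) = c(v')$. Iterating, $c$ is constant on each connected component of $(W^+)^c$. Thus the only even vertices whose type is not determined by such propagation lie in $W^+$, and these are exactly the vertices the approximation must either recover from indirect information or charge to $A_*$.

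\textbf{Construction via fingerprints.} A fingerprint is a triple $(T, \iota, \kappa)$ in which $T \subset W^+$ has size at most $k := \lceil C|W|/d \rceil$, $\iota \colon T \to \cQ$ assigns types to skeleton vertices, and $\kappa$ assigns a type in $\cQ$ to each ``relevant'' connected component of $(W^+)^c$. From a fingerprint we construct the approximation by: (a) setting $A_* \subset W^+$ of size $\le C|W|$ containing $W$, $T^+ \cap W^+$, and any ``small'' components of $(W^+)^c$; (b) reconstructing a type function $\hat c$ on $\Even \setminus A_*$ using $\kappa$ on components of $(W^+)^c$ and $\iota$ propagated through the skeleton for vertices in $N(W) \setminus (W \cup T^+)$; and (c) setting $A_i := (\{v \in \Even \setminus A_* : i \in \hat c(v)\})^+$. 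The required properties of $A = ((A_i), A_*)$---namely $A_i$ odd, $A_i \subset S_i \subset A_i \cup A_*$, $|A_*| \le C|W|$, $A_* \subset W^+$, and $\Even \cap A_* \subset N_d(\bigcup_i A_i)$---then follow by construction, the last being enforced by the covering choice of $T$ below. To exhibit a fingerprint approximating a given $\cQ$-collection $S$, apply Lemma~\ref{lem:existence-of-covering2} with threshold $t \asymp d$ to the set of even vertices of $W^+$ at which propagation fails to determine $c$; this yields $T$ of the desired size, and $\iota, \kappa$ are then taken as restrictions of the actual $c$ (well-defined on each component by the structural observation).

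\textbf{Counting and main obstacle.} The number of $T$'s is $\binom{|W^+|}{k} \le \exp(C|W|\log d/d)$; the number of $\iota$'s is $|\cQ|^k \le \exp(Cq|W|/d)$; and if the number of relevant components labeled by $\kappa$ can be bounded by $O(|W|/d)$, then the contribution from $\kappa$ is also $\exp(Cq|W|/d)$. Combining, $|\cA| \le \exp(C|W|(q+\log d)/d) \le \exp(Cq|W|\log d/d)$ as required, using $q + \log d \le Cq\log d$ for $d \ge 2$ and $q \ge 1$. The principal obstacle is the dual task of choosing $T$ small enough for the counting bound while still covering all vertices whose type cannot be recovered by propagation, together with controlling the components of $(W^+)^c$: the unbounded and other large components are each handled by a single label via $\kappa$, while small components must be absorbed wholesale into $A_*$ without exceeding the $C|W|$ size budget---typically via a double-counting argument showing that small components collectively use up a proportionate number of vertices of $W^+$ as their boundary.
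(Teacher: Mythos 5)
Your overall strategy (coarse-grain along the separating set, label regions by elements of $\cQ$, keep a small skeleton for the ambiguous zone) is in the spirit of the paper, which splits the task into Lemma~\ref{lem:family-of-small-approx-from-separating-set} and Lemma~\ref{lem:family-of-approx-from-small-approx}. But as written your construction has genuine gaps. First, you work with the connected components of $(W^+)^c$, and your counting rests on the unproven assertion that the number of ``relevant'' components is $O(|W|/d)$ --- you yourself flag this as the principal obstacle. The paper avoids it by using components of $\Z^d\setminus W$ instead: there each component $T$ has $\partial T\subset\partial W$ with the boundaries of distinct components disjoint, so $|\partial W|\le 2d|W|$ together with the isoperimetric bound $|\partial T|\ge d\min\{d,|T|\}$ gives at most $2|W|/d$ components of size $>d$ (each then labelled by one of the $\le 2^q$ elements of $\cQ$), and gives total size $\le 2|W|$ for the small components, which moreover lie in $N(W)$. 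With $(W^+)^c$ the same argument only yields $O(|W|)$ large components and $O(d|W|)$ total small-component volume, which destroys both the $4^{q|W|/d}$ count and the $|A_*|\le C|W|$ budget; worse, small components of $(W^+)^c$ are disjoint from $W^+$, so absorbing them into $A_*$ contradicts the requirement $A_*\subset W^+$ in the definition of ``controlled by $W$''.

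Second, the condition $\Even\cap A_*\subset N_d(\bigcup_i A_i)$ is not delivered by your skeleton. Lemma~\ref{lem:existence-of-covering2} with $t\asymp d$ gives a set $T$ with $N_t(S)\subset N(T)$ and $|T|\le C|S|(\log d)/d$ --- note the extra $\log d$, so $|T|\le C|W|(\log d)/d$ rather than your claimed $C|W|/d$, and then enumerating $T$ already costs $\exp(C|W|\log^2 d/d)$, which exceeds the stated bound $\exp(Cq|W|(\log d)/d)$ whenever $q$ is small compared to $\log d$. More importantly, knowing the types on a covering set does not certify that every even vertex left in $A_*$ has at least $d$ neighbours in $\bigcup_i A_i$: an even vertex deep inside the undetermined region may have no known odd neighbours at all. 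The paper's Lemma~\ref{lem:family-of-approx-from-small-approx} handles exactly this with a different mechanism: after a preprocessing step ensuring $\Odd\cap N(A_*)\subset\bigcup_i A_i\cup A_*$, one chooses a \emph{maximal} set $W'\subset\Even\cap A_*\cap\bigcup_i S_i$ subject to $d|W'|\le|A_*\cap (W')^+|$ (size $\le |A_*|/d$, no log factor), records a $\cQ$-partition of $W'$, adds the corresponding closed neighbourhoods to the $A_i$, and \emph{removes} from $A_*$ the set $W_\out=\Even\cap N_d(A_*\setminus (W')^+)$; maximality is what simultaneously guarantees the $N_d$ property for the surviving even vertices of $A_*$ and that removing $W_\out$ does not break the covering $S_i\subset A_i\cup A_*$. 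Your proposal contains no analogue of this removal/maximality step, and without it the approximation property cannot be verified.
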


Lemma~\ref{lem:family-of-separating-sets} and Lemma~\ref{lem:family-of-approx-from-separating-set} are proved in Sections~\ref{sec:separating-sets} and~\ref{sec:Q-approx} below.

\begin{proof}[Proof of Proposition~\ref{prop:family-of-odd-approx}]
	Applying Lemma~\ref{lem:family-of-separating-sets}, we obtain a family $\cW$ of subsets of $\Z^d$, each of size at most $r := CL (\log d) / \sqrt{d}$, such that every $X \in \breakups_{L,M,N}$ seen from $V$ is tightly separated by some set in $\cW$.
	By~\eqref{eq:def-atlas} and~\eqref{eq:possible-patterns-a-vertex-can-be-in}, there exists a rule $\cQ$ of rank at most $q$ such that $(X_P)_{P \in \phase_1}$ is an odd $\cQ$-collection for any $X \in \breakups$.
	Now, for each $W \in \cW$, we apply Lemma~\ref{lem:family-of-approx-from-separating-set} to obtain a family $\cA^1_W$ of approximations, each of which is controlled by $W$, such that $|\cA^1_W| \le \exp( Cr(q+\log d)/d)$ and satisfying that any odd $\cQ$-collection which is separated by $W$ is approximated by some element in $\cA^1_W$.
	Reversing the roles of even and odd, we also obtain a family $\cA^0_W$ in a similar manner.
	Finally, define $\cA := \bigcup_{W \in \cW} \bigcup_{A^0 \in \cA^0_W,\, A^1 \in \cA^1_W} \phi(A^0,A^1)$, where
	\[ \phi(A^0,A^1):=\big((A^0_P)_{P \in \phase_0} \cup (A^1_P)_{P \in \phase_1}, (\Odd \cap A^0_*) \cup (\Even \cap A^1_*), A^0_* \cup A^1_*\big).\]
	It is straightforward to verify that $\cA$ satisfies the requirements of the lemma.
\end{proof}

\subsection{Constructing separating sets}
\label{sec:separating-sets}

This section is devoted to the proof of Lemma~\ref{lem:family-of-separating-sets}.
That is, we construct a small family of sets, each of size at most $CL(\log d) / \sqrt{d}$, which contains a tightly separating set of every atlas $X \in \breakups_{L,M,N}$ seen from $V$.
We begin by showing that for every collection $S=(S_i)_i$ of regular odd sets, there exists a small set $U$ such that $N(U)$ tightly separates $S$.
For such a collection, denote $\partial S := \bigcup_i \partial S_i$ and $\intextB S := \bigcup_i \intextB S_i$.

\begin{lemma}\label{lem:existence-of-U}
	Let $S=(S_i)_i$ be a collection of regular odd sets.
	Then there exists $U \subset (\intextB S)^+$ of size at most $|\partial S| \cdot Cd^{-3/2}\log d$ such that $N(U)$ separates $S$.
\end{lemma}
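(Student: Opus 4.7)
The plan is to reduce to the case of a single regular odd set and invoke a Sapozhenko-type two-scale covering argument. It suffices to construct, for each regular odd set $A \subset \Z^d$, a set $U_A \subset (\intextB A)^+$ of size $|U_A| \le |\partial A| \cdot C d^{-3/2} \log d$ with $\intB A \subset N(U_A)$; then $N(U_A)$ covers the $A$-side endpoint of every edge of $\partial A$ (which lies in $\intB A$ because $A$ is odd), so $N(U_A)$ separates $\{A\}$. Setting $U := \bigcup_i U_{S_i}$, the bound $|U| \le |\partial S| \cdot C d^{-3/2} \log d$, the inclusion $U \subset (\intextB S)^+$, and the separation property all follow by summing over $i$.

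For a single regular odd set $A$, let $R := \intB A \subset \Odd$, so $|R| \le |\partial A|$. The key structural observation is that, since $\intB A \subset \Odd$, every even vertex of $A$ has all $2d$ neighbors in $A \cap \Odd$; consequently, for $E := N(R) \cap A \cap \Even$, every $v \in R$ satisfies $|N(v) \cap E| = 2d - d_{\text{ext}}(v)$, where $d_{\text{ext}}(v) := |N(v) \cap A^c|$ and $\sum_v d_{\text{ext}}(v) = |\partial A|$. In the first covering scale one exploits that most $v \in R$ have $d_{\text{ext}}(v)$ small, so $v$ has nearly $2d$ neighbors in $E$. Separating off the high-external-degree vertices $R_\tau := \{v : d_{\text{ext}}(v) \ge \tau\}$ (of size at most $|\partial A|/\tau$, covered one-by-one using a single neighbor per vertex), one applies Lemma~\ref{lem:existence-of-covering2} to the bipartite graph between $R \setminus R_\tau$ and $E$ with threshold $\sim d$, producing a first-scale covering $U_1 \subset E$ with $R \subset N(U_1)$ and $|U_1| \le C|\partial A|\log d / d$.

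The second scale exploits the $\Theta(d^2)$-sized $\ell_1$-distance-$2$ neighborhoods in $\Z^d$: any two vertices of $E$ at $\ell_1$-distance $2$ share $\Omega(1)$ common (odd) neighbors, giving a rich $2$-distance structure on $U_1$. A second application of Lemma~\ref{lem:existence-of-covering2} to an auxiliary graph on $U_1$ encoding this $2$-distance adjacency (with threshold $t \sim d$, taking advantage of the quadratic size of $2$-neighborhoods in $\Z^d$) extracts a refined set $U_A \subset U_1^+$ of size $|U_A| \le |U_1| \cdot C / \sqrt{d}$ (up to log factors) still satisfying $R \subset N(U_A)$ after absorbing a minor correction set. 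Combining the two scales and recalling $|R| \le |\partial A|$ yields $|U_A| \le |\partial A| \cdot C d^{-3/2} \log d$; the inclusion $U_A \subset (\intextB A)^+$ holds because every selected vertex lies within graph distance $2$ of $R$.

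The main technical obstacle is the second-scale reduction: constructing the auxiliary $\ell_1$-distance-$2$ graph on $U_1$ and verifying that a sufficiently large fraction of $U_1$-vertices lie in dense $2$-neighborhoods so that Lemma~\ref{lem:existence-of-covering2} applies with threshold proportional to $d$. This is precisely what yields the additional $1/\sqrt{d}$ saving (beyond the naïve single-scale covering which would only give $C d^{-1} \log d$) and is characteristic of the two-scale Sapozhenko arguments on $\Z^d$ used in \cite{galvin2004phase,peled2010high,feldheim2016growth}. The first scale is more standard but requires careful choice of the threshold $\tau$ to balance the direct contribution from $R_\tau$ against the fractional covering bound, and bookkeeping to ensure the added covering vertices stay within $(\intextB A)^+$.
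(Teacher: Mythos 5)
There is a genuine gap, and it occurs already in your first reduction. You claim it suffices to cover \emph{all} of $\intB A$ by $N(U_A)$ with $|U_A|\le C|\partial A|\,d^{-3/2}\log d$, but this reduced statement is false. Take $A=\Z^d\setminus u^+$ for an odd vertex $u$; this is a regular odd set with $|\partial A|=2d(2d-1)\le 4d^2$, while $\intB A$ is the sphere of radius $2$ around $u$, of size exactly $2d^2$. Since any vertex has at most $2d$ neighbours in that sphere, every $U_A$ with $\intB A\subset N(U_A)$ has $|U_A|\ge d$, whereas your budget is at most $4C\sqrt{d}\log d$ --- impossible for large $d$ (and a sparse union of such holes shows this is not an artifact of small boundary). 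The same example exposes the problem in your first covering scale: Lemma~\ref{lem:existence-of-covering2} bounds the cover $T$ by $\tfrac{1+\log\Delta}{t}|S|$ \emph{in terms of the size of the source set} $S$, which in your application is $E=N(R)\cap A\cap\Even$; in the example $|E|\asymp d^3$ while $|\partial A|\asymp d^2$, so the claimed bound $|U_1|\le C|\partial A|\log d/d$ does not follow from the lemma. The second scale (``rich $2$-distance structure'', ``absorbing a minor correction set'') is not an argument, and no version of it can rescue the reduction, since the target itself is unattainable.

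The paper never attempts to cover $\intB S_i$. The separating set it aims for is the set of \emph{revealed} vertices $S_i^{\rev}$, those incident to at least $d$ boundary edges: by the four-cycle property (Lemma~\ref{lem:four-cycle-property}) the two endpoints of any boundary edge together see at least $2d$ boundary edges, so $S_i^{\rev}$ separates (Corollary~\ref{cor:revealed-separate}), and by even--odd symmetry (applied to $S_i^c$, which has the same boundary and the same revealed set) it suffices to put $S_i\cap S_i^{\rev}$ inside $N(U)$. The $d^{-3/2}$ saving then comes from a four-cycle witness argument through a neighbour of the revealed vertex inside $S_i$ (this is where regularity enters): each $u\in S_i\cap S_i^{\rev}$ has at least $t=d/6$ witnesses landing in one of a few candidate sets of size $O(|\partial S|/\sqrt d)$ --- most importantly the even vertices seeing at least $\sqrt d$ boundary edges --- and Lemma~\ref{lem:existence-of-covering2} is applied to those small sets with threshold $d/6$. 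In the example above, $S_i\cap S_i^{\rev}=\emptyset$ and the external revealed vertices $N(u)$ are covered by the single vertex $u\in(\intextB A)^+$, which is why the true lemma survives while your covering target does not.
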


The proof of Lemma~\ref{lem:existence-of-U} is given at the end of the section. Before proving Lemma~\ref{lem:family-of-separating-sets}, we require another lemma.

\begin{lemma}\label{lem:number-of-disconnecting-sets}
For any $n \ge 1$, the number of sets $U \subset \Z^d$ of size at most $n$ such that $U^{+10}$ is connected and disconnects the origin from infinity is at most $\exp(Cn \log d)$.
\end{lemma}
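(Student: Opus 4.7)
The plan is to realize each admissible $U$ as a connected subset of the auxiliary graph $H := (\Z^d)^{\otimes 21}$, anchor it at a canonical vertex $u_0 \in U$ whose location admits only polynomially many choices, and then bound the possible extensions via Lemma~\ref{lem:number-of-connected-graphs}.

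First I would show that every admissible $U$ is connected in $H$. Given $u,u' \in U$, the connectedness of $U^{+10}$ in $\Z^d$ yields a path $u = x_0,\ldots,x_m = u'$ inside $U^{+10}$ with $x_i,x_{i+1}$ adjacent in $\Z^d$; picking $u_i \in U$ with $\dist(x_i,u_i)\le 10$ (and $u_0 = u$, $u_m = u'$) gives an $H$-path in $U$, since consecutive $u_i$ satisfy $\dist(u_i,u_{i+1})\le 21$. Because $|U|\le n$, the $H$-diameter of $U$ is at most $n-1$, hence the $\Z^d$-diameter of $U^{+10}$ is at most $21n+20$, and $U^{+10} \subset \{w\}^{+(21n+20)}$ for every $w \in U^{+10}$.

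Next I would choose the anchor. The ray $\{ke_1 : k\ge 0\}$ is an infinite path from the origin and therefore must meet $U^{+10}$; let $k_*$ be the smallest such $k$ and let $u_0$ be the lexicographically smallest vertex of $U$ with $\dist(u_0, k_*e_1) \le 10$. Writing $B := \{k_*e_1\}^{+(21n+20)}$, the diameter bound gives $U^{+10} \subset B$, and for $d\ge 2$ the complement $\Z^d\setminus B$ is connected and hence contained in the unbounded component of $\Z^d\setminus U^{+10}$. Every bounded component therefore lies in $B$; since the origin sits in such a component, $k_* = \dist(0,k_*e_1) \le 21n+20$, so $u_0 = k_*e_1 + \delta$ with $k_*\in\{0,\ldots,21n+20\}$ and $\dist(0,\delta)\le 10$.

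Finally I would count. Using $|\{v\in\Z^d : \dist(0,v)\le r\}| \le (2d+1)^r$, the number of admissible anchors is at most $(21n+21)\cdot(2d+1)^{10}\le Cnd^{10}$, and $\Delta(H)\le (2d+1)^{21}-1\le Cd^{21}$. For each fixed $u_0$, Lemma~\ref{lem:number-of-connected-graphs} bounds the number of connected subsets of $H$ of size at most $n$ containing $u_0$ by $\sum_{m=0}^{n-1}(e(\Delta(H)-1))^m \le \exp(Cn\log d)$. Multiplying and absorbing the $Cnd^{10}$ prefactor yields the desired bound $\exp(Cn\log d)$. The main subtlety lies in the choice of anchor: the naive choice ``vertex of $U$ closest to the origin'' only confines $u_0$ to the $\Z^d$-ball $\{0\}^{+(21n+20)}$, which can have on the order of $n^d$ vertices and would swamp $\exp(Cn\log d)$ when $d$ is large; anchoring instead at the first intersection of a coordinate ray with $U^{+10}$ confines $u_0$ to a tube of transverse radius $10$ about that ray, cutting the anchor enumeration down to $O(nd^{10})$ and letting the argument close.
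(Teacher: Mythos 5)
Your proof is correct and follows essentially the same route as the paper: observe that $U$ is connected in $(\Z^d)^{\otimes 21}$, anchor it near the first point where the $e_1$-ray from the origin meets $U^{+10}$, and enumerate connected subsets of the power graph containing the anchor via Lemma~\ref{lem:number-of-connected-graphs}. The only (harmless) difference is how the anchor's location is confined: you bound the first crossing by $21n+20$ using the diameter of $U^{+10}$, whereas the paper bounds it by $|U^{+10}|\le n(2d+1)^{10}$; either way the set of possible anchors has size polynomial in $n$ and $d$ and is absorbed into $\exp(Cn\log d)$.
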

\begin{proof}
Let $\cU$ be the collection of all sets $U \subset \Z^d$ of size at most $n$ which are connected in $(\Z^d)^{\otimes 21}$ and intersect $V^{+10}$, where $V := \{ \zero + ie_1 : 0 \le i < n(2d+1)^{10} \}$.
Since the maximum degree of $(\Z^d)^{\otimes 21}$ is at most $(2d)^{21}$ and since $|V^{+10}| \le n (2d+1)^{20}$, Lemma~\ref{lem:number-of-connected-graphs} implies that
\[ |\cU| \le |V^{+10}| \cdot (e(2d)^{21})^n \le e^{Cn \log d} ,\]
Thus, the lemma will follow if we show that $\cU$ contains every set $U$ satisfying the assumption of the lemma. Indeed, $U$ intersects $V^{+10}$, or equivalently, $U^{+10}$ intersects $V$, since $|U^{+10}| \le n (2d+1)^{10}$ and $U^{+10}$ disconnects the origin from infinity. Finally, the fact that $U^{+10}$ is connected implies that $U$ is connected in $(\Z^d)^{\otimes 21}$.
\end{proof}

\begin{proof}[Proof of Lemma~\ref{lem:family-of-separating-sets}]
	Let $L,M,N \ge 0$ be integers and let $V \subset \Z^d$ be finite.
	Let $\cU$ be the collection of all subsets $U$ of $\Z^d$ of size at most
	\[ r := CL d^{-3/2}\log d + C(M+N) d^{-1} \log d \]
	such that every connected component of $U^{+7}$ disconnects some vertex $v \in V$ from infinity. Define
	\[ \cW := \big\{ N(U') : U \in \cU,~ U' \subset U,~|U'| \le CL d^{-3/2} \log d \big\} .\]
	Let us show that $\cW$ satisfies the requirements of the lemma. Note first that every $W \in \cW$ has $|W| \le CL d^{-1/2} \log d$. Next, to bound the size of $\cW$, observe that $|\cW| \le |\cU| \cdot 2^r$. Consider a set $U \in \cU$ and let $\{U_l\}_{l=1}^n$ be the connected components of $U^{+7}$ and denote $r_l := |U \cap U_l|$. For each~$l$, choose a vertex $v_l \in V$ such that $U_l$ disconnects $v_l$ from infinity. There are at most $2^{|V|}$ choices for $\{ v_l \}_{l=1}^n$, and given such a choice, there are then at most $\binom{r+n}{n} \le 4^r$ choices for $(v_l,r_l)_l$.
	Thus, Lemma~\ref{lem:number-of-disconnecting-sets} implies that
	\[ |\cU| \le 2^{|V|} \cdot 4^r \cdot \exp(Cr \log d) \le 2^{|V|} \cdot \exp\left(\tfrac{CL \log^2 d}{d^{3/2}} + \tfrac{C(M+N) \log^2 d}{d} \right) .  \]
	
	It remains to show that any $X \in \breakups_{L,M,N}$ seen from $V$ is tightly separated by some set in $\cW$. Let $X$ be such an atlas and denote $S^j := (X_P)_{P \in \phase_j}$ and $L^j := |\partial S^j|$ for $j \in \{0,1\}$.
	By Lemma~\ref{lem:existence-of-U}, there exists a set $U^j \subset (\intextB S^j)^+$ such that $|U^j| \le CL^j d^{-3/2} \log d$ and $N(U^j)$ separates $S^j$. Denote $U' := U^0 \cup U^1$ and note that $|U'| \le CL d^{-3/2} \log d$ and $N(U')$ tightly separates $X$. Hence, to obtain that $N(U') \in \cW$ and thus conclude the proof, it remains to show that $U' \subset U$ for some $U \in \cU$.
	
	By Lemma~\ref{lem:existence-of-covering2}, there exists $U'' \subset X_\bad \cup X_\overlap$ such that $|U''| \le C(M+N)d^{-1}\log d$ and $N_{2d}(X_\bad \cup X_\overlap) \subset N(U'')$.
	Denote $U := U' \cup U''$ and note that $X_* \subset U^{++}$, $U \subset X_*^+$ and $|U| \le r$. In particular, every connected component of $U^{+7}$ disconnects some vertex $v \in V$ from infinity so that $U \in \cU$.
\end{proof}

Before proving Lemma~\ref{lem:existence-of-U}, we start with a basic geometric property of odd sets which we require for the construction of the separating set.

\begin{lemma}\label{lem:four-cycle-property}
	Let $S$ be an odd set and let $\{u,v\} \in \partial S$. Then, for any unit vector $e \in \Z^d$, either $\{u,u+e\}$ or $\{v,v+e\}$ belongs to $\partial S$. In particular,
	\[ |\partial u \cap \partial S| + |\partial v \cap \partial S| \ge 2d .\]
\end{lemma}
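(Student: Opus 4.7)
The plan is to exploit the defining property of odd sets: if $S$ is odd then $\intB S \subset \Odd$, equivalently $(\Even\cap S)^+\subset S$, so every even vertex of $S$ has its entire closed neighborhood in $S$.

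First, I will fix an orientation of the boundary edge: since $\{u,v\}\in\partial S$, I may assume $u\in S$ and $v\notin S$. Then $u\in\intB S\subset \Odd$, so $u$ is odd and $v$ is even. For a unit vector $e\in\Z^d$, the vertices $u,v,u+e,v+e$ form a $4$-cycle, with $u+e$ even and $v+e$ odd. The key geometric observation is that one cannot have $u+e\in S$ together with $v+e\notin S$: indeed, if $u+e\in S$ then $u+e\in\Even\cap S$, so the oddness of $S$ forces $(u+e)^+\subset S$, and in particular the neighbor $v+e$ of $u+e$ must lie in $S$.

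Using this observation, I will split into two cases according to whether $u+e\in S$ or not. If $u+e\notin S$, then $\{u,u+e\}\in\partial S$ because $u\in S$. If $u+e\in S$, then by the observation $v+e\in S$, and hence $\{v,v+e\}\in\partial S$ because $v\notin S$. Either way one of the two claimed edges lies in $\partial S$, which proves the first assertion.

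For the quantitative consequence, I note that the map $e\mapsto\{u,u+e\}$ is a bijection between the $2d$ unit vectors and the edges incident to $u$ (and similarly for $v$). Therefore
\[
|\partial u\cap\partial S|+|\partial v\cap\partial S|=\sum_{e}\bigl(\mathbf{1}[\{u,u+e\}\in\partial S]+\mathbf{1}[\{v,v+e\}\in\partial S]\bigr),
\]
where the sum is over all $2d$ unit vectors $e$. By the first part each summand is at least $1$, so the total is at least $2d$. I do not anticipate any obstacle here: the only subtlety is keeping track of parities, and the argument is essentially a one-step application of the ``closed neighborhood'' property of odd sets.
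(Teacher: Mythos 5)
Your proof is correct and is essentially identical to the paper's argument: both use that the $S$-endpoint of a boundary edge must be odd (equivalently, even vertices of $S$ have their closed neighborhood in $S$) to deduce that $u+e\in S$ forces $v+e\in S$, and then conclude by the same two-case analysis and counting over the $2d$ directions. No gaps; your spelled-out summation for the "in particular" part is just a more explicit version of what the paper leaves implicit.
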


\begin{proof}
	Assume without loss of generality that $u$ is odd. Since $S$ is odd, we have $u \in S$ and $v \notin S$.
	Similarly, if $u+e \in S$ then $v+e \in S$. Thus, either $\{u,u+e\} \in \partial S$ or $\{v,v+e\} \in \partial S$.
\end{proof}

For a set $S$, denote the {\em revealed vertices} in $S$ by
\[ S^{\rev} := \{ v \in \Z^d ~:~ |\partial v \cap \partial S| \geq d \} .\]
That is, a vertex is revealed if it sees the boundary in at least half of the $2d$ directions.
The following is an immediate corollary of Lemma~\ref{lem:four-cycle-property}.

\begin{cor}\label{cor:revealed-separate}
	Let $S$ be an odd set. Then $S^{\rev}$ separates $S$.
\end{cor}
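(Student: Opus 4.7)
The plan is to deduce the corollary directly from Lemma~\ref{lem:four-cycle-property} by a short pigeonhole argument.

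More precisely, I will take an arbitrary edge $\{u,v\} \in \partial S$ and show that it has at least one endpoint in $S^{\rev}$. Lemma~\ref{lem:four-cycle-property} (whose second assertion is exactly what I need) gives that
\[ |\partial u \cap \partial S| + |\partial v \cap \partial S| \ge 2d. \]
Therefore at least one of $|\partial u \cap \partial S|$ and $|\partial v \cap \partial S|$ is $\ge d$, which by the definition of $S^{\rev}$ means that $u \in S^{\rev}$ or $v \in S^{\rev}$. Since this holds for every edge of $\partial S$, the set $S^{\rev}$ separates $S$.

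There is no real obstacle here: all the geometric content is already in Lemma~\ref{lem:four-cycle-property}, and the corollary is just the observation that the averaged lower bound $2d$ forces at least one of the two summands to reach the threshold $d$ used in the definition of revealed vertices.
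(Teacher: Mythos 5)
Your proof is correct and is exactly the argument the paper has in mind: the corollary is stated as an immediate consequence of Lemma~\ref{lem:four-cycle-property}, and your pigeonhole step (the bound $|\partial u \cap \partial S| + |\partial v \cap \partial S| \ge 2d$ forces one endpoint to satisfy the threshold $d$ defining $S^{\rev}$) is precisely that intended deduction.
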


\begin{proof}[Proof of Lemma~\ref{lem:existence-of-U}]
	Let $S=(S_i)_i$ be a collection of regular odd sets and denote $L := |\partial S|$, $\intB S := \bigcup_i \intB S_i$ and $\extB S := \bigcup_i \extB S_i$.
	Note that a set separates $S$ if and only if it separates $S_i$ for all $i$.
	Note also that $\partial S_i = \partial S_i^c$ implies that $S_i^{\rev} = (S_i^c)^{\rev}$. Thus, in light of Corollary~\ref{cor:revealed-separate} and by even-odd symmetry, it suffices to show that there exists a set $U \subset N(\bigcup_i \intB S_i)$ such that $\bigcup_i (S_i \cap S_i^{\rev}) \subset N(U)$ and $|U| \le CL d^{-3/2} \log d$.
	
	Denote $s:=\sqrt{d}$ and $t:=d/6$, and define
	\[ A := \{ v\text{ even} : |\partial v \cap \partial S| \ge s \} \qquad\text{and}\qquad A_i := \{ u\text{ odd} : |\partial u \cap \partial S_i| \ge 2d-s \} ,\]
	and observe that, by Lemma~\ref{lem:sizes},
	\[ |A| \le \frac{L}{s} \quad\text{ and }\quad \Big|\bigcup_i A_i\Big| \le \frac{L}{2d-s}. \]
	For an odd vertex $w$ and a vertex $v \sim w$, denote
	\[ M(w) := \big|\big\{ z \sim w : I(w,z) \neq \emptyset \big\}\big|, \qquad M(w,v) := \big|\big\{ z \sim w : I(w,z) \not\subset I(w,v) \big\}\big|, \]
	where
	\[ I(w,z) := \{ i : w \in A_i,~z \in S_i \} .\]
	Denote
	\[ T := \big\{ v\text{ even} : \exists w \sim v~ M(w,v) < \tfrac12 M(w) \big\}, \qquad T' := \big\{ w\text{ odd} : 1 \le M(w) \le 2s \big\} .\]
	We claim that
	\[ |T| \le 2s \cdot \Big|\bigcup_i A_i\Big| \qquad\text{and}\qquad |T'| \le \Big|\bigcup_i A_i\Big| .\]
	The second inequality is straightforward since $M(w) \ge 1$ implies that $w \in \bigcup_i A_i$. Let us show the first inequality. 	
	Observe that $T = \bigcup_w T(w)$, where the union is over odd $w$ and
	\[ T(w) := \big\{ v \sim w : M(w,v) < \tfrac12 M(w) \big\} .\]
	Then
	\[ \tfrac12 M(w) \cdot |T(w)| < \left|\Big\{ (v,z) \in N(w)^2 : \emptyset \neq I(w,z) \subset I(w,v) \Big\}\right| \le s M(w) .\]
	Since $T(w)\neq \emptyset$ implies $M(w) \ge 1$, it follows that $|T(w)| \le 2s$. Since $T(w) \neq \emptyset$ also implies that $w \in \bigcup_i A_i$, the desired inequality follows.
	
	We now use Lemma~\ref{lem:existence-of-covering2} with $A$ to obtain a set $B \subset A \subset \extB S$ such that
	\[ |B|\le\frac{4\log d}t|A| \qquad\text{and}\qquad N_t(A) \subset N(B). \]
	Applying the same lemma again, we obtain a set $B' \subset T \subset N(\intB S)$ such that
	\[ |B'|\le\frac{4\log d}t|T| \qquad\text{and}\qquad N_t(T) \subset N(B'). \]
	We also define
	\[ B'' := \bigcup_i (S_i \cap N_t(A_i \cap T')) .\]
	By Lemma~\ref{lem:sizes} and the definition of $T'$, we have
	\[ |B''| \le \frac{2s}{t}|T'| .\]
	Finally, we define $U:=B \cup B' \cup B''$. Clearly, $U \subset N(\intB S)$ and
	\[ |U| \le \frac{4L \log d}{t}\left(\frac{1}{s} + \frac{2s}{2d-s}\right) +\frac{2sL}{t(2d-s)} \le
	\frac{CL \log d}{d^{3/2}} .\]
	
	It remains to show that $S_i \cap S_i^{\rev} \subset N(U)$ for all $i$.
	Towards showing this, let $u \in S_i \cap S_i^{\rev} = \intB S_i \cap N_d(\extB S_i)$ for some $i$.
	Since $S_i$ is regular, there exists a vertex $z \in N(u) \cap S_i$.
	Let $F$ denote the set of pairs $(v,w)$ such that $(u,v,w,z)$ is a four-cycle and $v \in \extB S_i$, and note that $|F| \ge d-1$.
	Define
	\[ G^0 := \big\{ (v,w) \in F : v \in A \big\},~~ G^1 := \big\{ (v,w) \in F : v \in T \big\},~~ G^2 := \big\{ (v,w) \in F : w \in A_i \cap T' \big\} .\]
	It suffices to show that $F = G^0 \cup G^1 \cup G^2$, since then, either $|G^0| \ge |F|/3 \ge t$ in which case $u \in N_t(A) \subset N(B) \subset N(U)$, or $|G^1| \ge t$ in which case $u \in N_t(T) \subset N(B') \subset N(U)$, or $|G^2| \ge t$ in which case $z \in N_t(A_i \cap T')$ so that $z \in B''$ and $u \in N(B'') \subset N(U)$.
	
	Towards showing this, let $(v,w) \in F$ and note that $w \in S_i$. By Lemma~\ref{lem:four-cycle-property}, $v \in A$ or $w \in A_i$. In the former case, $(v,w) \in G^0$, so we may assume that $v \notin A$ and $w \in A_i$. Thus, if $w \in T'$ then $(v,w) \in G^2$ so that we may also assume that $w \notin T'$. Since $w \in A_i \setminus T'$, we have $M(w) > 2s$. Thus, to obtain that $v \in T$ and hence that $(v,w) \in G^1$, it suffices to show that $M(w,v) \le s$. Since $v \notin A$, this will follow if we show that $|\partial v \cap \partial S| \ge M(w,v)-1$. For this, it is enough to show that if $(v,w,x,y)$ is a four-cycle such that $I(w,x) \not\subset I(w,v)$, then $\{v,y\} \in \partial S$. Indeed, this statement is straightforward, since $j \in I(w,x) \setminus I(w,v)$ implies that $x \in S_j$ (so that $y \in S_j$) and $v \notin S_j$.
\end{proof}

\subsection{Constructing approximations}
\label{sec:Q-approx}

The proof of Lemma~\ref{lem:family-of-approx-from-separating-set} is split into two parts. We first show that every separating set gives rise to a small family of weak approximations.
A \emph{weak approximation} of a collection $S=(S_i)_i$ is a collection $A=((A_i)_i,A_*)$ such that $A_i \subset S_i \subset A_i \cup A_*$ for all~$i$. As before, we say that $A$ is \emph{controlled by} $W$ if $|A_*| \le C|W|$ and $A_* \subset W^+$.

\begin{lemma}\label{lem:family-of-small-approx-from-separating-set}
	For any integers $d \ge 2$ and $q \ge 1$, any rule $\cQ$ of rank at most $q$ and any finite set $W \subset \Z^d$, there exists a family $\cA$ of weak approximations, each controlled by $W$, such that
	\[ |\cA| \le 4^{\frac qd |W|} \]
	and any odd $\cQ$-collection which is separated by $W$ is weakly approximated by some $A \in \cA$.
\end{lemma}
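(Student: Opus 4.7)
My plan is to build each approximation in $\cA$ by fixing a small set $A_* \subset W^+$ and then taking $A_i := S_i \setminus A_*$; the defining inclusions $A_i \subset S_i \subset A_i \cup A_*$ then hold automatically. The key structural observation is that, since $W$ separates $S$, any edge of $\Z^d$ not incident to $W$ lies outside every $\partial S_i$, and so the map $v \mapsto R(v) := \{i : v \in S_i\}$ is constant on every connected component $C$ of $\Z^d \setminus W$, with common value $T_C$. When $C$ contains an even vertex, the $\cQ$-collection hypothesis forces $T_C \in \cQ$, so the remaining data specifying $A$ reduces to the assignment $C \mapsto T_C$ over such components.

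For $A_*$ I would take $W$ together with the set $H$ of odd vertices $v \in \Z^d \setminus W$ whose $2d$ neighbors all lie in $W$; these form odd singleton components of $\Z^d \setminus W$ whose type is not a priori forced into $\cQ$. A double-counting argument using the bounded degree gives $|H| \le |W|$, so $|A_*| \le 2|W|$ and $A_* \subset W^+$, yielding an approximation controlled by $W$. The remaining enumeration of $C \mapsto T_C$ is then compressed by a covering argument in the spirit of Lemma~\ref{lem:existence-of-covering2}: applied at a threshold proportional to $d$, it should yield a representative subset $T \subset W$ of size at most $c|W|/d$ whose $\cQ$-labels, together with the purely geometric data of $W$, determine $T_C$ for every relevant component. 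Enumerating these labellings gives at most $|\cQ|^{|T|} \le 4^{|W|q/d}$ approximations.

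The main technical obstacle is executing this covering step sharply enough to save a factor of $d$ rather than only $d/\log d$, the latter being what a direct application of Lemma~\ref{lem:existence-of-covering2} would deliver and precisely what produces the weaker bound of Lemma~\ref{lem:family-of-approx-from-separating-set}. Overcoming this should rely on the high-dimensional isoperimetric structure of regular odd sets --- notably the four-cycle property of Lemma~\ref{lem:four-cycle-property} and the bound $|\partial S_i| \ge 2d(2d-1)$ of Lemma~\ref{lem:boundary-size-of-odd-set} --- to argue that each representative in $T$ pins down the types of many components simultaneously, exploiting the fact that in the $\Z^d$ geometry most relevant components contribute $\Omega(d)$ boundary vertices to $W$.
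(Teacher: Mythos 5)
There is a genuine gap, and it sits exactly where you placed your hopes. Your construction takes $A_* = W \cup H$ with $H$ only the odd singleton components of $\Z^d \setminus W$, so every other component of $\Z^d\setminus W$ that contains an even vertex keeps its type $T_C \in \cQ$ as data that the family $\cA$ must enumerate. But the number of such components can be of order $|W|$, not $|W|/d$: a component only satisfies $|\partial C| \ge d\cdot\min\{d,|C|\}$, so a small component (say an even singleton, or any component of size $\le d$) contributes merely $\Theta(d)$ edges to $\partial W$, and since $|\partial W|$ can be as large as $2d|W|$ there can be $\Theta(|W|)$ of them. Their types are essentially unconstrained across the family of all odd $\cQ$-collections separated by $W$ (being a regular odd set does not tie the type of one component to that of another), so any family in which $A_i$ is determined by the type assignment, as in your $A_i := S_i\setminus A_*$, must have size $2^{\Omega(q|W|)}$ rather than $4^{q|W|/d}$. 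The covering-lemma rescue you sketch cannot repair this: Lemma~\ref{lem:existence-of-covering2} produces a small set whose neighborhood covers $N_t(S)$, but it provides no mechanism by which the $\cQ$-label of one representative vertex determines the labels of many distinct components, and no amount of isoperimetry (Lemma~\ref{lem:four-cycle-property}, Lemma~\ref{lem:boundary-size-of-odd-set}) upgrades the per-component boundary contribution of a \emph{small} component beyond $\Theta(d)$. Also, the worry about losing a $\log d$ factor is misplaced here: no covering lemma is needed for this step at all.

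The paper's proof fixes precisely this point by enlarging $A_*$: declare a component of $\Z^d\setminus W$ \emph{small} if its size is at most $d$, and set $A_*$ to be $W$ together with the union $Y$ of all small components, while $A_i$ is the union of the \emph{large} components contained in $S_i$. The isoperimetric inequality $|\partial T| \ge d\cdot\min\{d,|T|\}$ with $\partial T \subset \partial W$ gives $|Y| \le |\partial W|/d \le 2|W|$, so $|A_*|\le 3|W|$, and every vertex of a small component has a neighbor in $W$ (its closed neighborhood, of size $2d+1>d$, cannot fit inside the component), so $A_*\subset W^+$ and the approximation is controlled by $W$. Each large component contains an even vertex, so its type lies in $\cQ$, and each has $|\partial T|\ge d^2$, so there are at most $|\partial W|/d^2 \le 2|W|/d$ large components; enumerating their $\cQ$-labels gives $|\cA| \le (2^q)^{2|W|/d} = 4^{q|W|/d}$. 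Your odd-singleton set $H$ is a special case of $Y$, but the size-$d$ threshold (rather than parity) is what converts the component count from $|W|$ to $|W|/d$ and yields the stated bound.
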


The second step is to upgrade a weak approximation to a small family of approximations which covers at least the same set of $\cQ$-collections.

\begin{lemma}\label{lem:family-of-approx-from-small-approx}
	For any integers $d \ge 2$ and $q \ge 1$, any rule $\cQ$ of rank at most $q$ and any weak approximation $A$ controlled by some $W$, there exists a family $\cA$ of approximations, each of which is also controlled by $W$, such that
	\[ |\cA| \le 2^{\frac{q+1+\log d}d |A_*|} \]
	and any odd $\cQ$-collection which is weakly approximated by $A$ is approximated by some element in~$\cA$.
\end{lemma}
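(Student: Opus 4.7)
The plan is to enumerate strong approximations by augmenting $A$ with a subset $T \subset \Even \cap A_*$ and an assignment $I \colon T \to \cQ$. From such data, define
\[ E'_i := (\Even \cap A_i) \cup \{t \in T : i \in I(t)\}, \qquad A'_i := (E'_i)^+ \cup (\Odd \cap A_i), \qquad A'_* := A_*, \]
and set $A' := ((A'_i)_i, A'_*)$. Since $\Even \cap A'_i = E'_i$, one has $(\Even \cap A'_i)^+ \subset A'_i$, so each $A'_i$ is odd. For any odd $\cQ$-collection $S$ weakly approximated by $A$, the choice $I(t) := \{i : t \in S_i\}$ (which lies in $\cQ$) yields $E'_i \subset \Even \cap S_i$, whence $(E'_i)^+ \subset S_i$ (using that $S_i$ is regular odd), and hence $A'_i \subset S_i$. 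The reverse inclusion $S_i \subset A'_i \cup A'_*$ follows by splitting $S_i \subset A_i \cup A_*$: the part in $A_i$ is absorbed (its even part into $E'_i \subset A'_i$, its odd part directly into $A'_i$), while the part in $A_*$ lies in $A'_* = A_*$. Control by $W$ is inherited since $A'_* = A_*$.

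The technical core is to arrange the remaining property $\Even \cap A'_* \subset N_d(\bigcup_i A'_i)$ while keeping $|T|$ small. The condition is automatic for $v \in T$ with $I(v) \ne \emptyset$ (then $v \in E'_i$ pulls all $2d$ odd neighbors of $v$ into $A'_i$) and for $v \in \Even \cap A_* \cap N_d(\bigcup_i A_i)$. The only problematic vertices are those in $U := \{v \in \Even \cap A_* : |N(v) \cap \bigcup_i A_i| < d\}$. Setting $K := T \cup (\Even \cap \bigcup_i A_i)$, the requirement reduces to $|N(v) \cap N(K)| \ge d$ for every $v \in U \setminus T$, that is, at least $d$ of $v$'s odd neighbors must have some even neighbor in $T$ or in the existing $\bigcup_i A_i$. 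The plan is to apply Lemma~\ref{lem:existence-of-covering2} in an auxiliary graph on $\Even$ whose edges encode $\Z^d$-distance-two adjacency (maximum degree $O(d^2)$), with base set $U$ and threshold of order $d$, to produce $T \subset U$ of size at most $C|A_*| \log d / d$ realizing this covering property.

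Given such $T$, enumerating its choice together with the assignment $I \colon T \to \cQ$ gives at most
\[ \binom{|A_*|}{\lfloor C|A_*|\log d / d \rfloor} \cdot |\cQ|^{C|A_*|\log d/d} \le \exp\!\left(\tfrac{Cq|A_*|\log d}{d}\right) \]
possibilities (the binomial factor being dominated since $|\cQ| \le 2^q$), yielding the required bound on $|\cA|$. Each $\cQ$-collection $S$ is then approximated by the element of $\cA$ corresponding to $(T, (I_S(t))_{t \in T})$. The main obstacle will be executing the covering step: each $t \in T$ at $\Z^d$-distance two from a given $v \in U$ contributes only one or two shared odd neighbors, so translating the combinatorial output of Lemma~\ref{lem:existence-of-covering2} into the required inequality $|N(v) \cap N(T)| \ge d$ for all $v \in U$ demands a careful choice of the auxiliary graph and threshold, and potentially a preliminary reduction (e.g., iterating the lemma or enlarging $T$ by a constant factor in $q$) before the covering condition can be verified.
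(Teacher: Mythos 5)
Your construction keeps the unknown set unchanged, $A'_* = A_*$, and this is where the argument breaks, not merely where it gets technical. Since you (correctly) force $A'_i \subset S_i$, the approximation requirement $\Even \cap A'_* \subset N_d(\bigcup_i A'_i)$ forces every even vertex of $A'_*$ to have at least $d$ neighbours in $\bigcup_i S_i$. But a weak approximation may contain even vertices of $A_*$ lying far from every $S_i$ (extreme case: all $S_i = \emptyset$ and $A_i = \emptyset$, which is a legitimate odd $\cQ$-collection weakly approximated by $((\emptyset)_i, A_*)$; more realistically, separating-set vertices far from all the $X_P$). For such a vertex no choice of $T$ can help: a vertex $t \in T$ with $I(t) = \emptyset$ contributes to no $E'_i$, so your reduction of the requirement to $|N(v) \cap N(K)| \ge d$ with $K = T \cup (\Even \cap \bigcup_i A_i)$ is not sufficient, and near a vertex deep inside $\bigcap_i S_i^c$ there simply are no admissible $t$'s. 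So the obstacle you flag at the end is not a matter of picking the right auxiliary graph and threshold for Lemma~\ref{lem:existence-of-covering2}; the covering statement you need is false as long as $A'_*$ is not allowed to shrink.

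The paper's proof resolves exactly this point by deleting vertices from $A_*$: it first reduces to the case $\Odd \cap N(A_*) \subset \bigcup_i A_i \cup A_*$, then, for the given $S$, takes $W$ to be a \emph{maximal} subset of $\Even \cap A_* \cap \bigcup_i S_i$ subject to $d|W| \le |A_* \cap W^+|$ (so $|W| \le |A_*|/d$ with no covering lemma), and outputs $B_i := A_i \cup W_i^+$ with $W_i := W \cap S_i$ and $B_* := A_* \setminus W_\out$, where $W_\out := \Even \cap N_d(A_* \setminus W^+)$. The maximality of $W$ shows that every vertex of $S_i \setminus B_i$ avoids $W_\out$ (so nothing needed for the containment $S_i \subset B_i \cup B_*$ is deleted), while every retained even vertex of $B_*$ automatically has at least $d$ neighbours in $W^+ \cup \bigcup_i A_i \subset \bigcup_i B_i$, thanks to the preliminary reduction. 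Note also a quantitative slip in your count: with $|T|$ of order $|A_*| \log d / d$, the binomial factor alone is $\exp(C|A_*|\log^2 d/d)$, which is not dominated by $\exp(Cq|A_*|\log d/d)$ for fixed $q$; the paper's choice $|W| \le |A_*|/d$ (no $\log d$ factor in the size of the recorded set) is what yields the stated bound.
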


Note that Lemma~\ref{lem:family-of-approx-from-separating-set} follows immediately from Lemma~\ref{lem:family-of-small-approx-from-separating-set} and Lemma~\ref{lem:family-of-approx-from-small-approx}. We now prove these two lemmas.

\begin{proof}[Proof of Lemma~\ref{lem:family-of-small-approx-from-separating-set}]
Let $\cQ$ be a rule of rank at most $q$ and let $W \subset \Z^d$ be finite.
Consider the set $X := \Z^d \setminus W$.
Say that a connected component of $X$ is {\em small} if its
size is at most $d$, and that it is {\em large} otherwise.

Let $S=(S_i)_i$ be a collection of regular odd sets which is tightly separated by $W$, and observe that, for each $i$, every connected component $T$ of $X$ is entirely contained in either $S_i$ or $S_i^c$.
Define
\[ A_i := \bigcup \big\{ T\text{ large component of }X : i \in I(T) \big\}, \qquad\text{where }I(T) := \{ i : T \subset S_i \} .\]
Note that $A_i$ is contained in $S_i$ and that if $I(T)=\emptyset$ then $T \subset (\bigcup_i S_i)^c$.
Let $Y$ be the union of all the small components of $X$ and define $A_* := Y \cup W$.
Clearly, $A = A(S) := ((A_i)_i,A_*)$ is a weak approximation of $S$.

Next, we bound the size of $A_*$. For this we require a simple consequence of a well-known isoperimetric inequality (see, e.g., \cite[Corollary~2.3]{feldheim2015long}), namely,
\[ |\partial T| \ge d \cdot \min\{d,|T|\} \qquad\text{ for any finite }T \subset \Z^d .\]
Since any small component $T$ of $X$ has $|T| \le |\partial T|/d$ and $\partial T \subset \partial W$, we obtain
\[ |Y| \le \frac{|\partial W|}{d} \le \frac{2d |W|}{d} \le 2 |W| .\]
Thus, $|A_*| = |Y \cup W| \le 3 |W|$.

Let us now show that $A$ is controlled by $W$. For this, it remains only to show that $A_* \subset W^+$.
It suffices to show that $Y \subset N(W)$. To this end, let $v \in Y$ and note that $v^+ \not\subset Y$ by the definition of small component. Since $\extB Y \subset W$, we see that $v \in \extB W$.

Now, denote by $\cA$ the collection of weak approximations $A(S)$ constructed above for all odd $\cQ$-collections $S$ which are separated by $W$. To conclude the proof, it remains to bound $|\cA|$.
Let~$\ell$ be the number of large components of $X$.
Since every large component $T$ must contain an even vertex (it is a connected set of size at least 2), and since every $S$ in question is a $\cQ$-collection, the set $I(T)$ defined above always belongs to $\cQ$. Hence, as $\cQ$ has rank at most $q$, we have $|\cA| \le |\cQ|^{\ell} \le 2^{q \ell}$.
Since any large component $T$ of $X$ has
$|\partial T| \ge d^2$ and $\partial T \subset \partial W$, we obtain
$\ell \le |\partial W|/d^2 \le 2|W|/d$ so that $|\cA| \le 4^{|W|q/d}$, as required.
\end{proof}
The statement and proof of Lemma~\ref{lem:family-of-small-approx-from-separating-set} will be modified in the setting of $\Z^{d_1}\times\T_{2m}^{d_2}$, $d_1\ge 2$. The conclusion of the lemma will be weakened to $|\cA| \le C^{\frac qd |W|}$ (recalling that $d:=d_1+d_2$), which still suffices for our use in proving Lemma~\ref{lem:family-of-approx-from-separating-set}. In the proof, the isoperimetric inequality will be changed to
\[ |\partial T| \ge c d \cdot \min\{d,|T|\} \qquad\text{ for any finite }T \subset \Z^{d_1}\times\T_{2m}^{d_2} .\]
This inequality is clear if all vertices $v\in T$ satisfy $|N(v)\cap T|\le \frac{d}{2}$. Otherwise, let $v\in T$ be a vertex with $|N(v)\cap T|> \frac{d}{2}$ and note that the inequality follows similarly if $\sum_{w\in N(v)\cap T} |N(w)\cap T|\le \frac{d^2}{4}$. If the latter bound fails then one may check that $|\pi(T)|\ge c d^2$ where $\pi$ is the projection map $\pi(x_1,\ldots, x_d)=(x_2,\ldots, x_d)$, and then one may use the simple bound $|\partial T|\ge 2|\pi(T)|$.

\begin{proof}[Proof of Lemma~\ref{lem:family-of-approx-from-small-approx}]
Let $\cQ$ be a rule of rank at most $q$ and let $A=((A_i)_i,A_*)$ be a weak approximation.
Let us first show that we may assume that
\begin{equation}\label{eq:A_*-odd}
\Odd \cap N(A_*) \subset \bigcup_i A_i \cup A_* .
\end{equation}
Define $A'_* := A_* \setminus N(U)$, where $U:=\Odd \cap (\bigcup_i A_i \cup A_*)^c$. Note that $\Odd \cap A'_* = \Odd \cap A_*$ and $\Odd \cap N(A'_*) \subset \bigcup_i A_i \cup A'_*$. Let us show that any odd $\cQ$-collection $S$ which is weakly approximated by $A$ is also weakly approximated by $A'=((A_i)_i,A'_*)$. For this, it suffices to show that $S_i \subset A_i \cup A'_*$ for any $i$. Let $v \in S_i \setminus A_i \subset A_*$. If $v$ is odd then clearly $v \in A'_*$. If $v$ is even, then we must show that $v \notin N(U)$. This follows since $U \subset S_i^c$ and $S_i$ is odd.
Finally, since $A'_* \subset A_*$, we have that $|A'_*| \le |A_*|$ and that $A'$ is controlled by $W$. The lemma is thus reduced to the case that~\eqref{eq:A_*-odd} holds.

For a set $W \subset \Even \cap A_*$, define
\[ W_\out := \Even\cap N_d(A_*\setminus W^+) .\]
Observe that $W^+$ and $W_\out$ are disjoint.
Here one should think of $W$ as recording the location of a subset of even vertices in $A_* \cap (\bigcup_i S_i)$. We shall see that if this subset is chosen suitably then $W^+ \subset \bigcup_i S_i$ and $W_\out \subset \bigcap_i S_i^c$.

Let $\cW$ denote the family of sets $W \subset \Even \cap A_*$ having size at most $m/d$, where $m := |A_*|$.
We say that a collection $(W_i)_i$ is a $\cQ$-partition of $W$ if $W=\bigcup_i W_i$ and $\{ i : v \in W_i \} \in \cQ$ for all $v \in W$.
Define
\[ \cA := \left\{ \big((A_i \cup W_i^+)_i, A_* \setminus W_\out\big) ~:~ W \in \cW,~(W_i)_i\text{ is a $\cQ$-partition of $W$} \right\} .\]
Let us show that $\cA$ satisfies the requirements of the lemma.
To this end, we first bound the size of~$\cA$. We have
\[ |\cW| \le \binom{m}{\le m/d} \le (ed)^{m/d} = e^{(1+\log d)m/d} .\]
Hence,
\[ |\cA| \le |\cW| \cdot |\cQ|^{m/d} \le 2^{(q+1+\log d)m/d} .\]

Next, let us show that, for any $B=((B_i)_i,B_*)\in \cA$, we have
$\Even \cap B_* \subset N_d(\bigcup_i B_i)$.
To this end, let $W \in \cW$ be such that $\bigcup_i B_i = \bigcup_i A_i \cup W^+$ and $B_* = A_* \setminus W_\out$.
Let $v \in \Even \cap B_*$ and note that, by~\eqref{eq:A_*-odd}, $N(v) \setminus A_* \subset \bigcup_i A_i$. Thus, it suffices to show that $v \in N_d(W^+ \cup (A^*)^c)$. This in turn follows from $v \notin W_\out$.

It remains to show that any odd $\cQ$-collection $S$ which is weakly approximated by $A$ is approximated by some element in $\cA$. Let $S$ be such a collection. Let $W$ be a maximal subset of
$\Even \cap A_* \cap (\bigcup_i S_i)$ among those satisfying $d|W| \le |A_* \cap W^+|$, and note that $W \in \cW$.
Now define $B_i := A_i \cup W_i^+$ and $B_* := A_* \setminus W_\out$, where $W_i := W \cap S_i$. To show that $B:=((B_i)_i,B_*) \in \cA$, we must show that $(W_i)_i$ is a $\cQ$-partition of $W$.
Indeed, since $W$ is a set of even vertices, this follows from the fact that $S$ is a $\cQ$-collection.

To conclude that $B$ approximates $S$, we must show that $B_i \subset S_i \subset B_i \cup B_*$ for all $i$ and that $B_* \subset (\intextB S)^{+3}$. Since $A_i \subset S_i$ and $W_i \subset \Even \cap S_i$, and since $S_i$ is odd, it follows that $B_i \subset S_i$. Let $v \in S_i \setminus B_i$ and note that $v \in A_*$ since $A_i \subset B_i$. If $v$ is odd, then $v \in \Odd \cap A_* = \Odd \cap B_*$. Suppose that $v$ is even. To obtain that $v \in B_*$, it remains to show that $v \notin N_d(A_* \setminus W^+)$.
Indeed, by the maximality of $W$, and since $v \in A_* \setminus W$, we have
\[ d|W \cup \{v\}| > |A_* \cap (W \cup\{v\})^+| = |A_* \cap W^+| + |A_* \cap v^+ \setminus W^+| \ge d|W| + |A_* \cap v^+ \setminus W^+| ,\]
so that $|A_* \cap v^+ \setminus W^+| < d$.
Finally, $B_* \subset (\intextB S)^{+3}$ follows from $B_* \subset A_*$ and the fact that $S$ is weakly approximated by $A$.
\end{proof}

\section{Infinite-volume Gibbs states}
\label{sec:gibbs}

In this section, we prove Theorem~\ref{thm:existence_Gibbs_states} and Theorem~\ref{thm:characterization_of_Gibbs_states}.
The former is about the existence of a limiting Gibbs state for each dominant pattern and the properties of this measure. The latter is about the characterization of all maximal-entropy Gibbs states. The first is proven in Section~\ref{sec:P-pattern-Gibbs-state} (modulo the fact the measure has maximal entropy, which is deferred to Section~\ref{sec:max-entropy-states}) and the second in Section~\ref{sec:max-entropy-states}. We assume throughout this section that $q \ge 3$ and that $d$ satisfies~\eqref{eq:dim-assump}.

Let us first provide a formal definition of a Gibbs state (for uniform proper $q$-colorings).
A probability measure $\mu$ on $[q]^{\Z^d}$ (with the natural product $\sigma$-algebra) is a \emph{Gibbs state} if it is supported on proper $q$-colorings of $\Z^d$ and a random coloring $f$ sampled from $\mu$ has the property that, for any finite $\Lambda \subset \Z^d$, conditioned on the restriction $f|_{\Lambda^c}$, the restriction $f|_{\Lambda^+}$ is almost surely uniformly distributed on the set of proper $q$-colorings of $\Lambda^+$ that agree with $f$ on $\extB \Lambda$.

For a distribution $\mu$ on $[q]^{\Z^d}$, we denote by $\mu|_U$ the marginal distribution of $\mu$ on $[q]^U$. Given two discrete distributions $\mu$ and $\lambda$ on a common space, we denote their total-variation distance by $\distTV(\mu,\lambda) := \max_{A} |\mu(A)-\lambda(A)|$, where the maximum is over all events $A$. Recall that a domain is a finite, non-empty, connected and co-connected subset of $\Z^d$.

\subsection{Large violations}\label{sec:large-violations}
For the proofs of Theorem~\ref{thm:existence_Gibbs_states} and Theorem~\ref{thm:characterization_of_Gibbs_states}, we require two extensions of Theorem~\ref{thm:long-range-order} to larger violations of the boundary pattern rather than just single-site violations. Recall the definition of $Z_*(f)$ from~\eqref{eq:Z_*-def}.
Let $Z_*^{+5}(f,V)$ denote the union of the connected components of $Z_*(f)^{+5}$ that are either infinite or disconnect some vertex in $V$ from infinity.

\begin{prop}\label{prop:Z_*-bound}
	Let $\Lambda$ be a domain and let $V \subset \Z^d$ be finite. Then, for any $k \ge 1$,
	\begin{equation*}
	\Pr_{\Lambda,P_0}\big(|Z_*(f) \cap Z_*^{+5}(f,V)| \ge k\big) \le 2^{|V|} \cdot e^{-\frac{ck}{q^3(q + \log d)d}} .
	\end{equation*}
\end{prop}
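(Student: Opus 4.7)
The plan is to reduce the statement to Proposition~\ref{prop:prob-of-breakup-associated-to-V} by showing that any violation $|Z_*(f)\cap Z_*^{+5}(f,V)|\ge k$ forces the existence of a breakup seen from $V$ whose ``bad region'' $X_*$ has size at least $k$.

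First, by Lemma~\ref{lem:no-infinite-breakups}, $\Pr_{\Lambda,P_0}$-almost-surely every breakup seen from the finite set $V$ is finite, so we may restrict to this event. (If $V\not\subset\Lambda$ the vertices in $V\setminus\Lambda$ can be treated separately by the direct Peierls estimate appearing in the proof of Lemma~\ref{lem:no-infinite-breakups}, reducing to the case $V\subset\Lambda$.) Given $f$ satisfying $|Z_*(f)\cap Z_*^{+5}(f,V)|\ge k$, I would invoke Lemma~\ref{lem:existence-of-breakup} to produce a breakup $X$ with $X_*^{+5}=Z_*^{+5}(f,V)$.

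The key step is to establish the identity
\[ X_* \;=\; Z_*(f)\cap Z_*^{+5}(f,V). \]
By the construction in Lemma~\ref{lem:existence-of-adapted-atlas}, one has $X_P\cap X_*^{+5}=Z_P\cap X_*^{+5}$ for every dominant pattern $P$. The main observation is that $X_*^{+5}$ is a union of connected components of $Z_*^{+5}$: if $v\in Z_*(f)\cap X_*^{+5}$ and $u$ is any neighbor of $v$, then $u\in v^+\subset Z_*^{+1}\subset Z_*^{+5}$, so $u$ belongs to the same connected component of $Z_*^{+5}$ as $v$ and therefore $u\in X_*^{+5}$. Hence the coincidence applies simultaneously to $v$ and to each of its neighbors, and a case-by-case analysis of $v\in Z_\bad$, $v\in Z_\overlap$, $v\in\intB Z_P$, $v\in\extB Z_P$ gives $v\in X_*$. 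The reverse inclusion $X_*\subset Z_*(f)\cap X_*^{+5}$ is symmetric, since for $v\in\intB X_P$ any non-$X_P$ neighbor lies in $X_*^{+1}\subset X_*^{+5}$ where the coincidence applies.

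Given the identity, $|X_*|\ge k$. Since $|X_*|\le 2L+M+N$ for any $X\in\breakups_{L,M,N}$, some such triple with $2L+M+N\ge k$ arises. Using~\eqref{eq:dim-assump}, which implies $d\ge q^2$, and that $M/q\ge M/q^2$,
\[ \tfrac{L}{d}+\tfrac{M}{q}+\tfrac{N}{q^2}\;\ge\;\tfrac{L}{d}+\tfrac{M+N}{q^2}\;\ge\;\tfrac{2L+M+N}{2d}\;\ge\;\tfrac{k}{2d}. \]
Combining Proposition~\ref{prop:prob-of-breakup-associated-to-V} with a union bound over $(L,M,N)$ with $2L+M+N\ge k$ yields
\[ \Pr_{\Lambda,P_0}\bigl(|Z_*\cap Z_*^{+5}(f,V)|\ge k\bigr)\;\le\;2^{|V|}\sum_{m\ge k}(m+1)^2\exp\!\left(-\tfrac{cm}{2d\,q^3(q+\log d)}\right), \]
where the factor $(m+1)^2$ counts triples with $2L+M+N=m$. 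The polynomial prefactor is absorbed into a slightly smaller exponential rate (the bound being trivial for $k$ below a threshold polynomial in $d,q$, thanks to~\eqref{eq:dim-assump}), delivering the claimed estimate.

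\textbf{Main obstacle.} The principal technical point is the identity $X_*=Z_*(f)\cap Z_*^{+5}(f,V)$, whose proof hinges on recognizing that $X_*^{+5}$ is a union of connected components of $Z_*^{+5}$ so that neighbors of $Z_*$-vertices sitting inside $X_*^{+5}$ automatically remain in $X_*^{+5}$; once this is established, the remainder is a routine geometric summation.
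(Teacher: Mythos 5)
Your overall route is the same as the paper's: apply Lemma~\ref{lem:existence-of-breakup} to get a breakup $X$ with $X_*^{+5}=Z_*^{+5}(f,V)$, identify $X_*=Z_*(f)\cap Z_*^{+5}(f,V)$ (the paper asserts this identity without detail; your justification via the fact that $X_*^{+5}$ is a union of connected components of $Z_*^{+5}$, so the coincidence $X_P\cap X_*^{+5}=Z_P\cap X_*^{+5}$ applies to each relevant vertex together with its neighbours, is correct), bound $|X_*|\le 2L+M+N$, and then sum the bound of Proposition~\ref{prop:prob-of-breakup-associated-to-V} over the admissible triples $(L,M,N)$. Your invocation of Lemma~\ref{lem:no-infinite-breakups} to discard infinite breakups is fine (the paper leaves this implicit).

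The one genuine gap is in the last step, where you absorb the prefactor of the tail sum. Writing $\alpha:=\tfrac{c}{2dq^3(q+\log d)}$, the sum $\sum_{m\ge k}(m+1)^2e^{-\alpha m}$ carries a prefactor of size roughly $(k+1)^2\alpha^{-1}+\alpha^{-3}$, i.e.\ $e^{C\log(dq)+C\log(k+1)}$, which is only beaten by $e^{-\alpha k/2}$ once $\alpha k\gtrsim\log(dq)$, that is $k\gtrsim dq^{3}(q+\log d)\log(dq)$. Your fallback that "the bound is trivial for $k$ below a threshold polynomial in $d,q$" only covers (when $|V|=1$) the range $k\lesssim |V|\,dq^{3}(q+\log d)$, leaving an uncovered window of order a $\log d$ factor, and no choice of the absolute constant $c$ in the final bound closes it, since the mismatch grows with $d$. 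The paper closes exactly this gap by observing (via Lemma~\ref{lem:boundary-size-of-odd-set}, as in the proof of Theorem~\ref{thm:long-range-order}) that $\breakups_{L,M,N}=\emptyset$ unless $L\ge d^2$, so every term in the sum already contains a factor $e^{-cd/(q^3(q+\log d))}$, which under~\eqref{eq:dim-assump} dominates all the polynomial prefactors uniformly in $k\ge 1$. Adding this one observation makes your argument complete.
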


\begin{proof}
	We omit $f$ from notation.
	Let $\Omega_{L,M,N}$ denote the event that there exists a breakup in $\breakups_{L,M,N}$ seen from~$V$. Let us show that $|Z_* \cap  Z_*^{+5}(V)| \ge k$ implies the occurrence of $\Omega_{L,M,N}$ for some $L,M,N \ge 0$ satisfying that $L/2 + M+N \ge k$.
	
	Lemma~\ref{lem:existence-of-breakup} implies the existence of a breakup $X$ such that $X_*^{+5}=Z_*^{+5}(V)$.
	Note that this implies that $X_*= Z_* \cap Z_*^{+5}(V)$ so that $|X_*| \ge k$. Since every vertex in $\bigcup_P \intextB X_P$ is an endpoint of an edge in $\bigcup_P \partial X_P$, and since every edge has only two endpoints, we see that $X \in \breakups_{L,M,N}$ implies that $L/2+M+N \ge k$. Note also that Lemma~\ref{lem:boundary-size-of-odd-set} implies that $\breakups_{L,M,N}=\emptyset$ when $L<d^2$.
	Therefore, by Proposition~\ref{prop:prob-of-breakup-associated-to-V},
	\[ \Pr\big(|Z_* \cap Z_*^{+5}(V)| \ge k\big) \le 2^{|V|} \sum_{\substack{L \ge d^2,\,M,N \ge 0\\L/2+M+N \ge k}} \exp\left(- \tfrac{c}{q^3(q+\log d)} \big( \tfrac{L}{d}+\tfrac{M}{q}+\tfrac{N}{q^2} \big) \right) .\]
	Using~\eqref{eq:dim-assump}, the desired inequality follows.
\end{proof}

Recall the definition of $Z_P(f)$ from~\eqref{eq:Z-def}, and that, while the $P$-even vertices in $Z_P(f)$ are always in the $P$-pattern, the $P$-odd vertices there need not be. Let $\bar{Z}_P(f)$ denote the subset of $Z_P(f)$ that is in the $P$-pattern.
For $V \subset \Z^d$, define $\cB_P(f,V)$ to be the union of the $(\Z^d)^{\otimes 2}$-connected components of $\bar{Z}_P(f)^c$ that intersect $V$.
For a set $U \subset \Z^d$, define $\diam^* U := 2m+\diam U_1 + \dots + \diam U_m$, where $\{U_i\}_{i=1}^m$ are the $(\Z^d)^{\otimes 2}$-connected components of~$U$.

\begin{prop}\label{prop:B_P-bound}
	Let $\Lambda$ be a domain and let $V \subset \Z^d$ be finite. Then, for any $k \ge 1$,
	\begin{equation}\label{eq:bound-on-diameter-B_P}
	\Pr_{\Lambda,P}\big(\diam^* \cB_P(f,V) \ge k \big) \le 2^{|V|} \cdot e^{-\frac{cdk}{q^4(q + \log d)}}.
	\end{equation}
\end{prop}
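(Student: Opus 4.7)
The plan is to reduce Proposition~\ref{prop:B_P-bound} to a combination of Proposition~\ref{prop:Z_*-bound} and a direct estimate on ``$P$-odd defects,'' following the template of the preceding proof of Proposition~\ref{prop:Z_*-bound}. By the symmetry of $\Pr_{\Lambda,P}$ under color permutations we may reduce to the case $P=P_0$. The first step is to observe the containment
\[
\{v\text{ not in the }P\text{-pattern}\}~\subset~\bar Z_P(f)^c~\subset~\{v\text{ not in the }P\text{-pattern}\}^{+2},
\]
which follows from a case analysis on the definitions of $Z_P$ and $\bar Z_P$: a vertex $v\in\bar Z_P^c\cap Z_P$ is necessarily a $P$-odd ``defect'' (and hence itself out of pattern), while a vertex $v\notin Z_P$ necessarily has an out-of-pattern vertex within distance $2$ by unwinding~\eqref{eq:Z-def}.

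Suppose $\diam^*\cB_P(f,V)\ge k$, and let $U_1,\dots,U_m$ be the $(\Z^d)^{\otimes 2}$-components of $\bar Z_P^c$ meeting $V$, so $\sum_i(2+\diam U_i)\ge k$. The $(\Z^d)^{\otimes 2}$-maximality of each $U_i$ implies $U_i^{+2}\setminus U_i\subset\bar Z_P\subset Z_P$, so $U_i^+$ is $\Z^d$-connected with diameter at least $\diam U_i$ and outer boundary in the $P$-patterned region. Apply Lemma~\ref{lem:existence-of-breakup} to obtain a breakup $X$ with $X_*^{+5}=Z_*^{+5}(f,V)$, and split the components by whether they meet $X_*^{+5}$.

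A component $U_i$ lying entirely in $(X_*^{+5})^c$ must sit inside a single connected region of this complement, which by (the proof of) Lemma~\ref{lem:existence-of-adapted-atlas} is assigned to a single $X_{P'}$. A short check using~\eqref{eq:breakup-prop-even}--\eqref{eq:breakup-prop-odd} forces $P'=P$ (otherwise the component would be forced to touch the enclosing contour $X_*^{+5}$), so these components consist only of $P$-odd defects in the interior of $X_P$. A direct union bound over $(\Z^d)^{\otimes 2}$-chains of defects rooted at $V$, using that conditionally on the surrounding configuration being in $P$-pattern a single defect has probability $O(qe^{-cd/q})$, controls their contribution by $2^{|V|}\exp(-cdk/q)$, which is well within the target. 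Components meeting $X_*^{+5}$ are handled via the breakup: the piece of $X_*^{+5}$ they touch must form a surface enclosing a region of diameter $\ge\diam U_i$ from the bulk of $\bar Z_P$, and a combination of $\Z^d$-isoperimetric inequalities and (for thin ``line-shaped'' components) the defect content of the line gives $L/d+M/q+N/q^2\ge cdk/q$ where $L=|\bigcup_P\partial X_P|$. Plugging this into Proposition~\ref{prop:prob-of-breakup-associated-to-V} then yields the target exponent $cdk/q^4(q+\log d)$.

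The main technical obstacle is this quantitative isoperimetric/defect-counting step, which must interpolate uniformly between two regimes. In the ``thick'' regime, ordinary isoperimetry in $\Z^d$ gives $|\partial U_i^+|\ge cdk^{(d-1)/d}$, which exceeds the required $cd^2k/q$ once $d$ is as large as in~\eqref{eq:dim-assump}. In the ``thin'' regime (line-shaped $U_i$), the $\Z^d$-boundary alone is only $\sim dk$, but the component must then contain $\gtrsim k$ defects, each contributing to the probability bound via the $N/q^2$ term in Proposition~\ref{prop:prob-of-breakup-associated-to-V} (or, alternatively, via the direct defect-probability bound used in the first case). Careful bookkeeping is also required when $\diam^*\cB_P(f,V)$ is composed of many small components rather than a few large ones, since each small component still contributes an additive $2$ to $\diam^*$ and needs to be attributed to either the breakup surface or an isolated defect.
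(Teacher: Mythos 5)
Your high-level reduction---construct the breakup $X$ with $X_*^{+5}=Z_*^{+5}(f,V)$ via Lemma~\ref{lem:existence-of-breakup} and feed a lower bound on $(L,M,N)$ into Proposition~\ref{prop:prob-of-breakup-associated-to-V}---is indeed the paper's template, but the step carrying all the content, namely converting $\diam^*\cB_P(f,V)\ge k$ into $\tfrac{L}{d}+\tfrac{M}{q}+\tfrac{N}{q^2}\gtrsim \tfrac{dk}{q}$ (the paper proves $L+2dM\ge cd^2k$), is exactly what you leave as ``the main technical obstacle,'' and the substitutes you sketch do not work. Your thick-regime claim is quantitatively false: ordinary isoperimetry gives bounds of the form $|\partial U|\gtrsim d\min(d,|U|)$ or $|U|^{(d-1)/d}$, and $d\,k^{(d-1)/d}$ never dominates $cd^2k/q$ (this would need $q\gtrsim d\,k^{1/d}$, which contradicts \eqref{eq:dim-assump}; increasing $d$ makes it worse, not better). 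No volume-type isoperimetric inequality yields a bound linear in the \emph{diameter} with a factor $d^2$. The paper obtains precisely this from the parity structure you never exploit: $\intB\cB\subset\extB\bar Z_P$ consists of $P$-odd vertices, so each $(\Z^d)^{\otimes 2}$-component of $\cB$ is an odd set, and Lemma~\ref{lem:boundary-size-via-diameter2} (resting on the odd-set inequality $|\partial A|\ge (d-1)^2\diam A$ from~\cite{feldheim2015long}) gives $|\partial\cB|+|\partial(\cB_\iso^+)|\ge cd^2\,\diam^*\cB$. This lemma is the key quantitative ingredient of the proof and is absent from your argument.

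Your handling of components disjoint from $X_*^{+5}$ has a second, independent gap. Such a component need not consist only of $P$-odd defects inside $X_P$ (the breakup records nothing about the coloring away from $X_*$, so these components can contain arbitrary portions of $\bar Z_P^c$), and the proposed union bound over defect chains, with a per-defect probability $O(qe^{-cd/q})$ ``conditionally on the surrounding configuration being in the $P$-pattern,'' is circular: that conditional estimate is local control of $\Pr_{\Lambda,P}$ of the very kind the breakup/Shearer machinery is built to establish, it is not available a priori, and the conditionings along a chain cannot simply be multiplied. The paper needs no such extra estimate because of a deterministic observation you miss: by properness, any $w\in\cB_\iso$ satisfies $f(N(w))\subset P_\bdry\setminus\{f(w)\}$, hence $w$ lies in the core of at least two sets $Z_Q$, so $\cB_\iso^+\subset Z_\overlap$, while $\partial(\cB\setminus\cB_\iso)\subset\partial Z_P$; combined with the seen-from-$V$ property this yields $\partial(\cB\setminus\cB_\iso)\subset\partial X_P$ and $\cB_\iso^+\subset X_\overlap$, so \emph{every} part of $\cB$ is charged to $L$ or $M$ and the conclusion follows by summing Proposition~\ref{prop:prob-of-breakup-associated-to-V} over $L+2dM\ge cd^2k$. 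Without the odd-set diameter inequality and without the $\cB_\iso\subset Z_\overlap$ observation, your proposal does not close.
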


For the proof, we require the following adaptation of~\cite[Lemma~2.4]{feldheim2015long}, which states that a finite, odd, connected set $A$ has $|\partial A| \ge (d-1)^2 \diam A$.
For $A \subset \Z^d$, denote the isolated vertices in $A$ by
\begin{equation}\label{eq:def-partial+}
	A_\iso := \{ v \in A : N(v) \cap A = \emptyset \} .
\end{equation}

\begin{lemma}\label{lem:boundary-size-via-diameter2}
	Let $A \subset \Z^d$ be finite, odd and $(\Z^d)^{\otimes 2}$-connected. Then
	\[ |\partial A| + |\partial(A_\iso^+)| \ge \tfrac12(d-1)^2 (2+\diam A) .\]
\end{lemma}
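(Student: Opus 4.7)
The plan is to adapt the argument of \cite[Lemma~2.4]{feldheim2015long}, incorporating the additional term $|\partial(A_\iso^+)|$ to handle isolated odd vertices of $A$. The starting observation is structural: since $A$ is odd, every even vertex of $A$ carries its full $2d$-neighborhood into $A$, and so $A$ decomposes as $A = A' \sqcup A_\iso$ with $A' := (\Even \cap A)^+$ a disjoint union of ``thick'' pieces and $A_\iso$ a set of isolated odd vertices whose neighbors all lie outside $A$.

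I would fix $u, v \in A$ realizing the diameter, $\dist(u,v) = L := \diam A$, and use the $(\Z^d)^{\otimes 2}$-connectedness of $A$ to produce a path $u = u_0, u_1, \ldots, u_n = v$ in $A$ with $\dist(u_j, u_{j+1}) \in \{1, 2\}$ and $\sum_j \dist(u_j, u_{j+1}) \ge L$ by the triangle inequality. Each step is therefore either a single edge or a length-$2$ hop that either moves along a single axis or makes an L-shape through a midpoint $m_j$; in the latter case, if $m_j \notin A$ then both $\{u_j, m_j\}$ and $\{u_{j+1}, m_j\}$ belong to $\partial A$.

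The core of the argument is a local lower bound: to each step of the path one associates a $(d-1)$-dimensional slab of $\Z^d$, perpendicular to the step direction and chosen so that slabs corresponding to distinct steps are disjoint. For each slab I would show that at least $\tfrac12(d-1)^2$ edges of $\partial A \cup \partial(A_\iso^+)$ lie inside it. When $u_j, u_{j+1}$ both belong to the thick piece $A'$, the cross-section of $A$ near the step is ``full'' close to the path and ``empty'' far away, so the perimeter of this cross-section produces $\sim (d-1)^2$ edges of $\partial A$ coming from the $d-1$ perpendicular coordinate directions each forced to cross $\partial A$. When $u_j$ or $u_{j+1}$ lies in $A_\iso$, the cross-section of $A_\iso^+$ through that vertex is essentially the $(d-1)$-dimensional ``equator'' of its $+$-neighborhood and its perimeter contributes $\sim (d-1)^2$ edges to $|\partial(A_\iso^+)|$. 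Distance-$2$ hops through a midpoint outside $A$ yield extra edges of $\partial A$ at the midpoint. Summing over the $n \ge L/2$ slabs and adding ``cap'' contributions at $u$ and $v$, which account for the $+2$ term, then yields the stated bound.

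The main obstacle will be the combinatorial bookkeeping needed to partition boundary contributions into disjoint slabs and ensure that no edge is charged twice, especially near transitions between $A'$ and $A_\iso$ or around midpoints of distance-$2$ hops where $A$ is locally non-convex. Producing the full $(d-1)^2$ factor (rather than just $(d-1)$) per step requires exploiting the entire $(d-1)$-dimensional cross-section: the key lever is that an even vertex of $A$ forces its whole neighborhood into $A$, and its odd neighbors in $A'$ retain most of their own neighbors in $A$, creating boundary crossings in many perpendicular directions simultaneously. Reconciling the $|\partial A|$ and $|\partial(A_\iso^+)|$ contributions without double-counting -- since both can be large near the same isolated vertex -- will require a clean attribution rule that assigns each boundary edge to exactly one slab.
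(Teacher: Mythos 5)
Your structural observation that $A$ splits as $(\Even\cap A)^+\sqcup A_\iso$ with $A_\iso\subset\Odd$ is correct, but the heart of your argument is missing. The claim that one can attach to each step of the path a $(d-1)$-dimensional slab, with slabs for distinct steps disjoint and each containing at least $\tfrac12(d-1)^2$ edges of $\partial A\cup\partial(A_\iso^+)$, is neither proved nor plausible as stated: a $(\Z^d)^{\otimes 2}$-path in $A$ between diameter-realizing vertices is not monotone, it may backtrack and recross the same hyperplanes, so disjoint slabs "perpendicular to the step direction" cannot in general be arranged (the diameter only controls $\dist(u,v)=\sum_i|u_i-v_i|$, not the geometry of the path you chose). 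The per-slab count of order $(d-1)^2$, the attribution rule preventing double counting between $\partial A$ and $\partial(A_\iso^+)$ near an isolated vertex, and the "cap" contributions at the endpoints are all deferred as acknowledged obstacles — but these are exactly the nontrivial content of the lemma, so what you have is a plan rather than a proof.

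For comparison, the paper's proof avoids re-deriving any slab estimate. It takes a $(\Z^d)^{\otimes 2}$-path $p$ in $A$ between $u,v$ with $\dist(u,v)=\diam A=:k$ and lets $\{A_i\}_i$ be the connected components of $A\setminus A_\iso$ and of $A_\iso^+$ that meet $p$. Each $A_i$ is a connected even or odd set; $\partial A_i\subset\partial A$ for the components of $A\setminus A_\iso$ and $\partial A_i\subset\partial(A_\iso^+)$ for the components of $A_\iso^+$, with boundaries of distinct components of the same set disjoint, so $\sum_i|\partial A_i|\le|\partial A|+|\partial(A_\iso^+)|$; and $k+2\le\sum_i(2+\diam A_i)\le 2\sum_i\diam A_i$ since every $\diam A_i\ge 2$. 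The componentwise bound $|\partial A_i|\ge(d-1)^2\diam A_i$ is then quoted directly from \cite[Lemma~2.4]{feldheim2015long} rather than reproved, and the factor $\tfrac12$ in the statement comes from $2+\diam A_i\le 2\diam A_i$, not from endpoint caps. To salvage your route you would essentially have to reprove that cited lemma with isolated vertices interleaved into the geometry, which is substantially harder than the reduction above; if you prefer your approach, at minimum you must replace the step-by-step slabs by coordinate hyperplanes counted against $\sum_i|u_i-v_i|$ and give an explicit, verifiable charging argument for the $(d-1)^2$ edges per crossing and for the separation of the $\partial A$ and $\partial(A_\iso^+)$ contributions.
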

\begin{proof}
	Let $u,v \in A$ be such that $k := \dist(u,v) = \diam A$ and let $p$ be a $(\Z^d)^{\otimes 2}$-path in $A$ between $u$ and $v$.
	Let $\{A_i\}_i$ be the connected components of $A \setminus A_\iso$ and $A_\iso^+$ that intersect~$p$. Observe that each $A_i$ is even or odd and that $k+2 \le \sum_i (2+\diam A_i) \le 2\sum_i \diam A_i$. Since $\partial A_i \subset \partial A \cup \partial(A_\iso^+)$ for all~$i$, the lemma follows using that $|\partial A_i| \ge (d-1)^2 \diam A_i$ by~\cite[Lemma~2.4]{feldheim2015long}.
\end{proof}
Lemma~\ref{lem:boundary-size-via-diameter2} will be modified in the setting of $\Z^{d_1}\times\T_{2m}^{d_2}$, $d_1\ge 2$ (with $d = d_1 + d_2$): Its conclusion will be weakened to $|\partial A| + |\partial(A_\iso^+)| \ge c(d-1)^2 (2+\diam A)$, which still suffices for our use in proving Proposition~\ref{prop:B_P-bound}, and its proof will use an extension of \cite[Lemma~2.4]{feldheim2015long} which states that a finite, odd, connected set $A \subset \Z^{d_1}\times\T_{2m}^{d_2}$ has $|\partial A| \ge c (d-1)^2\diam A$. Indeed, the proof of \cite[Lemma~2.4]{feldheim2015long} applies with the minor modification that the index $i$ there should be chosen as one of the $d_1$ infinite directions.

\begin{proof}[Proof of Proposition~\ref{prop:B_P-bound}]
	We denote $\cB:=\cB_P(f,V)$ and omit $f$ from notation.
	Let $\Omega_{L,M,N}$ denote the event that there exists a breakup in $\breakups_{L,M,N}$ seen from~$V$. Let us show that $\diam^* \cB \ge k$ implies the occurrence of $\Omega_{L,M,N}$ for some $L,M,N \ge 0$ satisfying that $L+2dM \ge cd^2k$.
	
	Note that $\intB \cB \subset \extB \bar{Z}_P$ so that, in particular, $\cB$ is a $P$-odd set.
	Note also that $\cB^{+2} \setminus \cB \subset \bar{Z}_P$ and that $\cB_\iso \subset Z_P \setminus \bar{Z}_P$.
	We claim that $\partial (\cB \setminus \cB_\iso) \subset \partial Z_P$ and $\cB_\iso^+ \subset Z_\overlap$, so that, in particular, $\intextB \cB \subset Z_*$. To see the former, let $(u,v) \in \dpartial (\cB \setminus \cB_\iso)$ and $w \in N(u) \cap \cB$, so that $v \in Z_P$ and $w \notin Z_P$. It follows that $u \notin Z_P$ and hence that $\{u,v\} \in \partial Z_P$ as required. To see the latter, let $u \in \cB_\iso^+$ and $w \in u^+ \cap \cB_\iso$, so that $f(w) \in P_\bdry$ and $f(N(w)) \subset P_\bdry$. It follows that $w^+ \subset Z_Q$ for any dominant pattern $Q=(A,B)$ such that $P_\bdry \setminus \{f(w)\}$, so that $u \in Z_\overlap$ as required.
	
	Lemma~\ref{lem:existence-of-breakup} implies the existence of a breakup $X$ such that $X_*^{+5}=Z_*^{+5}(V)$.
	Note that this implies that $X_*= Z_* \cap Z_*^{+5}(V)$.
	Since $\intextB \cB \subset Z_*$ and since every $(\Z^d)^{\otimes 2}$-connected component of $\cB$ intersects $V$, it follows that $\intextB \cB \subset Z_*^{+5}(V)$ and hence that $\intextB \cB \subset X_*$. In particular, by~\eqref{eq:breakup-1}, $X_Q$ and $Z_Q$ coincide near the boundary of $\cB$ for all $Q$, so that $\partial (\cB \setminus \cB_\iso) \subset \partial X_P$ and $\cB_\iso^+ \subset X_\overlap$.
	
	Let $L,M,N \ge 0$ be such that $X \in \breakups_{L,M,N}$.
	Applying Lemma~\ref{lem:boundary-size-via-diameter2} to each $(\Z^d)^{\otimes 2}$-connected component of~$\cB$ yields that $|\partial \cB| +|\partial(\cB_\iso^+)| \ge c d^2 k$. Since $|\partial(\cB_\iso^+)| \le 2d|\cB_\iso^+|$, we conclude that $L+2dM \ge cd^2k$.
	Therefore, by Proposition~\ref{prop:prob-of-breakup-associated-to-V},
	\[ \Pr\big(\diam^* \cB \ge k\big) \le 2^{|V|} \sum_{\substack{L,M,N \ge 0\\L+2dM\ge cd^2k}} \exp\left(- \tfrac{c}{q^3(q+\log d)} \big( \tfrac{L}{d}+\tfrac{M}{q}+\tfrac{N}{q^2} \big) \right) .\]
	Using~\eqref{eq:dim-assump}, the desired inequality follows.
\end{proof}

For the proof of Theorem~\ref{thm:existence_Gibbs_states}, we also require a corollary of Proposition~\ref{prop:B_P-bound} for violations of the boundary pattern in a pair of proper colorings.
Given two proper colorings $f$ and $f'$ of $\Z^d$, define $\cB_P(f,f',u)$ to be the $(\Z^d)^{\otimes 2}$-connected component of $u$ in $(\bar{Z}_P(f) \cap \bar{Z}_P(f'))^c$.

\begin{cor}\label{cor:prob-of-joint-breakup-core}
	Let $\Lambda$ and $\Lambda'$ be two domains and $f \sim \Pr_{\Lambda,P}$ and $f' \sim \Pr_{\Lambda',P}$ be independent. Then
	\[ \Pr\big(\diam \cB_P(f,f',u) \ge r \big) \le e^{-\frac{cdr}{q^4(q+\log d)}} \qquad\text{for any }r \ge 1\text{ and }u \in \Z^d .\]
\end{cor}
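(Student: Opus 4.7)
The plan is to reduce the corollary to Proposition~\ref{prop:B_P-bound} via a chain-of-components argument. Write $H := \cB_P(f,f',u)$ and $S := \bar Z_P(f)^c$, $S' := \bar Z_P(f')^c$, and suppose $\diam H \ge r$. Since $H \subseteq S \cup S'$ is $(\Z^d)^{\otimes 2}$-connected and contains $u$, and since $(\Z^d)^{\otimes 2}$-steps change $\Z^d$-distance from $u$ by at most $2$, there exists $v \in H$ with $\dist_{\Z^d}(u,v) \in [r/2, r/2+2]$.

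Any $(\Z^d)^{\otimes 2}$-path from $u$ to $v$ inside $H$ traverses an alternating sequence $C_1, C_2, \dots, C_M$ of $(\Z^d)^{\otimes 2}$-connected components of $S$ and $S'$, with $u \in C_1$, $v \in C_M$, and consecutive components sharing a pair of vertices at $\Z^d$-distance at most $2$. The triangle inequality gives
\[
 r/2 \le \dist_{\Z^d}(u,v) \le \sum_{i=1}^M \diam C_i + 2(M-1).
\]
Taking one representative vertex from each distinct $S$-component (respectively $S'$-component) in the chain produces sets $V_f$ and $V_{f'}$ for which $\cB_P(f, V_f)$ is exactly the union of the $S$-components visited, so $\diam^*\cB_P(f,V_f) = 2|V_f| + \sum_{C\in\cC_f}\diam C$, and analogously for $V_{f'}$. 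Summing these identities with the displayed inequality,
\[
 \diam^*\cB_P(f,V_f) + \diam^*\cB_P(f',V_{f'}) \ge 2M + \sum_i \diam C_i \ge r/2,
\]
so without loss of generality $\diam^*\cB_P(f,V_f) \ge r/4$.

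For each fixed $V_f$, Proposition~\ref{prop:B_P-bound} bounds $\Pr(\diam^*\cB_P(f,V_f)\ge r/4)$ by $2^{|V_f|} e^{-cdr/(4q^4(q+\log d))}$, and by independence of $f$ and $f'$ one could multiply an analogous factor from $f'$. The argument concludes by taking a union bound over the possible choices of $V_f$; the chain may be chosen so that its components, and hence $V_f$, lie within $u^{+O(r)}$. The main obstacle is that $|V_f|$ can be comparable to $r$, so that the naive enumeration cost $\binom{|u^{+O(r)}|}{|V_f|}$ threatens to swamp the exponential decay. The resolution is to split cases on $|V_f|$: when $|V_f|$ is small (say, at most $cr/q^5$) the bound $\diam^*\cB_P(f,V_f)\ge r/4$ is used directly, while when $|V_f|$ is larger, the stronger intrinsic bound $\diam^*\cB_P(f,V_f)\ge 2|V_f|$ combined with the dimension hypothesis~\eqref{eq:dim-assump}, which makes $d/(q^4(q+\log d))$ large enough to absorb the $\log(r)d$-per-vertex enumeration cost, ensures the union bound closes. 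Combining with the symmetric case where $\diam^*\cB_P(f',V_{f'}) \ge r/4$ yields the stated bound.
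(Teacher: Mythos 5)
Your reduction to Proposition~\ref{prop:B_P-bound} via the chain of components is the right idea, and the identity $\diam^*\cB_P(f,V_f)+\diam^*\cB_P(f',V_{f'})\ge 2M+\sum_i\diam C_i\ge r/2$ is fine. The gap is in the final union bound over the representative sets $V_f$. Enumerating an arbitrary $m$-element subset of a ball of radius $O(r)$ costs roughly $\exp\big(m\,d\log (Cr)\big)$, i.e.\ about $d\log r$ per element, whereas the probability gain per element that Proposition~\ref{prop:B_P-bound} can ever supply is at most of order $d/(q^4(q+\log d))$ (coming from $\diam^*\ge 2|V_f|$), and the gain in the small-$|V_f|$ case is a fixed $\exp(-cdr/(q^4(q+\log d)))$. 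Both of these are independent of $r$, so once $\log(Cr)\gg 1/(q^4(q+\log d))$ --- i.e.\ for all large $r$ --- the enumeration cost swamps the decay in the intermediate and large-$|V_f|$ regimes (take $|V_f|\asymp r$ to see it plainly). The case split cannot repair this, and assumption~\eqref{eq:dim-assump} is irrelevant here: it compares $d$ with $q$, not with $r$, so it cannot absorb a $\log r$ factor. A further, smaller, problem is the claim that the chain and hence $V_f$ can be taken inside $u^{+O(r)}$: the component $\cB_P(f,f',u)$ may be a long filament, and a $(\Z^d)^{\otimes 2}$-path inside it from $u$ to a vertex at distance $\approx r/2$ may be forced to leave any ball of radius $O(r)$, so even the stated enumeration domain is not justified.

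The paper's proof fixes exactly this enumeration issue by replacing the scattered set of representatives with a single \emph{connected, rooted} object: by Lemma~\ref{lem:diam-witness} there is an ordinary nearest-neighbour path $p$ from $u$ to $v$ of length $s\le \diam^*\cB_P(f,p)+\diam^*\cB_P(f',p)$, while $s\ge \dist(u,v)\ge r/2$, so $t:=\max\{\diam^*\cB_P(f,p),\diam^*\cB_P(f',p)\}\ge r/4$. One then applies Proposition~\ref{prop:B_P-bound} with $V=p$ (contributing $2^{|p|}\le 2^{2t+1}$) and union bounds over rooted paths of length at most $2t$, which costs only $(2d)^{2t}$, i.e.\ $O(\log d)$ per unit of $t$ rather than $O(d\log r)$ per representative; this cost is absorbed by the decay $e^{-cdt/(q^4(q+\log d))}$ under~\eqref{eq:dim-assump}, uniformly in $r$. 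If you want to keep your chain-of-components language, you would need to encode the chain sequentially along such a path (so that each step is enumerated at cost $O(\log d)$ or is paid for by the diameter of the component it crosses), which is in effect Lemma~\ref{lem:diam-witness}; as written, your union bound does not close.
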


For the proof of Corollary~\ref{cor:prob-of-joint-breakup-core}, we require the following simple adaptation of~\cite[Lemma~6.9]{feldheim2015long}.

\begin{lemma}\label{lem:diam-witness}
	Let $U,V \subset \Z^d$ be finite and assume that $U \cup V$ is $(\Z^d)^{\otimes 2}$-connected. Then for any $u,v \in U \cup V$ there exists a path $p$ from $u$ to $v$ of length at most $\diam^* U_p + \diam^* V_p$, where $U_p$ and $V_p$ are the union of $(\Z^d)^{\otimes 2}$-connected components of $U$ and $V$ which intersect $p$.
\end{lemma}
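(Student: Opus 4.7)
The plan is to build $p$ via a component-chaining argument. First, introduce an auxiliary bipartite graph $H$ whose two sides are the $(\Z^d)^{\otimes 2}$-connected components of $U$ and of $V$ respectively, with an edge between $U_i$ and $V_j$ whenever either $U_i \cap V_j \neq \emptyset$ or there exist $x \in U_i$, $y \in V_j$ with $x$ and $y$ adjacent in $(\Z^d)^{\otimes 2}$. A short case analysis shows that any $(\Z^d)^{\otimes 2}$-edge $\{x,y\}$ with $x$ in some $U$-component $U_a$ and $y \in U \setminus U_a$ would, by $(\Z^d)^{\otimes 2}$-adjacency, force $y \in U_a$, a contradiction; therefore a step leaving $U_a$ must land in $V \setminus U$, and symmetrically for $V$-components. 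Combined with the assumed $(\Z^d)^{\otimes 2}$-connectedness of $U \cup V$, translating any $(\Z^d)^{\otimes 2}$-path in $U \cup V$ into a walk between components, this implies $H$ is connected.

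Second, fix a shortest $H$-path $C_0, C_1, \ldots, C_k$ from a component containing $u$ to a component containing $v$ (if $u$ or $v$ lies in $U \cap V$, either side of $H$ may host the endpoint); by minimality the $C_i$ are pairwise distinct. For each $0 \le i < k$, fix vertices $x_i \in C_i$ and $y_i \in C_{i+1}$ such that either $x_i = y_i$ or $x_i$ and $y_i$ are $(\Z^d)^{\otimes 2}$-adjacent, as guaranteed by the $H$-edge between $C_i$ and $C_{i+1}$. Assemble $p$ by concatenating a $(\Z^d)^{\otimes 2}$-traversal of $C_0$ from $u$ to $x_0$, the single transition step $x_0 \to y_0$ (zero steps if $x_0 = y_0$), a traversal of $C_1$ from $y_0$ to $x_1$, and so on, ending with a traversal of $C_k$ from $y_{k-1}$ to $v$. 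Each intra-component leg has length at most $\diam C_i$ since each $C_i$ is $(\Z^d)^{\otimes 2}$-connected, and each of the $k$ transitions contributes at most $1$, giving $|p| \le k + \sum_{i=0}^{k} \diam C_i$.

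Finally, compare with $\diam^* U_p + \diam^* V_p$. Let $m_U$ and $m_V$ count, respectively, the $U$-components and $V$-components among $C_0, \ldots, C_k$, so that $m_U + m_V = k+1$. Since $p$ passes through a vertex of every $C_i$, each $U$-component on the list lies in $U_p$ and each $V$-component lies in $V_p$; therefore
\[
\diam^* U_p + \diam^* V_p \;\ge\; 2(m_U + m_V) + \sum_{i=0}^{k} \diam C_i \;=\; 2(k+1) + \sum_{i=0}^{k} \diam C_i \;\ge\; |p|,
\]
which is the desired bound.

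The main obstacle is the geometric observation underpinning the bipartite structure of $H$, namely that no $(\Z^d)^{\otimes 2}$-edge can directly link two distinct $U$-components (nor two distinct $V$-components) without passing through $V \setminus U$ (resp.\ $U \setminus V$). Once this is in hand, the existence of an alternating component path and its translation into a vertex path is essentially combinatorial bookkeeping, and the factor $2m$ in the definition of $\diam^*$ leaves enough slack to absorb the $k$ transition steps.
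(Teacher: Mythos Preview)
Your approach is essentially the paper's: build an auxiliary graph on the $(\Z^d)^{\otimes 2}$-components, extract a simple path of components, and stitch together a $\Z^d$-path $p$ whose length is controlled by the diameters plus the $+2$ per component in $\diam^*$. The paper does not bother with the bipartite structure---it simply puts all $U$- and $V$-components into one graph $G$ with $W\sim W'$ iff $\dist(W,W')\le 2$, observes $G$ is connected, and proceeds; your alternation argument is correct but unnecessary.

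One point needs tightening. You write ``a $(\Z^d)^{\otimes 2}$-traversal of $C_i$'' and justify the length bound ``since each $C_i$ is $(\Z^d)^{\otimes 2}$-connected''. Read literally as a path \emph{within} $C_i$, this does not give length $\le\diam C_i$: the intrinsic $(\Z^d)^{\otimes 2}$-diameter of $C_i$ can exceed its $\Z^d$-diameter $\diam C_i$. What you actually want (and what the paper does) is a shortest $\Z^d$-path between the two endpoints, which has length at most $\diam C_i$ simply because both endpoints lie in $C_i$; this path need not stay in $C_i$. With that fix, each transition between consecutive components costs at most $2$ $\Z^d$-steps (not $1$), so $|p|\le 2k+\sum_i\diam C_i$, which is still absorbed by $2(k+1)+\sum_i\diam C_i\le\diam^*U_p+\diam^*V_p$.
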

\begin{proof}
	Let $\mathcal{W}$ be the collection of $(\Z^d)^{\otimes 2}$-connected components of $U$ and $V$.
	Consider the graph $G$ on vertex set $\mathcal{W}$ in which $W,W' \in \mathcal{W}$ are adjacent if and only if $\dist(W,W') \le 2$.
	Note that $G$ is connected.
	Consider a simple path $q=(W_1,\dots,W_k)$ in $G$, where $u \in W_1$ and $v \in W_k$.
	For each $1 \le i \le k-1$, let $u_i \in W_i$ and $v_i \in W_{i+1}$ be such that $\dist(u_i,v_i) \le 2$.
	Let $p$ be a path from $u$ to $v$ constructed by connecting $v_{i-1}$ to $u_i$ by a shortest-path for every $1 \le i \le k$ (where we set $v_0 := u$ and $u_k := v$), and $u_i$ to $v_i$ by at most one other vertex for every $1 \le i \le k-1$.
	Then the length of $p$ is at most $\sum_{i=1}^k (\diam W_i + 2)$. On the other hand, $\diam^* U_p + \diam^* V_p \ge \sum_{i=1}^k (\diam W_i + 2)$, and the lemma follows.
\end{proof}

\begin{proof}[Proof of Corollary~\ref{cor:prob-of-joint-breakup-core}]
	Denote $\cB:=\cB_P(f,f',u)$ and suppose that $\diam \cB \ge r$.
	Let $v \in \cB$ be such that $\dist(u,v) \ge r/2$.
	Note that $\cB$ is contained in $\cB(f,\Z^d) \cup \cB(f',\Z^d)$.	
	By Lemma~\ref{lem:diam-witness} applied to $\cB \cap \cB(f,\Z^d)$ and $\cB \cap \cB(f',\Z^d)$, there exists a path $p$ from $u$ to $v$ of length $s \le \diam^* \cB(f,p) + \diam^* \cB(f',p)$.	
	In particular, $T := \max \{\diam^* \cB(f,p),\diam^* \cB(f',p)\}$ is at least $s/2$.
	Thus, by a union bound on the choices of $p$ and $T$, Proposition~\ref{prop:B_P-bound} and~\eqref{eq:dim-assump},
	\[ \Pr\big(\diam \cB \ge r \big) \le \sum_{t=\lceil r/4 \rceil}^\infty 2 (2d)^{2t} 2^{2t+1} e^{- \frac{cdt}{q^4(q+\log d)}} \le e^{-\frac{cdr}{q^4(q+\log d)}} . \qedhere \]	
\end{proof}

\subsection{The $P$-pattern Gibbs state}
\label{sec:P-pattern-Gibbs-state}

In this section, we show that $\Pr_{\Lambda,P}$ converges as $\Lambda \uparrow \Z^d$ to an infinite-volume Gibbs state $\mu_P$ that satisfies a mixing property which, in particular, implies that $\mu_P$ is extremal. This is the content of the following two lemmas.

\begin{lemma}\label{lem:convergence}
	Let $\Lambda$ and $\Lambda'$ be two domains.
	Let $r \ge 1$ and let $U$ be a domain such that $U^{+r} \subset \Lambda \cap \Lambda'$.
	Then
	\[ \distTV\big(\Pr_{\Lambda,P}|_U, \Pr_{\Lambda',P}|_U\big) \le |U| \cdot e^{-\frac{cdr}{q^4(q+\log d)}} .\]
\end{lemma}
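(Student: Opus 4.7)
My plan is to bound the total variation distance via a coupling argument: for any coupling $(f,f')$ of $\Pr_{\Lambda,P}$ and $\Pr_{\Lambda',P}$ one has
\[
\distTV(\Pr_{\Lambda,P}|_U,\Pr_{\Lambda',P}|_U) \le \Pr(f|_U\ne f'|_U),
\]
so it suffices to exhibit a coupling in which $\Pr(f|_U\ne f'|_U) \le |U|\cdot e^{-cdr/(q^4(q+\log d))}$.

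I plan to start from independent samples $f\sim \Pr_{\Lambda,P}$ and $f'\sim\Pr_{\Lambda',P}$ and define the event $E := \{\diam\cB_P(f,f',u)<r \text{ for every }u\in U\}$. Corollary~\ref{cor:prob-of-joint-breakup-core} and a union bound over the $|U|$ vertices of $U$ give $\Pr(E^c)\le |U|\cdot e^{-cdr/(q^4(q+\log d))}$. On $E$, the joint breakup region $W := U\cup\bigcup_{u\in U}\cB_P(f,f',u)$ is contained in $U^{+r}\subset \Lambda\cap\Lambda'$, and every vertex outside $W$ that is $(\Z^d)^{\otimes 2}$-adjacent to $W$ lies in $\bar Z_P(f)\cap \bar Z_P(f')$, hence is in the $P$-pattern under both colorings. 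The next step of the plan is to modify the coupling on $E$ to produce $f|_U=f'|_U$ while preserving both marginals. By the DLR property of the two finite-volume measures and the inclusion $W^+\subset \Lambda\cap\Lambda'$, conditional on $(f|_{W^c},f'|_{W^c})$ the restrictions $f|_W,f'|_W$ are independent and uniform on proper extensions of their respective boundary data. Since both boundary restrictions lie in the $P$-pattern on $E$, the color-symmetry of the $P$-pattern (independent permutations of $A$ on the $P$-even boundary and of $B$ on the $P$-odd boundary) identifies the two uniform distributions after a suitable relabeling, so they can be coupled to agree on $U$.

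The principal technical obstacle will be that the set $W$ is a joint functional of $(f,f')$, which creates a circular dependence between $W$ and the conditional laws of $f|_W,f'|_W$ used to re-couple. I plan to handle this by constructing the coupling by a sequential revealing of the pair $(f,f')$ outward from the domain boundaries $\intB\Lambda,\intB\Lambda'$, using at each exposed vertex a maximal coupling of the single-site conditional distributions given all previously revealed values. Under this construction, any disagreement between $f$ and $f'$ at a vertex $v$ can be traced back through already-revealed neighbors to a site where one of the two colorings deviates from the $P$-pattern, so the set of disagreements intersecting $U$ is contained in $\bigcup_{u\in U}\cB_P(f,f',u)$ as constructed from the output pair. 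Combining the union-bound estimate $\Pr(E^c)\le |U|\cdot e^{-cdr/(q^4(q+\log d))}$ with $f|_U=f'|_U$ on $E$ then yields the stated total variation bound.
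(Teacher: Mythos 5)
Your reduction of the problem to the event $E$ is the right quantitative skeleton: the union bound over vertices of $U$ together with Corollary~\ref{cor:prob-of-joint-breakup-core} is exactly where the bound $|U|\cdot e^{-cdr/(q^4(q+\log d))}$ comes from, and the paper uses the same estimate (with the set $S := U \cup \bigcup_{u\in\intextB U}\cB_P(f,f',u)^+ \subset U^{+r}$). The gap is in the step that converts $E$ into agreement of the two laws on $U$. After conditioning on the exact boundary data $(f|_{W^c},f'|_{W^c})$, the conditional laws of $f|_W$ and $f'|_W$ are uniform over proper extensions of two \emph{different} boundary conditions on $\extB W$, and these are not related by any color relabeling: a $P$-pattern boundary condition assigns to each $P$-even vertex of $\extB W$ an arbitrary element of $A$ (and to each $P$-odd vertex an arbitrary element of $B$), so $f$ may color two even boundary vertices $1,2$ while $f'$ colors them $1,1$, and no permutation of $A$ carries one boundary condition to the other. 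The two conditional marginals on $U$ are genuinely different measures, and showing they are close is precisely the statement being proved, so this step is circular. The fallback sequential-revealing/maximal-coupling construction does not repair this: a color disagreement at a vertex need not trace back to a deviation from the $P$-pattern, because two colorings that both follow the $P$-pattern at a vertex typically still disagree in color (the pattern only confines the color to $A$ or to $B$). Consequently the disagreement set is not contained in $\bigcup_{u\in U}\cB_P(f,f',u)$, and the coupling does not force $f|_U=f'|_U$ on $E$.

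The missing ingredient is the mechanism of Lemma~\ref{lem:marginal-distribution-given-agreement}: instead of conditioning on exact boundary values, condition on the \emph{maximal} set $S$ with $U\subset S\subset U^{+r}$ such that $\intextB S$ is in the $P$-pattern under both $f$ and $f'$ (a unique maximal element exists by the boundary semi-lattice structure). The event that this maximal set equals a given $S$ is determined by the two colorings on $(S^c)^+$ together with only the constraint that $\intB S$ follows the pattern, so the domain Markov property gives that, conditionally, each of $f|_S$ and $f'|_S$ is distributed exactly as $\Pr_{S,P}|_S$ --- the \emph{same} law for both, with no coupling inside $S$ needed. Hence $\distTV\big(\Pr_{\Lambda,P}|_U,\Pr_{\Lambda',P}|_U\big)$ is at most the probability that no such $S$ exists, which is bounded by your estimate on $E^c$ via Corollary~\ref{cor:prob-of-joint-breakup-core}. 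With that lemma in place of your relabeling/disagreement argument, the rest of your proof goes through.
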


\begin{lemma}\label{lem:almost-independence-of-colorings}
	Let $\Lambda$ be a domain, let $V \subset \Lambda$ be a domain, let $r \ge 1$ and let $U \subset \Lambda$ be such that $U^{+2r} \subset V$. Then
\[ \distTV\big(\Pr_{\Lambda,P}|_{U \cup (\Lambda \setminus V)}, \Pr_{\Lambda,P}|_U \times \Pr_{\Lambda,P}|_{\Lambda \setminus V}\big) \le |U| \cdot e^{-\frac{cdr}{q^4(q+\log d)}} .\]
\end{lemma}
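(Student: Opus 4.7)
The plan is to bound the total variation distance by constructing a swap coupling between two independent copies of $\Pr_{\Lambda,P}$, using Corollary~\ref{cor:prob-of-joint-breakup-core} to produce a thick $\bar Z_P$-barrier inside $V \setminus U$ that is rigid enough to allow the patching.

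First I would sample $f, f' \sim \Pr_{\Lambda,P}$ independently, so that $(f|_U, f'|_{\Lambda \setminus V})$ is distributed as the product measure $\mu_2 := \Pr_{\Lambda,P}|_U \times \Pr_{\Lambda,P}|_{\Lambda \setminus V}$. By Corollary~\ref{cor:prob-of-joint-breakup-core} (applied with $\Lambda'=\Lambda$ and $r$ replaced by $2r$) and a union bound over $u \in U$, the event $\mathcal{E} := \{\diam \cB_P(f,f',u) < 2r \text{ for every } u \in U\}$ has probability at least $1 - |U| e^{-cdr/(q^4(q+\log d))}$. On $\mathcal{E}$, every $(\Z^d)^{\otimes 2}$-connected component of $(\bar Z_P(f) \cap \bar Z_P(f'))^c$ meeting $U$ is contained in $U^{+2r} \subset V$, so one can extract inside $V \setminus U$ a shell $T \subset \bar Z_P(f) \cap \bar Z_P(f')$ separating $U$ from $\Lambda \setminus V$. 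A key observation is that any $P$-odd vertex in $\bar Z_P(g)$ has its entire neighborhood contained in $\bar Z_P(g)$ (immediately from~\eqref{eq:Z-def}); taking $T$ to be the closed neighborhood of a layer of $P$-odd vertices from $\bar Z_P(f) \cap \bar Z_P(f')$ thus yields a shell on which both $f$ and $f'$ are in the $P$-pattern at every vertex. Since $P_\bdry \cap P_\inner = \emptyset$, the hybrid $h$ defined by $h = f$ on the $U$-side of $T$ (including $T$) and $h = f'$ on the $(\Lambda \setminus V)$-side is a valid proper coloring satisfying $h|_U = f|_U$ and $h|_{\Lambda \setminus V} = f'|_{\Lambda \setminus V}$.

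Next I would argue that $h \sim \Pr_{\Lambda,P}$, so that $(h|_U, h|_{\Lambda \setminus V}) \sim \mu_1$ and the pair $((h|_U, h|_{\Lambda \setminus V}), (f|_U, f'|_{\Lambda \setminus V}))$ provides a coupling of $\mu_1$ and $\mu_2$ that agrees on $\mathcal{E}$; the bound $\distTV(\mu_1, \mu_2) \le \Pr(\mathcal{E}^c) \le |U| e^{-cdr/(q^4(q+\log d))}$ then follows. The standard device is to set up the swap $(f, f') \mapsto (h, h')$, where $h'$ is built analogously by exchanging the roles of $f$ and $f'$, as a measure-preserving involution on the product $\Pr_{\Lambda,P} \times \Pr_{\Lambda,P}$. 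The shell $T$ is specified by a rule depending symmetrically on $(f, f')$, for instance as a canonical ``$P$-odd inner boundary'' of the union of $(\Z^d)^{\otimes 2}$-components of $(\bar Z_P(f) \cap \bar Z_P(f'))^c$ meeting $U$; conditionally on $T$ together with the colorings on a neighborhood of $T$, the restrictions of $(f, f')$ to the two sides of $T$ are independent and uniform on compatible proper colorings, so exchanging them preserves the conditional measure.

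The hard part will be making this involution fully rigorous: the rule selecting $T$ must be genuinely invariant under the swap, which requires that for every vertex within distance~$2$ of the boundary of $T$ the $\bar Z_P$-membership after the swap agrees with that before. This forces $T$ to have thickness of several layers, so that the values of $h$ in any radius-$2$ ball intersecting $T$ come entirely from one of $f$ or $f'$, and it forces a careful choice of the inner/outer split of $T$ to preserve the $\bar Z_P$-structure on both sides. One must then verify, using that $\bar Z_P(f) \cap \bar Z_P(f')$ is rich in $P$-odd vertices on~$\mathcal{E}$, that such a symmetric thick shell can always be extracted, with its thickness absorbed into the margin between $U^{+r}$ and $V$ granted by the assumption $U^{+2r} \subset V$.
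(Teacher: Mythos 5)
Your plan is a genuinely different route from the paper (a direct swap coupling of two independent samples, using only Corollary~\ref{cor:prob-of-joint-breakup-core}), but as written it has two gaps, one local and one structural. The local one: with your cut placed at the outer boundary of $T=W^+$ ($W$ a layer of $P$-odd vertices of $\bar{Z}_P(f)\cap\bar{Z}_P(f')$), every cut edge joins a $P$-even vertex $v\in T$, which in $h$ carries $f(v)\in P_\bdry$, to a $P$-odd vertex $x\notin T$, which carries $f'(x)$; nothing forces $x$ to be in the pattern under $f'$, so $f'(x)$ may coincide with $f(v)$ (e.g.\ $q=4$, $P_\bdry=\{1,2\}$, $f(v)=1$, $f'(v)=2$, $f'(x)=1$) and $h$ is not proper. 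This is repairable by moving the cut one layer inward so that both endpoints of every cut edge are jointly in-pattern ($P_\inner$ against $P_\bdry$).

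The structural gap is the step you yourself flag: the swap must be a measure-preserving involution, hence the rule selecting $T$ must return the same $T$ when applied to $(h,h')$, and your fix (make $T$ several layers thick so every radius-$2$ ball meeting the cut lies in the jointly-in-pattern region) is not available on your event $\mathcal{E}$. Distinct $(\Z^d)^{\otimes 2}$-components of $(\bar{Z}_P(f)\cap\bar{Z}_P(f'))^c$ need only be at graph distance $3$ from one another, so components not meeting $U$ can approach within distance $3$ of $\cB_P(f,f',u)$; the only jointly-in-pattern moat you are guaranteed is the $2$-thick layer $\cB^{+2}\setminus\cB$, and Corollary~\ref{cor:prob-of-joint-breakup-core} gives no control over the fattened or merged clusters you would need in order to rule out such intrusions. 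Consequently the $\bar{Z}_P$-based canonical shell can genuinely change under the swap ($P$-even membership in $Z_P(h)$ requires a witness $w$ with $h(N(w))\subset P_\bdry$, and $N(w)$ can straddle the cut), and the involution is not established. A workable repair is to select the interface by a rule symmetric in the two colorings at each site, e.g.\ the maximal $S$ with $U\subset S\subset V$ and $\intextB S$ in the $P$-pattern under both $f$ and $f'$: the swap preserves the unordered pair of colors at every vertex, so this family, and hence its maximal element, is unchanged, and existence of some such $S$ on $\mathcal{E}$ follows exactly as in the proof of Lemma~\ref{lem:convergence}. But this is essentially the paper's boundary semi-lattice device; the paper itself avoids any coupling, bounding the total-variation distance by the expected distance between the conditional law of $f|_U$ given $f|_{V^c}$ and its marginal, and then combining Lemma~\ref{lem:marginal-distribution-given-agreement}, Lemma~\ref{lem:convergence} and Proposition~\ref{prop:B_P-bound}.
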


Lemma~\ref{lem:convergence} easily implies that the finite-volume $P$-pattern measures converge to an infinite-volume Gibbs state $\mu_P$.
Indeed, if $(\Lambda_n)$ is a sequence of domains increasing to $\Z^d$, then for any domain $U$, $\dist(U,\Lambda_n^c) \to \infty$ as $n \to \infty$, so that Lemma~\ref{lem:convergence} implies that the sequence of measures $(\Pr_{\Lambda_n,P}|_U)_{n=1}^{\infty}$ is a Cauchy sequence with respect to the total-variation metric, and therefore, converges. This establishes the convergence of $\Pr_{\Lambda_n,P}$ as $n \to \infty$ towards an infinite-volume measure $\mu_P$ and it follows that this limit is a Gibbs state. Since this holds for any such sequence $(\Lambda_n)$, it follows that $\mu_P$ is invariant to all automorphisms preserving the two sublattices.
Lemma~\ref{lem:almost-independence-of-colorings} then easily implies that $\mu_P$ satisfies the following mixing property: for any $0<\delta<1$, there exist constants $A,a>0$ such that
%\[ \distTV\big(\mu_P|_{[-\delta n,\delta n]^d \cup (\Z^d \setminus [-n,n]^d)}, \mu_P|_{[-\delta n,\delta n]^d} \times \mu_P|_{\Z^d \setminus [-n,n]^d)}\big) \le A e^{-an} \qquad\text{for all }n \ge 1 ,\]
\[ \distTV\big(\mu_P|_{B_{\delta n} \cup (\Z^d \setminus B_n)}, \mu_P|_{B_{\delta n}} \times \mu_P|_{\Z^d \setminus B_n}\big) \le A e^{-an} \qquad\text{for all }n \ge 1 ,\]
where $B_m := [-m,m]^d \cap \Z^d$ (this property is termed \emph{quite weak Bernoulli with exponential rate} in~\cite{burton1995quite} in the context of translation-invariant measures). In particular, for any $k \ge 1$,
\[ \lim_{n \to \infty} \distTV\big(\mu_P|_{B_k \cup (\Z^d \setminus B_n)}, \mu_P|_{B_k} \times \mu_P|_{\Z^d \setminus B_n}\big) = 0 .\]
It is fairly standard to conclude from this that $\mu_P$ is tail trivial~(see~\cite[Proposition~7.9]{georgii2011gibbs}), which is equivalent to extremality within the set of all Gibbs states~(see~\cite[Theorem~7.7]{georgii2011gibbs}). Noting that~\eqref{eq:color_distribution} implies that different~$P$ yield different measures $\mu_P$, Theorem~\ref{thm:existence_Gibbs_states} will follow once we show that $\mu_P$ is of maximal entropy. We postpone this part to Section~\ref{sec:max-entropy-states} (it is a consequence of Proposition~\ref{prop:max-entropy-states-are-mixtures}. A direct proof, using Kempe chains, appears in~\cite{galvin2012phase}, where it is stated for the $q=3$ case but the proof may be extended to general $q$).

The proofs of Lemma~\ref{lem:convergence} and Lemma~\ref{lem:almost-independence-of-colorings} make use of the following fact which exploits the domain Markov property of the model.
We say that a collection $\cS$ of proper subsets of $\Z^d$ is a \emph{boundary semi-lattice} if for any $S_1,S_2 \in \cS$ there exists $S \in \cS$ such that $S_1 \cup S_2 \subset S$ and $\partial S \subset \partial S_1 \cup \partial S_2$.
Two boundary semi-lattices which we require are $\cS(U,V) := \{ S \subsetneq \Z^d : U \subset S \subset V \}$ and $\cS(f,P) := \{ S \subsetneq \Z^d : \intextB S\text{ is in the $P$-pattern with respect to }f \}$.
The latter has the property that if $\cS$ is any boundary semi-lattice, then $\cS \cap \cS(f,P)$ is also a boundary semi-lattice.

\begin{lemma}\label{lem:marginal-distribution-given-agreement}
	Let $\Lambda,\Lambda' \subset \Z^d$ be finite and let $U \subset V \subset \Lambda \cap \Lambda'$ be non-empty.
	Let $f \sim \Pr_{\Lambda,P}$ and $f' \sim \Pr_{\Lambda',P}$ be independent.
	\begin{enumerate}[label=(\alph*)]
		\item \label{it:marginal-distribution-given-agreement-pair} $\distTV(\Pr_{\Lambda,P}|_U,\Pr_{\Lambda',P}|_U) \le \Pr(\cS(U,V) \cap \cS(f,P) \cap \cS(f',P)=\emptyset)$.
		\item \label{it:marginal-distribution-given-agreement-single} Assume that $U$ is connected, $V$ is co-connected and $\Pr(\cS(U,V) \cap \cS(f,P) \neq \emptyset)>0$. Then, conditioned on $\{\cS(U,V) \cap \cS(f,P) \neq \emptyset \}$, the distribution of $f|_U$ is a convex combination of the measures $\{ \Pr_{S,P}|_U \}_{S \in \cS^{\text{dom}}(U,V)}$, where $\cS^{\text{dom}}(U,V)$ is the collection of domains in $\cS(U,V)$.
	\end{enumerate}
\end{lemma}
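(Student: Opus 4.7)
The plan rests on two structural observations. First, for any (dominant) pattern $(A,B)$ one has $A\cap B=\emptyset$, so whenever $\intB S$ and $\extB S$ are both in the $P$-pattern, every edge of $\partial S$ joins an $A$-vertex to a $B$-vertex and is automatically proper. Second, the family $\cS(U,V)\cap\cS(f,P)$ is itself a boundary semi-lattice: for two members $S_1,S_2$ one has $U\subset S_1\cup S_2\subset V\subsetneq\Z^d$, $\partial(S_1\cup S_2)\subset\partial S_1\cup\partial S_2$, and consequently $\intextB(S_1\cup S_2)\subset\intextB S_1\cup\intextB S_2$ remains in the $P$-pattern. Hence, whenever non-empty, this family admits a unique maximum $T(f)$, namely the union of all its members. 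Together with the domain Markov property (DMP) of $\Pr_{\Lambda,P}$, these will let me identify the relevant conditional laws with $\Pr_{S,P}$.

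For part~(a), set $\cR:=\cS(U,V)\cap\cS(f,P)\cap\cS(f',P)$ and, on $\{\cR\ne\emptyset\}$, let $T_*=T_*(f,f')$ be its maximum. The key observation is that $\{T_*=t\}$ is measurable with respect to $(f|_{\Int(t)^c},f'|_{\Int(t)^c})$: the $P$-pattern requirement on $\intextB t$ uses $f,f'$ on $\intextB t\subset t^+\setminus\Int(t)$, while maximality uses $\intextB T'$ for $T'\supsetneq t$ in $\cS(U,V)$ and one checks that $\intextB T'\cap\Int(t)=\emptyset$ (because $\Int(t)^+\subset t\subset T'$, so every vertex of $\Int(t)$ has all its neighbors in $T'$ and is missed by both $\intB T'$ and $\extB T'$). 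By DMP, conditional on $\{T_*=t\}$ and on $(f|_{\Int(t)^c},f'|_{\Int(t)^c})$, the values $f|_{\Int(t)}$ and $f'|_{\Int(t)}$ are independent, each uniform on proper extensions of the respective (known, $P$-pattern) boundary on $\intB t$; by disjointness of $A,B$ the interface with $\extB t$ adds no constraint, so these laws coincide with the $\Pr_{t,P}$-conditional laws on $\Int(t)$ given the $\intB t$-boundary. I will then construct a coupling that first matches $f|_{\intB t}$ and $f'|_{\intB t}$ (using a symmetry/exchange argument that shows both $\intB t$-marginals under $\{T_*=t\}$ equal the $\intB t$-marginal of $\Pr_{t,P}$) and then equates the interiors, yielding $f|_t=f'|_t$ and in particular $f|_U=f'|_U$ on $\{\cR\ne\emptyset\}$, whence $\distTV(\Pr_{\Lambda,P}|_U,\Pr_{\Lambda',P}|_U)\le \Pr(f|_U\ne f'|_U)\le \Pr(\cR=\emptyset)$.

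For part~(b), on $E:=\{\cS(U,V)\cap\cS(f,P)\ne\emptyset\}$ I define $S(f)$ as the co-connected closure of the connected component $T_0(f)$ of $T(f)$ containing $U$ (well-defined since $U$ is connected). By Lemma~\ref{lem:co-connect-properties}\ref{it:co-connect-reduces-boundary}, $\partial S(f)\subset\partial T_0(f)\subset\partial T(f)$, so $\intextB S(f)$ remains in the $P$-pattern and $S(f)\in\cS(f,P)$; the co-connected closure is connected and co-connected, and it lies in $V$ because $V^c$ (connected since $V$ is co-connected) is contained in the unbounded component of $T_0(f)^c$. Thus $S(f)\in\cS^{\mathrm{dom}}(U,V)\cap\cS(f,P)$. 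The DMP/disjointness argument from~(a), applied to the single random coloring $f$, then shows that conditional on $\{S(f)=s\}$ the law of $f|_s$ is $\Pr_{s,P}$, so in particular $f|_U$ given $\{S(f)=s\}$ has law $\Pr_{s,P}|_U$. Summing over $s\in\cS^{\mathrm{dom}}(U,V)$ with weights $\Pr(S(f)=s\mid E)$ yields the desired convex combination.

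The principal obstacle I foresee is the measurable-selection step underpinning the DMP conditioning: verifying that under $\{T_*=t\}$ (respectively $\{S(f)=s\}$) the marginal of $f|_{\intB t}$ (respectively $f|_{\intB s}$) coincides with the $\intB t$-marginal of $\Pr_{t,P}$. My plan is to exploit that the event is invariant under the color-symmetry group fixing $A$ and $B$ setwise; this forces the conditional law of $f|_{\intB t}$ on each symmetry orbit to be uniform, and since the orbits coincide with the $P$-pattern configurations on $\intB t$, a short computation identifies this with the required $\Pr_{t,P}$-marginal.
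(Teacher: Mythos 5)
Your overall architecture (the semi-lattice of admissible sets, its unique maximal element, measurability of $\{T_*=t\}$ with respect to the colorings outside $\Int(t)$, then the domain Markov property) is the same as the paper's. The genuine gap is in the step you yourself flag as the principal obstacle: identifying the conditional law of $f|_{\intB t}$ given $\{T_*=t\}$ with the $\intB t$-marginal of $\Pr_{t,P}$ via a colour-symmetry/orbit argument. That argument cannot work. The orbits of permutations of $[q]$ fixing the two colour classes setwise are much finer than ``all $P$-pattern configurations on $\intB t$'' (configurations using different subsets or multiplicities of colours lie in different orbits), so within-orbit uniformity does not determine the law; and the target is not uniform anyway: the $\intB t$-marginal of $\Pr_{t,P}$ is proportional to the number of proper extensions to $\Int(t)$, which varies across orbits. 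The correct mechanism is different and is what the paper's appeal to the domain Markov property amounts to: for $T'\supsetneq t$ in $\cS(U,V)$ one has $\intextB T'\cap t\subset\intB t$, so, given that $\intB t$ is in the $P$-pattern for both colorings, the ``no larger admissible set'' condition depends only on $(f|_{t^c},f'|_{t^c})$; hence $\{T_*=t\}=\{f|_{\intB t},f'|_{\intB t}\text{ in the }P\text{-pattern}\}\cap A$ with $A$ measurable with respect to $(f|_{t^c},f'|_{t^c})$. Conditioning on $f|_{t^c}$, the law of $f|_t$ is uniform over proper colorings of $t$ compatible with $f|_{\extB t}$, and since $P_\bdry\cap P_\inner=\emptyset$ every $P$-pattern proper coloring of $t$ is compatible with every $P$-pattern configuration on $\extB t$; therefore all such colorings receive equal conditional weight, i.e.\ $f|_t$ given $\{T_*=t\}$ has law exactly $\Pr_{t,P}|_t$ (and likewise $f'|_t$). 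This replaces both your boundary-matching coupling and the symmetry argument, and is the heart of the lemma; without it your proof of (a) does not close.

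A secondary issue is in (b): the paper takes the maximal element itself and shows it is automatically a domain (its co-connected closure is again in $\cS(U,V)\cap\cS(f,P)$, so equals it by maximality), which preserves the maximality-based measurability of $\{{\sf S}=S\}$ outside $\Int(S)$. You instead take the co-connected closure $S(f)$ of the $U$-component of the maximum; this is a sensible object, but it is no longer the maximum of the semi-lattice, so the measurability of $\{S(f)=s\}$ with respect to $f|_{\Z^d\setminus\Int(s)}$ — on which your appeal to ``the argument from (a)'' silently relies — is not automatic and must be proved (it is true, via an argument using that $M\cup s$ is again admissible with $\intextB(M\cup s)\cap\Int(s)=\emptyset$ for any admissible $M$, but you give no such argument). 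As written, both parts therefore rest on unproved or incorrectly justified claims at the decisive step.
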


\begin{proof}
	We shall prove both items together.
	To this end, let $f''$ be either $f$ or $f'$, and denote $\cS := \cS(U,V) \cap \cS(f,P) \cap \cS(f'',P)$.
	Since $\cS$ is a finite boundary semi-lattice, it has a unique maximal element $\sf S$ (if  $\cS = \emptyset$, we set $\sf S := \emptyset$).
	Let $S \neq \emptyset$ be such that $\Pr({\sf S}=S)>0$.
	Observe that the event $\{{\sf S}=S\}$ is determined by $f|_{(S^c)^+}$ and $f''|_{(S^c)^+}$.
	Therefore, by the domain Markov property, conditioned on $\{{\sf S}=S\}$, $f|_S$ and $f''|_S$ are distributed as $\Pr_{S,P}|_S$. In particular, conditioned on $\{\cS \neq \emptyset \}$, the distribution of both $f|_U$ and $f''|_U$ is $\sum_S \Pr({\sf S}=S \mid \cS \neq \emptyset) \Pr_{S,P}|_U$, from which the first item follows. Moreover, if $U$ is connected and $V$ is co-connected, then $\sf S$ is always a domain, since Lemma~\ref{lem:co-connect-properties}\ref{it:co-connect-reduces-boundary} and Lemma~\ref{lem:co-connect-properties}\ref{it:co-connect-kills-components} imply that the co-connected closure of $S$ (with respect to infinity) belongs to $\cS$ for any $S \in \cS$. Hence, the second item also follows.
\end{proof}

We are now ready to prove Lemma~\ref{lem:convergence} and Lemma~\ref{lem:almost-independence-of-colorings}.

\begin{proof}[Proof of Lemma~\ref{lem:convergence}]
	Denote $S := U \cup \bigcup_{u \in \intextB U} \cB_P(f,f',u)^+$ and observe that, by definition, $\intextB S$ is in the $P$-pattern with respect to both $f$ and $f'$.
	Let $\cE$ be the event that $S$ intersects $(U^{+r})^c$, so that $S \subset U^{+r}$ on the complement of $\cE$.
	Then, by Lemma~\ref{lem:marginal-distribution-given-agreement} and Corollary~\ref{cor:prob-of-joint-breakup-core},
	\[ \distTV\big(\Pr_{\Lambda,P}|_U, \Pr_{\Lambda',P}|_U\big) \le \Pr(\mathcal E) \le \sum_{u \in U} \Pr\big(\diam \cB_P(f,f',u) \ge r\big) \le |U| \cdot e^{-\frac{cdr}{q^4(q+\log d)}} . \qedhere \]
\end{proof}

\begin{proof}[Proof of Lemma~\ref{lem:almost-independence-of-colorings}]
We begin with a simple observation. Let $X$ and $Y$ be discrete random variables and let $\mu_{X|Y}$ denote the conditional (random) distribution of $X$ given $Y$. Then
\[ \distTV(\mu_{(X,Y)}, \mu_X \times \mu_Y) = \E[\distTV(\mu_{X|Y},\mu_X)] ,\]
where we write $\mu_Z$ for the distribution of a random variable $Z$.
Indeed, the verification of this is straightforward using that $\distTV(\mu,\lambda)=\frac12 \sum_i |\mu(i)-\lambda(i)|$.

Let $\mu$ be the conditional (random) distribution of $f|_U$ given $f|_{V^c}$. Let $\cE'$ be the event that there exists a set $S$ such that $U^{+r} \subset S \subset V$ and such that $\intextB S$ is in the $P$-pattern.
By Lemma~\ref{lem:marginal-distribution-given-agreement}, conditioned on $\cE'$, $\mu$ is a convex combination of measures $\Pr_{S,P}|_U$, where $S$ is a domain containing $U^{+r}$.
For any such $S$, by Lemma~\ref{lem:convergence}, we have \[ \distTV(\Pr_{S,P}|_U,\Pr_{\Lambda,P}|_U) \le |U| \cdot e^{-\frac{cdr}{q^4(q+\log d)}} .\]
Let $\cE$ be the event that $\cB_P(f,u)^+$ intersects $V^c$ for some $u \in U^{+r}$, and note that $\cE^c \subset \cE'$.
Hence,
\[ \E[\distTV(\mu,\Pr_{\Lambda,P}|_U)] \le |U| \cdot e^{-\frac{cdr}{q^4(q+\log d)}} + \E[\mu(\cE)] .\]
By Proposition~\ref{prop:B_P-bound},
\[ \E[\mu(\cE)] = \Pr(\cE) \le |U^{+r}| \cdot e^{-\frac{cdr}{q^4(q+\log d)}} \le |U| \cdot (Cd)^r \cdot e^{-\frac{cdr}{q^4(q+\log d)}} \le |U| \cdot e^{-\frac{cdr}{q^4(q+\log d)}} .\]
Thus, $\E[\distTV(\mu,\Pr_{\Lambda,P}|_U)] \le |U| \cdot e^{-\frac{cdr}{q^4(q+\log d)}}$, and the lemma follows from the above observation.
\end{proof}

\subsection{The maximal-entropy Gibbs states}
\label{sec:max-entropy-states}

The purpose of this section is to characterize all maximal-entropy Gibbs states.
Let us begin by defining the relevant notions.
Let $\mu$ be a probability measure on $[q]^{\Z^d}$.
Given a transformation $T \colon \Z^d \to \Z^d$, we say that $\mu$ is $T$-invariant if $\mu(T^{-1}A)=\mu(A)$ for any measurable event $A$. We say that $\mu$ is \emph{periodic} if it is $\Gamma$-invariant for a (full-dimensional) lattice $\Gamma$ of translations of $\Z^d$.
Observe that every periodic measure $\mu$ is $(N \Z^d)$-invariant for some positive integer $N$.

To define the notion of a maximal-entropy Gibbs state, we first require some other definitions.
Let $\Omega^{\text{free}}_\Lambda$ be the set of proper colorings of $\Lambda$.
The \emph{topological entropy} of proper colorings is the exponential rate of growth of the number of proper colorings, i.e.,
\[ h_{\text{top}} := \lim_{n \to \infty} \frac{\log \big|\Omega^{\text{free}}_{[n]^d}\big|}{n^d} .\]
The above limit exists by subadditivity (see~\cite[Lemma~15.11]{georgii2011gibbs}).
Note also that $\tfrac12 \log (\lfloor \frac{q}{2}\rfloor \lceil \frac{q}{2}\rceil)$ is a trivial lower bound on $h_{\text{top}}$.
Let $\mu$ be a periodic measure which is supported on proper $q$-colorings of $\Z^d$.
The \emph{measure-theoretic entropy} (also known as Kolmogorov--Sinai entropy) of $\mu$ is
\[ h(\mu) := \lim_{n \to \infty} \frac{\Ent(\mu|_{\Lambda_n})}{|\Lambda_n|} , \qquad \text{where }\Lambda_n := \{0,1,\dots,n\}^d ,\]
which also exists by subadditivity (see~\cite[Theorem~15.12]{georgii2011gibbs}).
Using~\eqref{eq:entropy-support}, one easily checks that $h(\mu) \le h_{\text{top}}$.
The variational principle tells us that equality is achieved by some $\mu$. Such a $\mu$ is said to be of \emph{maximal entropy}.
A theorem of Lanford--Ruelle (see, e.g., \cite{misiurewicz1976short}) tells us that every measure of maximal entropy is also a Gibbs state (so that there is some redundancy when speaking about a maximal-entropy Gibbs state). We stress that a measure of maximal entropy is, by definition, always assumed to be periodic.

Before proceeding with the proof of Theorem~\ref{thm:characterization_of_Gibbs_states}, let us give a simple consequence of our results to the enumeration of proper colorings.
Using the sub-additivity of entropy~\eqref{eq:entropy-subadditivity}, it is straightforward to see that Theorem~\ref{thm:long-range-order}, together with the fact that $\mu_P$ is of maximal entropy, implies that, when~\eqref{eq:dim-assump} holds, the topological entropy is bounded by
\[ h_{\text{top}} \le \tfrac12 \log (\lfloor \tfrac{q}{2}\rfloor \lceil \tfrac{q}{2}\rceil) + e^{-\frac{cd}{q^3(q+\log d)}} .\]
Galvin--Tetali~\cite{galvin2004weighted} showed a weaker bound of this form (where the exponential correction term is replaced by a term of order~$\frac{1}{d}$) on any bipartite regular graph (in which case their bound is of the correct order) and in the context of general graph homomorphisms. Using either bound, we see that $h_{\text{top}} \to \tfrac12 \log (\lfloor \frac{q}{2}\rfloor \lceil \frac{q}{2}\rceil)$ as $d \to \infty$. An analogue of this for isotropic subshifts was shown by Meyerovitch--Pavlov~\cite{meyerovitch2014independence}. On the hypercube, the asymptotics of the number of proper $3$-colorings were found by Galvin~\cite{Galvin2003hammingcube} (following Kahn--Lawrentz~\cite{kahn1999generalized} and Kahn~\cite{Kahn2001hypercube}) and, recently, the asymptotics of the number of proper $4$-colorings were determined by Kahn--Park~\cite{kahn2020number} verifying a conjecture of Engbers--Galvin~\cite{engbers2012h2}.

Let us come back to the proof of Theorem~\ref{thm:characterization_of_Gibbs_states}.
We wish to show that the $P$-pattern Gibbs states are the only extremal maximal-entropy measures. Our technique is inspired by the work of Gallavotti and Miracle-Sol{\'e}~\cite{gallavotti1972equilibrium} on the translation-invariant Gibbs states of the low-temperature Ising model. In order to allow ourselves to appeal directly to Proposition~\ref{prop:Z_*-bound} in the proof (instead of repeating similar arguments), we first show that proper colorings with periodic boundary conditions may be extended to $P$-pattern boundary conditions.

A proper coloring $f$ of $\{-n,\dots,n\}^{d-1}$ is \emph{symmetric} if $f(x_1,\dots,x_{d-1})=f(|x_1|,\dots,|x_{d-1}|)$ for all $x \in \{-n,\dots,n\}^{d-1}$.
A proper coloring $f$ of $U \subset \Z^{d-1}$ is \emph{$n$-periodic} if $f(x)$ depends only on $(x_1\text{ mod }n,\dots,x_{d-1}\text{ mod }n)$ for $x \in U$. A proper coloring of $\{-kn,\dots,kn\}^{d-1}$ is \emph{$n$-symmetric} if it is $2n$-periodic and its restriction to $\{-n,\dots,n\}^{d-1}$ is symmetric. A proper coloring of $\Lambda_{2kn}$ is \emph{$n$-symmetric} if its restriction to any of the $2d$ faces is $n$-symmetric (after an appropriate translation).
Finally, a proper coloring of $U$ has $(a,b)$-boundary conditions if the even vertices in $\intB U$ take the value $a$ and the odd ones take $b$.

\begin{lemma}\label{lem:pattern-extends-to-ab-boundary-conditions}
Any $n$-symmetric proper coloring $f$ of $\Lambda_{2kn}$ can be extended to a proper coloring of $(\Lambda_{2kn})^{+dn}$ having $(a,b)$-boundary conditions, where $a := f(0,\dots,0)$ and $b := f(1,0,\dots,0)$.
\end{lemma}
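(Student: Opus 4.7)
The plan is to build the extension $\tilde f$ by an explicit layered construction that exploits the $n$-symmetric structure on each of the $2d$ faces of $\Lambda_{2kn}$. For a face such as $F = \{x_1 = 0\} \cap \Lambda_{2kn}$, the face coloring is $2n$-periodic in $(x_2, \ldots, x_d)$ (after appropriate translation) and symmetric on the central $(d-1)$-dimensional cube of side $2n+1$. I would extend $f$ perpendicular to $F$ into the slab of thickness $dn$ in three parts: in the innermost layers, use a reflection $\tilde f(-j, x_2, \ldots, x_d) := f(j, x_2, \ldots, x_d)$, which is automatically proper since adjacency is preserved under reflection; in the outermost layer at $x_1 = -dn$, directly impose the $(a, b)$-pattern; and in between, use transition layers that, when necessary, employ a third color (available since $q \geq 3$) to mediate between face values and the target pattern.

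For the corner regions of the extension, where two or more coordinates lie outside $\{0, \ldots, 2kn\}$, I would use the observation that the $(d-k)$-dimensional intersection of $k$ faces inherits from each face an analogous periodicity and symmetry structure, so that the extensions from different adjacent face slabs can be matched consistently on the common ridges. The total thickness $dn$ (rather than just $n$) provides enough room to accommodate the deepest corner, where all $d$ coordinates lie outside $\Lambda_{2kn}$ and the extension must interpolate between $f$'s face/ridge values and the $(a,b)$-pattern in all $d$ coordinate directions simultaneously. This ``one layer of width $n$ per outside coordinate'' heuristic is what motivates the bound $dn$ in the statement.

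The main obstacle is verifying that the outer boundary $\intB (\Lambda_{2kn})^{+dn}$ truly matches the $(a,b)$-pattern and not some other periodic pattern. This reduces to aligning the ``grid-aligned'' values on each face with the target pattern: by the $2n$-periodicity, the corner value $a = f(0,\ldots,0)$ propagates across each face to all positions of the form $(0, 2nj_2, \ldots, 2nj_d)$, and the value $b = f(1, 0, \ldots, 0)$ propagates to all shifted positions $(1, 2nj_2, \ldots, 2nj_d)$; together with the symmetry on the central cube, these grid values are precisely the ones that the $(a,b)$-pattern prescribes on a $2n$-subgrid of the outer face. Choosing the depth at which the transition layer switches from reflection to $(a,b)$-pattern so as to align the ``$a$-grid'' of the face with the ``$a$-grid'' of the outer boundary (and similarly for $b$) is the key technical point; the freedom in the ``appropriate translation'' in the definition of $n$-symmetric accommodates this. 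A secondary obstacle is the proper-coloring verification in the corner regions, where reflections and transition layers from multiple directions overlap; this is to be resolved by the flexibility of $q \geq 3$ and by the compatibility of the $n$-symmetric structure across the ridges.
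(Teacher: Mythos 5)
Your construction has a genuine gap at its core: the ``transition layers'' are never actually constructed, and the suggestion that a third color lets you mediate between the reflected face coloring and the $(a,b)$-pattern is not sound in general. The obstruction is clearest for $q=3$: proper $3$-colorings carry a height-function structure, the pure $(a,b)$-pattern is flat, while the $n$-symmetric face coloring can have height oscillations of order $n$ within one period; passing from one layer to the next changes the height at each site by at most a bounded amount, so no bounded number of intermediate layers can interpolate, and even with the full budget of $\sim dn$ layers you must exhibit an explicit sequence of mutually compatible proper layers that performs the flattening. Your proposal gives no such mechanism --- reflection merely reproduces $f$ and does not move toward the target pattern, and a single ``switch depth'' per face would require the reflected layer to be everywhere compatible with the chessboard pattern, which fails in general. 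The second unresolved point is the matching of the per-face slab extensions along ridges and corners: you assert that the inherited periodicity and $q\ge 3$ make them consistent, but this is exactly where independently defined face extensions typically clash, and nothing in the proposal pins down a common definition on the overlap regions.

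The paper's proof avoids both difficulties with a different device: every exterior vertex $x$ has a unique closest vertex $u(x)$ in the box, and one sets $g(x):=p^{u(x)}_{\dist(u(x),x)}$, where $p^u$ is a path in the complete graph $K_q$ that replays the colors of $f$ along a lexicographic descent from $u$ to the origin and then settles into the alternation $b,a,b,a,\dots$; since $f$ is proper and $n$-symmetric, the paths attached to adjacent boundary vertices never agree at equal indices, which makes the radial extension proper everywhere at once (faces, ridges and corners are treated uniformly), and the bound $\ell_u=|u|_1\le dn$ on the descent length is what produces the depth $dn$ and the correct parity of the $(a,b)$-pattern on $\intB\big((\Lambda_{2kn})^{+dn}\big)$. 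If you want to salvage your layered picture, the descent-to-the-corner idea is the missing ingredient: the transition at a boundary column must be driven by the values of $f$ itself along a path to the corner, not by an ad hoc insertion of a third color.
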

\begin{proof}
	Let $K_q$ be the complete graph on $[q]$.
Say that two paths $p=(p_m)_{m \ge 0}$ and $q=(q_m)_{m \ge 0}$ in $K_q$ are adjacent if $p_m \neq q_m$ for all $m \ge 0$.
Denote $\Lambda := \Lambda_{2kn}$.
Let $(p^u)_{u \in \intB \Lambda}$ be a family of paths such that $p^u$ and $p^v$ are adjacent whenever $u \sim v$ and such that $p^u_0 = f(u)$ for every $u \in \intB \Lambda$.
Observe that every $x \in \Z^d$ has a unique $u(x) \in \Lambda$ closest to $x$ and that $\dist(u(x),u(y)) \le \dist(x,y)$. In particular, if $x \sim y$ then either $\dist(u(x),x)=\dist(u(y),y)$ and $u(x) \sim u(y)$ or $\dist(u(x),x)=\dist(u(y),y) \pm 1$ and $u(x)=u(y)$. Hence, defining $g \colon \Z^d \to [q]$ by
\[ g(x) := \begin{cases} f(x) &\text{if }x \in \Lambda\\p^{u(x)}_{\dist(u(x),x)} &\text{if }x \notin \Lambda \end{cases} ,\]
we have that $g(x) \neq g(y)$ whenever $x \sim y$. Thus, $g$ is a proper coloring of $\Z^d$ which extends $f$.
To conclude, it suffices to show the existence of such a family of paths $(p^u)_{u \in \intB \Lambda}$ which also satisfies that
\begin{equation}\label{eq:a-b-path}
\text{for every $u \in \intB \Lambda$ there exists $0 \le m \le dn$ such that }(p^u_m,p^u_{m+1},\dots)=(a,b,a,b,\dots) .
\end{equation}
Indeed, the lemma will then follow as $g|_{\Lambda^{+dn}}$ has $(a,b)$-boundary conditions.
To construct such a family, we first define $p^u$ for $u \in \Lambda' := \Lambda_n \cap \intB \Lambda$ by
\[ p^u := \big(f(u),f(S u),f(S^2u),\dots,f(S^{\ell_u}u),b,a,b,a,\dots\big) ,\]
where $S \colon \Lambda' \setminus \{0\} \to \intB \Lambda$ is the lexicographical successor operator defined by $Su := u-e_j$, where $j := \min \{ i \ge 1 : u_i > 0 \}$, and $\ell_u := \min\{ \ell \ge 0 : S^\ell u =0 \}$.
Note that $\ell_u = |u| := \sum_{1 \le i \le d} |u_i| \le dn$ so that~\eqref{eq:a-b-path} holds for all $u \in \Lambda'$. For $u \in \intB \Lambda \setminus \Lambda'$, define $p^u := p^{(|r_1|,\dots,|r_d|)}$, where $r_i$ is uniquely determined by writing $u_i=2k_in+r_i$ for $k_i \in \{0,1,\dots,k\}$ and $r_i \in \{-n+1,\dots,n\}$. It is easy to see that $p^u$ and $p^v$ are adjacent whenever $u,v \in \intB \Lambda$ are adjacent. It remains to check that $p^u_0=f(u)$ for all $u \in \intB \Lambda$. For $u \in \Lambda_0$ this follows from the definition, and in general, this holds since the fact that $f$ is $n$-symmetric implies that $f(u)$ depends only on $(|r_1|,\dots,|r_d|)$, where $r_i$ is defined as before.
\end{proof}

Recall the definition of $Z_*(f)$ from~\eqref{eq:Z_*-def}.

\begin{lemma}\label{lem:no-infinite-cluster-of-interface-vertices}
Assume that \eqref{eq:dim-assump} holds and suppose that $f$ is sampled from some (periodic) measure of maximal entropy. Then $Z_*(f)$ almost surely has no infinite $(\Z^d)^{\otimes 2}$-connected component.
\end{lemma}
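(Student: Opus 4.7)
Suppose for contradiction that the event $E_\infty$ that $Z_*(f)$ contains an infinite $(\Z^d)^{\otimes 2}$-connected component has positive $\mu$-measure; the plan is to derive the strict inequality $h(\mu)<h_{\text{top}}$, which contradicts maximality of entropy. First I would reduce to the ergodic case: decomposing $\mu$ into its ergodic components with respect to its periodicity lattice $\Gamma$ and using that Kolmogorov--Sinai entropy is affine on the simplex of $\Gamma$-invariant measures, every ergodic component of a maximal-entropy measure is itself of maximal entropy, so I may assume $\mu$ is $\Gamma$-ergodic and $\mu(E_\infty)=1$. Existence of an infinite cluster forces $|Z_*(f)|=\infty$ $\mu$-a.s., so by $\Gamma$-invariance some residue class modulo $\Gamma$ carries positive probability of being in $Z_*$, and the multidimensional ergodic theorem then yields a positive density $\alpha>0$ such that
\[ \mu(\mathcal{A}_n)\to 1, \qquad \mathcal{A}_n := \{|Z_*(f)\cap \Lambda_n|\ge \tfrac{\alpha}{2}|\Lambda_n|\}. \]

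The endgame is standard entropy bookkeeping. Setting $N_n := |\{f\in\Omega^{\text{free}}_{\Lambda_n^+}:f\in\mathcal{A}_n\}|$ and using
\[ H(\mu|_{\Lambda_n^+}) \le \mu(\mathcal A_n)\log N_n + \mu(\mathcal A_n^c)\log|\Omega^{\text{free}}_{\Lambda_n^+}| + \log 2, \]
dividing by $|\Lambda_n|$ and letting $n\to\infty$, a bound of the form $\log N_n \le (h_{\text{top}}-\delta)|\Lambda_n|$ for some $\delta=\delta(\alpha,q,d)>0$ would force $h(\mu)\le h_{\text{top}}-\delta$, the desired contradiction. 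The core of the argument therefore reduces to establishing this counting bound.

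My plan for the counting bound is to bridge to the pattern measures via Lemma~\ref{lem:pattern-extends-to-ab-boundary-conditions}. Given $f\in\Omega^{\text{free}}_{\Lambda_n^+}$, restrict $f$ to $\Lambda_n$ and reflect $f|_{\Lambda_n}$ successively through the coordinate hyperplanes bounding $\Lambda_n$, producing a proper $n$-symmetric coloring of a concentric larger box $\Lambda_{2kn}$; Lemma~\ref{lem:pattern-extends-to-ab-boundary-conditions} then extends this to a proper coloring $F$ of $\Lambda_{2kn}^{+dn}$ with $(a,b)$-boundary conditions, where $a=f(0)$ and $b=f(e_1)$. Since $q\ge 3$, there is a dominant pattern $P_0=(A_0,B_0)$ with $a\in A_0$ and $b\in B_0$, and hence $F\in\supp(\Pr_{\Lambda_{2kn}^{+dn},P_0})$. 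The map $f\mapsto F$ is injective up to the choice of $f|_{\extB\Lambda_n}$ (contributing at most a factor $q^{|\extB\Lambda_n|}=e^{o(|\Lambda_n|)}$) and preserves $Z_*\cap\Lambda_n$ up to an $o(|\Lambda_n|)$-sized boundary layer; summing over the $O(1)$ many choices of $(a,b)$ and of $P_0$ reduces the counting to an upper bound on $\Pr_{\Lambda_{2kn}^{+dn},P_0}(|Z_*(F)\cap\Lambda_n|\ge\tfrac{\alpha}{4}|\Lambda_n|)$.

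The main obstacle is precisely this last probabilistic estimate. Proposition~\ref{prop:prob-of-breakup-associated-to-V} supplies the required bound but carries a $2^{|V|}$ enumeration factor, so $V$ must be much smaller than $|\Lambda_n|$ while still catching most of the $Z_*$-mass. My plan is to take $V$ to be a sparse $L$-net of $\Lambda_n$ (so $|V|\le C|\Lambda_n|/L^d$) with $L=L(q,d,\alpha)$ a suitably large constant: any cluster of $Z_*^{+5}$ enclosing a macroscopic region must then encircle some vertex of $V$ and is hence captured by $Z_*^{+5}(f,V)$, while the remaining small clusters are controlled by a separate application of Proposition~\ref{prop:prob-of-breakup-associated-to-V} summed over the parameters $(L',M,N)$, exploiting Lemma~\ref{lem:boundary-size-of-odd-set} which guarantees that every nontrivial breakup carries at least $d^2$ boundary plaquettes. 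The dimension hypothesis~\eqref{eq:dim-assump} then ensures the $2^{|V|}$ enumeration factor is dominated by the exponential decay rate per interface vertex, producing the desired exponentially small bound on $\Pr_{\Lambda,P_0}(|Z_*\cap\Lambda_n|\ge\tfrac{\alpha}{4}|\Lambda_n|)$ and hence on $N_n$.
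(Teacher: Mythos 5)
Your skeleton (contradiction with maximal entropy, ergodic theorem, reflection plus Lemma~\ref{lem:pattern-extends-to-ab-boundary-conditions} to reach dominant-pattern boundary conditions, then the breakup estimates) is the right one, but two steps as written would fail. First, your ergodic-theorem step throws away the information that makes the probabilistic estimate possible. From $\mu(E_\infty)>0$ you retain only ``$Z_*$ has density $\alpha$ in $\Lambda_n$'', but Proposition~\ref{prop:prob-of-breakup-associated-to-V} (equivalently Proposition~\ref{prop:Z_*-bound}) only controls interface mass lying in components \emph{seen from} a prescribed set $V$, at an enumeration cost $2^{|V|}$. Your sparse $L$-net does not catch this mass: a component of $Z_*^{+5}$ of cardinality of order $\alpha|\Lambda_n|$ can be filament- or sheet-like and need not disconnect any net vertex from infinity (it encloses nothing beyond a distance-$5$ neighborhood of itself), so it contributes nothing to $Z_*^{+5}(f,V)$; and there is no ``separate application'' of Proposition~\ref{prop:prob-of-breakup-associated-to-V} for components \emph{not} seen from $V$ --- being seen from $V$ is exactly the hypothesis. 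Taking $V$ of full volume instead makes $2^{|V|}$ overwhelm the gain, since the decay rate per interface vertex is $c/(q^3(q+\log d)d)\ll\log 2$. This is precisely why the paper's proof works with the event that $u$ lies on an \emph{infinite} interface path: the ergodic theorem then yields a positive density of vertices joined by interface paths to the boundary of the box, and after replication these paths reach the grid $B_{n,k}$ of copied boundaries, so one may take $V=B_{n,k}^{+4}$ of size $O(k^dn^{d-1})$, which is negligible per site as $n\to\infty$.

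Second, the entropy bookkeeping is inverted. Your map $f\mapsto F$ sends each small-box coloring to a \emph{single} coloring with $P_0$-pattern boundary conditions on $\Lambda_{2kn}^{+dn}$; to convert the bound on $\Pr_{\Lambda_{2kn}^{+dn},P_0}(\cdot)$ into a bound on $N_n$ you must multiply by the size of the support, roughly $e^{h_{\mathrm{top}}(2kn)^d}$, which is astronomically larger than your target $e^{(h_{\mathrm{top}}-\delta)|\Lambda_n|}$ (and even with $k=1$, the collar of thickness $dn$ required by Lemma~\ref{lem:pattern-extends-to-ab-boundary-conditions} has volume comparable to, indeed much larger than, $|\Lambda_n|$, so its $\log q$ per-site cost alone kills the bound). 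The paper's reflection serves a different purpose: one reflects only the \emph{boundary condition} $\tau$ (chosen so that $\log|\Omega^{\tau}_{\Lambda_n}\cap\cE_n|\approx h(\mu)\,n^d$), so that the number of big-box colorings with boundary data $\tau_{n,k}$ satisfying the replicated event is at least the $(2k)^d$-th power of the small-box count, the interiors of the sub-boxes varying independently. All error terms --- the extension collar $O(n^dk^{d-1})$, the factor $2^{|V|}=2^{O(k^dn^{d-1})}$, and the gain $e^{-c\delta(kn)^d}$ --- are then compared per site of $\Lambda_{2kn}$, and the contradiction comes in the iterated limit $n\to\infty$ followed by $k\to\infty$. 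Without this magnification (and without the ``connected to the grid'' form of the event), the per-site comparison you propose cannot close.
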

\begin{proof}
Let $\mu$ be a measure of maximal entropy and let $f$ be sampled from $\mu$. Denote the lattice of $\mu$-preserving translations by $\Gamma$. We call the elements of $Z_*$ interface vertices.
For a vertex $u$, let $E_u$ be the event that $u$ belongs to an infinite $(\Z^d)^{\otimes 2}$-path of interface vertices.
Since $\mu$ is $\Gamma$-periodic, $\mu(E_u)$ depends only on the $\Gamma$-equivalence class $[u]$ of $u$. Assume towards a contradiction that $\mu(E_u)>\delta$ for some $u$ and $\delta>0$. By ergodic decomposition, we may assume that $\mu$ is ergodic with respect to the $\Gamma$-action. Then by the ergodic theorem, the density of the set of vertices $v \in [u]$ for which $E_v$ occurs is $\mu(E_u)$ almost surely. In particular, $\mu(\cE_n) \to 1$ as $n \to \infty$, where $\cE_n$ is the event that at least a $\delta$-proportion of vertices in $\Lambda_n$ are connected to $(\intB \Lambda_n)^{+4}$ by a $(\Z^d)^{\otimes 2}$-path of interface vertices in $\Lambda_n \setminus (\intB \Lambda_n)^{+2}$. Note that the event that a vertex $v$ is an interface vertex is measurable with respect to the values of $f$ on $v^{+3}$, and thus, $\cE_n$ is measurable with respect to the values of $f$ on $\Lambda_n$ so that we may regard it as a collection of proper colorings of $\Lambda_n$.

Denote by $\Omega^{\tau,B}_\Lambda$ the set of proper colorings of $\Lambda$ that agree with $\tau$ on $B$. Denote also $\Omega^\tau_\Lambda := \Omega^{\tau,\intB \Lambda}_\Lambda$.
Then, using~\eqref{eq:entropy-chain-rule}-\eqref{eq:entropy-subadditivity},
\[ \Ent(f|_{\Lambda_n}) \le \Ent(f|_{\intB \Lambda_n})+\Ent(\cE_n) + \mu(\cE_n^c) \cdot \log |\Omega^{\text{free}}_{\Lambda_n}| + \max_{\tau \in [q]^{\intB \Lambda_n}} \log |\Omega^\tau_{\Lambda_n} \cap \cE_n| .\]
In particular, there exists a fixed (deterministic) boundary condition $\tau \in [q]^{\Z^d}$ such that
\[ \frac{\log |\Omega^\tau_{\Lambda_n} \cap \cE_n|}{|\Lambda_n|} \to h(\mu) \qquad\text{as }n \to \infty .\]
Using the assumption that $\mu$ has maximal entropy, we shall show that this is impossible.

\begin{figure}
	\centering
	\begin{subfigure}[t]{.4\textwidth}
		\centering
		\includegraphics[scale=1]{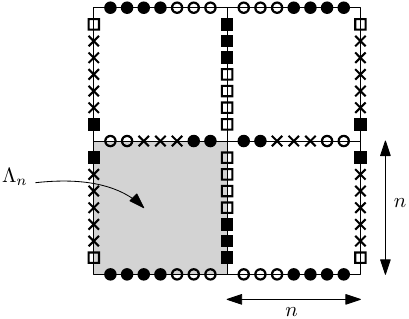}
		\caption{Boundary conditions on a box of side-length $n$ are reflected to obtain boundary conditions on a box of side-length $2n$ (more precisely, on the union of the boundaries of the $2^d$ boxes of side-length $n$).}
		\label{fig:maximal-entropy1}
	\end{subfigure}%
	\begin{subfigure}{15pt}
		\quad
	\end{subfigure}%
	\begin{subfigure}[t]{.58\textwidth}
		\centering
		\includegraphics[scale=0.54]{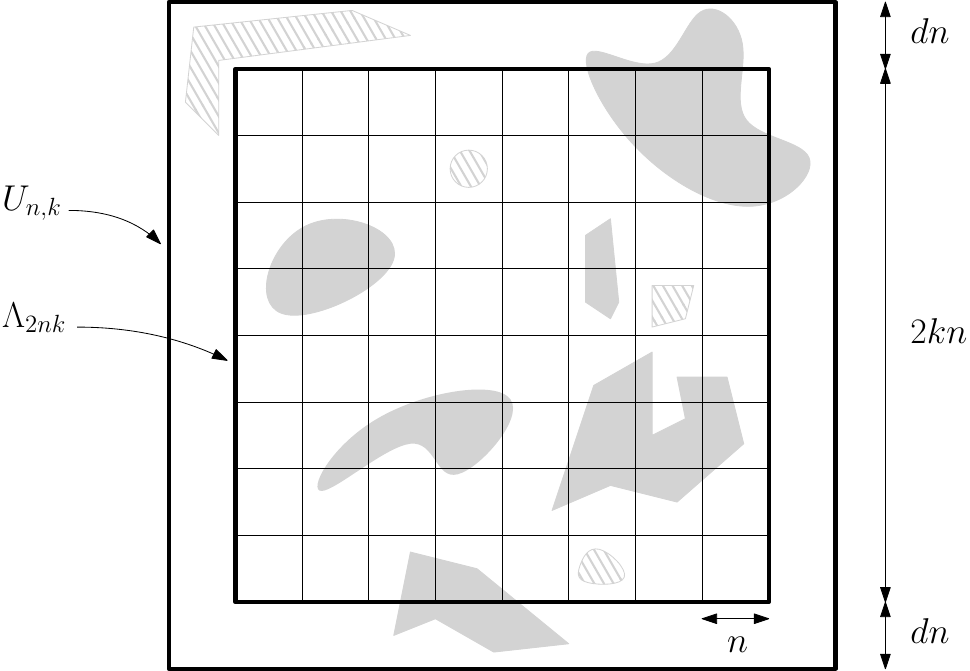}
		\caption{Many translated copies of $\Lambda_{2n}$ with a suitably chosen boundary condition are placed within a slightly larger box $U_{n,k}$. Connected components of $Z_*$ are depicted, with shaded regions representing components that intersect $B_{n,k}$. Proposition~\ref{prop:Z_*-bound} implies that the total area covered by the latter is typically not large when $f$ is sampled from a measure of maximal entropy.}
		\label{fig:maximal-entropy2}
	\end{subfigure}	
	\caption{Excluding the possibility of infinite components of $Z_*$.}
	\label{fig:maximal-entropy}
\end{figure}

The first step is to magnify the effect at a given scale $n$ by replicating it many times.
Namely, we take the model in domain $\Lambda_n$ with $\tau$ boundary conditions, and duplicate it to obtain a model in domain $\Lambda_{2kn}$, with each of the $(2k)^d$ shifted copies of the smaller box $\Lambda_n$ having the same boundary conditions (up to reflections). Indeed, by reflecting $\tau$ along the sides of the box $\Lambda_n$ some $2k-1$ number of times in each coordinate direction, we get boundary conditions $\tau_{n,k}$ defined on $B_{n,k} := n\{0,1,\dots,2k-1\}^d + \intB \Lambda_n$. Let $\cE_{n,k}$ denote the event that at least a $\delta$-proportion of vertices in $\Lambda_{2nk}$ are connected to $B_{n,k}^{+4}$ by a $(\Z^d)^{\otimes 2}$-path of interface vertices in $\Lambda_{2nk} \setminus (\intB \Lambda_{2nk})^{+2}$. With a slight abuse of notation, we regard $\cE_{n,k}$ below as a collection of proper colorings of either $\Lambda_{2nk}$ or $U_{n,k} := \{-dn,\dots,2kn+dn\}^d$, according to the context.
Then
\[ \frac{\log |\Omega^{\tau_{n,k},B_{n,k}}_{\Lambda_{2kn}} \cap \cE_{n,k}|}{|\Lambda_{2kn}|} \ge (2k)^d \cdot \frac{\log |\Omega^\tau_{\Lambda_n} \cap \cE_n|}{{|\Lambda_{2kn}|}} = h(\mu) - o(1) \qquad\text{as }n \to \infty .\]
By Lemma~\ref{lem:pattern-extends-to-ab-boundary-conditions}, each proper coloring of $\Lambda_{2kn}$ having $\tau_{n,k}$ boundary conditions can be extended to a proper coloring of $U_{n,k}$ having $(a,b)$-boundary conditions for some $a \neq b$ depending only on $\tau_{n,k}$. Thus, letting $P_{n,k}$ be a dominant pattern extending $(\{a\},\{b\})$ and letting $\Omega^P_\Lambda$ be the set of proper colorings of $\Lambda$ for which $\intB \Lambda$ is in the $P$-pattern, we have
\[ \Pr_{U_{n,k},P_{n,k}}(\cE_{n,k}) \cdot |\Omega^{P_{n,k}}_{U_{n,k}}| = |\Omega^{P_{n,k}}_{U_{n,k}} \cap \cE_{n,k}| \ge |\Omega^{\tau_{n,k},B_{n,k}}_{\Lambda_{2kn}} \cap \cE_{n,k}| .\]
On the other hand,
\[ \log |\Omega^{P_{n,k}}_{U_{n,k}}| - \log |\Omega^{\text{free}}_{\Lambda_{2kn}}| \le \log |\Omega^{\text{free}}_{U_{n,k} \setminus \Lambda_{2kn}}| \le |U_{n,k} \setminus \Lambda_{2kn}| \log q \le C_{d,q} n^d k^{d-1} ,\]
so that
\[ h(\mu) \le \frac{\log |\Omega^{\text{free}}_{\Lambda_{2kn}}|}{|\Lambda_{2kn}|} + \frac{C_{d,q}}{k} + \frac{\log \Pr_{U_{n,k},P_{n,k}}(\cE_{n,k})}{|\Lambda_{2kn}|} + o(1) \qquad\text{as }n \to \infty .\]
Thus, since $\mu$ has maximal entropy, we will arrive at a contradiction if
\[ \limsup_{k \to \infty} \limsup_{n \to \infty} \frac{\log \Pr_{U_{n,k},P_{n,k}}(\cE_{n,k})}{|\Lambda_{2kn}|} < 0 .\]
This follows from Proposition~\ref{prop:Z_*-bound} as it implies that
\[ \Pr_{U_{n,k},P_{n,k}}(\cE_{n,k}) \le 2^{C_d k^d n^{d-1}} \cdot e^{-c_{d,q}\delta(kn)^d} . \qedhere \]
\end{proof}

\begin{prop}\label{prop:max-entropy-states-are-mixtures}
Assume that \eqref{eq:dim-assump} holds.
Then every (periodic) measure of maximal entropy is a mixture of the $P$-pattern Gibbs states.
\end{prop}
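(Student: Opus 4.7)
The strategy is to decompose $\mu$ according to a ``pattern at infinity'' and identify each piece with $\mu_P$ using the Gibbs (domain Markov) property and Lemma~\ref{lem:convergence}. The only new probabilistic input is Lemma~\ref{lem:no-infinite-cluster-of-interface-vertices}, which gives that $Z_*(f)$ has only finite $(\Z^d)^{\otimes 2}$-connected components for $\mu$-a.e.\ $f$.

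For each finite connected $U \subset \Z^d$ and each $f$ in this a.s.\ event, I would first build a random finite domain $\Lambda(U, f) \supset U$ whose internal boundary is in some $P$-pattern. Let $W(U, f)$ be the union of $U$ with all $(\Z^d)^{\otimes 2}$-connected components of $Z_*(f)^{+10}$ that intersect $U$ or separate some vertex of $U$ from infinity, and take $\Lambda(U, f)$ to be the co-connected closure of $W(U, f)$. By Lemma~\ref{lem:no-infinite-cluster-of-interface-vertices}, $\Lambda(U, f)$ is $\mu$-a.s.\ a finite domain; its internal boundary lies in the outer $10$-collar of the outermost separating component, hence outside $Z_*(f)^{+5}$, is connected by Lemma~\ref{lem:int+ext-boundary-is-connected}, and therefore lies entirely in a single $Z_P(f)$, defining a random pattern $P(U, f)$. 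For any sequence $U_n \uparrow \Z^d$, the patterns $P(U_n, f)$ stabilize $\mu$-a.s.\ (any disagreement between nested wrappings would force an extra $(\Z^d)^{\otimes 2}$-component of $Z_*^{+10}$ in the intervening annulus, contradicting the rule defining $W$), so $P^*(f) := \lim_n P(U_n, f)$ and $\alpha_P := \mu(P^*(f) = P)$ are well-defined.

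Next, fix a finite connected $U_0 \subset \Z^d$ and a co-connected sequence $V_n \uparrow \Z^d$ with $U_0 \subset V_n$. The collections $\cS(U_0, V_n) := \{S : U_0 \subset S \subset V_n,\, S \subsetneq \Z^d\}$ and $\cS(f, P) := \{S \subsetneq \Z^d : \intextB S \text{ is in the $P$-pattern}\}$ are boundary semi-lattices; the proof of Lemma~\ref{lem:marginal-distribution-given-agreement}\ref{it:marginal-distribution-given-agreement-single} uses only the domain Markov property and applies verbatim with $\mu$ in place of the finite-volume measure. Thus, conditional on $\{\cS(U_0, V_n) \cap \cS(f, P) \neq \emptyset\}$, the law of $f|_{U_0}$ is a convex combination of $\{\Pr_{S, P}|_{U_0}\}$ over domains $S \in \cS(U_0, V_n) \cap \cS(f, P)$. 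By the previous paragraph, $\Lambda(U_0, f) \in \cS(U_0, V_n) \cap \cS(f, P^*(f))$ once $n$ is large, so the relevant union over $P$ has $\mu$-probability tending to $1$. As $n \to \infty$ the domains entering the mixture satisfy $\dist(U_0, S^c) \to \infty$, so Lemma~\ref{lem:convergence} gives $\Pr_{S, P}|_{U_0} \to \mu_P|_{U_0}$ in total variation uniformly in $S$; the weight on each $P$ converges to $\alpha_P$, yielding $\mu|_{U_0} = \sum_P \alpha_P \mu_P|_{U_0}$. Since $U_0$ was arbitrary, $\mu = \sum_P \alpha_P \mu_P$.

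The main obstacle is the geometric bookkeeping in the construction of $\Lambda(U, f)$: verifying that it is genuinely a domain (connected and co-connected) and that its internal boundary is connected and lies in a single $Z_P$ requires care around the interplay between lattice- and $(\Z^d)^{\otimes 2}$-connectivity, as well as around pathological configurations where $Z_*^{+10}$-components densely wrap $U$. The remainder---the stopping-domain version of Lemma~\ref{lem:marginal-distribution-given-agreement}\ref{it:marginal-distribution-given-agreement-single} for $\mu$ and the passage to the limit---is essentially routine once Lemma~\ref{lem:no-infinite-cluster-of-interface-vertices} is in hand.
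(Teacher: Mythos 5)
Your first half (wrap $U$ by the co-connected closure of $U$ together with the nearby $Z_*$-components, use Lemma~\ref{lem:no-infinite-cluster-of-interface-vertices} for finiteness and Lemma~\ref{lem:int+ext-boundary-is-connected} to get a connected boundary lying in a single $Z_P$) is essentially the paper's construction, and adapting Lemma~\ref{lem:marginal-distribution-given-agreement}\ref{it:marginal-distribution-given-agreement-single} to $\mu$ via the Gibbs property is fine. The genuine gaps are in the second half. First, the stabilization of $P(U_n,f)$ is not established by your parenthetical argument: a disagreement between two nested wrappings only forces the \emph{union} of the $(\Z^d)^{\otimes 2}$-components of $Z_*^{+10}$ meeting the intervening annulus to block all paths from the inner to the outer boundary; no single component need disconnect a vertex of $U_n$ from infinity (think of interlocking partial shells), so nothing contradicts the rule defining $W$, and Lemma~\ref{lem:no-infinite-cluster-of-interface-vertices} by itself is compatible with infinitely many nested shells of alternating patterns. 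Stabilization is true only a posteriori; the paper deliberately avoids needing it by working with the events $\cE_P=\{P_n=P\text{ for infinitely many }n\}$, whose union has full measure for trivial reasons, and deducing their essential disjointness only after identifying $\mu(\cdot\mid\cE_P)=\mu_P$. Without stabilization your $P^*(f)$ and the weights $\alpha_P$ are not defined at that stage of the argument.

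Second, the limiting step fails as written. Conditioning on $\{\cS(U_0,V_n)\cap\cS(f,P)\neq\emptyset\}$ only requires \emph{some} domain between $U_0$ and $V_n$ with $P$-patterned boundary; the maximal element of the semi-lattice can remain a small domain hugging $U_0$ for all $n$ (an accidental $P$-patterned shell near $U_0$ occurs with probability bounded away from zero under the other phases, e.g.\ under $\mu_Q$ with $Q\neq P$), so the assertion that the domains entering the mixture satisfy $\dist(U_0,S^c)\to\infty$ ``uniformly in $S$'' is false, and the conditional law given this event does not converge to $\mu_P|_{U_0}$; likewise these events overlap in $P$ and their probabilities do not converge to $\alpha_P$. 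The fix is the paper's: on the event that the pattern $P$ recurs among the wrappings, for every $r$ there is a.s.\ a finite domain containing $U_0^{+r}$ with $P$-patterned boundary, so one runs the maximal-domain argument with inner set $U_0^{+r}$ (so that every domain in the resulting mixture contains $U_0^{+r}$, making Lemma~\ref{lem:convergence} applicable with error $|U_0|e^{-cr}$) and then lets $r\to\infty$. As it stands, your proposal does not yield $\mu|_{U_0}=\sum_P\alpha_P\,\mu_P|_{U_0}$.
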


\begin{proof}
Let $f$ be sampled from a Gibbs state $\mu$ under which $Z_*(f)$ almost surely has no infinite $(\Z^d)^{\otimes 2}$-connected components. In light of Lemma~\ref{lem:no-infinite-cluster-of-interface-vertices}, it suffices to show that such a measure $\mu$ is a mixture of the $P$-pattern Gibbs states.

Let $U \subset \Z^d$ be finite and connected. Let us show that, almost surely, there exists a dominant pattern $P$ and a finite set $V$ containing $U$ such that $(\intB V)^+$ is in the $P$-pattern. Indeed, if we let~$W$ denote the $(\Z^d)^{\otimes 2}$-connected component of $U \cup Z_*$ containing~$U$, then $W$ is almost surely finite. Thus, if $V$ denotes the co-connected closure of $W^+$ with respect to infinity, then $V$ is finite, connected, co-connected and contains $U$. Since $\intextB V$ is connected by Lemma~\ref{lem:int+ext-boundary-is-connected} (Corollary~\ref{cor:int+ext-boundary-is-connected torus} in the $\Z^{d_1}\times\T_{2m}^{d_2}$ setting) and is contained in $\intextB W^+ = W^{+2} \setminus W$, which is disjoint from $Z_*$, it follows from the definition of $Z_*$ that $(\intextB V)^+$ is in the $P$-pattern for some $P$.

Now consider the boxes $U_n := \{-n,\dots,n\}^d$ and let $P_n$ and $V_n$ be as above.
For a dominant pattern $P$, let $\cE_P$ be the event that $\{ n : P_n=P \}$ is infinite.
As there are finitely many dominant patterns, $\bigcup_P \cE_P$ occurs almost surely. By a similar argument as in the proof of Lemma~\ref{lem:marginal-distribution-given-agreement}, and using the fact that the finite-volume $P$-pattern measures converge, it follows that $\mu(\cdot \mid \cE_P)$ is precisely the $P$-pattern Gibbs state $\mu_P$. Thus, the events $\{\cE_P\}_P$ are disjoint and $\mu$ is the mixture $\sum_P \mu(\cE_P) \mu_P$.
\end{proof}

\bibliographystyle{amsplain}
%\nocite{*}
\bibliography{biblio}

\end{document}